\newtheorem{theorem}{Theorem}
\newtheorem{lemma}[theorem]{Lemma}
\newtheorem{proposition}[theorem]{Proposition}
\theoremstyle{definition}
\newtheorem{definition}[theorem]{Definition}
\theoremstyle{remark}
\newtheorem{remark}[theorem]{Remark}
\tikzstyle{line}=[draw]
\def\ZZ{\mathbb{Z}}
\def\QQ{\mathbb{Q}}
\def\TT{\mathbb{T}}
\def\SP{\operatorname{Sp}}
\def\Sp{\operatorname{Sp}}
\def\SL{\operatorname{SL}}
\def\sL{\operatorname{sl}}
\def\GL{\operatorname{GL}}
\def\PSL{\operatorname{PSL}}
\def\Id{\operatorname{Id}}
\DeclareMathOperator{\KoZoMon}{\Gamma^{(0)}}
\DeclareMathOperator{\hol}{hol}
\DeclareMathOperator{\Thor}{T_{hor}}
\DeclareMathOperator{\Tver}{T_{ver}}
\DeclareMathOperator{\Tdiag}{T_{diag}}
\DeclareMathOperator{\Tadiag}{T_{adiag}}
\DeclareMathOperator{\Affplus}{Aff^{+}}
\DeclareMathOperator{\VG}{\Gamma}
\DeclareMathOperator{\Gal}{Gal}
\DeclareMathOperator{\Aut}{Aut}
\DeclareMathOperator{\Lie}{Lie}
\newlength{\wdth}
\numberwithin{equation}{section}
\newcommand{\uproman}[1]{\uppercase\expandafter{\romannumeral #1\relax}}
\newcommand{\mc}{\mathcal}
\newcommand{\F}{\mc{F}}
\newcommand{\K}{\mc{K}}
\newcommand{\C}{\mathbb{C}}
\newcommand{\R}{\mathbb{R}}
\newcommand{\N}{\mathbb{N}}
\newcommand{\Q}{\mathbb{Q}}
\newcommand{\Z}{\mathbb{Z}}
\begin{document}

\title[Arithmeticity of Kontsevich--Zorich monodromies of origamis]{Arithmeticity of the Kontsevich--Zorich monodromies of certain families of origamis}

\author[E. B.]{Etienne Bonnafoux}
\address{Centre de Math\'ematiques Laurent Schwartz, CNRS (UMR 7640), \'Ecole Polytechnique, 91128 Palaiseau, France.}
\email{etienne.bonnafoux@polytechnique.edu}

\author[M. K.]{Manuel Kany}
\address{Department of Mathematics and Computer Science, Saarland University, 66123 Saarbr\"ucken, Germany}
\email{kany@math.uni-sb.de}

\author[P. K.]{Pascal Kattler}
\address{Department of Mathematics and Computer Science, Saarland University, 66123 Saarbr\"ucken, Germany}
\email{kattler@math.uni-sb.de}

\author[C. M.]{Carlos Matheus}
\address{Centre de Math\'ematiques Laurent Schwartz, CNRS (UMR 7640), \'Ecole Polytechnique, 91128 Palaiseau, France.}
\email{carlos.matheus@math.cnrs.fr}

\author[R. N.]{Rogelio Ni\~no Hern\'andez}
\address{Centro de Ciencias Matem\'aticas, UNAM Campus Morelia, Antigua Carretera a P\'atzcuaro \#8701, Col. Ex Hacienda San Jos\'e de la Huerta, C.P. 58089, Morelia, Michoac\'an, M\'exico}
\email{rnino@matmor.unam.mx}
\author[M. S.-M.]{Manuel Sedano-Mendoza}
\address{Centro de Ciencias Matem\'aticas, UNAM Campus Morelia, Antigua Carretera a P\'atzcuaro \#8701, Col. Ex Hacienda San Jos\'e de la Huerta, C.P. 58089, Morelia, Michoac\'an, M\'exico}
\email{msedano@matmor.unam.mx}
\author[F. V.]{Ferr\'an Valdez}
\address{Centro de Ciencias Matem\'aticas, UNAM Campus Morelia, Antigua Carretera a P\'atzcuaro \#8701, Col. Ex Hacienda San Jos\'e de la Huerta, C.P. 58089, Morelia, Michoac\'an, M\'exico}
\email{ferran@matmor.unam.mx}
\author[G. W.-S.]{Gabriela Weitze-Schmith\"{u}sen}
\address{Department of Mathematics and Computer Science, Saarland University, 66123 Saarbr\"ucken, Germany}
\email{weitze@math.uni-sb.de}

\date{November 15, 2024}

\begin{abstract}
The variations of Hodge structures of weight one associated to square-tiled surfaces naturally generate interesting subgroups of integral symplectic matrices called Kontsevich--Zorich monodromies. In this paper, we show that arithmetic groups are frequent among the Kontsevich--Zorich monodromies of square-tiled surfaces of low genera $g$.   
\end{abstract}

\maketitle



\section{Introduction}

A subgroup $\Gamma\subset \GL(n,\mathbb{Z})$ with Zariski closure $G$ is called \textit{arithmetic}, resp., \textit{thin} (in the sense of Sarnak \cite{Sarnak}) if the index of $\Gamma$ in $G(\mathbb{Z})$ is finite, respectively infinite. From the point of view of number theory, the problems driven by thin matrix groups possess an ``extra'' degree of difficulty in comparison with the questions involving arithmetic matrix groups. Partly motivated by this fact, several authors tried to identify how often one meets thin matrix groups in certain geometric situations, for example:
\begin{itemize}
\item certain Calabi--Yau threefolds form 14 families whose moduli spaces are isomorphic to $\overline{\mathbb{C}}\setminus\{0,1,\infty\}$, so that one gets 14 examples of subgroups of $\textrm{Sp}(4,\mathbb{Z})$ (with full Zariski closures) by looking at the corresponding variations of Hodge structures; in this context, Brav and Thomas \cite{BT} showed that 7 families lead to thin matrix groups, and Singh and Venkataramana \cite{Singh}, \cite{SV} proved that the remaining 7 families lead to arithmetic groups;
\item the setting of the previous paragraph can be significantly extended by looking at the monodromy groups generated by hypergeometric differential equations, and, in this direction, many new examples of thin matrix groups were found by Fuchs, Meiri and Sarnak \cite{FMS}, Filip and Fougeron \cite{FF}, \cite{F}, among other authors.
\end{itemize}

In this paper we are interested in the Kontsevich--Zorich monodromies of square-tiled surfaces i.e., the matrix groups associated to the actions on the first homology groups of affine homeomorphisms of square-tiled surfaces. Equivalently, Kontsevich--Zorich monodromies can be seen as the variations of Hodge structures along the closed $\SL(2,\mathbb{R})$-orbits spanned by integral points in moduli spaces of translation surfaces. In this direction, we do not have examples of thin Kontsevich--Zorich monodromies with the largest possible Zariski closures (despite a recent effort by Hubert--Matheus \cite{HM}). In fact, our two main results below partly explain why it might not be easy to find such examples among square-tiled surfaces of genera three and four\footnote{We restrict ourselves to the higher genus case because an observation of M. M\"oller (which is discussed in details in Appendix \ref{a.ArithmShadVee} below) asserts the arithmeticity of the Kontsevich--Zorich monodromy of any genus two square-tiled surface.} with a single conical singularity.

\begin{theorem}\label{t.A} There are infinitely many square-tiled surfaces of genus three with a single conical singularity whose Kontsevich--Zorich monodromies are arithmetic.
\end{theorem}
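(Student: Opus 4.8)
The plan is to exhibit an explicit infinite family of square-tiled surfaces $\{X_n\}_{n}$ all lying in the stratum $\mathcal{H}(4)$ — the unique stratum of genus three carrying a single conical singularity, by Gauss--Bonnet (cone angle $10\pi$, i.e. a zero of order $2g-2=4$) — and to establish arithmeticity of their Kontsevich--Zorich monodromies in a way that is uniform in the parameter $n$. I would search for a family with a rigid, easily describable cylinder structure, for instance origamis assembled from a fixed combinatorial block that is elongated or repeated $n$ times, so that the horizontal and vertical cylinder decompositions — and hence the parabolic elements of the affine automorphism group — can be read off uniformly in $n$.

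First I would record, for each $X_n$, the affine automorphisms coming from completely periodic directions. Each such direction produces a Dehn multitwist supported on the core curves $c_1,\dots,c_r$ of the cylinders, and the induced action on $H_1(X_n,\Z)$ is a product of powers of symplectic transvections $v\mapsto v+\langle v,c_i\rangle\,c_i$, the exponents being dictated by the commensurability of the cylinder moduli. Using the horizontal and vertical decompositions, and likely a third periodic direction, I would obtain an explicit list of unipotent generators of the monodromy group $\Gamma_n\subset\SP(6,\Z)$ as products of transvections along homology classes that I can compute combinatorially from the gluing data.

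Next I would split off the tautological plane, on which the $\SL(2,\R)$-action is the standard one (so that the Zariski closure there is forced to be $\SP(2)=\SL(2)$), and analyze the action on its $4$-dimensional symplectic-orthogonal complement, which carries the genuinely new part of the cocycle. The target is to show that the Zariski closure of the image on this complement is the full symplectic group $\SP(4)$: this requires verifying that the representation is $\Q$-irreducible and that no proper reductive subgroup of $\SP(4)$ contains all the computed unipotents, which I would arrange by exhibiting transvections whose directions span the $4$-dimensional space and whose pairwise symplectic pairings form a connected, non-degenerate pattern. Arithmeticity of the full monodromy then reduces to arithmeticity of this $\SP(4)$-piece.

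The final step is an arithmeticity criterion of Venkataramana type: a Zariski-dense subgroup of $\SP(2k,\Z)$ that contains a suitable configuration of unipotent transvections — for example transvections realizing a symplectic basis, or a pair of opposite unipotent one-parameter subgroups that already generate — has finite index in $\SP(2k,\Z)$. I expect the main obstacle to lie precisely in making this step uniform in $n$: the genuine difficulty is not treating one surface but isolating an $n$-independent sub-pattern of the transvection configuration whose transvections alone generate an arithmetic group, so that a single argument covers the whole infinite family at once. The second delicate point is verifying Zariski density uniformly, i.e. ruling out that the symmetry of the family collapses the closure for infinitely many $n$; here I would use the eigenvalues and the explicit unipotent parts of the parabolic products to force the closure to remain large.
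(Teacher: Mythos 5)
Your outline reproduces the paper's strategy at the level of architecture: an explicit infinite family in $\mathcal{H}(4)$ built from a repeated combinatorial block, Dehn multitwists from periodic directions acting as transvections on the non-tautological part $H_1^{(0)}$, Zariski density of the image in $\mathrm{Sp}(4)$, and then a Singh--Venkataramana-type criterion. One remark on the density step: the paper establishes it in genus three not via irreducibility of the transvection configuration but by producing a Galois-pinching element --- a product of the horizontal and vertical multitwists whose reciprocal characteristic polynomial $x^4+ax^3+bx^2+ax+1$ is shown, via explicit discriminants and Siegel's theorem, to have totally real roots and hyperoctahedral Galois group --- combined with a unipotent whose image is not Lagrangian; your transvection-pattern route is closer to what the paper does in genus four via the Detinko--Flannery--Hulpke criterion, and is viable, but you would still have to verify irreducibility uniformly in $n$.

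The genuine gap is in the final step, and it is exactly the one you flag as ``the main obstacle.'' First, the criterion you invoke is not correctly stated: Singh--Venkataramana require three transvections $T_1,T_2,T_3$ with $(T_i-\mathrm{Id})(\mathbb{Z}^{2d})=\mathbb{Z}w_i$ and $\Omega(w_1,w_2)\neq 0$ such that the group they generate on the \emph{three-dimensional degenerate} subspace $W=\mathbb{Q}w_1\oplus\mathbb{Q}w_2\oplus\mathbb{Q}w_3$ contains a nontrivial element of the unipotent radical of $\mathrm{Sp}(W)$; ``transvections realizing a symplectic basis'' or ``a pair of opposite unipotent one-parameter subgroups'' is not the hypothesis and is not what can be checked here. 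Second, verifying that hypothesis is the actual content of the proof: one computes the three restricted $3\times 3$ matrices in a basis $\{X_i,X_j,e\}$ of $W$, where $e$ is the annihilator of the alternating form on $W$, and exhibits an explicit word in them landing in the unipotent radical. In the paper this only works after restricting each family to an arithmetic progression in the parameter (for instance $n=20k-2$ for $\mathcal{F}_{3n+8}$, with the word $D_{X_3}^{3k}D_{X_2}D_{X_3}^{-3k}D_{X_1}^{-25}$), because the entries of the restricted transvections depend on $n$ and one needs divisibility relations among them to cancel the $\mathrm{SL}_2$-part and isolate a unipotent-radical element. Without this computation --- and without the realization that uniformity is achieved by passing to congruence subfamilies of $n$ rather than by finding an $n$-independent sub-pattern --- the argument does not close.
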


In particular, conditionally to a conjecture by Delecroix and Leli\`evre (whose statement is recalled in the next section), this theorem says that a positive proportion (at least $1/8$) of the $\SL(2,\mathbb{R})$-orbits of square-tiled surfaces of genus three with a single conical singularity have arithmetic Kontsevich--Zorich monodromies. For more precise formulations of Theorem \ref{t.A} and the following Theorem~\ref{t.B}, see the statements of Theorems \ref{t.H4odd-families1to6}, \ref{t.H4hyp-family1} and \ref{KoZoGenus4} below.

\begin{theorem}\label{t.B} For each $10\leq n\leq 260$ which is divisible by $5$, there exists a square-tiled surface of genus four with a single conical singularity tiled by $n$ squares whose Kontsevich--Zorich monodromy is arithmetic. 
\end{theorem}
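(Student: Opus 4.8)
The plan is to place each required origami in the stratum $\mathcal{H}(6)$, which is exactly the locus of genus-four abelian differentials with a single conical singularity (a single zero has order $2g-2=6$), and to establish arithmeticity by exhibiting enough explicit unipotent elements of the Kontsevich--Zorich monodromy. Recall that for an $n$-square origami $O$, presented as a degree-$n$ cover $O\to\mathbb{T}^2$ branched over one point, the symplectic $\Z$-module $H_1(O,\Z)$ splits $\mathrm{Aff}(O)$-equivariantly as $H_1^{\mathrm{st}}\oplus H_1^{(0)}$, where the tautological plane $H_1^{\mathrm{st}}$ carries the standard $\SL(2,\Z)$-action and its orthogonal complement $H_1^{(0)}$ has rank $2g-2=6$. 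The monodromy of interest is the image $\Gamma\subset\SP(H_1^{(0)})\cong\SP(6,\Z)$, and the goal is to show $\Gamma$ has finite index.

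First I would fix, for each admissible $n\in\{10,15,\dots,260\}$, an explicit pair of permutations $(h_n,v_n)\in S_n\times S_n$ encoding the horizontal and vertical gluings. I would arrange that $\langle h_n,v_n\rangle$ acts transitively (connectedness) and that the commutator $[h_n,v_n]$ is a single $7$-cycle fixing the remaining $n-7$ letters; the Euler count $V=n+2-2g$ then gives $n-6$ vertices, forcing a unique zero of order $6$ and hence $O_n\in\mathcal{H}(6)$, and one additionally records the spin (or hyperelliptic) invariant to pin down the connected component. The divisibility by $5$ and the ceiling $n=260$ reflect the shape of the construction: I expect a single uniform template assembled from elementary five-square blocks, so that $(O_n)_n$ is a combinatorially one-parameter family and the bound $260$ is simply the range over which the construction, and the ensuing verification, are carried out explicitly.

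Next I would read off unipotent generators of $\Gamma$ from the flat geometry. The horizontal cylinder decomposition of $O_n$ yields a parabolic in the Veech group whose action on $H_1^{(0)}$ is a product of commuting symplectic transvections along the core curves of the horizontal cylinders; the vertical decomposition gives a second such unipotent, and conjugating one by powers of the other produces further transvections. Computing the cylinder diagrams and the intersection pairing for the template then presents $\Gamma$, up to finite index, as the group generated by two explicit unipotent $6\times 6$ integral symplectic matrices $U_h^{(n)}$ and $U_v^{(n)}$.

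The decisive step, and the main obstacle, is passing from these generators to \emph{finite index} in $\SP(6,\Z)$; Zariski density in $\SP(6)$ is comparatively soft and would follow from irreducibility of the $\Gamma$-action together with the spanning of the transvection directions. For arithmeticity itself I would invoke the congruence subgroup property for $\SP(2g,\Z)$, $g\ge 2$: by strong approximation a Zariski-dense $\Gamma$ is arithmetic as soon as the reductions $\Gamma\to\SP(6,\Z/p\Z)$ are surjective for every prime $p$, and for a fixed integral transvection system this surjectivity is automatic at all but finitely many primes, leaving only a handful of small primes to be checked. I expect the genuine work to lie here: either certifying that surjectivity at the bad primes uniformly across the family, or — failing a uniform argument — verifying it computationally for each of the $51$ values of $n$, which is precisely why the statement is confined to the explicit range $10\le n\le 260$.
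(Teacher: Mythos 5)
Your setup (explicit origamis in $\mathcal{H}(6)$, unipotents coming from cylinder decompositions, and a separate treatment of Zariski density) matches the paper's in outline: the paper works with a staircase family $\mathcal{O}_{N,M}$, extracts transvections from several two-cylinder directions, and verifies Zariski density via the Detinko--Flannery--Hulpke criterion (absolute irreducibility of the normal closure of a transvection), checked computationally for $m\le 50$ --- which, combined with the constraint $N=3m+4$ imposed at the arithmeticity stage, is the actual source of both the divisibility by $5$ (the number of squares is $5m+10$) and the cap $n\le 260$, rather than any ``five-square block'' template.

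However, your decisive step contains a genuine gap. You propose to deduce finite index from Zariski density plus surjectivity of the reductions $\Gamma\to\SP(6,\Z/p\Z)$ for every prime $p$, invoking the congruence subgroup property. The CSP says that every finite-index subgroup of $\SP(6,\Z)$ is a congruence subgroup; it does not say that a subgroup whose congruence closure is everything has finite index. Strong approximation already guarantees that any Zariski-dense $\Gamma\le\SP(6,\Z)$ surjects onto $\SP(6,\Z/p\Z)$ for all but finitely many $p$, and thin (infinite-index) Zariski-dense groups with full closure in $\SP(6,\hat{\Z})$ exist; distinguishing them from arithmetic lattices is precisely the difficulty this subject is about (cf.\ the thin hypergeometric monodromy groups mentioned in the introduction). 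So your criterion would ``prove'' arithmeticity of groups that are genuinely thin, and checking finitely many bad primes cannot rescue it. The paper instead applies the Singh--Venkataramana transvection criterion: after finding three transvections whose directions $\Delta$, $X$, $\Gamma$ span a $3$-dimensional subspace $W$ on which the symplectic form does not vanish identically, one must exhibit a word in their restrictions to $W$ lying in the unipotent radical of $\SP(W)$; this is what forces the choice $N=3m+4$ (making a certain coefficient vanish so that $(D_\chi|_W)^{-2B^2}(D_\gamma|_W)^{A^2}$ is unipotent of the required shape) and is the genuinely nontrivial arithmeticity input missing from your proposal.
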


\textbf{Structure of this article}. The proof of these results occupy the remainder of this text. More concretely, we quickly review in Section \ref{s.preliminaries} the aspects of the theory of square-tiled surfaces entering into the statements of Theorems \ref{t.A} and \ref{t.B}, and the relevant strategy towards the arithmeticity of subgroups of symplectic matrices. After that, in Section \ref{s.H4odd}, we prove Theorem \ref{t.H4odd-families1to6}, yielding a precise version of Theorem \ref{t.A} in the context of the so-called odd component of $\mathcal{H}(4)$. Subsequently, we complete in Section \ref{s.H4hyp} the discussion of Theorem \ref{t.A} by showing a statement, namely Theorem \ref{t.H4hyp-family1}, giving a precise version of Theorem \ref{t.A} in the context of the so-called hyperelliptic component of $\mathcal{H}(4)$. Next, we establish Theorem \ref{t.B} in Section \ref{s.H6}. Once the main results are proved, we include in Section \ref{s.computationals} some numerical experiments about the indices of the Kontsevich--Zorich  monodromies (in the integral lattices in their Zariski closures) of some square-tiled surfaces in genus two and we describe two curious examples in genera three and four: In particular, concerning the genus two case, the list of such indices seems to take only two values ($1$ or $3$) for square-tiled surfaces in $\mathcal{H}(2)$, while many values (including $1$, $3$, $4$, $6$, $12$, $24$) seem to be taken for square-tiled surfaces in $\mathcal{H}(1,1)$. Finally, we complete the article with two appendices: in Appendix \ref{a.examplePrym}, we briefly compute the Kontsevich--Zorich monodromy of a square-tiled surface of genus three in the Prym locus (but unfortunately we are not able to infer whether arithmeticity or thinness should be typically expected in this special locus of $\mathcal{H}^{odd}(4)$), and in Appendix \ref{a.ArithmShadVee}, we explain the result of M\"oller that the Kontsevich--Zorich  monodromy of any square-tiled surface of genus two is always arithmetic. 

\textbf{Acknowledgments}

We thank Martin M\"oller for allowing us to include his unpublished result about the arithmeticity of the Kontsevich--Zorich monodromy of square-tiled surfaces of genus two in Appendix \ref{a.ArithmShadVee} and for his helpful and unwavering support. R. Ni\~no would like to thank CONACYT's Ph.d. grant.
M. Sedano would like to thank UNAM-DGAPA's postdoctoral grant. 
F.Valdez would like to thank the following grants: CONACYT Ciencia B\'asica CB-2016 283960 and UNAM PAPIIT IN-101422. 
The working group of G. Weitze-Schmith\"usen would like to thank the Deutsche Forschungsgemeinschaft (DFG, German Research Foundation) for their funding in Project-ID 286237555 – TRR 195.



\section{Preliminaries}\label{s.preliminaries}

We briefly summarize what we need about translation surfaces and origamis and refer the reader not familiar with these topics to introductory texts about translation surfaces e.g. in  \cite{Masur2006, Zorich2006} and about origamis e.g. in  \cite{DissGabi, zmiaikou2011,M2022}.
A \textit{finite translation surface $(M,\omega)$} is a Riemann surface together with a holomorphic differential $\omega$. The latter defines a \textit{translation structure},  i.e. an atlas such that all transition maps are translations, on $M\backslash Z$ where $Z = \{x_1, \ldots, x_r\}$ is the set of zeroes of $\omega$.
The points $x_i$ are cone angle singularities of angle $(\mu_i + 1)2\pi$, where $\mu_i$ is the order of the zero. We consider the \textit{stratum} $\mathcal{H}(\mu_1,\ldots, \mu_r)$, i.e. the space of all finite translation surfaces with $r$ singularities of order $\mu_1$, \ldots, $\mu_r$. By Riemann Roch we have $\mu_1 + \ldots + \mu_r = 2g-2$.
An \textit{origami} or \textit{square-tiled surface} (cf. Figure~\ref{fig:O35}) is a finite translation surface $(M,\omega)$, where $\omega$ is obtained as pull back of the differential $dz$ on the flat torus $\mathbb{T}^2=\mathbb{C}/(\mathbb{Z}\oplus i\mathbb{Z})$ via a ramified covering $\pi\colon M\to\mathbb{T}^2$  which is not branched outside $0\in\mathbb{T}^2$.  Two origamis given by coverings $\pi_1$ and $\pi_2$ are \textit{equivalent} if there is a homeomorphism $h$ such that $\pi_2 = h \circ \pi_1$.  In the whole article, we assume that the origamis are \textit{reduced}, i.e. that the group of relative periods of $\omega$ is $\mathbb{Z}\oplus i \mathbb{Z}$. Recall that the fundamental group of the once-punctured torus $\mathbb{T}^2\backslash\{0\}$ equals the free group $F_2$. Let $x$ and $y$ be the closed paths on $\mathbb{T}^2$ in direction ${1 \choose 0}$ and ${0 \choose 1}$. Then, for an origami $(M,\omega)$, we consider the monodromy action $m: \pi_1(\mathbb{T}^2\backslash\{0\}) \to S_N$ of the associated unramified covering $\pi\colon M\backslash \pi^{-1}(0)\to\mathbb{T}^2\backslash\{0\}$. Hence, the origami is coded by the pair of permutations $(h = m(x),v = m(y))\in S_N\times S_N$, where $N$ is the degree of $\pi$. Two origamis $(M,\omega)$ and $(M',\omega')$ are equivalent if and only if the two corresponding permutations $(h,v)$ and $(h', v')$  are simultaneously conjugated, i.e. there exits a permutation $\sigma$ such that $h' = \sigma\circ h \circ \sigma^{-1}$ and $v' = \sigma\circ v \circ \sigma^{-1}$.  If the origami is defined by gluing a finite collection of squares which are labeled by $1$, \ldots, $N$  then $h$ gives the right neighbors and $v$ gives the upper neighbors of each square. Choosing a different labeling of the squares corresponds to the simultaneous conjugation with some $\sigma \in S_N$. 
$\SL(2,\Z)$ acts on the set of equivalence-classes of origamis in the following way. For $A \in \SL(2,\Z)$ the affine homeomorphism $f_A: \R^2 \to \R^2, {x \choose y} \mapsto A\cdot {x \choose y}$ descends to $\overline{f}_A: \TT^2 \to \TT^2$. Here we use the standard identification $\R^2 \cong \C$. For the origami $O = (X,\mu)$ defined by the covering $\pi: M \to \TT^2$ then $A\cdot O$ is the origami $(M_A,\omega_A)$ defined by the covering $\overline{f}_A \circ \pi:M \to \TT^2$. In turns out that the generators
\begin{equation}\label{TheMatrixSandT}
  T =\begin{pmatrix} 1 & 1\\0 & 1\end{pmatrix} \mbox{ and } S = \begin{pmatrix} 0 & -1 \\1 & 0\end{pmatrix}
\end{equation}
of $\SL(2,\Z)$ act on the associated permutations as the Nielsen transformations by $(h,v)\mapsto (h,vh^{-1})$ and $(h,v)\mapsto (v^{-1},h)$ (cf.\cite[Section 2]{Weitze2015}).  It follows in particular that the group $G=\langle h, v\rangle\subset S_N$ called the \textit{monodromy group} is an invariant in the $\SL(2,\mathbb{Z})$-orbit of the origami. 
%

\subsection{Kontsevich--Zorich monodromy of origamis}\label{KZmonOrigami}
An \textit{affine homeomorphism} $f$ of a reduced origami $(M,\omega)$ is an orientation-preserving homeomorphism of $M$ which is affine in the local charts of the corresponding translation structure. The affine homeomorphisms of $(M,\omega)$ form a group called \textit{the affine group} $\Affplus(M,\omega)$. In this situation, the linear part $A = Df$ of $f$ is an element of $\SL(2,\mathbb{Z})$ and the derivative map $\Affplus(M,\omega) \to \SL(2,\Z), f \mapsto A = Df$ is a group homomorphism.   Its image is a finite-index subgroup of $\SL(2,\mathbb{Z})$ called the \textit{Veech group} $\VG(M,\omega)$. An affine homeomorphism whose derivative is the identity matrix is called \textit{translation}. Hence, if the identity map is the only translation of $(M,\omega)$ then $\Affplus(M,\omega)$ is isomorphic to $\VG(M,\omega)$. The Veech group of the torus $\mathbb{T}^2$ is $\SL(2,\Z)$. The group $\SL(2,\Z)$ acts on the set of origamis: For $A \in \SL(2,\Z)$ we send an origami $(M,\omega)$ given by the covering map $\pi$ to the origami $A\cdot (M,\omega)$ given by the covering map $\pi \circ f_A$, where $f_A$ is the affine homeomorphism of $\mathbb{T}^2$ with derivative $A$ fixing $0 \in \mathbb{T}^2$. See e.g. \cite{Vorobets96, MoellerAff09} for an introduction and a survey about Veech groups.

The first homology group $H_1(M,\mathbb{Q})$ of a reduced origami has a splitting
  \begin{equation}\label{splitting}
  H_1(M,\mathbb{Q}) = H_1^{st}(M,\mathbb{Q})\oplus H_1^{(0)}(M,\mathbb{Q}),
  \end{equation}
which is respected by the natural action of the affine homeomorphisms of $(M,\omega)$ via pushforward (cf.~\cite[Section 1]{HM}, \cite[Section 2.6]{M2022}). In concrete terms, $H_1^{(0)}(M,\mathbb{Q})$ is the kernel of $\pi_*:H_1(M,\mathbb{Q})\to H_1(\mathbb{T}^2,\mathbb{Q})$, and $H_1^{st}(M,\mathbb{Q})$ is the orthogonal complement of $H_1^{(0)}(M,\mathbb{Q})$ with respect to the symplectic intersection form $\Omega$ on $H_1(M,\mathbb{Q})$. The factor $H_1^{(0)}(M,\mathbb{Q})$ is called the \textit{non-tautological part} of $H_1(M,\mathbb{Q})$. Indeed, the splitting is well-defined over $\Z$ as $H_1(M,\mathbb{Z}) = H_1^{st}(M,\mathbb{Z})\oplus H_1^{(0)}(M,\mathbb{Z})$. Each square in an origami defines two relative cycles $h_i$ and $v_i$ given by the bottom horizontal and left vertical sides respectively. These cycles define a basis for $H_1^{st}(M,\mathbb{Q})$ given by $\sigma=\sum_i h_i$ and $\zeta=\sum_i v_i$. 
An affine homeomorphism $A$ of $(M,\omega)$ acts on $H_1^{st}(M,\mathbb{Z})\simeq\mathbb{Z}^2$ via the induced action of the derivative $DA\in \textrm{SL}(2,\mathbb{Z})$ on $\mathbb{Z}^2$. This action has been intensively studied in the context of translation surfaces. Understanding the action of the group of affine homeomorphisms $\textrm{Aff}^+(M,\omega)$ on $H_1^{(0)}(M,\mathbb{Z})$ is much more delicate. This goal leads us to the main object of this work.

\begin{definition}
  We denote by $\SP(H_1^{(0)}(M,\mathbb{Z}),\Omega)$ the symplectic group of $H_1^{(0)}(M,\mathbb{Z})$ with respect to the intersection form $\Omega$.
  Since the action of the affine group on the homology respects the intersection form $\Omega$ and the splitting (\ref{splitting}) is invariant,
  one obtains a representation
  \[
  \rho\colon \textrm{Aff}^+(M,\omega) \longrightarrow \textrm{Sp}(H_1^{(0)}(M,\mathbb{Z}),\Omega).
  \]
  Its image is called the \textit{Kontsevich--Zorich monodromy} of $(M,\omega)$ and is denoted by $\Gamma^{(0)}(M,\omega)$ or just $\Gamma^{(0)}$.
\end{definition}

An origami $(M,\omega)$ has \textit{arithmetic}, respectively \textit{thin} Kontsevich--Zorich  monodromy (in the sense of Sarnak \cite{Sarnak}) if its Kontsevich--Zorich monodromy has finite, respectively infinite index in $G(\mathbb{Z})$, where $G$ is the Zariski closure of its Kontsevich--Zorich monodromy. Since for two origamis $(M,\omega)$ and $A \cdot (M,\omega)$ in the same $\SL(2,\Z)$-orbit  their  Kontsevich--Zorich monodromies are conjugated, either both are arithmetic or both are not arithmetic.
Hence it suffices to restrict to $\SL(2,\Z)$-orbits.

\subsection{Zariski density in symplectic groups}\label{ss.Prasad-Rapinchuk-MMY}


In this section we present in Lemma~\ref{lemma:criteria-zariski-density} the main criterion that we will use for showing that the Kontsevich-Zorich monodromy groups of origamis are Zariski dense in the ambient symplectic group. 
In the rest of this section $\Omega$ is always a symplectic form on $\mathbb{Q}^{2d}$ taking integral values on the lattice $\mathbb{Z}^{2d}$. We denote $\textrm{Sp}_{\Omega}(2d,\mathbb{Z})$ for the symplectic group of $\mathbb{Z}^{2d}$ with respect to this form $\Omega$. A key ingredient of the criterion is to consider matrices with the property \textit{Galois-pinching} (cf. Definition~\ref{def:galois-pinching}) introduced in \cite[Section 4]{MMY}.

\begin{definition}
    \label{def:galois-pinching}
    A matrix $A\in \textrm{Sp}_{\Omega}(2d,\mathbb{Z})$ is called \textit{Galois-pinching} if the following holds:
\begin{enumerate}
\item
The characteristic polynomial of the matrix $A$ is irreducible over $\mathbb{Q}$.
\item
All the roots of the matrix $A$ are real numbers.
\item
The Galois group of the characteristic polynomial of $A$ is the largest possible, namely, isomorphic to the hyperoctahedral group of order $2^d\,d!$. Here we view $\textrm{Gal}$ as the centralizer of the involution $\lambda\to\lambda^{-1}$ on the set of roots.
\end{enumerate}
\end{definition}

We are particularly interested in determining Zariski density for Kontsevich-Zorich monodromies of genus three surfaces. In this case $H_1^{(0)}(M,\mathbb{Q})$ is isomorphic to $\mathbb{Q}^4$ and the following lemma, whose proof can be consulted in \cite[\S 6.7]{MMY}, provides a simple criterion for the Galois-pinching property. 

\begin{lemma}
    \label{lemma:galois-pinching-criterion}
Let $A \in \textrm{Sp}_{\Omega}(4,\mathbb{Z})$ and $x^4+ax^3+bx^2+ax+1\in \mathbb{Z}[x]$ be its characteristic polynomial.
Then $A$ is Galois-pinching whenever $-a-4>0$, $b+2a+2>0$, and the three discriminants 
  \[
  \Delta_1=a^2-4b+8>0, \qquad \Delta_2=(b+2+2a)(b+2-2a)
  \]
and $\Delta_1\Delta_2$ are not squares.
\end{lemma}

The main tool  for the density criterion is that, by Theorem 1.5 in \cite{IR},  the Zariski closure of a subgroup $\Gamma\subset \textrm{Sp}_{\Omega}(2d,\mathbb{Z})$ containing a Galois-pinching matrix $A$ and an infinite order matrix $B$ not commuting with $A$, is either $\textrm{Sp}_{\Omega}(2d,\mathbb{R})$ or a proper subgroup conjugated to the subgroup of block diagonal matrices, isomorphic to $\textrm{SL}(2,\mathbb{R})^d$. This statement follows from  Theorem 9.10 in  \cite{PR} in the following way. We apply this theorem to $G =\textrm{Sp}_{\Omega}(2d)$.  Firstly, it turns out that $g = A$ being Galois-pinching implies that $A$ is generic (cf. \cite[Def. 9.4]{PR}). To see this consider the maximal torus $T$ for $A$ which is equal to the centralizer $C_{G}(A)$ of $A$ in $G$.  Its splitting field $K_T$, i.e. the minimal field extension of $\QQ$ such that $A$ is diagonalisable over $K_T$, equals $\QQ(\lambda_1,\ldots, \lambda_{2d})$, where $\lambda_1, \ldots, \lambda_{2d}$ are the eigenvalues of $A$.  Furthermore, one considers the faithful action \mbox{$\Theta_T:\Gal(K_T/K) \to \Aut(\Phi(G,T))$} of $\Gal(K_T/K)$ on the root system $\Phi(G,T)$ of $T$ induced by the natural action of $\Gal(K_T/K)$ on the character group $X(T)$ of $T$. Recall that the image of $\Theta_T$ in general lies in the Weyl group $W(G,T)$ (cf. \cite[Prop 2.1]{JKZ}). The order of  $W(G,T)$ for the symplectic group $G$ is $2^d\cdot d!$. By the definition of Galois pinching (cf. Def.~\ref{def:galois-pinching})  the order of $\Gal(K_T/K)$ also equals $2^d\cdot d!$. Hence the image of $\Theta_T$ equals the Weyl-Group $W(G,T)$ which is precisely the definition for $T$ and then for $A$ to  be generic (cf.~\cite[Section 9, Def. 9.4]{PR}). Secondly, now Theorem 9.10 in \cite{PR} tells us that the Zariski closure of $\Gamma$ is either the full group $G =\textrm{Sp}_{\Omega}(2d)$ or $G_T^{>}$. The latter is by definition the subgroup of $G$ generated by the torus $T$ and the 1-parameter family of unipotent subgroups $U_{\alpha}$, where $\alpha$ runs through the set $\Phi_{>}(G,T)$ of all long roots. Finally, one observes that since $G =\textrm{Sp}_{\Omega}(2d)$ the group $G_T^{>}$ is isomorphic to $\SL(2)^d$. This can be seen in the following way. We consider the associated Lie algebras rather than the Lie groups. We then have $\Lie(G_T^{>}) \cong \mathfrak{h} \bigoplus_{\alpha \in {\Phi^+}_{>}(G,T)} (\mathfrak{g}_{\alpha} \oplus \mathfrak{g}_{-\alpha})$, where $ {\Phi^+}_{>}(G,T)$ denotes the set of positive long roots and $\mathfrak{h}$,  $\mathfrak{g}_{\alpha}$ and $\mathfrak{g}_{-\alpha}$  are the Lie algebras associated to $T$, $U_{\alpha}$ and $U_{-\alpha}$.  In the case of $G =\textrm{Sp}_{\Omega}(2d)$ one can check for example by direct computation with explicit matrices that $\mathfrak{h} \cong \bigoplus_{\alpha \in \Phi^+_>(G,T)} \mathfrak{h}_{\alpha}$ where $\mathfrak{h}_{\alpha} = [\mathfrak{g}_{\alpha},\mathfrak{g}_{-\alpha}]$. We finally use that  $\mathfrak{g}_{\alpha} \oplus \mathfrak{g}_{-\alpha} \oplus \mathfrak{h}_{\alpha} \cong \sL_2$ (cf. \cite[Thm. 2.22]{Milne}) and obtain the claim. 
We now almost have  Lemma~\ref{lemma:criteria-zariski-density}. 




\begin{lemma}
    \label{lemma:criteria-zariski-density}
Let $\Omega$ be a symplectic form on $\mathbb{Q}^{2d}$ taking integral values on the lattice $\mathbb{Z}^{2d}$. Suppose that $\Gamma\subset \textrm{Sp}_{\Omega}(2d,\mathbb{Z})$ contains a  Galois-pinching matrix $A$ and an unipotent matrix $B\neq \Id$ such that $(B-\Id)(\mathbb{R}^{2d})$ is not a Lagrangian subspace, then $\Gamma$ is Zariski dense in $\textrm{Sp}_{\Omega}(2d,\mathbb{R})$.
\end{lemma}

\begin{proof}
  Firstly, we realize as follows that $B$ has infinite order and the matrices $A$ and $B$ do not commute. The first statement directly follows from $B$ being a unipotent matrix $\neq \Id$. For the second statement let us assume that $A$ and $B$ commute. Since $B$ is unipotent $\neq \Id$ it follows that  the eigenspace of $B$ is a common invariant proper subspace for $A$ and $B$. By \cite[Prop. 4.3]{MMY} this however implies that $(B - \Id)(R^{2d})$ is a Langrangian subspace of $\R^{2d}$ in contradiction to our assumption. Hence $A$ and $B$ do not commute. Now, by the discussion before this lemma it follows from Theorem 9.10 in \cite{PR} that the group generated by $A$ and $B$ is either the whole group  $G =\textrm{Sp}_{\Omega}(2d,\R)$ or conjugated to the subgroup of block diagonal matrices, isomorphic to $\textrm{SL}(2,\mathbb{R})^d$. In the second case again $A$ and $B$ share a common proper invariant subspace and hence again by\cite[Prop. 4.3]{MMY} this would imply that $(B - \Id)(\mathbb{R}^{2d})$ is a Langrangian subspace of $\R^{2d}$ giving a contradiction.
\end{proof}

\begin{remark}
    \label{rmk:criteria-zariski-density}
In Lemma~\ref{lemma:criteria-zariski-density}, the dimension of a Lagrangian subspace equals $d$. Thus if the dimension of $(B-\Id)(\mathbb{R}^{2d})$ is not $d$ then it is not a Lagrangian subspace.
\end{remark}


\subsection{Arithmeticity of subgroups of symplectic matrices}\label{ss.Sing-Venkataramana} 
For proving arithmeticity we will use the powerful criterion due to Sing and Venkataramana presented in Theorem~\ref{thm:sing-venkataramana}.

\begin{theorem}(\cite[Theorem 1.2]{SV})
    \label{thm:sing-venkataramana}
 Let $\Omega$ be a symplectic form on $\mathbb{Q}^{2d}$ taking integral values on the lattice $\mathbb{Z}^{2d}$. Suppose that $\Gamma\subset \Sp_{\Omega}(2d,\mathbb{Z})$ is Zariski dense and there are three transvections $T_n\in \Gamma$, $n=1,2,3$, such that $(T_n-\Id)(\mathbb{Z}^{2d}) = \mathbb{Z}w_n$ satisfies $\Omega(w_1,w_2)\neq 0$ and $w_1,w_2, w_3$ are linearly independent. Then
$\Gamma$ is arithmetic if the group generated by the restrictions of $T_n$, $n=1,2,3$, to $W=\mathbb{Q}w_1\oplus\mathbb{Q}w_2\oplus\mathbb{Q}w_3$ contains an element of the unipotent radical of $\Sp_{\Omega}(W)$
\end{theorem}


In the following we describe the unipotent radical of $\Sp_{\Omega}(W)$, where $W$ as defined in Theorem~\ref{thm:sing-venkataramana}. Recall that any three-dimensional vector space with a non-trivial alternating form has a null space $\langle e \rangle \leq W$. We choose 
$\{w_1,w_2,e\}$
as basis of $W$ with $w_1$, $w_2$ from Theorem~\ref{thm:sing-venkataramana}.
 With respect to this basis the symplectic group  is described as
    \[  \Sp(W) = \left\{ \left(\begin{array}{ccc}
        a & b & 0 \\
        c & d & 0 \\
        x & y & \lambda
    \end{array}\right) \in \GL_3(\mathbb{Q}) : \lambda \neq 0, \  ad - bc = 1 \right\} \cong (\mathbb{Q}^* \times \SL_2(\mathbb{Q})) \ltimes \mathbb{Q}^2.    \]
As $\Sp(2,\mathbb{Q}) = \SL_2(\mathbb{Q})$ is a simple factor, the radical subgroup of $\Sp(W)$, being the greatest normal, solvable subgroup, is 
    \[  R(\Sp(W)) = \left\{ \left(\begin{array}{ccc}
        1 & 0 & 0 \\
        0 & 1 & 0 \\
        x & y & \lambda
    \end{array}\right) \in \GL_3(\mathbb{Q}) : \lambda \neq 0 \right\} \cong \mathbb{Q}^* \ltimes \mathbb{Q}^2    \]
and the unipotent radical is
    \begin{equation}\label{NiceFormUnipotent}  U(\Sp(W)) = \left\{ \left(\begin{array}{ccc}
        1 & 0 & 0 \\
        0 & 1 & 0 \\
        x & y & 1
    \end{array}\right) \in \GL_3(\mathbb{Q}) : \right\} \cong  \mathbb{Q}^2.    \end{equation}
We will be dealing with symplectic matrices $A \in \Sp_{\Omega}(2d,\mathbb{Z})$ which preserve the element $e$, so their restriction lies in the subgroup 
    \[  \left\{ \left(\begin{array}{ccc}
        a & b & 0 \\
        c & b & 0 \\
        x & y & 1
    \end{array}\right) \in \GL_3(\mathbb{Q}) : \lambda \neq 0, \  ad - bc = 1 \right\} \cong \SL_2(\mathbb{Q}) \ltimes \mathbb{Q}^2.    \]
There is a small error made in \cite{SV}, or perhaps a difference in convention: The authors stated that in fact $\Sp(W)\cong \SL_2(\mathbb{Q}) \ltimes \mathbb{Q}^2$. Nevertheless this is a harmless error and does not affect their result, as they were also considering elements fixing the element $e$.

\begin{remark} Interestingly enough, Singh and Venkataramana also discuss in \cite[\S 2]{SV} an alternative arithmeticity criterion closer to a recent work of Benoist and Miquel \cite{BM} which was used by Hubert and Matheus \cite{HM} to establish the arithmeticity\footnote{Actually, they initially thought that this specific origami had good chances to possess a thin Kontsevich--Zorich  monodromy.} of the Kontsevich--Zorich monodromy of a specific example origami of genus three. However, it seems hard to push this method to produce infinte families of examples of arithmetic Kontsevich--Zorich monodromies.
\end{remark}


\subsection{Two-cylinder decomposition and transvections} 
    \label{cyn-decompos-transvec}
    

In this paragraph we now assume that $M$ is an origami which decomposes into two cylinders for some direction $\theta \in \Z^2$. In this setting we will develop a criterion for the  Kontsevich--Zorich monodromy $\Gamma^{(0)}$ of $M$ to be Zariski dense in $\Sp_{\Omega}(2d,\R)$, see Proposition~\ref{prop:density}.

Let us first briefly recall the definition of moduli of cylinders. After that we want to explain how to associate an affine Dehn multitwist to any direction $\theta$ in which  $M$ decomposes into cylinders with commensurable moduli. This also works for general translation surfaces (cf.\cite[Lemma 3.9]{Vorobets96}). 
%
Consider any horizontal flat cylinder $C$ of height $h$ and width $w$. It is stabilized by the affine action of the matrix  $A := \left(\begin{smallmatrix}1&\frac{w}{h}\\0&1\end{smallmatrix}\right)$.  The quotient $h/w$ is called the \textit{modulus of $C$}. The matrix acts as an affine homeomorphism which fixes the boundary of $C$ pointwise. If we extend this homeomorphism from the cylinder to the whole surface by the identity we obtain a Dehn twist of $M$ about the core curve of the cylinder $C$. If the cylinder is in direction $\theta$ rather than in horizontal direction, we can do the same construction and just have to replace the matrix $A$ by $A_{\theta}$, where we obtain $A_{\theta}$ form $A$ by conjugating by a rotation $R_{\theta}$ by the angle $\theta$ which maps ${1 \choose 0}$ to a vector in direction $\theta$. \\
  Suppose now that $M$ decomposes in some direction $\theta$  into cylinders $\{C_i\}_{i=1}^k$ of moduli $(\mu_i)_{i=1}^k$. Moreover suppose that the inverse moduli are commensurable i.e., that there are $k_i\in\mathbb{Z}_{>0}$ and some $L \in \R$ such that $L=k_i\,\mu_i^{-1}$ for all $i=1,\ldots, k$. Then the conjugate of the matrix $\left(\begin{smallmatrix}1&L\\0&1\end{smallmatrix}\right)$ with $R_{\theta}$ acts as $k_i$-th power of a Dehn twist on the cylinder $C_i$. The product of all these powers of Dehn twists is a Dehn multitwist and an affine homeomorphism of $M$ with derivative  $\left(\begin{smallmatrix}1&L\\0&1\end{smallmatrix}\right)$. We say it is an {\it affine Dehn multitwist} of $M$. It maps each cylinder to itself and fixes the boundaries of the cylinders pointwise.
We call the quantity $L$ the \textit{strength} of the multitwist $D$ and $k_i$ its {\it multiplicity for the cylinder $C_i$}. Observe that affine Dehn multitwists in a given direction are unique up to powers.
The action of $D$ as above on $H_1(M,\mathbb{Q})$ is given by:
\begin{equation} 
    \label{eq:action-multitwist-homology}
D_*= \Id + \sum_{i=1}^k k_i\,\Omega(\cdot,\gamma_i)\,\gamma_i,
\end{equation}
where $\gamma_i$ is the core curve of the cylinder $C_i$ and $\Omega(\cdot,\cdot)$ is the algebraic intersection form on $H_1(M,\mathbb{Z})$.

Now, we return to the case that $M$ is an origami coming by definition with a covering $\pi:M \to \mathbb{T}^2$. Then $M$ decomposes in direction $v$ in cylinders precisely for rational directions i.e., $v$ is a multiple of a vector in $\Z^2$.  In this case the moduli of the cylinders are always commensurable. One can then express the length of cylinders in terms of the covering data.
\begin{definition}
  Let $\pi:M \to \mathbb{T}^2$ be an origami and $v = (v_x ,\, v_y) \in \Z^2$ primitive i.e., $v_x$ and $v_y$ are coprime. Let $w$ be the width and $h$ the height of a cylinder on $M$ in direction $v$. Let us consider the cylinder on $\mathbb{T}$ in direction $v$.  Its width equals $|v|$ and its height equals $h_0 = \frac{1}{|v|}$.  Then we call $f = w/|v|$ the \textit{combinatorial width} and $\tilde{h} = h/h_0 = h\cdot |v|$ the \textit{combinatorial height} of the cylinder on $M$.
\end{definition}

We show in the following that if we have two cylinders in the cylinder decomposition in direction $v$ then the associated Dehn multitwist $D$ defines a transvection on $H_1^{(0)}(M,\mathbb{Q})$.

\begin{remark}\label{rem:multiplicites}
  Suppose that the direction $v \in \Z^2$  decomposes the origami $M$ into two cylinders $C_1$ and $C_2$ of combinatorial width $f_1$ and $f_2$ and of height $h_1$ and $h_2$ with core curves $\gamma_1$ and $\gamma_2$, respectively.
  \begin{enumerate}
  \item
    We write 
    \begin{equation} \label{MultiplicitiesForTwoCylinders}
      \frac{h_1}{h_2} \cdot \frac{f_2}{f_1} = \frac{k_1}{k_2} \mbox{ with } k_1, k_2 \in \Z  \mbox{ coprime }.
    \end{equation}
    Then we obtain an affine Dehn multitwist in direction $v$ with multiplicities $k_1$ and $k_2$ for the cylinders $C_1$ and $C_2$.  It is of strength
    $L = \frac{k_1w_1}{h_1} = \frac{k_2w_2}{h_2}$, where $w_1 = |v|\cdot f_1$ and $w_2 = |v|\cdot f_2$ are the widths of the cylinders.
  \item
    If $\beta \in  H_1^{(0)}(M,\mathbb{\QQ})$ then
    \begin{equation}\label{RelIntersectionNumbers}
      h_1\,\Omega(\beta,\gamma_1)+h_2\,\Omega(\beta,\gamma_2)=0
    \end{equation}
  \end{enumerate}
\end{remark}

\begin{proof}
  (1)
  By the description of the affine Dehn multitwist associated to a cylinder direction above we have to find a common multiple $L$ of the inverse moduli $\mu_1^{-1}$ and $\mu_2^{-1}$ of the two cylinders. From (\ref{MultiplicitiesForTwoCylinders}) it follows that $k_1\cdot \frac{f_1}{h_1} = k_2\cdot \frac{f_2}{h_2}$. By definition, we have that $\mu_i^{-1} = \frac{w_i}{h_i} = \frac{|v|\cdot f_i}{h_i}$. Hence we obtain the desired $L$ as
  \[
  L = k_1\cdot\mu_1^{-1} = k_1\cdot \frac{|v|\cdot f_1}{h_1} =  k_2\cdot \frac{|v|\cdot f_2}{h_2} =k_2\cdot\mu_2^{-1}
  \]
  and the multiplicities are $k_1$ and $k_2$.\\
  (2)  Let $\delta_1$ be a simple closed geodesic curve on  $\mathbb{T}^2$ in direction $v$.  Observe that $\Omega(\pi_*(\beta),\delta_1) = \tilde{h}_1\cdot \Omega(\beta,\gamma_1) + \tilde{h}_2\cdot \Omega(\beta,\gamma_2)$, where $\tilde{h}_1$ and $\tilde{h}_2$ are the combinatorial heights of $C_1$ and $C_2$. On the other hand by definition of $H_1^{(0)}(M,\mathbb{Q})$ we have $\Omega(\pi_*(\beta),\delta_1) = 0$. This shows the claim. 
\end{proof}

\begin{lemma}\label{lemma:transvection}
  Suppose that  $M$ is an origami of genus $g \geq 2$ such that the translation flow in direction $\theta$ decomposes $M$ into two cylinders and let $D$ be the associated affine Dehn multitwist as described above. Then the induced automorphism $D_*| H_1^{(0)}(M,\QQ)$ is a transvection.
\end{lemma}
\begin{proof}
Let  $C_1$ and $C_2$  be the two cylinders in direction $\theta$, let $h_1$ and $h_2$ be their heights, $w_1$ and $w_2$ their widths and $L =  \frac{k_1w_1}{h_1} = \frac{k_2w_2}{h_2}$ the strength of the associated Dehn multitwist. Then:
\begin{align} 
\begin{split}\label{eq:transvection-general-formula}
D_*| H_1^{(0)}(M,\mathbb{Q}) &\stackrel{\ref{eq:action-multitwist-homology}}{=}  \Id + k_1\,\Omega(\cdot,\gamma_1)\,\gamma_1+k_2\,\Omega(\cdot,\gamma_2)\,\gamma_2  \\
      &\stackrel{\ref{RelIntersectionNumbers}}{=}  \Id + \Omega(\cdot,\gamma_2)\,(k_2\,\gamma_2-\frac{k_1\,h_2}{h_1}\gamma_1)\\
      &=  \Id + \Omega(\cdot, \gamma_2)\,Y,
\end{split}
\end{align}
where
\begin{align*}
 Y    &=   k_2\,\gamma_2-\frac{k_1\,h_2}{h_1}\,\gamma_1 
      =  \frac{h_2}{w_2}\,L\,\gamma_2-\frac{h_2}{w_1}\,L\,\gamma_1.
\end{align*}

The core curves $\gamma_1$ and $\gamma_2$ are parallel and of lengths $w_1$ and $w_2$, respectively, thus $Y\in H_1^{(0)}(M,\mathbb{Q})$ and $D_*$ restricted to $H_1^{(0)}(M,\mathbb{Q})$ is a transvection.
\end{proof}


\begin{proposition}\label{prop:density}
  Suppose that $M$ is an origami of genus $g \geq 3$ which decomposes for  some direction $\theta \in \Z^2$ into two cylinders. If the Kontsevich--Zorich monodromy $\KoZoMon$ contains a Galois-pinching matrix $A$, then  $\KoZoMon$ is Zariski dense in  $\Sp_{\Omega}(2d,\R)$, where $d = g-1$.
\end{proposition}

\begin{proof} 
  It follows from Lemma~\ref{lemma:transvection} that $\KoZoMon$ contains a transvection $B$. In particular $B$ is a non-trivial unipotent matrix and $(B-\Id)(\mathbb{R}^{2d})$  has dimension one. Now, Zariski density follows from Lemma~\ref{lemma:criteria-zariski-density} and Remark~\ref{rmk:criteria-zariski-density} since $d \geq 2$.
\end{proof}

\subsection{Origamis in $\mathcal{H}(4)$}

We now restrict to origamis in the stratum $\mathcal{H}(4)$, i.e. to origamis with a single singularity of order 4, which are then of genus 3.
After Kontsevich and Zorich \cite{KZ}, $\mathcal{H}(4)$ has two connected components $\mathcal{H}^{hyp}(4)$ and $\mathcal{H}^{odd}(4)$. $(M,\omega)\in\mathcal{H}(4)$ belongs to $\mathcal{H}^{hyp}(4)$ if and only if it admits an \textit{anti-automorphism}, i.e. an affine homeomorphism with linear part $-\Id$, which has eight fixed points. $(M,\omega)\in\mathcal{H}^{odd}(4)$ can also have an anti-automorphism which then has four fixed points. In this case we say that $(M,\omega)$ belongs to the \textit{Prym locus} of $\mathcal{H}^{odd}(4)$.

\subsubsection{$\SL(2,\mathbb{Z})$-orbits in $\mathcal{H}(4)$ and their invariants}


As it was shown by Zmiaikou (see also \cite[Prop. 6.1 and 6.2]{MMY}), the monodromy group of a reduced origami $(M,\omega)\in\mathcal{H}(4)$ is $A_N$ or $S_N$ if the degree of the corresponding covering $\pi:M\to\mathbb{T}^2$ is $N\geq 7$. If a reduced origami $(M,\omega)\in\mathcal{H}(4)$ possesses an anti-automorphism $\iota$, then the fixed points of $\iota$ project under $\pi$ on the $2$-torsion (half-integer) points of $\mathbb{T}^2$, and we obtain the list $(l_0,[l_1,l_2,l_3])$, where 
\begin{align*}
l_0 = \#(\textrm{Fix}(\iota)\cap\pi^{-1}(0))-1,  &\qquad l_1=\#(\textrm{Fix}(\iota)\cap\pi^{-1}(1/2))\\
l_2=\#(\textrm{Fix}(\iota)\cap\pi^{-1}(i/2)),    &\qquad l_3=\#(\textrm{Fix}(\iota)\cap\pi^{-1}((1+i)/2)).
\end{align*} 
Here $[l_1,l_2,l_3]$ denotes the unordered triple. Since $\SL(2,\mathbb{Z})$ acts on $\mathbb{T}^2$ by fixing $0\in\mathbb{T}^2$ and permuting the other three $2$-torsion points, one has that $(l_0,[l_1,l_2,l_3])$ is also an invariant of the $\SL(2,\mathbb{Z})$-orbit of $(M,\omega)$ called its HLK invariant (after the work of Hubert--Leli\`evre and Kani on genus two origamis).

\subsubsection{Delecroix--Leli\`evre conjecture}{\label{ss:Delecroix-Lelievre-conjecture}}

After performing extensive numerical experiments using SageMath, Delecroix and Leli\`evre conjectured that the monodromy group and the HLK invariants suffice to classify $\SL(2,\mathbb{Z})$-orbits of reduced origamis $(M,\omega)$ in $\mathcal{H}(4)$ tiled by $N>8$ squares (i.e., the covering $\pi:M\to\mathbb{T}^2$ has degree $N>8$). More precisely, their conjecture says:

\begin{itemize}
\item 
Outside of the Prym locus of $\mathcal{H}^{odd}(4)$, there are two $\SL(2,\mathbb{Z})$-orbits distinguished by the values $A_N$ or $S_N$ of their monodromy groups;
\item 
in $\mathcal{H}^{hyp}(4)$, if $N$ is odd, then there are four $\SL(2,\mathbb{Z})$-orbits distinguished by the values of their HLK invariants;
\item 
in $\mathcal{H}^{hyp}(4)$, if $N$ is even, then there are three $\SL(2,\mathbb{Z})$-orbits distinguished by the values of their HLK invariants.
\end{itemize}

\begin{remark} Concerning the Prym locus of $\mathcal{H}^{odd}(4)$, the analogue of the Delecroix--Leli\`evre conjecture was established by Lanneau and Nguyen \cite{LN}: in this situation, the HLK invariant is a complete invariant of $\SL(2,\mathbb{Z})$-orbits (taking one or two values depending on the parity of $N$).
\end{remark}

Closing this section, let us observe that, conditionally to the Delecroix--Leli\`evre conjecture, the main results of \cite{MMY} together with the discussion in \S\ref{ss.Prasad-Rapinchuk-MMY} above allow to conclude the Zariski density in $\textrm{Sp}(H_1^{(0)}(M,\mathbb{Q}))$ of the Kontsevich--Zorich monodromy of all but finitely many reduced origamis in $(M,\omega)\in \mathcal{H}(4)$ outside the Prym locus \footnote{Note that if $(M,\omega)\in\mathcal{H}^{odd}(4)$ is an origami in the Prym locus, then $H_1^{(0)}(M,\mathbb{Q})$ decomposes into two subbundles which are respected by all affine homeomorphisms. In particular, the Kontsevich--Zorich monodromy of an origami in the Prym locus of $\mathcal{H}^{odd}(4)$ is included in a product $\SL(2,\mathbb{R})\times \SL(2,\mathbb{R})$.} (cf. Appendix~\ref{a.examplePrym}).


\subsection{Generic frame for the proof of arithmeticity}
    \label{ssec:general-algo}
    
    Based on the theory presented in the last sections, to prove arithmeticity of Kontsevich-Zorich monodromies of origamis we will proceed in the following way:
\begin{itemize}
  \item 
  We first show that  the Kontsevich-Zorich monodromy $\KoZoMon$ of the given origami 
  $\mathcal{O}$ is Zariski dense in  $\SP(H_1^{(0)}(\mathcal{O},\R))$.
  \item 
  We then find three 2-cylinder directions  $\theta_1$, $\theta_2$ and $\theta_3$ of $\mathcal{O}$. 
  As explained in Section~\ref{cyn-decompos-transvec} this gives us three  affine Dehn multitwists 
  $T_1$, $T_2$, $T_3$ whose multiplicities can be computed by Remark~\ref{rem:multiplicites}. 
  By Lemma ~\ref{lemma:transvection} the induced actions $\tilde{D}_1$, $\tilde{D}_2$, $\tilde{D}_3$ 
  of these multitwists on homology have restrictions $D_i = \tilde{D}_i| {H_1^{(0)}(M,\QQ)}$ 
  on the non tautological part of homology that are transvections. 
  Moreover, by (\ref{eq:transvection-general-formula}) in the same Lemma we have 
    \[
    (D_*-\Id)| {H_1^{(0)}(M,\mathbb{Z})}=\mathbb{Z}Y.
    \]
  \item 
  For each transvection $D_i$ we obtain from (\ref{eq:transvection-general-formula}) a generator 
  $X_i \in H_1^{(0)}(M,\Z)$ of the image of $D_i - \Id$.
  We check that $\Omega(X_i,X_j) \neq 0$ for some $i,j=1,2,3$.
  We then show that $X_1, X_2, X_3$ are linearly independent and consider the three-dimensional space 
  $W = <X_1, X_2, X_3>$.
  \item
   For some $i,j \in \{1,2,3\}, i\neq j$, we find an element $e \in W$ in the radical of the intersection form $\Omega$ such that $e$ is not contained 
   in the subspace $<X_i, X_j>$. Then $\{X_i, X_j, e\}$ is a basis of $W$. 
   We then compute the transition matrices $A_1$, $A_2$, $A_3$ of the linear maps $D_1|_W$, $D_2|_W$ and $D_3|_W$ 
   with respect to this basis. Finally, we find a matrix $A$ in the group generated by $A_1, A_2$ and $A_3$ 
   such that $A$ has the form given in (\ref{NiceFormUnipotent}). 
   Now, it follows from Theorem~\ref{thm:sing-venkataramana} that the Kontsevich-Zorich monodromy 
   $\KoZoMon$ is arithmetic.
\end{itemize}


\section{Arithmeticity of Kontsevich--Zorich monodromies in $\mathcal{H}^{odd}(4)$}\label{s.H4odd}

This section is divided in two parts. In the first part we describe seven infinite families of origamis in $\mathcal{H}^{odd}(4)$. In the second part we use the approach described in Section~\ref{ssec:general-algo} to show that all but finitely many elements in each family have arithmetic Kontsevich--Zorich monodromy. 

\subsection{The seven families}\label{TheFamilies}
The families we describe in this subsection are grouped in two groups. The first group stems from~\cite{MMY}, Chapter 7, and it contains six of the seven families. We recall parts of that chapter in the following. In Figure \ref{familyMMY} we depict an origami which depends on six positive real parameters $H_1$, $H_2$, $H_3$, $V_1$, $V_2$, $V_3$. Table~\ref{tabla1} describes nine disjoint families of
origamis according to these parameters. In this table we assume $n\geq 1$. Furthermore, $N$ is the number of squares and  ``$\rm Mon$'' the monodromy group (see Proposition 6.2 in~\cite{MMY} for details) of the corresponding origami. In the column with heading ``Origami'' we have listed the name that we gave to  the corresponding origami. Observe that by considering the number of squares and the monodromy one directly sees that these nine families are disjoint. Even more their $\SL(2,\Z)$-orbits are disjoint. In particular, the naming of the origamis is consistent.
\begin{table}[!h]
  \centering
\begin{tabular}{||c c c c c c c c c||}
 \hline
Origami & N & $H_1$ & $H_2$ & $H_3$ & $V_1$ & $V_2$ & $V_3$ & Mon\\ [0.5ex]
 \hline\hline
 $O_{3n+8,S_N}$ & 3n+8 & 1 & 2 & 1 & 1 & 2 & 3n & $S_N$ \\
 \hline
 $O_{3n+10,S_N}$ &3n+10 & 1 & 3 & 1 & 1 & 2 & 3n & $S_N$ \\
 \hline
 $O_{3n+12,S_N}$&3n+12 & 1 & 4 & 1 & 1 & 2 & 3n & $S_N$ \\
 \hline
 $O_{6n+13,A_N}$&6n+13 & 2 & 4 & 1 & 1 & 2 & 6n & $A_N$ \\
 \hline
 $O_{6n+14,A_N}$&6n+14 & 2 & 3 & 1 & 1 & 2 & 6n+3 & $A_N$ \\
 \hline
$O_{6n+17,A_N}$ &6n+17 & 2 & 6 & 1 & 1 & 2 & 6n & $A_N$ \\
 \hline
 $O_{6n+18,A_N}$&6n+18 & 2 & 5 & 1 & 1 & 2 & 6n+3 & $A_N$ \\
 \hline
 $O_{6n+21,A_N}$&6n+21 & 2 & 8 & 1 & 1 & 2 & 6n & $A_N$\\
 \hline
 $O_{6n+22,A_N}$&6n+22 & 2 & 7 & 1 & 1 & 2 & 6n+3 & $A_N$\\
 \hline
\end{tabular}
\caption{Parameters for families of origamis \label{tabla1}}
\end{table}

\begin{figure}[!h]
\centering
\begin{tikzpicture}
\draw[color=red] (0.5,0) -- (0.5,2);
\draw[color=red] (2,0) -- (2,1);
\draw[color=red] (5.5,1) -- (5.5,-3);
\draw[color=blue] (0,1.5) -- (1,1.5);
\draw[color=blue] (0,0.3) -- (6,0.3);
\draw[color=blue] (4,-.5) -- (6,-.5);

\draw (0,0) rectangle (1,1) node[pos=0.5,left=12pt,black]{$H_3$};
\draw (0,1) rectangle (1,2) node[pos=0.5,left=12pt,black]{$H_1$} node[pos=0.5,above=12pt,black]{$V_1$};
\draw (1,0) rectangle (4,1)  node[pos=0.5,above=12pt,black]{$V_3$};
\draw (4,0) rectangle (5,1);
\draw (5,0) rectangle (6,1);
\draw (4,0) rectangle (6,-3) node[pos=0.5,right=28pt,black]{$H_2$} node[pos=0.5,below=42pt,black]{$V_2$};

\draw [dashed] (1.5,0.5) -- (3.5,0.5);

\node at (0.5,-0.2) {$\zeta_3$};
\node at (2,-0.2) {$\zeta_1$};
\node at (5.5,-3.2) {$\zeta_2$};
\node at (1.3,1.5) {$\sigma_1$};
\node at (6.3,0.3) {$\sigma_2$};
\node at (6.3,-.5) {$\sigma_3$};

\end{tikzpicture}
\caption{Family prototypes for primitive origamis in $\mathcal{H}^{odd}(4)$}
\label{familyMMY}
\end{figure}

In Table~\ref{tabla2} we introduce six subfamilies $\mathcal{F}_1$, \ldots, $\mathcal{F}_6$ of families from Table~\ref{tabla1} which, as we will show later, have all arithmetic Kontsevich--Zorich monodromy except possibly for finitely many $k\geq 1$.
\begin{table}[!h]
  \centering
\begin{tabular}{ |c|c| }
 \hline
$\mathcal{F}_1: \mathcal{O}_{3n+8,S_N}$ with $n=20k-2$   &  $\mathcal{F}_2: \mathcal{O}_{3n+10,S_N}$ with $n=4k-3$ \\[0.5ex]
 \hline
 $\mathcal{F}_3: \mathcal{O}_{3n+12,S_N}$ with $n=144k-3$ & $\mathcal{F}_4: \mathcal{O}_{6n+14,A_N}$ with $n=4k-2$ \\[0.5ex]
 \hline
 $\mathcal{F}_5: \mathcal{O}_{6n+18,A_N}$ with $n=3k-1$  & $\mathcal{F}_6: \mathcal{O}_{6n+22}$ with $n=4k-1$ \\
\hline
\end{tabular}
\caption{\label{tabla2}}
\end{table}

%

We finally introduce in the following the last family $\mathcal{F}_7$. Let $\mathcal{O}'_{K,N}$ be the origami associated to the pair of permutations:
$$h=(1)(2)\dots(K-2)(K-1,K)(K+1,K+2,\dots, K+N)$$ 
and 
$$v=(K+1,K-1,K-2,\dots, 1)(K+2,K)(K+3)(K+4)\dots(K+N).$$
Note that $\mathcal{O}'_{K,N}\in\mathcal{H}^{odd}(4)$ has monodromy $A_{K+N}$ or $S_{K+N}$ depending if $K$ and $N$ are both even or not. Now, the family $\mathcal{F}_7$ consists by definition of the origamis:
\begin{table}[!h]
  \centering
\begin{tabular}{ |c| }
\hline
$\mathcal{F}_7 : \mathcal{O}'_{K,N}$ with $K=3n$, $N=5n$ and $n\geq 1$\\
\hline
\end{tabular}
\caption{\label{tabla3}}
\end{table}

\begin{figure}[h]
\centering
\begin{tikzpicture}
\draw (0,0) rectangle (1,1);
\draw (1,1) rectangle (2,2);
\draw (0,1) rectangle (1,2);
\draw (0,2) rectangle (1,4);
\draw (0,4) rectangle (1,5);
\draw (1,0) rectangle (2,1);
\draw (2,0) rectangle (4,1);
\draw (4,0) rectangle (5,1);

\draw[color=red] (0.25,0) -- (0.25,5);
\draw[color=red] (1.5,0) -- (1.5,2);
\draw[color=red] (4.5,0) -- (4.5,1);

\draw[color=blue] (0,4.5) -- (1,4.5);
\draw[color=blue] (0,1.5) -- (2,1.5);
\draw[color=blue] (0,0.25) -- (5,.25);

\node at (0.25,5.3) {$\zeta_3$};
\node at (1.5,2.3) {$\zeta_2$};
\node at (4.5,1.3) {$\zeta_1$};
\node at (1.3,4.5) {$\sigma_1$};
\node at (2.3,1.5) {$\sigma_2$};
\node at (5.3,.25) {$\sigma_3$};

\draw [decorate,line width=0.5mm,
    decoration = {brace}] (-0.5,0) --  (-0.5,5) node[pos=0.5,left=10pt,black]{$K$};

\draw [decorate,line width=0.5mm,
    decoration = {brace,mirror}] (0,-0.5) --  (5,-0.5) node[pos=0.5,below=10pt,black]{$N$};

\draw [dashed] (2.5,0.5) -- (3.5,0.5);
\draw [dashed] (0.5,2.5) -- (0.5,3.5);


\end{tikzpicture}
\caption{The origami $\mathcal{O}'_{K,N}$}
\label{KNOrigami}
\end{figure}

We now state the result of this section which will be proved in the following subsections.

\begin{theorem}\label{t.H4odd-families1to6} For each family $\mathcal{F}_1$, \ldots, $\mathcal{F}_7$ the Kontsevich--Zorich monodromy of all but finitely many origamis in this family is arithmetic.
\end{theorem}

\begin{remark}\label{OurBasis}
  Observe that for the origamis $\mathcal{O}$ in the seven families  the horizontal and the vertical directions are both 3-cylinder directions. Let $\sigma_1, \sigma_2, \sigma_3$ and $\zeta_1,\zeta_2, \zeta_3$ be the waist curves of the horizontal and the vertical cylinder decompositions, respectively. By \cite{LN} their union $\tilde{B} = (\sigma_i)\cup(\zeta_i)$  is  a basis of $H_1(\mathcal{O},\mathbb{\Z})$. We will use this basis frequently in the proof of Theorem~\ref{t.H4odd-families1to6}
\end{remark}


%


\subsection{Zariski-density for the families $\mathcal{F}_1$, \ldots, $\mathcal{F}_7$}

In this section we study the Zariski-density for the origamis in our seven families.  

\begin{proposition}\label{ZDensitiyForTheSeven}
  The Kontsevich-Zorich monodromies of all but finitely many origamis in the seven families $\mathcal{F}_1$, \ldots, $\mathcal{F}_7$ are Zariski dense.
\end{proposition}

\begin{proof}
  For the origamis in the families $\F_1$, \ldots, $\F_6$ the statement follows from \cite{MMY} as follows. Let $\mathcal{O}$ be an origami in one of the families described by Table~\ref{tabla1} tiled by $N$ squares. Proposition 7.5 in~\cite{MMY} implies that for $N$ sufficiently large the Kontsevich--Zorich monodromy of $\mathcal{O}$ has a Galois-pinching element. Proposition 7.3 in~[\emph{Ibid.}] says that $\mathcal{O}$ decomposes into the direction $(3,1)$ in two cylinders. Now the claim follows from Proposition~\ref{prop:density}.
  
For the origamis in the family $\F_7$ we firstly detect that $(1,1)$ is a two-cylinder direction. We find in this case a  Galois-pinching element in the Kontsevich--Zorich monodromy of $\mathcal{O}' = \mathcal{O}'_{K,N}$ for $K = 3n$, $N = 5n$ and $n$ large by explicit computations as follows. Let us consider the Dehn multitwists $\Thor$ and $\Tver$ in horizontal and in vertical direction. We read off from Figure~\ref{KNOrigami} that the strength of $\Thor$ is $L = 2N$ with multiplicities $k_1 = 2N(K-2)$, $k_2 = N$ and $k_3 = 2$. For $\Tver$ we obtain the strength $L = 2K$ and multiplicities $k_1 = 2K(N-2)$, $k_2 = K$ and  $k_3 = 2$. We use the basis  $\tilde{B} = (\sigma_i)\cup(\zeta_i)$ of $H_1(\mathcal{O},\mathbb{\Z})$ (cf. Remark~\ref{OurBasis}) and the basis 
  \[
  \{e_1 = \sigma_2 - 2\sigma_1,~ e_2 = \sigma_3 - N\sigma_1, ~ e_3 = \zeta_2 - 2\zeta_1, ~ e_4 = \zeta_3 - K\zeta_1\}
  \]
of $H_1^{(0)}(\mathcal{O}'_{K,N},\mathbb{Q})$ computed in (\ref{BasisEforF7}). Using the intersection numbers between horizontal and vertical core curves which we read off from Figure~\ref{KNOrigami} we obtain for example:
  \[
  \Omega(e_3,\sigma_3) = \Omega(\zeta_2,\sigma_3) - 2\Omega(\zeta_1,\sigma_3) = -1 + 2 = 1.
  \]
Similarly, we obtain
  \[
  \Omega(e_3,\sigma_1) = 0 \quad \mbox{and} \quad \Omega(e_3, \sigma_2) = -1
  \]
From  (\ref{eq:action-multitwist-homology}) we then compute for the action of $\Thor$ for example:
  \begin{equation}\label{ThorOne3}
    \Thor: e_3 \mapsto e_3 + 2N(K- 2) \cdot 0 \cdot \sigma_1 + N\cdot (-1) \cdot \sigma_2 + 2\cdot 1 \cdot \sigma_3 = e_3 - Ne_1 + 2e_2
  \end{equation}
    We determine the remaining intersection numbers:
   \begin{IEEEeqnarray*}{rClcrClcrCl}
    \Omega(e_4,\sigma_1) &=& -1, &\qquad& \Omega(e_4,\sigma_2)  &=& - 1,  &\qquad& \Omega(e_4,\sigma_3) &=& -1+K,\\
    \Omega(e_1,\zeta_1)  &=& 0,  &&       \Omega(e_1,\zeta_2)   &=&  1,    &&      \Omega(e_1,\zeta_3)  &=& -1,\\
    \Omega(e_2,\zeta_1)  &=& 1,  &&       \Omega(e_2,\zeta_2)   &=& 1,    &&       \Omega(e_2,\zeta_3)  &=& 1-N
   \end{IEEEeqnarray*}
With similar computations as in (\ref{ThorOne3}) we obtain for the transformation matrices $A$ and $B$ of the action of $\Thor$ and $\Tver$ on $H_1^{(0)}(\mathcal{O}'_{K,N},\mathbb{Q})$ with respect to the basis $\{e_1,e_2,e_3,e_4\}$:
    \[
    A = \begin{pmatrix} 1&0&-N& -N\\ 0&1&2&2(K-1)\\0&0&1&0\\0&0&0&1\end{pmatrix} , \quad 
    B = \begin{pmatrix} 1&0&0&0\\0&1&0&0\\K&K&1&0\\-2&2(1-N)&0&1\end{pmatrix} 
    \]
Using Lemma \ref{lemma:galois-pinching-criterion} we show that $C = A^{-1}B$ is Galois-pinching if $n$ is large enough.
Using the Computer Algebra System Mathematica we compute the characteristic polynomial of $C$:
  \begin{align*}
   & x^4+(6N+K(6-5N)-8)x^3+2(7-6N+2K^2(N-2)N\\
  +& K(-4N^2+13N-6))x^2+(6N+K(6-5N)-8)x+1.
  \end{align*}
By taking $K=3n$ and $N=5n$, this polynomial becomes $x^4+ax^3+bx^2+ax+1$ where
$$a= -75n^2+48n-8 \quad \textrm{and} \quad b=2(450n^4-480n^3+195n^2-48n+7).$$
Note that the quantities $t=-a-4$ and $d=b+2a+2$ are positive for all $n$ sufficiently large. Furthermore, using again Mathematica we obtain the following factorization of the discriminants $\Delta_1$ and $\Delta_2$ and of their product $\Delta_1\Delta_2$:
\begin{align*}
          \Delta_1&=  a^2-4b+8=2025n^4-3360n^3+1944n^2-384n+16,\\
          \Delta_2&=  (b+2+2a)(b+2-2a)\\
                  &=  240n^2(5n-2)(3n-2)(225n^4-240n^3+135n^2-48n+8),\\
  \Delta_1\Delta_2&=  240n^2(5n-2)(3n-2)(225n^4-240n^3+135n^2-48n+8)\\
                  &\quad (2025n^4-3360n^3+1944n^2-384n+16)
\end{align*}

These are polynomial functions of $n$ taking positive values for all $n$ sufficiently large such that their square-free parts have degrees greater or equal four. 
As it is explained in \cite[\S6.7]{MMY}, these facts imply that $C = A^{-1}B$ is a Galois-pinching matrix for all $n$ sufficiently large (thanks to Siegel's theorem).
\end{proof}



\begin{remark}
    Observe that the proof of density for the families $\F_1,\ldots, \F_6$ in the first part of Proposition~\ref{ZDensitiyForTheSeven} works  more generally for all but finitely many origamis in Table~\ref{tabla1}. Assuming the  Delecroix-Leli\`evre conjecture (see Section~\ref{ss:Delecroix-Lelievre-conjecture}) this implies Zariski-density for the majority of the origamis in the whole stratum (that is, all but finitely many origamis outside the Prym locus).
\end{remark}

\subsection{Proof of arithmeticity for our seven families in $\mathcal{H}^{odd}(4)$}\label{TheSteps}

It turns out that we can proceed in a unified way to show arithmeticity for the Kontsevich-Zorich monodromy $\KoZoMon$ of an origami $\mathcal{O}$ in our families in $\mathcal{H}^{odd}(4)$ after we have shown Zariski density. We proceed in the following steps:\\




\textbf{(A) Basis of  \boldmath{$H_1^{(0)}(\mathcal{O},\mathbb{Z})$}:}\\
  Using the basis $\tilde{B}$ of $H_1(\mathcal{O},\mathbb{Z})$ from Remark~\ref{OurBasis}, we produce a basis $\{e_1, e_2, e_3, e_4\}$ of the non-tautological part $H_1^{(0)}(\mathcal{O},\mathbb{Q})$ in the following way. First remark that $H_1^{(0)}(\mathcal{O},\mathbb{Q})$ is the kernel of the holonomy map $\hol:  H_1(\mathcal{O},\mathbb{Q}) \to \R^2$ given by $\hol(\gamma)=\int_\gamma\omega$, where $\mathcal{O}=(M,\omega)$. From Figures~\ref{familyMMY} and~\ref{KNOrigami} we observe that we have $\hol(\sigma_i) = (l(\sigma_i),0)$ and $\hol(\zeta_i) = (0,l(\zeta_i))$. Here $l(\sigma_i)$ and $l(\zeta_i)$ are the respective lengths. Furthermore, we have as specific feature of the origamis in our seven families that there is a horizontal and a vertical cylinder core curve with holonomy vector ${1 \choose 0}$ and ${0 \choose 1}$, respectively.\\ 
  For the family $\F_7$ this is the case for $\sigma_1$ and $\zeta_1$. A short linear algebra computation shows that then - for the family $\F_7$ - the following is  a basis of the kernel of $\hol$ and thus of $H_1^{(0)}(\mathcal{O},\mathbb{Q})$:
  \begin{equation}
      \begin{split}
          \begin{IEEEeqnarraybox}[][c]{lCrClCr} \label{BasisEforF7}
            e_1 &=&  \sigma_2 - l(\sigma_2)\cdot \sigma_1,  &\qquad&   e_2  &=& \sigma_3 - l(\sigma_3)\cdot \sigma_1,\\
            e_3 &=&  \zeta_2 - l(\zeta_2)\cdot \zeta_1,     &      &   e_4  &=& \zeta_3 - l(\zeta_3)\cdot \zeta_1     
          \end{IEEEeqnarraybox}
      \end{split}
  \end{equation}
  As a matter of fact (\ref{BasisEforF7}) is a $\Z$-basis of  $H_1^{(0)}(\mathcal{O},\mathbb{Z})$. For the families $\F_1$ to $\F_6$ the core curves $\sigma_1$ and $\zeta_2$ have length 1 and we therefore obtain  for family $\F_1$ to $\F_6$ the following
  basis of $H_1^{(0)}(\mathcal{O},\mathbb{Z})$:
  \begin{equation}
      \begin{split}
        \begin{IEEEeqnarraybox}[][c]{rClcrCl}\label{BasisEforF1to6} 
          e_1 &=& \sigma_2 - l(\sigma_2)\cdot \sigma_1,&\qquad&  e_2 &=& \sigma_3 - l(\sigma_3)\cdot \sigma_1,\\
          e_3 &=& \zeta_1 - l(\zeta_1)\cdot \zeta_2,   &      &  e_4 &=& \zeta_3 - l(\zeta_3)\cdot \zeta_2          
        \end{IEEEeqnarraybox}
      \end{split}
  \end{equation}
  We finally obtain the transformation matrix $BC_1$ i.e., the matrix such that  for  any vector $x \in  H_1^{(0)}(\mathcal{O},\mathbb{Q})$ we obtain the coordinate vector $x_{\tilde{B}}$ with respect to the basis $\tilde{B} = \{\sigma_1, \ldots, \zeta_3\}$ from the coordinate vector $x_B$ of $x$ with respect to the basis $B = \{e_1,e_2,e_3,e_4\}$ as
  \begin{equation}\label{BaseChange}
    x_{\tilde{B}} = BC_1\cdot x_B.
  \end{equation}    

\textbf{(B) The three 2-cylinder directions:}\\ We find three different 2-cylinder directions $\theta_1$, $\theta_2$ and $\theta_3$. Let $\{\alpha_1,\alpha_2\}$, $\{\beta_1,\beta_2\}$ and $\{\gamma_1,\gamma_2\}$ be the waist curves of the cylinder decompositions of $\mathcal{O}$ corresponding to $\theta_1$, $\theta_2$ and $\theta_3$ respectively. \\

\textbf{(C) Computation of the Dehn multitwists and of \boldmath{$X_1$}, \boldmath{$X_2$} and \boldmath{$X_3$}:}\\
  For each 2-cylinder direction $\theta_n$ we denote by $T_n$ the corresponding affine multitwist (see Section~\ref{cyn-decompos-transvec}) and by $\widetilde{D}_n$ and $D_n$ the actions of $T_n$ on $H_1(\mathcal{O},\mathbb{Q})$ and on $H_1^{(0)}(\mathcal{O},\mathbb{Q})$ respectively.\\ For each $T_n$ $(n \in \{1,2,3\})$ we compute the multiplicities $k_i$ for each cylinder using Remark~\ref{rem:multiplicites}. For this we need the combinatorial lengths $f_i$ and the heights $h_i$ with $i \in \{1,2\}$ for both cylinder in direction $\theta$. However, it suffices to know the quotients $\frac{f_1}{f_2}$ and $\frac{h_1}{h_2}$. Therefore, rather than giving the heights $h_1$ and $h_2$, we list natural  numbers $c_1$, $c_2$ such that $\frac{h_1}{h_2} = \frac{c_1}{c_2}$ and obtain the multiplicities from the equation
  \begin{equation}
    \label{OurMultiplicities}
    \frac{k_1}{k_2} = \frac{c_1}{c_2}\cdot \frac{f_2}{f_1}.
  \end{equation}
  We then express $\widetilde{D}_n$  using Equation (\ref{eq:action-multitwist-homology}).
  We finally compute generators $X_1$, $X_2$ and $X_3$ in $H_1^{(0)}(M,\mathbb{Q})$ as generators of the one-dimensional image of $D_1 - \Id$, $D_2 - \Id$ and $D_3 - \Id$ in $H_1^{(0)}(M,\mathbb{Q})$. It follows from the proof of Lemma~\ref{lemma:transvection} that $X_i$ can be chosen as a rational multiple of
  \begin{equation}\label{ConstructX}
    Y = k_2\,w_2 - \frac{c_2}{c_1}\,k_1\,w_1
  \end{equation}
  where $w_1$ and $w_2$ are the core curves of the cylinders in the respective direction. We then have again by the proof of Lemma~\ref{lemma:transvection} for $x \in H_1^{(0)}(\mathcal{O},\mathbb{Q})$:
  \begin{equation}\label{DnForThe7Families}
    D_n(x) = x + \Omega(x,w_2)\cdot Y
  \end{equation}
 From (\ref{DnForThe7Families}) we have that $(D_n-\Id)(H_1^{(0)}(\mathcal{O},\mathbb{Z}))=\mathbb{Z}Y$, which is one of the hypothesis of Theorem~\ref{thm:sing-venkataramana}.\\

\textbf{(D) Proof of linear independence of \boldmath{$X_1$}, \boldmath{$X_2$} and \boldmath{$X_3$} and explicit computation of the affine Dehn multitwists \boldmath{$D_1$}, \boldmath{$D_2$} and \boldmath{$D_3$}:}\\
  For $n=1,2,3$, we compute an explicit expression for $D_n$, the action of $T_n$ on $H_1^{(0)}(\mathcal{O},\mathbb{Q})$ as described in the following. We first summarize the intersection numbers of the cylinder core curves in the three chosen 2-cylinder directions $\theta_1$, $\theta_2$ and $\theta_3$ and the core curves of the horizontal and the vertical directions. We furthermore write down the fundamental matrix $\tilde{G}$ of the intersection form $\Omega$ with respect to the basis $\tilde{B} = \{\sigma_1, \sigma_2, \sigma_3, \zeta_1, \zeta_2, \zeta_3\}$. We then obtain the coordinate vectors $x^{(1)}$, \ldots, $x^{(6)}$ of the core curves $c_1 = \alpha_1$, \ldots, $c_6 = \gamma_2$ for the directions $\theta_1$, $\theta_2$ and $\theta_3$ from the equation:
  \begin{equation}\label{coordinates}
      (x^{(i)})^t \cdot \tilde{G} = (\Omega(c_i,\sigma_1), \ldots, \Omega(c_i,\zeta_3))
  \end{equation}
  Hence we can express the core curves $\alpha_1$, \ldots, $\gamma_2$ in terms of the basis $\tilde{B} = \{\sigma_1, \ldots, \zeta_3\}$. Using this we express the elements $X_1$, $X_2$, $X_3$ that we have computed in Step (C) in terms of the same basis. We then use the base change matrix $BC_1$ computed in Step (A) in order to express $X_1$, $X_2$, $X_3$ wit respect to the basis $B = \{e_1,e_2,e_3,e_4\}$ chosen in Step~(A). From this we see that $X_1$, $X_2$ and $X_3$ are linearly independent.  Furthermore, we compute the intersection numbers of the elements $e_i$ in the basis $B$ and the core curves $\alpha_2$, $\beta_2$ and $\gamma_2$. We then obtain an explicit expression for $D_i$ from (\ref{DnForThe7Families}) using that the $Y$ in the formula equals $X_i$. We finally compute $\Omega(X_i,X_j)$ to determine whether two elements of $\{X_1,X_2,X_3\}$ satisfy $\Omega(X_i,X_j)\neq 0$, which is one of the hypothesis of Theorem~\ref{thm:sing-venkataramana}.\\

\textbf{(E) Action of the affine Dehn multitwists on the subspace \boldmath{W}}:\\
  Let $W$ be the $\mathbb{Q}$-vector subspace generated by $\{X_1,X_2,X_3\}$. We choose an annihilator element $e\in W$ as suitable multiple of:
  \begin{equation}
    \label{annihilatoreq}
    \Omega(X_2,X_3)\cdot X_1 + \Omega(X_3,X_1)\cdot X_2+\Omega(X_1,X_2)\cdot X_3.
  \end{equation}
  Then, by (\ref{NiceFormUnipotent}) the elements in the unipotent radical of $\SP(W)$ in the ordered basis $\{X_i,X_j,e\}$  are of the form:
  \begin{equation}
    \label{rad-uni-matrix0}
    \left(\begin{array}{ccc}1&0&0\\0&1&0\\x&y&1\end{array}\right),
  \end{equation} for $x,y \in \mathbb{Q}$.
  We now compute the restrictions of $D_n$ to $W$ in the basis $\{X_i,X_j,e\}$. \\
  
\textbf{(F) Detection of an element in the unipotent radical:}\\ We finally find an appropriate word in the letters $D_1$, $D_2$, $D_3$ and their inverses which produces a matrix of the form (\ref{rad-uni-matrix0}). 

\begin{remark}
  Out of all the steps above, Step (F) is likely the most challenging one in the proof of arithmeticity once Zariski density has been established. In this step the approach is entirely heuristic in all cases.
\end{remark}

\subsection{Arithmeticity for the  family \boldmath{$\mathcal{F}_1$}}\label{ss.H4odd-family1}

We follow steps (A) to (F) from the procedure described in Section~\ref{TheSteps} in order to show that for all but finitely many origamis in $\F_1$ the Kontsevich-Zorich monodromy is arithmetic. We denote in the following by $\mathcal{O}_n$ the origami $\mathcal{O}_{3n+8,S_N}$ in the family $\mathcal{F}_{1}$ with $3n+8$ squares and monodromy group $S_N$. \\

\textbf{Step (A).}
  As in Section~\ref{TheSteps} $\sigma_1,\sigma_2,\sigma_3,\zeta_1,\zeta_2,\zeta_3$ are the core curves of the horizontal and vertical cylinders and we fix the basis
  \[
  \tilde{B}  = 
  \{\tilde{b}_1 = \sigma_1,~ \tilde{b}_2 = \sigma_2, ~\tilde{b}_3 = \sigma_3,~ \tilde{b}_4 = \zeta_1,~\tilde{b}_5 = \zeta_2,~ \tilde{b}_6 = \zeta_3\}
  \]
  of the homology $H_1(\mathcal{O}_n;\mathbb{Q})$. We read off from  Figure~\ref{familyMMY}, Figure~\ref{3n+8fam} and Table~\ref{tabla1}:
  \begin{IEEEeqnarray*}{rClCrClCrCl}
    l(\sigma_1) &=& V_1 = 1,       &\qquad&  l(\sigma_2)    &=& V_1 + V_3 + V_2 = 3n + 3,   &\qquad&    l(\sigma_3) &=&  V_2 = 2,\\
    l(\zeta_1)  &=& H_1 + H_3 = 2, &&       l(\zeta_2)      &=& H_3 = 1,                    &&          l(\zeta_3)  &=& H_3 + H_2 = 3
  \end{IEEEeqnarray*}
  \begin{figure}[!htbp]
    \centering
    \begin{tikzpicture}
      \draw (0,0) rectangle (1,1) node[pos=0.5,left=12pt,black]{\textcolor{purple}{$\sigma_2$}} node[pos=0.5,below=12pt,black]{\textcolor{purple}{$\zeta_1$}};
      \draw (0,1) rectangle (1,2)  node[pos=0.5,left=12pt,black]{\textcolor{purple}{$\sigma_1$}};
      \draw (1,0) rectangle (2,1) node[pos=0.5,below=12pt,black]{\textcolor{purple}{$\zeta_2$}} ;
      \draw (2,0) rectangle (3,1);
      \draw (3,0) rectangle (4,1);
      \draw (4,0) rectangle (5,1);
      \draw (5,0) rectangle (6,1);
      \draw (4,0) rectangle (5,-1) node[pos=0.5,left=12pt,black]{\textcolor{purple}{$\sigma_3$}} ;
      \draw (5,0) rectangle (6,-1);
      \draw (4,-1) rectangle (5,-2)node[pos=0.5,below=12pt,black]{\textcolor{purple}{$\zeta_3$}};
      \draw (5,-1) rectangle (6,-2);
      
      \draw [dashed] (2.2,0.7) -- (2.9,0.7);
      \draw [line,blue] (0,1.5) -- (1,1.5) ;
      \draw [line,red] (0,0.5) -- (6,0.5) ;
      \draw [line,yellow] (4,-0.5) -- (6,-0.5) ;
      \draw [line,green] (0.5,0) -- (0.5,2) ;
      \draw [line,orange] (1.5,0) -- (1.5,1) ;
      \draw [line,purple] (4.5,-2) -- (4.5,1) ;
      
    \end{tikzpicture}
    \caption{Prototype for the family $\F_1$.
    }
    \label{3n+8fam}
  \end{figure}
  By Equation (\ref{BasisEforF1to6}) we have:
  
\begin{equation}
    \begin{split}
        \begin{IEEEeqnarraybox}[][c]{rClCrCl} \label{BasisEForFamily1}
            e_1 &=& \sigma_2-(3n+3)\sigma_1,  & \qquad &   e_2 &=& \sigma_3-2\sigma_1\\
            e_3 &=& \zeta_1-2\zeta_2,         &&           e_4 &=& \zeta_3-3\zeta_2       
        \end{IEEEeqnarraybox}
    \end{split}
\end{equation}
  
  This gives us the transformation matrix (cf. \ref{BaseChange}):
  \[
  BC_1 = 
  \begin{pmatrix}-(3n+3)& -2 & 0 &0\\1 &0&0&0\\ 0&1&0&0\\0&0&1&0\\ 0&0&-2&-3\\ 0&0&0&1\end{pmatrix}
  \]
\textbf{Step (B).}
Let $\theta_1$, $\theta_2$ and $\theta_3$ be the directions defined by the vectors $(1,2)$, $(-1,2)$ and $(1,3)$ respectively. As depicted in Figures~\ref{fig:cylinder3n+8(1,2)-bis},~\ref{fig:cylinder3n+8(-1,2)-bis} and \ref{fig:cylinder3n+8(1,3)-bis}, each of them is a 2-cylinder direction. We call the  core curves of the white cylinders  $\alpha_1$, $\beta_1$ and $\gamma_1$ and the  core curves of the ruled cylinders $\alpha_2$, $\beta_2$ and $\gamma_2$, respectively. \\

\textbf{Step (C).}
    For each cylinder direction $\theta_i$ (with $i \in \{1,2,3\}$),  we read off   from  Figures~\ref{fig:cylinder3n+8(1,2)-bis},~\ref{fig:cylinder3n+8(-1,2)-bis} and \ref{fig:cylinder3n+8(1,3)-bis} the combinatoral lengths $f_1$ and $f_2$ of the two cylinders and the ratio $\frac{c_1}{c_2}$ of their heights (cf. Equation~\ref{OurMultiplicities} and explanation above). Recall that the degree of the origami is $N = 3n+8$. Observe from Figures~\ref{fig:cylinder3n+8(1,2)-bis} and Figure~\ref{fig:cylinder3n+8(-1,2)-bis} that for $\theta_1$ and  $\theta_2$ the ruled cylinder has combinatorial length 1 and the white one has combinatorial length $N-1 = 3n + 7$. In Figure~\ref{fig:cylinder3n+8(1,3)-bis} however  the ruled cylinder has combinatorial length 2 and the other cylinder has combinatorial length  $3n + 4$. In Figures~\ref{fig:cylinder3n+8(1,2)-bis} and Figure~\ref{fig:cylinder3n+8(-1,2)-bis} both cylinders have same height, whereas in Figure~\ref{fig:cylinder3n+8(1,3)-bis} the height of the ruled cylinder is double the heigth of the non-ruled cylinder. We then obtain the multiplicites $k_1$ and $k_2$ of the corresponding affine Dehn multitwist from Equation~\ref{OurMultiplicities}.  The corrsponding data are listed in Table~\ref{table:multitwistdata3n+8}.
    
\begin{table}[!htbp]
      \centering
      \begin{tabular}{|c|c |c| c|} 
        \hline
        &$\theta_1$ & $\theta_2$ & $\theta_3$ \\ [0.5ex] 
        \hline
        $c_1$ & 1 & 1 & 1  \\ 
        $c_2$ & 1 & 1 & 2 \\
        $f_1$ & 3n+7 &  3n+7 & 3n+4\\
        $f_2$ & 1 &  1&  2 \\
        $k_1$ &  1 & 1  & 1\\
        $k_2$ &  3n+7 & 3n+7 & 3n+4\\
        \hline
      \end{tabular}
      \caption{Data for multitwist in directions $\theta_1$, $\theta_2$, $\theta_3$ for the origami $\mathcal{O}_{3n+8,S_N}$ }
      \label{table:multitwistdata3n+8}
\end{table}
\begin{figure}[!htbp]
	\centering
	\noindent\begin{subfigure}[b]{0.3\textwidth}
		\centering
		\begin{tikzpicture}[scale=0.4]
	    \draw (0,0) rectangle (4,1);
	     \draw (6,0) rectangle (8,1);
	    \draw (6,0) rectangle (8,-1);
	    \draw (6,-1) rectangle (8,-2);
	    \draw (0,1) rectangle (1,2);
	    \draw (1,0) -- (1,1);
	    \draw (2,0) -- (2,1);
	    \draw (3,0) -- (3,1);
        \draw (7,1) -- (7,-2);
        \draw [dashed] (4.8,0.5) -- (5.2,0.5);
        \draw [decorate,line width=0.5mm,decoration={brace,mirror}] (1,-0.5) --  (5.8,-0.5) node[pos=0.5,below=10pt,black]{$3n$};
        \draw [decorate, decoration={snake}] (4.5,0) -- (4.5,1);
        \draw [decorate, decoration={snake}] (5.5,0) -- (5.5,1);
        \draw (4,0) -- (4.5,0);
        \draw (4,1) -- (4.5,1);
        \draw (5.5,0) -- (6,0);
        \draw (5.5,1) -- (6,1); 
        \draw[pattern color=blue, pattern = north east lines] (0,0) -- (0.5,0) -- (1,1) -- (1,2) -- (0,0) -- cycle;
	    \draw[pattern color=blue, pattern = north east lines] (0,1) -- (0.5,2) -- (0,2) -- (0,1) -- cycle;
	    \draw[thick, color=red] (0,0) -- (1,2);
	    \draw[thick, color=red] (0.5,0) -- (1,1);
	    \draw[thick, color=red] (0,1) -- (0.5,2);
	    \draw[thick, color=red] (1,0) -- (1.5,1);
	    \draw[thick, color=red] (1.5,0) -- (2,1);
	    \draw[thick, color=red] (2,0) -- (2.5,1);
	    \draw[thick, color=red] (2.5,0) -- (3,1);
	    \draw[thick, color=red] (3,0) -- (3.5,1);
	    \draw[thick, color=red] (3.5,0) -- (4,1);
	    \draw[thick, color=red] (6,0) -- (6.5,1);
	    \draw[thick, color=red] (6,-1) -- (7,1);
	    \draw[thick, color=red] (6,-2) -- (7.5,1);
	    \draw[thick, color=red] (6.5,-2) -- (8,1);
	    \draw[thick, color=red] (7,-2) -- (8,0);
	    \draw[thick, color=red] (7.5,-2) -- (8,-1);
        \end{tikzpicture}
		\caption{}\label{fig:cylinder3n+8(1,2)-bis}
	\end{subfigure}%
	\hfill
	\begin{subfigure}[b]{0.3\textwidth}
		\centering
		\begin{tikzpicture}[scale=0.4]
	    \draw (0,0) rectangle (4,1);
	    \draw (6,0) rectangle (8,1);
	    \draw (6,0) rectangle (8,-1);
	    \draw (6,-1) rectangle (8,-2);
	    \draw (0,1) rectangle (1,2);
	    \draw (1,0) -- (1,1);
	    \draw (2,0) -- (2,1);
	    \draw (3,0) -- (3,1);
        \draw (7,1) -- (7,-2);
        \draw [dashed] (4.8,0.5) -- (5.2,0.5);
        \draw [decorate,line width=0.5mm,decoration={brace,mirror}] (1,-0.5) --  (5.8,-0.5) node[pos=0.5,below=10pt,black]{$3n$};
        \draw [decorate, decoration={snake}] (4.5,0) -- (4.5,1);
        \draw [decorate, decoration={snake}] (5.5,0) -- (5.5,1);
        \draw (4,0) -- (4.5,0);
        \draw (4,1) -- (4.5,1);
        \draw (5.5,0) -- (6,0);
        \draw (5.5,1) -- (6,1); 
        %
        \draw[pattern color=blue, pattern = north east lines] (0.5,0) -- (0,1) -- (0,2) -- (1,0) -- (0.5,0) -- cycle;
	    \draw[pattern color=blue, pattern = north east lines] (1,1) -- (0.5,2) -- (1,2) -- (1,1) -- cycle;
	    \draw[thick, color=red] (0.5,0) -- (0,1);
	    \draw[thick, color=red] (1,0) -- (0,2);
	    \draw[thick, color=red] (1.5,0) -- (0.5,2);
	    \draw[thick, color=red] (2,0) -- (1.5,1);
	    \draw[thick, color=red] (2.5,0) -- (2,1);
	    \draw[thick, color=red] (3,0) -- (2.5,1);
	    \draw[thick, color=red] (3.5,0) -- (3,1);
	    \draw[thick, color=red] (4,0) -- (3.5,1);
	    \draw[thick, color=red] (6.5,-2) -- (6,-1);
	    \draw[thick, color=red] (7,-2) -- (6,0);
	    \draw[thick, color=red] (7.5,-2) -- (6,1);
	    \draw[thick, color=red] (8,-2) -- (6.5,1);
	    \draw[thick, color=red] (8,-1) -- (7,1);
	    \draw[thick, color=red] (8,0) -- (7.5,1);
        \end{tikzpicture}
		\caption{} \label{fig:cylinder3n+8(-1,2)-bis}
	\end{subfigure}%
	\hfill
	\begin{subfigure}[b]{0.3\textwidth}
		\centering
		\begin{tikzpicture}[scale=0.4]
	    \draw (0,0) rectangle (4,1);
	    \draw (6,0) rectangle (8,1);
	    \draw (6,0) rectangle (8,-1);
	    \draw (6,-1) rectangle (8,-2);
	    \draw (0,1) rectangle (1,2);
	    \draw (1,0) -- (1,1);
	    \draw (2,0) -- (2,1);
	    \draw (3,0) -- (3,1);
        \draw (7,1) -- (7,-2);
        \draw [dashed] (4.8,0.5) -- (5.2,0.5);
        \draw [decorate,line width=0.5mm,decoration={brace,mirror}] (1,-0.5) --  (5.8,-0.5) node[pos=0.5,below=10pt,black]{$3n$};
        \draw [decorate, decoration={snake}] (4.5,0) -- (4.5,1);
        \draw [decorate, decoration={snake}] (5.5,0) -- (5.5,1);
        \draw (4,0) -- (4.5,0);
        \draw (4,1) -- (4.5,1);
        \draw (5.5,0) -- (6,0);
        \draw (5.5,1) -- (6,1); 
        \draw[pattern color=blue, pattern = north east lines] (6,-2) -- (7,1) -- (6.333,1) -- (6,0) -- (6,-2) -- cycle;
	    \draw[pattern color=blue, pattern = north east lines] (6.333,-2) -- (7,-2) -- (8,1) -- (7.333,1)--(6.333,-2) -- cycle;
	    \draw[pattern color=blue, pattern = north east lines] (7.333,-2) -- (8,-2) -- (8,0) --(7.333,-2) -- cycle;
	    \draw[thick, color=red] (0,1) -- (.333,2);
	    \draw[thick, color=red] (0,0) -- (0.666,2);
	    \draw[thick, color=red] (0.333,0) -- (1,2);
	    \draw[thick, color=red] (0.666,0) -- (1,1);
	    \draw[thick, color=red] (1,0) -- (1.333,1);
	    \draw[thick, color=red] (1.333,0) -- (1.666,1);
	    \draw[thick, color=red] (1.666,0) -- (2,1);
	    \draw[thick, color=red] (2,0) -- (2.333,1);
	    \draw[thick, color=red] (2.333,0) -- (2.666,1);
	    \draw[thick, color=red] (2.666,0) -- (3,1);
	    \draw[thick, color=red] (3,0) -- (3.333,1);
	    \draw[thick, color=red] (3.333,0) -- (3.666,1);
	    \draw[thick, color=red] (3.666,0) -- (4,1);
	    \draw[thick, color=red] (6,0) -- (6.333,1);
	    \draw[thick, color=red] (6,-2) -- (7,1);
	    \draw[thick, color=red] (6.333,-2) -- (7.333,1);
	    \draw[thick, color=red] (7,-2) -- (8,1);
	    \draw[thick, color=red] (7.333,-2) -- (8,0);
        \end{tikzpicture}
		\caption{} \label{fig:cylinder3n+8(1,3)-bis}
	\end{subfigure}
	\caption{Cylinder decomposition in direction $(1,2)$, $(-1,2)$ and $(1,3)$ of the origami $\mathcal{O}_{3n+8,S_N}$. Here $\alpha_1$, $\beta_1$ and $\gamma_1$ are the core curves of the white cylinders in the respective decomposition.}
\end{figure}
   Using (\ref{eq:action-multitwist-homology}) and Table~\ref{table:multitwistdata3n+8} we obtain for the action  $\widetilde{D}_i$ on $H_1(\mathcal{O}_n;\mathbb{Q})$ of the affine Dehn multitwist $T_i$ in direction $\theta_i$: 
    \begin{align}
        \begin{split}\label{eq:widetilde-D_i-3n+8}
        \widetilde{D}_1 =& \Id+\Omega(\cdot,\alpha_1)\,\alpha_1+(3n+7)\Omega(\cdot,\alpha_2)\,\alpha_2\\
        \widetilde{D}_2 =& \Id+\Omega(\cdot,\beta_1)\,\beta_1+(3n+7)\Omega(\cdot,\beta_2)\,\beta_2\\
        \widetilde{D}_3 =& \Id+ \Omega(\cdot,\gamma_1)\,\gamma_1+(3n+4)\Omega(\cdot,\gamma_2)\,\gamma_2
        \end{split}
    \end{align}
   Using (\ref{ConstructX}) and Table~\ref{table:multitwistdata3n+8} we define the vectors
    \begin{equation}\label{XVectorsForFamily1}
      X_1 = (3n+7)\alpha_2-\alpha_1, \quad X_2 = (3n+7)\beta_2-\beta_1, \quad X_3 = (3n+4)\gamma_2 - 2\gamma_1
    \end{equation}

 \textbf{Step (D).}
    The following table summarizes the intersection data of the core curves of the cylinders in the three 2-cylinder directions and the basis $\tilde{B}$, see Figures~\ref{3n+8fam},~\ref{fig:cylinder3n+8(1,2)-bis},~\ref{fig:cylinder3n+8(-1,2)-bis} and \ref{fig:cylinder3n+8(1,3)-bis}:
    \begin{table}[!htbp]
      \centering
      \begin{tabular}{|c|c|c|c|c|c|c|} 
        \hline
        $\Omega$ & $\alpha_1$ & $\alpha_2$ & $\beta_1$ & $\beta_2$ & $\gamma_1$ & $\gamma_2$ \\ 
        \hline
        $\sigma_1$ & 1 & 1 & 1 & 1 & 3 & 0 \\ 
        $\sigma_2$ & $6n+5$ & 1 & $6n+5$ & 1 & $9n+5$ & 2 \\
        $\sigma_3$ & 4 & 0 & 4 & 0 & 2  & 2 \\
        $\zeta_1$ & -1 & -1 & 1 & 1 & -2 & 0 \\
        $\zeta_2$ & -1 & 0 & 1 & 0 & -1 &  0 \\ 
        $\zeta_3$ & -3 & 0 & 3 & 0 & -1 & -1 \\ [1ex] 
        \hline
      \end{tabular}
      \caption{Intersection data of cylinder core curves and the basis of homology fixed in step (A).}
      \label{intersections3n+8}
    \end{table}
    We furthermore read off from Figure~\ref{3n+8fam}  the fundamental matrix $\tilde{G} = (\Omega(\tilde{b}_i, \tilde{b}_j))$ of the intersection form $\Omega$ with respect to the basis
    $\tilde{B}$ fixed in step (A) and determine its inverse:
    \begin{align}\label{fundamentalMatrix}
    \tilde{G}= 
    \begin{pmatrix}
      0 & 0 & 0 & 1 & 0 & 0\\
      0 & 0 & 0 & 1 & 1 & 1\\
      0 & 0 & 0 & 0 & 0 & 1\\
      -1 & -1 &  0 & 0 & 0 & 0\\
      0  & -1 &  0 & 0 & 0 & 0 \\
      0  & -1 & -1 & 0 & 0 & 0\\
    \end{pmatrix},~~
    \tilde{G}^{-1}=
    \begin{pmatrix} 
      0 & 0 & 0 &-1 & 1 & 0\\
      0 & 0 & 0 & 0 &-1 & 0\\
      0 & 0 & 0 & 0 & 1 &-1\\
      1 & 0 & 0 & 0 & 0 & 0\\
     -1 & 1 &-1 & 0 & 0 & 0 \\
      0 & 0 & 1 & 0 & 0 & 0 
    \end{pmatrix}
    \end{align}
     Observe that the matrix $\tilde{G}$ is the same for all families $\mathcal{F}_i$, $i=1,\ldots,6$. We now compute the coordinate vectors $x^{(1)}$ of $\alpha_1$ from (\ref{coordinates}):
    \begin{align*}
    (x^{(1)})^t = & (-1 \,,\, -6n-5\,,\,-4\,,\,  1 \,,\, 1\,,\,3 \,)\cdot \tilde{G}^{-1}\\
                = & (\, 0 \,,\,  1 \,,\, 2 \,,\, 1 \,,\, 6n \,,\, 4\,)
    \end{align*}
    Hence $\alpha_1 = \sigma_2 + 2\,\sigma_3 + \zeta_1 + 6n\,\zeta_2 + 4\,\zeta_3$. Similarly we compute $\alpha_2$, \ldots, $\gamma_2$ and obtain altogether:
    \begin{equation}\label{CCforF1}
        \begin{array}{lcl}
        \alpha_1 = \sigma_2 + 2\sigma_3 + \zeta_1 + 6n\zeta_2 + 4\zeta_3,   && \alpha_2 = \sigma_1 + \zeta_1\\
        \beta_1  =  -\sigma_2 - 2\sigma_3 + \zeta_1 +6n\zeta_2 +4\zeta_3,   && \beta_2  = -\sigma_1 + \zeta_1 \\
        \gamma_1 =\sigma_1 + \sigma_2 + 3\zeta_1  + 9n\zeta_2  + 2\zeta_3,  && \gamma_2 =      \sigma_3  + 2\zeta_3 
        \end{array}
    \end{equation}
    Using (\ref{CCforF1}) we express $X_1$, $X_2$ and $X_3$ defined in (\ref{XVectorsForFamily1}) as linear combination of the elements $\sigma_1$, \ldots, $\zeta_3$:
    \begin{align}
      \begin{split}\label{XiViaBtilde}
        X_1 &  = (3n+7)\sigma_1  -  \sigma_2 -      2\sigma_3 + (3n+6)\zeta_1 -  6n\zeta_2 - 4\zeta_3\\
        X_2 &  = -(3n+7)\sigma_1 +  \sigma_2 +      2\sigma_3 + (3n+6)\zeta_1 -  6n\zeta_2 - 4\zeta_3\\
        X_3 &  = -2\sigma_1 - 2\sigma_2 + (3n+4)\sigma_3 -      6\zeta_1 - 18n\zeta_2 + (6n+4)\zeta_3
      \end{split}
    \end{align}
    Now, we compute the coordinate vectors of $X_1$, $X_2$ and $X_3$ with respect to the basis $B = \{e_1,\ldots, e_4\}$ by solving the linear equation system (\ref{BaseChange}) and hence obtain: 
  \begin{align}
       \begin{split}\label{eq:X_i-e_i-3n+8}
        X_1 & =-e_1 - 2e_2    + (3n + 6)e_3 - 4e_4\\
        X_2 &= e_1 + 2e_2    + (3n + 6)e_3 - 4e_4\\
        X_3 &=-2e_1+(3n+4)e_2 -        6e_3 + (6n + 4)e_4
      \end{split}
  \end{align}
From (\ref{eq:X_i-e_i-3n+8}) we obtain that $X_1$, $X_2$, $X_3$ are linearly independent.\\
From Table~\ref{intersections3n+8} we now compute the intersection data between the basis vectors $e_i$ and the core curves using Equation (\ref{BasisEForFamily1}):
\begin{align}\label{IntersectionNumbers2}
  \begin{split}
    \Omega(e_1,\alpha_2) &= \Omega(\sigma_2,\alpha_2) - (3n+3)\cdot\Omega(\sigma_1,\alpha_2) = 1 -(3n+3)\cdot 1 = -3n-2\\
    \Omega(e_2,\alpha_2) &= \Omega(\sigma_3,\alpha_2) - 2\cdot\Omega(\sigma_1,\alpha_2) = 0 - 2 = -2\\
    \Omega(e_3,\alpha_2) &= \Omega(\eta_1,\alpha_2) - 2\cdot\Omega(\zeta_2,\alpha_2) = -1 -2\cdot 0 = -1 \\
    \Omega(e_4,\alpha_2) &= \Omega(\zeta_3,\alpha_2) - 2\cdot\Omega(\zeta_2,\alpha_2) = 0\\[2mm]
    \Omega(e_1,\beta_2)  &= \Omega(\sigma_2,\beta_2) - (3n+3)\Omega(\sigma_1,\beta_2) = 1 - (3n+3)\cdot 1 = -3n-2\\
    \Omega(e_2,\beta_2)  &= \Omega(\sigma_3,\beta_2) - 2\Omega(\sigma_1,\beta_2) = 0 - 2 = -2\\
    \Omega(e_3,\beta_2)  &= \Omega(\zeta_1,\beta_2) - 2\Omega(\zeta_2,\beta_2) = 1 - 0 = 1\\
    \Omega(e_4,\beta_2)  &= \Omega(\zeta_3,\beta_2) - 2\Omega(\zeta_2,\beta_2) = 0 - 0 = 0\\[2mm]
    \Omega(e_1,\gamma_2) &= \Omega(\sigma_2,\gamma_2) - (3n+3)\Omega(\sigma_1,\gamma_2) = 2 - (3n+3)\cdot 0 = 2\\
    \Omega(e_2,\gamma_2) &= \Omega(\sigma_3,\gamma_2) - 2\Omega(\sigma_1,\gamma_2) = 2 - 2\cdot 0 = 2\\
    \Omega(e_3,\gamma_2) &= \Omega(\zeta_1,\gamma_2) - 2\Omega(\zeta_2,\gamma_2) = 0 - 0 = 0\\
    \Omega(e_4,\gamma_2) &= \Omega(\zeta_3,\gamma_2) - 3\Omega(\zeta_2,\gamma_2) = -1 - 3\cdot 0 = -1\\      
  \end{split}
\end{align}
The preceding equations determine $D_i$ with respect to the basis $B$ by~(\ref{DnForThe7Families}) - with $Y = X_i$.\\

    We now compute the intersection numbers for the $X_i$'s using (\ref{XiViaBtilde}) and the fundamental matrix $\tilde{G}$ from (\ref{fundamentalMatrix}) :
      \begin{align}
        \begin{split}\label{intersectionXi}
          \Omega(X_1,X_2) =& 2(3n+6)(3n+8)\\
          \Omega(X_1,X_3) =& -2(3n+8)\\
          \Omega(X_2,X_3) =& 10(3n+8)
        \end{split}
      \end{align}

 \textbf{Step (E).}
  Recall that $W$ is the subspace of $H_1^{(0)}(\mathcal{O}_n,\Q)$ generated by $X_1$, $X_2$ and $X_3$.\\
    We first compute the intersection numbers for $X_1$ and $X_3$ with some of the core curves of our three chosen 2-cylinder directions using  (\ref{eq:X_i-e_i-3n+8}) and (\ref{IntersectionNumbers2}):
    \begin{align}
      \begin{split}\label{IntersectionsBCoreCurves}
        \Omega(X_3,\alpha_2) &= -2\Omega(e_1,\alpha_2) + (3n+4)\Omega(e_2,\alpha_2) - 6\Omega(e_3,\alpha_2) + (6n+4)\Omega(e_4,\alpha_2)\\
                             &= 2\cdot(3n+2) + (3n+4)\cdot(-2) + 6 + 0 = 2\\
        \Omega(X_1,\gamma_2) &= -\Omega(e_1,\gamma_2) - 2\Omega(e_2,\gamma_2) + (3n+6)\Omega(e_3,\gamma_2) - 4\Omega(e_4,\gamma_2)\\
                             &= -2-4+0+4 = -2\\
        \Omega(X_1,\beta_2)  &= -\Omega(e_1,\beta_2) - 2\Omega(e_2,\beta_2) + (3n+6)\Omega(e_3,\beta_2) - 4\Omega(e_4,\beta_2)\\
                             &= (3n+2) + 4 + (3n+6)  + 0 = 6(n+2)\\
        \Omega(X_3,\beta_2)  &= -2\Omega(e_1,\beta_2) + (3n+4)\Omega(e_2,\beta_2) - 6\Omega(e_3,\beta_2) + (6n+4)\Omega(e_4,\beta_2)\\
                             &= 2\cdot(3n+2) - 2\cdot(3n+4) - 6 + 0 = -10
      \end{split}
    \end{align}
     By (\ref{intersectionXi}) and (\ref{annihilatoreq}) we choose the  annihilator element as
     \begin{equation}\label{TheAnnihilator}
       e=5X_1+X_2+(3n+6)X_3. 
     \end{equation}
    
     Finally, we compute the transformation matrix of $D_n|_{W}$ ($n \in \{1,2,3\}$) with respect to the basis $\{X_1,X_3,e\}$. Since $e$ is an annihilator, $D_n(e)=e$ for all $n=3$.
     Furthermore, we have by definition of $X_i$ that $D_i(X_i) = X_i$. 
     Direct computations using (\ref{IntersectionsBCoreCurves}), (\ref{DnForThe7Families}) and (\ref{TheAnnihilator}) show that:
      \begin{align*} 
        D_1(X_3) =& X_3 + \Omega(X_3,\alpha_2)\cdot X_1 = X_3 + 2X_1\\ 
        D_3(X_1) =& X_1 + \Omega(X_1,\gamma_2)\cdot X_3 = X_1 - 2X_3\\
        D_2(X_1) =& X_1 + \Omega(X_1,\beta_2) \cdot X_2 = X_1 + 6(n+2)X_2\\
              =& X_1 + 6(n+2)\cdot(e-5X_1 - (3n+6)X_3)\\
              =&  -(30n+59)X_1-18(n+2)^{2}X_3+6(n+2)e\\
   \end{align*}
    As well as:
    \begin{align*}
     D_2(X_3) =& X_3 + \Omega(X_3,\beta_2)\cdot X_2 = X_3 - 10X_2\\
              =& X_3 - 10e + 50X_1 + 10(3n+6)X_3 \\
              =&  50X_1 + (30n+61)X_3 - 10e 
    \end{align*}
    
    Hence the transformation matrices $A_n$ representing $D_n|W$ in the basis $X_1,~X_3,~e$ are: 
    \[
    A_1=
    \begin{pmatrix}
    1 & 2 & 0 \\
    0 & 1 & 0 \\
    0 & 0 & 1\\
    \end{pmatrix},  \quad
    A_3=
    \begin{pmatrix}
    1 & 0 & 0 \\
    -2 & 1 & 0 \\
    0  & 0 & 1\\
    \end{pmatrix}, \quad
    A_2=
    \begin{pmatrix}
    -(30n+59) & 50 & 0 \\
    -18(n+2)^{2} & 30n + 61 & 0 \\
    6(n+2) & -10 & 1\\
    \end{pmatrix}.
    \]
%

\emph{Step (F)}.
    We set $n=10k-2$  with $k \in \mathbb{N}$ and show that
    \[A_1^{-25}A_3^{3k}A_2 A_3^{-3k}\]
    is a nontrivial element in the unipotent radical of $\Sp(W)$.\\
    Firstly, observe that $A_3$ is the elementary matrix $L_{2,1}(-2)$, i.e. the matrix such that the entries in the diagonal are all 1, the entry at position $(2,1)$ is -2 and all other entries are 0.
    Hence $A_3^{3k} = L_{2,1}(-6k)$ and  $A_3^{-3k} = L_{2,1}(6k)$. One has for the elementary matrix $L_{2,1}(T)$ and an arbitrary matrix
    $A = \left(\begin{smallmatrix}x&y&z\\ u&v&w\\r&s&t\end{smallmatrix}\right)$ that
    \begin{equation}\label{TheConjugate}
      L_{2,1}(T)\cdot A \cdot L_{2,1}(-T) =
      \begin{pmatrix} 
      x - Ty & y & z\\ Tx+u - T(Ty+v) & Ty + v & Tz+w\\ \star & \star & t
      \end{pmatrix}
    \end{equation}
    Here $\star$ symbolyzes arbitrary numbers.
    Choosing $A = A_2$ this gives:
    \[\begin{array}{rclrclrclrcl}
      x &=& -(30n+59),&
      y &=& 50,&
      z &=& 0, &
      u &=& -18(n+2)^2,\\
      v &=& 30n+61,& 
      w &=& 0,&
      t &=& 1,&
      Tz + w &=& 0.
      \end{array}\] 
    We choose $T = -6k$ and use that $n=10k-2$ and thus $3(n+2) = -5T$. It follows that:
    \[
    (x - Ty) - 1 =  - (30n+59) - 50T - 1 = (-10)\cdot(3(n+2)+5T)= 0\\
    \]
   In particular the relation $x - Ty = 1$ holds. Furthermore we conclude
   \begin{align*}
      Ty + v =  50T + 30(n+2) + 1 = 50T - 50T + 1 = 1.
    \end{align*}
   On the other hand we obtain the relation
   \[\begin{array}{l}
   Tx + u - T(Ty+v) =  Tx + u - T = T(x-1) + u \\
   \begin{array}{lcl}
     &=&  T(-30n-60) - 18(n+2)^2
                       \; = \;   -30T(n+2) -6\cdot 3(n+2)^2 \\
                       &=&  -30T(n+2) + 6\cdot 5T (n+2) \; = \;  0.
   \end{array}
   \end{array}\]
    Hence it follows from (\ref{TheConjugate}) that
    \[A_3^{3k}A_2 A_3^{-3k} = \begin{pmatrix} 1 & y & 0\\ 0 & 1 & 0\\ \star &\star&1 \end{pmatrix}.\]
    Finally, we obtain:
    \[A_1^{-25}A_3^{-3k}A_2 A_3^{3k} = L_{1,2}(-50) = \begin{pmatrix} 1 &y-50& 0\\ 0&1&0\\\star&\star &1\end{pmatrix}\]
      Using that $y = 50$ and (\ref{rad-uni-matrix0}) we obtain that the matrix is in the  unipotent radical of $\Sp(W)$.\\
      
We conclude that for $n = 10k-2$ for the origami $\mathcal{O}_n$ in the family $\mathcal{F}_{1}$ we have that the Kontsevich-Zorich monodromy is arithemtic if $k$ is large enough.
Hence the proof of Theorem~\ref{t.H4odd-families1to6} is complete  for the family $\mathcal{F}_1$ .

\subsection{Arithmeticity for the family \boldmath{$\mathcal{F}_2$}}\label{ss.H4odd-family2}

To avoid unnecessary repetitions, we condense the rest of our calculations. For this we will use tables and use the same notations as in the preceding section. The computations follow the procedure described in Section~\ref{TheSteps}.

\textbf{Step (A)} You can directly read off the basis $B$ (see (\ref{BasisEforF1to6})) and the matrix $BC_1$ (see (\ref{BaseChange})) from Figure~\ref{familyMMY} and Table~\ref{tabla1}.

\textbf{Step (B).} The vectors defining the chosen 2-cylinder directions $\{\theta_1,\theta_2,\theta_3\}$ are in the first row of Table~\ref{multitwistdataintersections3n+10} (A). We have set the parameter $n$ for $\mathcal{O}_n$ in the family $\mathcal{F}_2$ as $n=2k-1$, in order to obtain a 2-cylinder decomposition for directions $\theta_1$ and $\theta_2$. These decompositions are illustrated in Figures~\ref{fig:cylinder3n+10(2,1)},~\ref{fig:cylinder3n+10(-2,1)} and \ref{fig:cylinder3n+10(1,1)}. In Table~\ref{multitwistdataintersections3n+10} (A) we indicate below for each 2-cylinder direction the names of the corresponding core curves.

\textbf{Step (C).} We computed Table~\ref{multitwistdataintersections3n+10} (B) using Figures~\ref{fig:cylinder3n+10(2,1)},~\ref{fig:cylinder3n+10(-2,1)} and \ref{fig:cylinder3n+10(1,1)} as well as (\ref{OurMultiplicities}). We now obtain the vectors $\{X_1,X_2,X_3\}$ from (\ref{ConstructX}) as linear combination of $\alpha_1$, \ldots, $\gamma_2$ and the Dehn multitwists $D_n$ from (\ref{DnForThe7Families}).


\textbf{Step (D).} Table~\ref{multitwistdataintersections3n+10} (A) contains the intersection data between the core curves $\{\alpha_1,\cdots,\gamma_2\}$ and the elements of the basis $\widetilde{B}$. Recall that $\tilde{G}$ is given in (\ref{fundamentalMatrix}). As carried out in Section~\ref{ss.H4odd-family1} we use these data to compute the coordinates of $X_1$,$X_2$ and $X_3$ first with respect to the basis $\tilde{B}$ and then with respect to the basis $B$. The first three rows of Table ~\ref{coefbaseandAction3n+10} show the coefficients of $X_i$ in terms of $B$. We further compute the action of the transvections $D_1$, $D_2$ and $D_3$ on $B$. We have $D_i(e_j)=e_j + r_{i,j}X_i$. The last three rows of Table ~\ref{coefbaseandAction3n+10} show the $r_{i,j}$'s.
Finally, with similar computations as in \ref{fundamentalMatrix} we obtain in the same way that $\Omega(X_1,X_3)\neq 0$.

\textbf{Step (E).} Using Table~\ref{multitwistdataintersections3n+10} (A) and (\ref{annihilatoreq}) we obtain the annihilator $e$. With the analog computations as in section~\ref{ss.H4odd-family1} we compute the action of $D_n$, $n=1,2,3$ on $\{X_1,X_3\}$. The results are summarized in the first two rows of Table~\ref{MatricesAndWord3n+10}. These allow us to compute the matrices $A_1$, $A_2$ and $A_3$ corresponding to the action of $D_1$, $D_2$ and $D_3$ on the basis $\{X_1,X_3,e\}$. We now set the parameter $k$ introduced in Step $(B)$ above as $k=2l-1$.
     
\textbf{Step (F).} The word giving an element in the unipotent radical of $\Sp(W)$ is given by $A_{3}^{l}A_{2} A_{3}^{-l}A_{1}^{-9}$. See Table~\ref{MatricesAndWord3n+10}.\\

This proves item (ii) of Theorem~\ref{t.H4odd-families1to6}. For the remaining families we just present the corresponding tables, since we can proceed in the same way to obtain the results.

\begin{table}[!htbp]
\begin{subtable}{.6\textwidth}
\centering
\begin{tabular}{|c|c |c| c| c| c| c|} 
 \hline
 \multirow{2}{*}{$\Omega$} & \multicolumn{2}{c|} {$\theta_1\parallel(2,1)$} & \multicolumn{2}{c|}{$\theta_2\parallel(-2,1)$} &  \multicolumn{2}{c|}{$\theta_3\parallel(1,1)$} \\
 \cline{2-7}
 &$\alpha_1$ & $\alpha_2$ & $\beta_1$ & $\beta_2$ & $\gamma_1$ & $\gamma_2$ \\ [0.5ex] 
 \hline
 $\sigma_1$ & 0 & 1 & 0 & 1 & 1 & 0 \\ 
 $\sigma_2$ & $3k$ & $3k$ & $3k$ & $3k$ & $6k-1$ & 1   \\
 $\sigma_3$ & 1 & 1 & 1& 1 & 1  & 1 \\
 $\zeta_1$ & -1 & -3 & 1 & 3 & -2 & 0 \\
 $\zeta_2$ & -1 & -1 & 1 & 1 & -1 &  0 \\ 
 $\zeta_3$ & -4 & -4 & 4 & 4 & -2 & -2 \\ [1ex] 
 \hline
\end{tabular}
\caption{}\label{A-multitwistdataintersections3n+10}
\end{subtable}
\begin{subtable}{.38\textwidth}
\centering
\begin{tabular}{|c|c |c| c|} 
 \hline
 &$\theta_1$ & $\theta_2$ & $\theta_3$ \\ [0.5ex] 
 \hline
 $c_1$ & 1 & 1 & 1  \\ 
 $c_2$ & 1 & 1 & 1    \\
 $f_1$ & $3k+4$ & $3k+4$  & $6k+3$  \\
 $f_2$ & $3k+5$ & $3k+5$ & 4 \\
 $k_1$ & $ 3k+5 $ & $3k+5$ & $ 4 $\\
 $k_2$ & $ 3k+4 $ & $ 3k+4 $ & $ 6k+3 $\\
 \hline
\end{tabular}
    \label{B-multitwistdataintersections3n+10}
    \caption{}
    \end{subtable}
\caption{For the family $\mathcal{F}_2$: In (A): intersection numbers for core curves in the directions $\{\theta_1,\theta_2,\theta_3\}$ and elements of $\tilde{B}$.
  In (B): multiplicities and combinatorial lengths in directions $\{\theta_1,\theta_2,\theta_3\}$.}
\label{multitwistdataintersections3n+10}
\end{table}

\begin{table}[!htbp]
\centering
\begin{tabular}{|c|c |c| c| c|} 
 \hline
 &$e_1$ & $e_2$ & $e_3$ & $e_4$ \\ [0.5ex] 
 \hline
 $X_1$ & -1 & -3 & $3k+3$ & -1 \\ 
 $X_2$ & 1 & 3 & $3k+3$  & -1  \\
 $X_3$ & -4 &  $12k+2$ & -4 &$6k-1$\\
 $D_{1}$ & $-3k$ & -1 & -1 & 0\\ 
 $D_{2}$ & $-3k$ & -1 & 1 & 0 \\
 $D_{3}$ & 1 & 1 & 0 & -2 \\ [1ex] 
 \hline
\end{tabular}
\caption{Coefficients for the base in $H_1^{(0)}(M,\mathbb{Q})$ and coefficients for the action of the associated transvection on the elements of the basis for the family $\mathcal{F}_2$.}
\label{coefbaseandAction3n+10}
\end{table}
\begin{table}[!htbp]
\centering
\begin{tabular}{|c|c |c|} 
 \hline
 &$X_1$ & $X_3$  \\ [0.5ex] 
 \hline
 $D_{1}$ & $X_1$ & $X_3+2X_1$  \\ 
 $D_{2}$ & $(-18(k+1)+1)X_1-18(k+1)^{2}X_3+6(k+1)e$ & $18X_1+(18(k+1)+1))X_3-6e$    \\
 $D_{3}$ & $X_1-2X_3$ &  $X_3$ \\
 \hline
 Annihilator & \multicolumn{2}{c|}{$e=3X_1+X_2+3(k+1)X_3$}\\
 \hline
 Word & \multicolumn{2}{c|}{$k=2l-1, A_{3}^{l}A_{2} A_{3}^{-l}A_{1}^{-9}$ } \\ [1ex] 
 \hline
\end{tabular}
\caption{Action of each transvection on the ordered basis $X_1,X_3,e$ and nontrivial element $\Sp(W)$ for the family $\mathcal{F}_2$.}
\label{MatricesAndWord3n+10}
\end{table}

\begin{figure}[!h]
	\centering
	\noindent\begin{subfigure}[b]{0.3\textwidth}
		\centering
		\begin{tikzpicture}[scale=0.45]
	    \draw (0,0) rectangle (4,1);
	    \draw (6,0) rectangle (8,1);
	    \draw (6,0) rectangle (8,-1);
	    \draw (6,-1) rectangle (8,-2);
	    \draw (6,-2) rectangle (8,-3);
	    \draw (0,1) rectangle (1,2);
	    \draw (1,0) -- (1,1);
	    \draw (2,0) -- (2,1);
	    \draw (3,0) -- (3,1);
        \draw (7,1) -- (7,-3);
        \draw [dashed] (4.8,0.5) -- (5.2,0.5);
        \draw [decorate,line width=0.5mm,decoration={brace,mirror}] (1,-0.5) --  (5.8,-0.5) node[pos=0.5,below=10pt,black]{$6k-3$};
        \draw [decorate, decoration={snake}] (4.5,0) -- (4.5,1);
        \draw [decorate, decoration={snake}] (5.5,0) -- (5.5,1);
        \draw (4,0) -- (4.5,0);
        \draw (4,1) -- (4.5,1);
        \draw (5.5,0) -- (6,0);
        \draw (5.5,1) -- (6,1); 
        \draw[pattern color=blue, pattern = north east lines] (0,0) -- (2,1) -- (1,1) -- (0,0.5) -- (0,0) -- cycle;
    	\draw[pattern color=blue, pattern = north east lines] (1,0) -- (2,0) -- (4,1) -- (3,1) -- (1,0) -- cycle;
        \draw[pattern color=blue, pattern = north east lines] (3,0) -- (4,0) -- (4,0.5) -- (3,0) -- cycle;	
        \draw[pattern color=blue, pattern = north east lines] (6,0) -- (8,1) -- (7,1) -- (6,0.5) -- (6,0) -- cycle;
	    \draw[pattern color=blue, pattern = north east lines] (7,-3) -- (8,-3) -- (8,-2.5) -- (7,-3) -- cycle;
	    \draw[pattern color=blue, pattern = north east lines] (6,-3) -- (8,-2) -- (8,-1.5) -- (6,-2.5) -- (6,-3) -- cycle;
	    \draw[pattern color=blue, pattern = north east lines] (6,-2) -- (8,-1) -- (8,-0.5) -- (6,-1.5) -- (6,-2) -- cycle;
	    \draw[pattern color=blue, pattern = north east lines] (6,-1) -- (8,0) -- (8,0.5) -- (6,-0.5) -- (6,1) -- cycle;
	    \draw[thick, color=red] (0,0) -- (2,1);
	    \draw[thick, color=red] (1,0) -- (3,1);
	    \draw[thick, color=red] (2,0) -- (4,1);
	    \draw[thick, color=red] (3,0) -- (4,0.5);
	    \draw[thick, color=red] (6,0.5) -- (7,1);
	    \draw[thick, color=red] (6,0) -- (8,1);
	    \draw[thick, color=red] (0,0.5) -- (1,1);
	    \draw[thick, color=red] (0,1) -- (1,1.5);
	    \draw[thick, color=red] (0,1.5) -- (1,2);
	    \draw[thick, color=red] (6,-0.5) -- (8,0.5);
	    \draw[thick, color=red] (6,-1) -- (8,0);
	    \draw[thick, color=red] (6,-1.5) -- (8,-0.5);
	    \draw[thick, color=red] (6,-2) -- (8,-1);
	    \draw[thick, color=red] (6,-2.5) -- (8,-1.5);
	    \draw[thick, color=red] (6,-3) -- (8,-2);
	    \draw[thick, color=red] (7,-3) -- (8,-2.5);
	    \end{tikzpicture}
		\caption{}\label{fig:cylinder3n+10(2,1)}
	\end{subfigure}%
	\hfill
	\begin{subfigure}[b]{0.3\textwidth}
		\centering
        \begin{tikzpicture}[scale=0.45]
    	\draw (0,0) rectangle (4,1);
    	\draw (6,0) rectangle (8,1);
    	\draw (6,0) rectangle (8,-1);
    	\draw (6,-1) rectangle (8,-2);
    	\draw (6,-2) rectangle (8,-3);
    	\draw (0,1) rectangle (1,2);
    	\draw (1,0) -- (1,1);
    	\draw (2,0) -- (2,1);
    	\draw (3,0) -- (3,1);
        \draw (7,1) -- (7,-3);
        \draw [dashed] (4.8,0.5) -- (5.2,0.5);
        \draw [decorate,line width=0.5mm,decoration={brace,mirror}] (1,-0.5) --  (5.8,-0.5) node[pos=0.5,below=10pt,black]{$6k-3$};
        \draw [decorate, decoration={snake}] (4.5,0) -- (4.5,1);
        \draw [decorate, decoration={snake}] (5.5,0) -- (5.5,1);
        \draw (4,0) -- (4.5,0);
        \draw (4,1) -- (4.5,1);
        \draw (5.5,0) -- (6,0);
        \draw (5.5,1) -- (6,1); 
   	    \draw[pattern color=blue, pattern = north east lines] (1,0) -- (0,0.5) -- (0,1) -- (2,0) -- (1,0) -- cycle;
	    \draw[pattern color=blue, pattern = north east lines] (3,0) -- (1,1) -- (2,1) -- (4,0) -- (3,0) -- cycle;
        \draw[pattern color=blue, pattern = north east lines] (4,0.5) -- (4,1) -- (3,1) -- (4,0.5) -- cycle;
        \draw[pattern color=blue, pattern = north east lines] (8,0.5) -- (8,1) -- (7,1) -- (8,0.5) -- cycle;	
	    \draw[pattern color=blue, pattern = north east lines] (8,0) -- (6,1) -- (6,0.5) -- (8,-0.5) -- (8,0) -- cycle;
	    \draw[pattern color=blue, pattern = north east lines] (8,-1) -- (6,0) -- (6,-0.5) -- (8,-1.5) -- (8,-1) --cycle;
	    \draw[pattern color=blue, pattern = north east lines] (8,-2) -- (6,-1) -- (6,-1.5) -- (8,-2.5) -- (8,-2) -- cycle;
	    \draw[pattern color=blue, pattern = north east lines] (8,-3) -- (6,-2) -- (6,-2.5) -- (7,-3) -- (8,-3) -- cycle;
	    \draw[thick, color=red] (1,0) -- (0,0.5);
    	\draw[thick, color=red] (2,0) -- (0,1);
    	\draw[thick, color=red] (3,0) -- (0,1.5);
    	\draw[thick, color=red] (1,1.5) -- (0,2);
	    \draw[thick, color=red] (4,0) -- (2,1);
	    \draw[thick, color=red] (4,0.5) -- (3,1);
	    \draw[thick, color=red] (8,0.5) -- (7,1);
    	\draw[thick, color=red] (8,0) -- (6,1);
    	\draw[thick, color=red] (8,-0.5) -- (6,0.5);
    	\draw[thick, color=red] (8,-1) -- (6,0);
    	\draw[thick, color=red] (8,-1.5) -- (6,-0.5);
    	\draw[thick, color=red] (8,-2) -- (6,-1);
    	\draw[thick, color=red] (8,-2.5) -- (6,-1.5);
    	\draw[thick, color=red] (8,-3) -- (6,-2);
    	\draw[thick, color=red] (7,-3) -- (6,-2.5);
	    \end{tikzpicture}
		\caption{}\label{fig:cylinder3n+10(-2,1)}
	\end{subfigure}%
	\hfill
	\begin{subfigure}[b]{0.3\textwidth}
		\centering
        \begin{tikzpicture}[scale=0.45]
        \draw (0,0) rectangle (4,1);
        \draw (6,0) rectangle (8,1);
    	\draw (6,0) rectangle (8,-1);
    	\draw (6,-1) rectangle (8,-2);
    	\draw (6,-2) rectangle (8,-3);
    	\draw (0,1) rectangle (1,2);
	    \draw (1,0) -- (1,1);
	    \draw (2,0) -- (2,1);
	    \draw (3,0) -- (3,1);
        \draw (7,1) -- (7,-3);
        \draw [dashed] (4.8,0.5) -- (5.2,0.5);
        \draw [decorate,line width=0.5mm,decoration={brace,mirror}] (1,-0.5) --  (5.8,-0.5) node[pos=0.5,below=10pt,black]{$6k-3$};
        \draw [decorate, decoration={snake}] (4.5,0) -- (4.5,1);
        \draw [decorate, decoration={snake}] (5.5,0) -- (5.5,1);
        \draw (4,0) -- (4.5,0);
        \draw (4,1) -- (4.5,1);
        \draw (5.5,0) -- (6,0);
        \draw (5.5,1) -- (6,1); 
        \draw[pattern color=blue, pattern = north east lines] (6,0) -- (6,-1) -- (8,1) -- (7,1) -- (6,0) -- cycle;
	    \draw[pattern color=blue, pattern = north east lines] (6,-2) -- (6,-3) -- (8,-1) -- (8,0) -- (6,-2) -- cycle;
        \draw[pattern color=blue, pattern = north east lines] (7,-3) -- (8,-3) -- (8,-2) -- (7,-3) -- cycle;	
	    \draw[thick, color=red] (0,1) -- (1,2);
	    \draw[thick, color=red] (0,0) -- (1,1);
	    \draw[thick, color=red] (1,0) -- (2,1);
	    \draw[thick, color=red] (2,0) -- (3,1);
	    \draw[thick, color=red] (3,0) -- (4,1);
	    \draw[thick, color=red] (6,0) -- (7,1);
	    \draw[thick, color=red] (6,-1) -- (8,1);
	    \draw[thick, color=red] (6,-2) -- (8,0);
	    \draw[thick, color=red] (6,-3) -- (8,-1);
	    \draw[thick, color=red] (7,-3) -- (8,-2);
	    \end{tikzpicture}
		\caption{}\label{fig:cylinder3n+10(1,1)}
	\end{subfigure}
	\caption{Cylinder decomposition in direction \((2,1)\), $(-2,1)$ and $(1,1)$ of origamis in \(\mathcal{F}_2\). Here $\alpha_2$, $\beta_2$ and $\gamma_2$ are the core curves of the white cylinders in the respective decomposition.}
\end{figure}



\subsection{Arithmeticity for the family \boldmath{$\mathcal{F}_3$}}\label{ss.H4odd-family3}

We set the parameter $n$ defining the family as $n=2k-1$ so that the directions $\theta_1$, $\theta_2$ and $\theta_2$ give us two-cylinder decompositions. At the end we set $k=36l-1$ to find the appropiate word in the matrices $A_1$, $A_2$ and $A_3$. We collected the necessary information in Table \ref{multitwistdataintersections3n+12}, Table \ref{coefbaseandAction3n+12} and Table \ref{MatricesAndWord3n+12}. The cylinder decompositions we used for this family are illustrated in Figure
\ref{fig:cylinder3n+12(2,1)}, Figure \ref{fig:cylinder3n+12(1,2)} and Figure \ref{fig:cylinder3n+12(1,5)}.
\begin{table}[!htbp]
\begin{subtable}{0.6\textwidth}
\centering
\begin{tabular}{|c|c |c| c| c| c| c|} 
 \hline
 \multirow{2}{*}{$\Omega$} & \multicolumn{2}{c|} {$\theta_1\parallel(2,1)$} & \multicolumn{2}{c|}{$\theta_2\parallel(1,2)$} &  \multicolumn{2}{c|}{$\theta_3\parallel(1,5)$} \\
 \cline{2-7}
 &$\alpha_1$ & $\alpha_2$ & $\beta_1$ & $\beta_2$ & $\gamma_1$ & $\gamma_2$ \\ [0.5ex] 
 \hline
 $\sigma_1$ & 1 & 0 & 1 & 1 & 5 & 0 \\ 
 $\sigma_2$ & $3k$ & $3k$ & $12k-1$ & 1 & $30k-8$ & 2   \\
 $\sigma_3$ & 1 & 1 & 4 & 0 & 2  & 2 \\
 $\zeta_1$ & -3 & -1 & -1 & -1 & -2 & 0 \\
 $\zeta_2$ & -1 & -1 & -1 & 0 & -1 &  0 \\ 
 $\zeta_3$ & -5 & -5 & -5 & 0 & -1 & -1 \\ [1ex] 
 \hline
\end{tabular}
\caption{}
\end{subtable}
\begin{subtable}{0.38\textwidth}
\centering
\begin{tabular}{|c|c |c| c|} 
 \hline
 &$\theta_1$ & $\theta_2$ & $\theta_3$ \\ [0.5ex] 
 \hline
 $c_1$ & 1 & 1 & 1  \\ 
 $c_2$ & 1 & 1 & 4    \\
 $f_1$ &  $3k+5$ &  $6k+8$&  $6k+1$\\
 $f_2$ & $3k+4$ &  1 & 2\\
 $k_1$ & $ 3k+4$ & $6k+8 $ & 1\\
 $k_2$ & $3k+5$ &  $6k+8$&  $2(6k+1)$ \\
 \hline
\end{tabular}
\caption{}
\end{subtable}
\caption{Intersection data for the waist curves of the horizontal and vertical cylinders and data for multitwist for the family $\mathcal{F}_3$.}
\label{multitwistdataintersections3n+12}
\end{table}

\begin{figure}[!h]
	\centering
	\noindent\begin{subfigure}[b]{0.3\textwidth}
		\centering
        \begin{tikzpicture}[scale=0.45]
	    \draw (0,0) rectangle (4,1);
	    \draw (6,0) rectangle (8,1);
	    \draw (6,0) rectangle (8,-1);
	    \draw (6,-1) rectangle (8,-2);
	    \draw (6,-2) rectangle (8,-3);
	    \draw (6,-3) rectangle (8,-4);
	    \draw (0,1) rectangle (1,2);
	    \draw (1,0) -- (1,1);
	    \draw (2,0) -- (2,1);
	    \draw (3,0) -- (3,1);
        \draw (7,1) -- (7,-4);
        \draw [dashed] (4.8,0.5) -- (5.2,0.5);
        \draw [decorate,line width=0.5mm,decoration={brace,mirror}] (1,-0.5) --  (5.8,-0.5) node[pos=0.5,below=10pt,black]{$6k-3$};
        \draw [decorate, decoration={snake}] (4.5,0) -- (4.5,1);
        \draw [decorate, decoration={snake}] (5.5,0) -- (5.5,1);
        \draw (4,0) -- (4.5,0);
        \draw (4,1) -- (4.5,1);
        \draw (5.5,0) -- (6,0);
        \draw (5.5,1) -- (6,1); 
        \draw[pattern color=blue, pattern = north east lines] (0,0) -- (2,1) -- (1,1) -- (0,0.5) -- (0,0) -- cycle;
	    \draw[pattern color=blue, pattern = north east lines] (1,0) -- (2,0) -- (4,1) -- (3,1) -- (1,0) -- cycle;
        \draw[pattern color=blue, pattern = north east lines] (3,0) -- (4,0) -- (4,0.5) -- (3,0) -- cycle;	
        \draw[pattern color=blue, pattern = north east lines] (6,0) -- (8,1) -- (7,1) -- (6,0.5) -- (6,0) -- cycle;
		\draw[pattern color=blue, pattern = north east lines] (6,-1) -- (8,0) -- (8,0.5) -- (6,-0.5) -- (6,-1) -- cycle;
		\draw[pattern color=blue, pattern = north east lines] (6,-2) -- (8,-1) -- (8,-0.5) -- (6,-1.5) -- (6,-2) -- cycle;
		\draw[pattern color=blue, pattern = north east lines] (6,-3) -- (8,-2) -- (8,-1.5) -- (6,-2.5) -- (6,-3) -- cycle;
		\draw[pattern color=blue, pattern = north east lines] (6,-4) -- (8,-3) -- (8,-2.5) -- (6,-3.5) -- (6,-4) -- cycle;
		\draw[pattern color=blue, pattern = north east lines] (7,-4) -- (8,-4) -- (8,-3.5) -- (7,-4) -- cycle;
	    \draw[thick, color=red] (0,0) -- (2,1);
	    \draw[thick, color=red] (0,0.5) -- (1,1);
	    \draw[thick, color=red] (0,1) -- (1,1.5);
	    \draw[thick, color=red] (0,1.5) -- (1,2);
	    \draw[thick, color=red] (1,0) -- (3,1);
	    \draw[thick, color=red] (2,0) -- (4,1);
	    \draw[thick, color=red] (3,0) -- (4,0.5);
	    \draw[thick, color=red] (6,0.5) -- (7,1);
	    \draw[thick, color=red] (6,0) -- (8,1);
	    \draw[thick, color=red] (6,-0.5) -- (8,0.5);
	    \draw[thick, color=red] (6,-1.5) -- (8,-0.5);
	    \draw[thick, color=red] (6,-2) -- (8,-1);
	    \draw[thick, color=red] (6,-2.5) -- (8,-1.5);
	    \draw[thick, color=red] (6,-3) -- (8,-2);
	    \draw[thick, color=red] (6,-3.5) -- (8,-2.5);
	    \draw[thick, color=red] (6,-4) -- (8,-3);
	    \draw[thick, color=red] (7,-4) -- (8,-3.5);
	    \end{tikzpicture}
		\caption{}\label{fig:cylinder3n+12(2,1)}
	\end{subfigure}%
	\hfill
	\begin{subfigure}[b]{0.3\textwidth}
		\centering
        \begin{tikzpicture}[scale=0.45]
	    \draw (0,0) rectangle (4,1);
	    \draw (6,0) rectangle (8,1);
    	\draw (6,0) rectangle (8,-1);
    	\draw (6,-1) rectangle (8,-2);
    	\draw (6,-2) rectangle (8,-3);
    	\draw (6,-3) rectangle (8,-4);
    	\draw (0,1) rectangle (1,2);
    	\draw (1,0) -- (1,1);
	    \draw (2,0) -- (2,1);
    	\draw (3,0) -- (3,1);
        \draw (7,1) -- (7,-4);
        \draw [dashed] (4.8,0.5) -- (5.2,0.5);
        \draw [decorate,line width=0.5mm,decoration={brace,mirror}] (1,-0.5) --  (5.8,-0.5) node[pos=0.5,below=10pt,black]{$6k-3$};
        \draw [decorate, decoration={snake}] (4.5,0) -- (4.5,1);
        \draw [decorate, decoration={snake}] (5.5,0) -- (5.5,1);
        \draw (4,0) -- (4.5,0);
        \draw (4,1) -- (4.5,1);
        \draw (5.5,0) -- (6,0);
        \draw (5.5,1) -- (6,1); 
        \draw[pattern color=blue, pattern = north east lines] (0,0) -- (1,2) -- (1,1) -- (0.5,0) -- (0,0) -- cycle;
	    \draw[pattern color=blue, pattern = north east lines] (0,1) -- (0.5,2) -- (0,2) -- (0,1) -- cycle;
    	\draw[thick, color=red] (0,0) -- (1,2);
    	\draw[thick, color=red] (0,1) -- (0.5,2);
    	\draw[thick, color=red] (0.5,0) -- (1,1);
    	\draw[thick, color=red] (1,0) -- (1.5,1);
    	\draw[thick, color=red] (1.5,0) -- (2,1);
    	\draw[thick, color=red] (2,0) -- (2.5,1);
    	\draw[thick, color=red] (2.5,0) -- (3,1);
    	\draw[thick, color=red] (3,0) -- (3.5,1);
    	\draw[thick, color=red] (3.5,0) -- (4,1);
    	\draw[thick, color=red] (6,0) -- (6.5,1);
    	\draw[thick, color=red] (6,-1) -- (7,1);
    	\draw[thick, color=red] (6,-2) -- (7.5,1);
    	\draw[thick, color=red] (6,-3) -- (8,1);
    	\draw[thick, color=red] (6,-4) -- (8,0);
    	\draw[thick, color=red] (6.5,-4) -- (8,-1);
    	\draw[thick, color=red] (7,-4) -- (8,-2);
    	\draw[thick, color=red] (7.5,-4) -- (8,-3);
		\end{tikzpicture}
		\caption{}\label{fig:cylinder3n+12(1,2)}
	\end{subfigure}%
	\hfill
	\begin{subfigure}[b]{0.3\textwidth}
		\centering
        \begin{tikzpicture}[scale=0.45]
        \draw (0,0) rectangle (4,1);
    	\draw (6,0) rectangle (8,1);
    	\draw (6,0) rectangle (8,-1);
    	\draw (6,-1) rectangle (8,-2);
    	\draw (6,-2) rectangle (8,-3);
    	\draw (6,-3) rectangle (8,-4);
    	\draw (0,1) rectangle (1,2);
    	\draw (1,0) -- (1,1);
    	\draw (2,0) -- (2,1);
    	\draw (3,0) -- (3,1);
        \draw (7,1) -- (7,-4);
        \draw [dashed] (4.8,0.5) -- (5.2,0.5);
        \draw [decorate,line width=0.5mm,decoration={brace,mirror}] (1,-0.5) --  (5.8,-0.5) node[pos=0.5,below=10pt,black]{$6k-3$};
        \draw [decorate, decoration={snake}] (4.5,0) -- (4.5,1);
        \draw [decorate, decoration={snake}] (5.5,0) -- (5.5,1);
        \draw (4,0) -- (4.5,0);
        \draw (4,1) -- (4.5,1);
        \draw (5.5,0) -- (6,0);
        \draw (5.5,1) -- (6,1); 
        \draw[pattern color=blue, pattern = north east lines] (6,0) -- (6.2,1) -- (7,1) -- (6,-4) -- (6,0) -- cycle;
	    \draw[pattern color=blue, pattern = north east lines] (6.2,-4) -- (7.2,1) -- (8,1) -- (7,-4) -- (6.2,-4) -- cycle;
		\draw[pattern color=blue, pattern = north east lines] (7.2,-4) -- (8,0) -- (8,-4) -- (7.2,-4) -- cycle;
    	\draw[thick, color=red] (0,1) -- (0.2,2);
        \draw[thick, color=red] (0,0) -- (0.4,2);
	    \draw[thick, color=red] (0.2,0) -- (0.6,2);
	    \draw[thick, color=red] (0.4,0) -- (0.8,2);
	    \draw[thick, color=red] (0.6,0) -- (1,2);
	    \draw[thick, color=red] (0.8,0) -- (1,1);
	    \draw[thick, color=red] (1,0) -- (1.2,1);
	    \draw[thick, color=red] (1.2,0) -- (1.4,1);
	    \draw[thick, color=red] (1.4,0) -- (1.6,1);
	    \draw[thick, color=red] (1.6,0) -- (1.8,1);
	    \draw[thick, color=red] (1.8,0) -- (2,1);
	    \draw[thick, color=red] (2,0) -- (2.2,1);
	    \draw[thick, color=red] (2.2,0) -- (2.4,1);
	    \draw[thick, color=red] (2.4,0) -- (2.6,1);
	    \draw[thick, color=red] (2.6,0) -- (2.8,1);
	    \draw[thick, color=red] (2.8,0) -- (3,1);
	    \draw[thick, color=red] (3,0) -- (3.2,1);
	    \draw[thick, color=red] (3.2,0) -- (3.4,1);
	    \draw[thick, color=red] (3.4,0) -- (3.6,1);
	    \draw[thick, color=red] (3.6,0) -- (3.8,1);
	    \draw[thick, color=red] (3.8,0) -- (4,1);
	    \draw[thick, color=red] (6,0) -- (6.2,1);
        \draw[thick, color=red] (6,-4) -- (7,1);
        \draw[thick, color=red] (6.2,-4) -- (7.2,1);
	    \draw[thick, color=red] (7,-4) -- (8,1);
	    \draw[thick, color=red] (7.2,-4) -- (8,0);
        \end{tikzpicture}
		\caption{}\label{fig:cylinder3n+12(1,5)}
	\end{subfigure}
	\caption{Cylinder decomposition in direction \((2,1)\), $(1,2)$ and $(1,5)$ of origamis in \(\mathcal{F}_3\). Here $\alpha_1$, $\beta_1$ and $\gamma_1$ are the core curves of the white cylinders in the respective decomposition.}
\end{figure}

\begin{table}[!htbp]
\centering
\begin{tabular}{|c|c |c| c| c|} 
 \hline
 &$e_1$ & $e_2$ & $e_3$ & $e_4$ \\ [0.5ex] 
 \hline
 $X_1$ & 1 & 4 & $-(3k+4)$ & 1 \\ 
 $X_2$ & -1 & -4 & $6k+7$  & -4  \\
 $X_3$ & -4 &  $2(6k+1)$ & -20 &$24k-4$\\
 $D_{1}$ & $3k$ & 1 & 1 & 0\\ 
 $D_{2}$ & $-6k+1$ & -2 & -1 & 0 \\
 $D_{3}$ & 2 & 2 & 0 & -1 \\ [1ex] 
 \hline
\end{tabular}
\caption{Coefficients for the basis in $H_1^{(0)}(M,\mathbb{Q})$ and coefficients for the action of the associated transvection on the elements of the basis for the family $\mathcal{F}_3$}
\label{coefbaseandAction3n+12}
\end{table}

\begin{table}[!htbp]
\centering
\begin{tabular}{|c|c |c|} 
 \hline
 &$X_2$ & $X_3$  \\ [0.5ex] 
 \hline
 $D_{1}$ & $(\dfrac{-9(k+1)}{2}+1)X_2+\dfrac{-3}{2}(k+1)^{2}X_3-3(k+1)e$ & $27X_2+(\dfrac{9(k+1)}{2}+1)X_3-18e$  \\ 
 $D_{2}$ & $X_2$ & $X_3+12X_2$    \\
 $D_{3}$ & $X_2-6X_3$ &  $X_3$ \\
 \hline
 Annihilator & \multicolumn{2}{c|}{$e=X_1+\dfrac{3}{2}X_2+(\dfrac{k+1}{4})X_3$}\\
 \hline
 Word & \multicolumn{2}{c|}{$k=36l-1, A_{3}^{l}A_{1}^{12} A_{3}^{-l}A_{2}^{-27}$ } \\ [1ex] 
 \hline
\end{tabular}
\caption{Action of each transvection on the ordered basis $X_2,X_3,e$ and nontrivial element $\Sp(W)$ for the family $\mathcal{F}_3$}
\label{MatricesAndWord3n+12}
\end{table}




\subsection{Arithmeticity for the family \boldmath{$\mathcal{F}_4$}}\label{ss.H4odd-family4} In this case we obtain a 2-cylinder decomposition for any $n\in\mathbb{N}$ and hence no additional parameter is needed in Step (B). Only one change of parameter is needed to find the appropriate word on the matrices $A_1$, $A_2$ and $A_3$. This is indicated in Table \ref{MatricesAndWord6n+14}. The remaining tables for this family are Table \ref{multitwistdataintersections6n+14} and Table \ref{coefbaseandAction6n+14}. The cylinder decompositions for this family can be seen in Figure \ref{fig:cylinder6n+14(2,1)}, Figure \ref{fig:cylinder6n+14(1,1)} and Figure \ref{fig:cylinder6n+14(-2,1)}

\begin{table}[!htbp]
\centering
\begin{subtable}{\textwidth}
\centering
\begin{tabular}{|c|c |c| c| c| c| c|} 
 \hline
 \multirow{2}{*}{$\Omega$} & \multicolumn{2}{c|} {$\theta_1\parallel(-2,1)$} & \multicolumn{2}{c|}{$\theta_2\parallel(1,1)$} &  \multicolumn{2}{c|}{$\theta_3\parallel(2,1)$} \\
 \cline{2-7}
 &$\alpha_1$ & $\alpha_2$ & $\beta_1$ & $\beta_2$ & $\gamma_1$ & $\gamma_2$ \\ [0.5ex] 
 \hline
 $\sigma_1$ & 0 & 1 & 0 & 1 & 0 & 1 \\ 
 $\sigma_2$ & $3n+3$ & $3n+3$ & 1 & $6n+5$ & $3n+3$ & $3n+3$  \\
 $\sigma_3$ & 1 & 1 & 1 & 1 & 1  & 1 \\
 $\zeta_1$ & 1 & 5 & 0 & -3 & -1 & -5 \\
 $\zeta_2$ & 1 & 1 & 0 & -1 & -1 & -1 \\ 
 $\zeta_3$ & 4 & 4 & 2 & -2 & -4 & -4 \\ [1ex] 
 \hline
\end{tabular}
\caption{}
\end{subtable}
\hfill
\begin{subtable}{\textwidth}
\centering
\begin{tabular}{|c|c |c| c|} 
 \hline
 &$\theta_1$ & $\theta_2$ & $\theta_3$ \\ [0.5ex] 
 \hline
 $c_1$ & 1 & 1 & 1  \\ 
 $c_2$ & 1 & 1 & 1    \\
 $f_1$ & $3n+6$ &  4 & $3n+6$\\
 $f_2$ & $3n+8$ &  $6n+10$&  $3n+8$ \\ 
 $k_1$ & $3n+8 $ & $3n+5 $ & $ 3n+8$\\
 $k_2$ & $3n+6$ & $2 $ & $ 3n+6$\\
 \hline
\end{tabular}
\caption{}
\end{subtable}
\caption{Intersections forms between the waist curves of the horizontal and vertical cylinders and data for multitwist for the family $\mathcal{F}_4$.}
\label{multitwistdataintersections6n+14}
\end{table}


\begin{figure}[!htbp]
	\centering
	\noindent\begin{subfigure}[b]{0.3\textwidth}
		\centering
        \begin{tikzpicture}[scale=0.45]
	    \draw (0,0) rectangle (3,1);
	    \draw (5,0) rectangle (7,1);
	    \draw (5,0) rectangle (7,-1);
	    \draw (5,-1) rectangle (7,-2);
    	\draw (5,-2) rectangle (7,-3);
    	\draw (0,1) rectangle (1,2);
    	\draw (0,2) rectangle (1,3);
    	\draw (1,0) -- (1,1);
    	\draw (2,0) -- (2,1);
    	\draw (3,0) -- (3,1);
        \draw (6,1) -- (6,-3);
        \draw [dashed] (3.8,0.5) -- (4.2,0.5);
        \draw [decorate,line width=0.5mm,decoration={brace,mirror}] (1,-0.5) --  (4.8,-0.5) node[pos=0.5,below=10pt,black]{$6n+3$};
        \draw [decorate, decoration={snake}] (3.5,0) -- (3.5,1);
        \draw [decorate, decoration={snake}] (4.5,0) -- (4.5,1);
        \draw (3,0) -- (3.5,0);
        \draw (3,1) -- (3.5,1);
        \draw (4.5,0) -- (5,0);
        \draw (4.5,1) -- (5,1); 
        \draw[pattern color=blue, pattern = north east lines] (0,0) -- (2,1) -- (1,1) -- (0,0.5) -- (0,0) -- cycle;
	    \draw[pattern color=blue, pattern = north east lines] (1,0) -- (2,0) -- (3,0.5) -- (3,1) -- (1,0) -- cycle;
	    \draw[pattern color=blue, pattern = north east lines] (5,0) -- (7,1) -- (6,1) -- (5,0.5) -- (5,0) -- cycle;
	    \draw[pattern color=blue, pattern = north east lines] (5,-1) -- (7,0) -- (7,0.5) -- (5,-0.5) -- (5,-1) -- cycle;
	    \draw[pattern color=blue, pattern = north east lines] (5,-2) -- (7,-1) -- (7,-0.5) -- (5,-1.5) -- (5,-2) -- cycle;
	    \draw[pattern color=blue, pattern = north east lines] (5,-3) -- (7,-2) -- (7,-1.5) -- (5,-2.5) -- (5,-3) -- cycle;
	    \draw[pattern color=blue, pattern = north east lines] (6,-3) -- (7,-3) -- (7,-2.5) -- (6,-3) -- cycle;
	    \draw[thick, color=red] (0,0) -- (2,1);
	    \draw[thick, color=red] (0,0.5) -- (1,1);
    	\draw[thick, color=red] (0,1) -- (1,1.5);
    	\draw[thick, color=red] (0,1.5) -- (1,2);
    	\draw[thick, color=red] (0,2) -- (1,2.5);
    	\draw[thick, color=red] (0,2.5) -- (1,3);
    	\draw[thick, color=red] (1,0) -- (3,1);
	    \draw[thick, color=red] (2,0) -- (3,0.5);
	    \draw[thick, color=red] (5,0.5) -- (6,1);
    	\draw[thick, color=red] (5,0) -- (7,1);
	    \draw[thick, color=red] (5,-0.5) -- (7,0.5);
	    \draw[thick, color=red] (5,-1) -- (7,0);
	    \draw[thick, color=red] (5,-1.5) -- (7,-0.5);
	    \draw[thick, color=red] (5,-2) -- (7,-1);
	    \draw[thick, color=red] (5,-2.5) -- (7,-1.5);
	    \draw[thick, color=red] (5,-3) -- (7,-2);
	    \draw[thick, color=red] (6,-3) -- (7,-2.5);
	    \end{tikzpicture}
		\caption{}\label{fig:cylinder6n+14(2,1)}
	\end{subfigure}%
	\hfill
	\begin{subfigure}[b]{0.3\textwidth}
		\centering
        \begin{tikzpicture}[scale=0.45]
        \draw (0,0) rectangle (3,1);
	    \draw (5,0) rectangle (7,1);
	    \draw (5,0) rectangle (7,-1);
	    \draw (5,-1) rectangle (7,-2);
	    \draw (5,-2) rectangle (7,-3);
	    \draw (0,1) rectangle (1,2);
	    \draw (0,2) rectangle (1,3);
	    \draw (1,0) -- (1,1);
	    \draw (2,0) -- (2,1);
	    \draw (3,0) -- (3,1);
        \draw (6,1) -- (6,-3);
        \draw [dashed] (3.8,0.5) -- (4.2,0.5);
        \draw [decorate,line width=0.5mm,decoration={brace,mirror}] (1,-0.5) --  (4.8,-0.5) node[pos=0.5,below=10pt,black]{$6n+3$};
        \draw [decorate, decoration={snake}] (3.5,0) -- (3.5,1);
        \draw [decorate, decoration={snake}] (4.5,0) -- (4.5,1);
        \draw (3,0) -- (3.5,0);
        \draw (3,1) -- (3.5,1);
        \draw (4.5,0) -- (5,0);
        \draw (4.5,1) -- (5,1); 
        \draw[pattern color=blue, pattern = north east lines] (5,0) -- (6,1) -- (7,1) -- (5,-1) -- (5,0) -- cycle;
	   	\draw[pattern color=blue, pattern = north east lines] (5,-2) -- (7,0) -- (7,-1) -- (5,-3) -- (5,-2) -- cycle;
        \draw[pattern color=blue, pattern = north east lines] (6,-3) -- (7,-3) -- (7,-2) -- (6,-3) -- cycle;
	    \draw[thick, color=red] (0,0) -- (1,1);
	    \draw[thick, color=red] (0,1) -- (1,2);
	    \draw[thick, color=red] (0,2) -- (1,3);
	    \draw[thick, color=red] (1,0) -- (2,1);
	    \draw[thick, color=red] (2,0) -- (3,1);
        \draw[thick, color=red] (5,0) -- (6,1);
	    \draw[thick, color=red] (5,-1) -- (7,1);
	    \draw[thick, color=red] (5,-2) -- (7,0);
	    \draw[thick, color=red] (5,-3) -- (7,-1);
	    \draw[thick, color=red] (6,-3) -- (7,-2);
		\end{tikzpicture}
		\caption{}\label{fig:cylinder6n+14(1,1)}
	\end{subfigure}%
	\hfill
	\begin{subfigure}[b]{0.3\textwidth}
		\centering
        \begin{tikzpicture}[scale=0.45]
	    \draw (0,0) rectangle (3,1);
	    \draw (5,0) rectangle (7,1);
	    \draw (5,0) rectangle (7,-1);
	    \draw (5,-1) rectangle (7,-2);
	    \draw (5,-2) rectangle (7,-3);
	    \draw (0,1) rectangle (1,2);
	    \draw (0,2) rectangle (1,3);
	    \draw (1,0) -- (1,1);
	    \draw (2,0) -- (2,1);
	    \draw (3,0) -- (3,1);
	    \draw (6,1) -- (6,-3);
        \draw [dashed] (3.8,0.5) -- (4.2,0.5);
        \draw [decorate,line width=0.5mm,decoration={brace,mirror}] (1,-0.5) --  (4.8,-0.5) node[pos=0.5,below=10pt,black]{$6n+3$};
        \draw [decorate, decoration={snake}] (3.5,0) -- (3.5,1);
        \draw [decorate, decoration={snake}] (4.5,0) -- (4.5,1);
        \draw (3,0) -- (3.5,0);
        \draw (3,1) -- (3.5,1);
        \draw (4.5,0) -- (5,0);
        \draw (4.5,1) -- (5,1); 
        \draw[pattern color=blue, pattern = north east lines] (1,0) -- (2,0) -- (0,1) -- (0,0.5) -- (1,0) -- cycle;
	    \draw[pattern color=blue, pattern = north east lines] (3,0) -- (3,0.5) -- (2,1) -- (1,1) -- (3,0) -- cycle;
	    \draw[pattern color=blue, pattern = north east lines] (5,1) -- (7,0) -- (7,-0.5) -- (5,0.5) -- (5,1) -- cycle;
	    \draw[pattern color=blue, pattern = north east lines] (5,0) -- (7,-1) -- (7,-1.5) -- (5,-0.5) -- (5,0) -- cycle;
	   	\draw[pattern color=blue, pattern = north east lines] (5,-1) -- (7,-2) -- (7,-2.5) -- (5,-1.5) -- (5,-1) -- cycle;
	    \draw[pattern color=blue, pattern = north east lines] (5,-2) -- (7,-3) -- (6,-3) -- (5,-2.5) -- (5,-2) -- cycle;
	    \draw[pattern color=blue, pattern = north east lines] (7,1) -- (6,1) -- (7,0.5) -- (7,1) -- cycle;
	    \draw[thick, color=red] (1,0) -- (0,0.5);
	    \draw[thick, color=red] (2,0) -- (0,1);
	    \draw[thick, color=red] (1,1) -- (0,1.5);
	    \draw[thick, color=red] (1,1.5) -- (0,2);
    	\draw[thick, color=red] (1,2) -- (0,2.5);
	    \draw[thick, color=red] (1,2.5) -- (0,3);
	    \draw[thick, color=red] (3,0) -- (1,1);
	    \draw[thick, color=red] (2,1) -- (3,0.5);
	    \draw[thick, color=red] (7,0.5) -- (6,1);
	    \draw[thick, color=red] (7,0) -- (5,1);
	    \draw[thick, color=red] (7,-0.5) -- (5,0.5);
	    \draw[thick, color=red] (7,-1) -- (5,0);
    	\draw[thick, color=red] (7,-1.5) -- (5,-0.5);
	    \draw[thick, color=red] (7,-2) -- (5,-1);
	    \draw[thick, color=red] (7,-2.5) -- (5,-1.5);
    	\draw[thick, color=red] (7,-3) -- (5,-2);
    	\draw[thick, color=red] (6,-3) -- (5,-2.5);
		\end{tikzpicture}
		\caption{}\label{fig:cylinder6n+14(-2,1)}
	\end{subfigure}
	\caption{Cylinder decomposition in direction \((2,1)\), $(1,1)$ and $(-2,1)$ of origamis in \(\mathcal{F}_4\). Here $\alpha_2$, $\beta_2$ and $\gamma_2$ are the core curves of the white cylinder in the respective decomposition.}
\end{figure}

\begin{table}[!htbp]
\centering
\begin{tabular}{|c|c |c| c| c|} 
 \hline
 &$e_1$ & $e_2$ & $e_3$ & $e_4$ \\ [0.5ex] 
 \hline
 $X_1$ & 2 & 6 & $3n+6$ & -2 \\ 
 $X_2$ & 2 & $-(6n+8)$ & 2  & $-(3n+3)$  \\
 $X_3$ & -2 &  -6 & $3n+6$ & -2\\
 $D_{1}$ & $-(3n+3)$ & -1 & 2 & 0\\ 
 $D_{2}$ & -1 & -1 & 0 & 2 \\
 $D_{3}$ &  $-(3n+3)$  & -1 & -2 & 0 \\ [1ex] 
 \hline
\end{tabular}
\caption{Coefficients for the basis in $H_1^{(0)}(M,\mathbb{Q})$ and coefficients for the action of the associated transvection on the elements of the basis for the family $\mathcal{F}_4$.}
\label{coefbaseandAction6n+14}
\end{table}
\begin{table}[!htbp]
\centering
\begin{tabular}{|c|c |c|} 
 \hline
 &$X_2$ & $X_3$  \\ [0.5ex] 
 \hline
 $D_{1}$ & $(1+36(n+2))X_2-18X_3+6e$ & $36(n+2)^2X_2+(1-36(n+2  ))X_3+12(n+2)e$  \\ 
 $D_{2}$ & $X_2$ & $X_3+4X_2$     \\
 $D_{3}$ & $X_2-2X_3$ &  $X_3$ \\
 \hline
 Annihilator & \multicolumn{2}{c|}{$e=X_1-(6n+12)X_2+3X_3$}\\
 \hline
 Word & \multicolumn{2}{c|}{$n=2k-2, A_{2}^{k}A_{1} A_{2}^{-k}A_{3}^{-9}$ } \\ [1ex] 
 \hline
\end{tabular}
\caption{Action of each transvection on the ordered basis $X_2,X_3,e$ and nontrivial element $\Sp(W)$ for the family $\mathcal{F}_4$}
\label{MatricesAndWord6n+14}
\end{table}



\subsection{Arithmeticity for the family \boldmath{$\mathcal{F}_5$}}\label{ss.H4odd-family5}

Again, we do not  need an additional parameter in Step (B) to get a 2-cylinder decomposition. Only one change of parameter is needed to find the appropriate word in the matrices $A_1$, $A_2$ and $A_3$. This is indicated in Table \ref{MatricesAndWord6n+18}. The remainig tables for this family are Table \ref{multitwistdata6n+18} and Table \ref{coefbaseandAction6n+18}.
The cylinder decompositions we used for this family can be seen in Figure \ref{fig:cylinder6n+18(2,1)}, Figure \ref{fig:cylinder6n+18(-1,1)} and Figure \ref{fig:cylinder6n+18(1,1)}.
\begin{table}[!htbp]
\centering
\begin{subtable}{0.6\textwidth}
\centering
\begin{tabular}{|c|c |c| c| c| c| c|} 
 \hline
 \multirow{2}{*}{$\Omega$} & \multicolumn{2}{c|} {$\theta_1\parallel(2,1)$} & \multicolumn{2}{c|}{$\theta_2\parallel(1,1)$} &  \multicolumn{2}{c|}{$\theta_3\parallel(-1,1)$} \\
 \cline{2-7}
 &$\alpha_1$ & $\alpha_2$ & $\beta_1$ & $\beta_2$ & $\gamma_1$ & $\gamma_2$ \\ [0.5ex] 
 \hline
 $\sigma_1$ & 0 & 1 & 0 & 1 & 0 & 1 \\ 
 $\sigma_2$ & $3n+3$ & $3n+3$ & 1 & $6n+5$ & 1 &  $6n+5$  \\
 $\sigma_3$ & 1 & 1 & 1 & 1 & 1  & 1 \\
 $\zeta_1$ & 1 & 5 & 0 & -3 & 0 & 3 \\
 $\zeta_2$ & 1 & 1 & 0 & -1 & 0 & 1 \\ 
 $\zeta_3$ & 6 & 6 & -3 & -3 & 3 & 3 \\ [1ex] 
 \hline
\end{tabular} 
\caption{}
\end{subtable}
\hfill
\begin{subtable}{0.38\textwidth}
\centering
\begin{tabular}{|c|c |c| c|} 
 \hline
 &$\theta_1$ & $\theta_2$ & $\theta_3$ \\ [0.5ex] 
 \hline
 $c_1$ & 1 & 1 & 1  \\ 
 $c_2$ & 1 & 1 & 1    \\
 $f_1$ & $3n+8$ &  6 & 6\\
 $f_2$ & $3n+10$ &  $6n+12$&  $6n+12$ \\ 
 $k_1$ & $ 3n+10 $ & $ n+2 $ & $ n+2 $\\
 $k_2$ & $ 3n+8 $ & $ 1 $ & $ 1 $\\
 \hline
\end{tabular}
\caption{}
\end{subtable}
\caption{Intersections forms between the waist curves of the horizontal and vertical cylinder and data for multitwist for the family  $\mathcal{F}_5$}
\label{multitwistdata6n+18}
\end{table}

\begin{table}[!htbp]
\centering
\begin{tabular}{|c|c |c| c| c|} 
 \hline
 &$e_1$ & $e_2$ & $e_3$ & $e_4$ \\ [0.5ex] 
 \hline
 $X_1$ & 2 & 10 & $3n+8$ & -2 \\ 
 $X_2$ & 1 & $-(3n+4)$ & 1  & $-(n+1)$  \\
 $X_3$ & -1 &  $3n+4$ & 1 & $-(n+1)$ \\
 $D_{1}$ & $-(3n+3)$ & -1 & 2 & 0\\ 
 $D_{2}$ & -1 & -1 & 0 & 3 \\
 $D_{3}$ &  -1  & -1 & 0 & -3 \\ [1ex] 
 \hline
\end{tabular}
\caption{Coefficients for the basis in $H_1^{(0)}(M,\mathbb{Q})$ and coefficients for the action of the associated transvection on the elements of the basis for the family $\mathcal{F}_5$.}
\label{coefbaseandAction6n+18}
\end{table}
\begin{table}[!htbp]
\centering
\begin{tabular}{|c|c |c|} 
 \hline
 &$X_1$ & $X_3$  \\ [0.5ex] 
 \hline
 $D_{1}$ & $X_1$ & $X_3+X_1$  \\ 
 $D_{2}$ & $(1+18(n+1))X_1-54X_3-18e$ & $6(n+1)^2X_1+(1-18(n+1))X_3-6(n+1)e$     \\
 $D_{3}$ & $X_1-6X_3$ &  $X_3$ \\
 \hline
 Annihilator & \multicolumn{2}{c|}{$e=(n+1)X_1+X_2-3X_3$}\\
 \hline
 Word & \multicolumn{2}{c|}{$n=3k-1, A_{1}^{k}A_{2} A_{1}^{-k}A_{3}^{-9}$ } \\ [1ex] 
 \hline
\end{tabular}
\caption{Action of each transvection on the ordered basis $X_1,X_3,e$ and nontrivial element $\Sp(W)$ for the family $\mathcal{F}_5$.}
\label{MatricesAndWord6n+18}
\end{table}


\begin{figure}[!htbp]
	\centering
	\noindent\begin{subfigure}[b]{0.3\textwidth}
		\centering
        \begin{tikzpicture}[scale=0.45]
	    \draw (0,0) rectangle (3,1);
	    \draw (5,0) rectangle (7,1);
	    \draw (5,0) rectangle (7,-1);
	    \draw (5,-1) rectangle (7,-2);
	    \draw (5,-2) rectangle (7,-3);
	    \draw (0,1) rectangle (1,2);
	    \draw (0,2) rectangle (1,3);
	    \draw (5,-3) rectangle (7,-4);
        \draw (5,-4) rectangle (7,-5);
	    \draw (1,0) -- (1,1);
	    \draw (2,0) -- (2,1);
	    \draw (3,0) -- (3,1);
        \draw (6,1) -- (6,-5);
        \draw [dashed] (3.8,0.5) -- (4.2,0.5);
        \draw [decorate,line width=0.5mm,decoration={brace,mirror}] (1,-0.5) --  (4.8,-0.5) node[pos=0.5,below=10pt,black]{$6n+3$};
        \draw [decorate, decoration={snake}] (3.5,0) -- (3.5,1);
        \draw [decorate, decoration={snake}] (4.5,0) -- (4.5,1);
        \draw (3,0) -- (3.5,0);
        \draw (3,1) -- (3.5,1);
        \draw (4.5,0) -- (5,0);
        \draw (4.5,1) -- (5,1); 
        \draw[pattern color=blue, pattern = north east lines] (0,0) -- (2,1) -- (1,1) -- (0,0.5) -- (0,0) -- cycle;
	    \draw[pattern color=blue, pattern = north east lines] (1,0) -- (2,0) -- (3,0.5) -- (3,1) -- (1,0) -- cycle;
        \draw[pattern color=blue, pattern = north east lines] (5,0) -- (7,1) -- (6,1) -- (5,0.5) -- (5,0) -- cycle;
		\draw[pattern color=blue, pattern = north east lines] (5,-1) -- (7,0) -- (7,0.5) -- (5,-0.5) -- (5,-1) -- cycle;
	    \draw[pattern color=blue, pattern = north east lines] (5,-2) -- (7,-1) -- (7,-0.5) -- (5,-1.5) -- (5,-2) -- cycle;
		\draw[pattern color=blue, pattern = north east lines] (5,-3) -- (7,-2) -- (7,-1.5) -- (5,-2.5) -- (5,-3) -- cycle;
		\draw[pattern color=blue, pattern = north east lines] (5,-4) -- (7,-3) -- (7,-2.5) -- (5,-3.5) -- (5,-4) -- cycle;
		\draw[pattern color=blue, pattern = north east lines] (5,-5) -- (7,-4) -- (7,-3.5) -- (5,-4.5) -- (5,-5) -- cycle;
		\draw[pattern color=blue, pattern = north east lines] (6,-5) -- (7,-5) -- (7,-4.5) -- (6,-5) -- cycle;
	    \draw[thick, color=red] (0,0) -- (2,1);
	    \draw[thick, color=red] (0,0.5) -- (1,1);
	    \draw[thick, color=red] (0,1) -- (1,1.5);
	    \draw[thick, color=red] (0,1.5) -- (1,2);
	    \draw[thick, color=red] (0,2) -- (1,2.5);
    	\draw[thick, color=red] (0,2.5) -- (1,3);
	    \draw[thick, color=red] (1,0) -- (3,1);
	    \draw[thick, color=red] (2,0) -- (3,0.5);
		\draw[thick, color=red] (5,0.5) -- (6,1);
	    \draw[thick, color=red] (5,0) -- (7,1);
	    \draw[thick, color=red] (5,-0.5) -- (7,0.5);
	    \draw[thick, color=red] (5,-1.5) -- (7,-0.5);
	    \draw[thick, color=red] (5,-1) -- (7,0);
	    \draw[thick, color=red] (5,-2) -- (7,-1);
	    \draw[thick, color=red] (5,-2.5) -- (7,-1.5);
	    \draw[thick, color=red] (5,-3) -- (7,-2);
	    \draw[thick, color=red] (5,-3.5) -- (7,-2.5);
	    \draw[thick, color=red] (5,-4) -- (7,-3);
	    \draw[thick, color=red] (5,-4.5) -- (7,-3.5);
	    \draw[thick, color=red] (5,-5) -- (7,-4);
	    \draw[thick, color=red] (6,-5) -- (7,-4.5);
		\end{tikzpicture}
		\caption{}\label{fig:cylinder6n+18(2,1)}
	\end{subfigure}%
	\hfill
	\begin{subfigure}[b]{0.3\textwidth}
		\centering
        \begin{tikzpicture}[scale=0.45]
	    \draw (0,0) rectangle (3,1);
	    \draw (5,0) rectangle (7,1);
	    \draw (5,0) rectangle (7,-1);
	    \draw (5,-1) rectangle (7,-2);
    	\draw (5,-2) rectangle (7,-3);
	    \draw (0,1) rectangle (1,2);
	    \draw (0,2) rectangle (1,3);
		\draw (5,-3) rectangle (7,-4);
        \draw (5,-4) rectangle (7,-5);
	    \draw (1,0) -- (1,1);
	    \draw (2,0) -- (2,1);
	    \draw (3,0) -- (3,1);
        \draw (6,1) -- (6,-5);
        \draw [dashed] (3.8,0.5) -- (4.2,0.5);
        \draw [decorate,line width=0.5mm,decoration={brace,mirror}] (1,-0.5) --  (4.8,-0.5) node[pos=0.5,below=10pt,black]{$6n+3$};
        \draw [decorate, decoration={snake}] (3.5,0) -- (3.5,1);
        \draw [decorate, decoration={snake}] (4.5,0) -- (4.5,1);
        \draw (3,0) -- (3.5,0);
        \draw (3,1) -- (3.5,1);
        \draw (4.5,0) -- (5,0);
        \draw (4.5,1) -- (5,1); 
        \draw[pattern color=blue, pattern = north east lines] (5,1) -- (7,-1) -- (7,0) -- (6,1) -- (5,1) -- cycle;
		\draw[pattern color=blue, pattern = north east lines] (5,0) -- (7,-2) -- (7,-3) -- (5,-1) -- (5,0) -- cycle;
		\draw[pattern color=blue, pattern = north east lines] (5,-2) -- (7,-4) -- (7,-5) -- (5,-3) -- (5,-2) -- cycle;
    	\draw[pattern color=blue, pattern = north east lines] (6,-5) -- (5,-4) -- (5,-5) -- (6,-5) -- cycle;
	    \draw[thick, color=red] (1,0) -- (0,1);
	    \draw[thick, color=red] (1,2) -- (0,3);
	    \draw[thick, color=red] (2,0) -- (0,2);
	    \draw[thick, color=red] (3,0) -- (2,1);
	    \draw[thick, color=red] (7,0) -- (6,1);
	    \draw[thick, color=red] (7,-1) -- (5,1);
	    \draw[thick, color=red] (7,-2) -- (5,0);
	    \draw[thick, color=red] (7,-3) -- (5,-1);
	    \draw[thick, color=red] (7,-4) -- (5,-2);
	    \draw[thick, color=red] (7,-5) -- (5,-3);
	    \draw[thick, color=red] (6,-5) -- (5,-4);
		\end{tikzpicture}
		\caption{}\label{fig:cylinder6n+18(-1,1)}
	\end{subfigure}%
	\hfill
	\begin{subfigure}[b]{0.3\textwidth}
		\centering
		\begin{tikzpicture}[scale=0.45]
	    \draw (0,0) rectangle (3,1);
	    \draw (5,0) rectangle (7,1);
	    \draw (5,0) rectangle (7,-1);
	    \draw (5,-1) rectangle (7,-2);
	    \draw (5,-2) rectangle (7,-3);
	    \draw (0,1) rectangle (1,2);
	    \draw (0,2) rectangle (1,3);
	    \draw (5,-3) rectangle (7,-4);
        \draw (5,-4) rectangle (7,-5);
	    \draw (1,0) -- (1,1);
	    \draw (2,0) -- (2,1);
	    \draw (3,0) -- (3,1);
        \draw (6,1) -- (6,-5);
        \draw [dashed] (3.8,0.5) -- (4.2,0.5);
        \draw [decorate,line width=0.5mm,decoration={brace,mirror}] (1,-0.5) --  (4.8,-0.5) node[pos=0.5,below=10pt,black]{$6n+3$};
        \draw [decorate, decoration={snake}] (3.5,0) -- (3.5,1);
        \draw [decorate, decoration={snake}] (4.5,0) -- (4.5,1);
        \draw (3,0) -- (3.5,0);
        \draw (3,1) -- (3.5,1);
        \draw (4.5,0) -- (5,0);
        \draw (4.5,1) -- (5,1); 
        \draw[pattern color=blue, pattern = north east lines] (5,0) -- (6,1) -- (7,1) -- (5,-1) -- (5,0) -- cycle;
	    \draw[pattern color=blue, pattern = north east lines] (5,-2) -- (7,0) -- (7,-1) -- (5,-3) -- (5,-2) -- cycle;
	    \draw[pattern color=blue, pattern = north east lines] (5,-4) -- (7,-2) -- (7,-3) -- (5,-5) -- (5,-4) -- cycle;
        \draw[pattern color=blue, pattern = north east lines] (6,-5) -- (7,-5) -- (7,-4) -- (6,-5) -- cycle;
	    \draw[thick, color=red] (0,0) -- (1,1);
	    \draw[thick, color=red] (0,1) -- (1,2);
	    \draw[thick, color=red] (0,2) -- (1,3);
	    \draw[thick, color=red] (1,0) -- (2,1);
	    \draw[thick, color=red] (2,0) -- (3,1);
	    \draw[thick, color=red] (5,0) -- (6,1);
	    \draw[thick, color=red] (5,-1) -- (7,1);
	    \draw[thick, color=red] (5,-2) -- (7,0);
	    \draw[thick, color=red] (5,-3) -- (7,-1);
	    \draw[thick, color=red] (5,-4) -- (7,-2);
	    \draw[thick, color=red] (5,-5) -- (7,-3);
	    \draw[thick, color=red] (6,-5) -- (7,-4);
		\end{tikzpicture}
		\caption{}\label{fig:cylinder6n+18(1,1)}
	\end{subfigure}
	\caption{Cylinder decomposition in direction \((2,1)\) $(-1,1)$ and $(1,1)$ of origamis in \(\mathcal{F}_5\). Here $\alpha_2$, $\beta_2$ and $\gamma_2$ are the core curves of the white cylinders in the respective decomposition.}
\end{figure}


\subsection{Arithmeticity for the family \boldmath{$\mathcal{F}_6$}}\label{ss.H4odd-family6} 
The tables for this family are Table \ref{multitwistdata6n+22}, Table \ref{coefbaseandAction6n+22} and Table \ref{MatricesAndWord6n+22}. The cylinder decompositions we used for this family are in direction $(2,1)$, $(1,1)$ and $(-2,1)$ and can be seen in Figure \ref{fig:cylinder6n+22(2,1)}, Figure \ref{fig:cylinder6n+22(-2,1)} and Figure \ref{fig:cylinder6n+22(1,1)}.
\begin{table}[!htbp]
\centering
\begin{subtable}{0.6\textwidth}
\centering
\begin{tabular}{|c|c |c| c| c| c| c|} 
 \hline
 \multirow{2}{*}{$\Omega$} & \multicolumn{2}{c|} {$\theta_1\parallel(2,1)$} & \multicolumn{2}{c|}{$\theta_2\parallel(1,1)$} &  \multicolumn{2}{c|}{$\theta_3\parallel(-1,1)$} \\
 \cline{2-7}
 &$\alpha_1$ & $\alpha_2$ & $\beta_1$ & $\beta_2$ & $\gamma_1$ & $\gamma_2$ \\ [0.5ex] 
 \hline
 $\sigma_1$ & 0 & 1 & 0 & 1 & 0 & 1 \\ 
 $\sigma_2$ & $3n+3$ & $3n+3$ & 1 & $6n+5$ & 1 &  $6n+5$  \\
 $\sigma_3$ & 1 & 1 & 1 & 1 & 1  & 1 \\
 $\zeta_1$ & 1 & 5 & 0 & -3 & 0 & 3 \\
 $\zeta_2$ & 1 & 1 & 0 & -1 & 0 & 1 \\ 
 $\zeta_3$ & 8 & 8 & -4 & -4 & 4 & 4 \\ [1ex] 
 \hline
\end{tabular}
\caption{}
\end{subtable}
\hfill
\begin{subtable}{0.38\textwidth}
\centering
\begin{tabular}{|c|c |c| c|} 
 \hline
 &$\theta_1$ & $\theta_2$ & $\theta_3$ \\ [0.5ex] 
 \hline
 $c_1$ & 1 & 1 & 1  \\ 
 $c_2$ & 1 & 1 & 1    \\
 $f_1$ & $3n+10$ &  8 & 8\\
 $f_2$ & $3n+12$ &  $6n+14$&  $6n+14$ \\ 
 $k_1$ & $ 3n+12$ & $ 3n+7 $ & $3n+7 $\\
 $k_2$ & $ 3n+10$ & $ 4 $ & $4 $\\
 \hline
\end{tabular}
\caption{}
\end{subtable}
\caption{Intersections forms between the waist curves of the horizontal and vertical cylinders and data for multitwist for the family $\mathcal{F}_6$.}
\label{multitwistdata6n+22}
\end{table}

\begin{table}[!htbp]
\centering
\begin{tabular}{|c|c |c| c| c|} 
 \hline
 &$e_1$ & $e_2$ & $e_3$ & $e_4$ \\ [0.5ex] 
 \hline
 $X_1$ & 2 & 14 & $3n+10$ & -2 \\ 
 $X_2$ & 4 & $-(12n+16)$ & 4  & $-(3n+3)$  \\
 $X_3$ & -4 &  $12n+16$ & 4 & $-(3n+3)$ \\
 $D_{1}$ & $-(3n+3)$ & -1 & 2 & 0\\ 
 $D_{2}$ & -1 & -1 & 0 & 4 \\
 $D_{3}$ &  -1  & -1 & 0 & -4 \\ [1ex] 
 \hline
\end{tabular}
\caption{Coefficients for the basis in $H_1^{(0)}(M,\mathbb{Q})$ and coefficients for the action of the associated transvection on the elements of the basis for the family $\mathcal{F}_6$.}
\label{coefbaseandAction6n+22}
\end{table}

\begin{table}[!htbp]
\centering
\begin{tabular}{|c|c |c|} 
 \hline
 &$X_1$ & $X_3$  \\ [0.5ex] 
 \hline
 $D_{1}$ & $X_1$ & $X_3+4X_1$  \\ 
 $D_{2}$ & $(1+72(n+1))X_1-72X_3-24e$ & $72(n+1)^2X_1+(1-72(n+1))X_3-24(n+1)e$     \\
 $D_{3}$ & $X_1-8X_3$ &  $X_3$ \\
 \hline
 Annihilator & \multicolumn{2}{c|}{$e=3(n+1)X_1+X_2-3X_3$}\\
 \hline
 Word & \multicolumn{2}{c|}{$n=4k-1, A_{1}^{k}A_{2} A_{1}^{-k}A_{3}^{-9}$ } \\ [1ex] 
 \hline
\end{tabular}
\caption{Action of each transvection on the ordered basis $X_1,X_3,e$ and nontrivial element $\Sp(W)$ for the family $\mathcal{F}_6$.}
\label{MatricesAndWord6n+22}
\end{table}

\begin{figure}[!htbp]
	\centering
	\noindent\begin{subfigure}[b]{0.3\textwidth}
		\centering
		\begin{tikzpicture}[scale=0.45]
	    \draw (0,0) rectangle (3,1);
	    \draw (5,0) rectangle (7,1);
	    \draw (5,0) rectangle (7,-1);
	    \draw (5,-1) rectangle (7,-2);
	    \draw (5,-2) rectangle (7,-3);
	    \draw (0,1) rectangle (1,2);
	    \draw (0,2) rectangle (1,3);
    	\draw (5,-3) rectangle (7,-4);
        \draw (5,-4) rectangle (7,-5);
        \draw (5,-5) rectangle (7,-6);
        \draw (5,-6) rectangle (7,-7);
	    \draw (1,0) -- (1,1);
	    \draw (2,0) -- (2,1);
	    \draw (3,0) -- (3,1);
        \draw (6,1) -- (6,-7);
        \draw [dashed] (3.8,0.5) -- (4.2,0.5);
        \draw [decorate,line width=0.5mm,decoration={brace,mirror}] (1,-0.5) --  (4.8,-0.5) node[pos=0.5,below=10pt,black]{$6n+3$};
        \draw [decorate, decoration={snake}] (3.5,0) -- (3.5,1);
        \draw [decorate, decoration={snake}] (4.5,0) -- (4.5,1);
        \draw (3,0) -- (3.5,0);
        \draw (3,1) -- (3.5,1);
        \draw (4.5,0) -- (5,0);
        \draw (4.5,1) -- (5,1); 
        \draw[pattern color=blue, pattern = north east lines] (0,0) -- (2,1) -- (1,1) -- (0,0.5) -- (0,0) -- cycle;
	    \draw[pattern color=blue, pattern = north east lines] (1,0) -- (2,0) -- (3,0.5) -- (3,1) -- (1,0) -- cycle;
        \draw[pattern color=blue, pattern = north east lines] (5,0) -- (7,1) -- (6,1) -- (5,0.5) -- (5,0) -- cycle;
		\draw[pattern color=blue, pattern = north east lines] (5,-0.5) -- (5,-1) -- (7,0) -- (7,0.5) -- (5,-0.5) -- cycle;
		\draw[pattern color=blue, pattern = north east lines] (5,-2) -- (7,-1) -- (7,-0.5) -- (5,-1.5) -- (5,-2) -- cycle;
		\draw[pattern color=blue, pattern = north east lines] (5,-3) -- (7,-2) -- (7,-1.5) -- (5,-2.5) -- (5,-3) -- cycle;
		\draw[pattern color=blue, pattern = north east lines] (5,-4) -- (7,-3) -- (7,-2.5) -- (5,-3.5) -- (5,-4) -- cycle;
		\draw[pattern color=blue, pattern = north east lines] (5,-5) -- (7,-4) -- (7,-3.5) -- (5,-4.5) -- (5,-5) -- cycle;
		\draw[pattern color=blue, pattern = north east lines] (5,-6) -- (7,-5) -- (7,-4.5) -- (5,-5.5) -- (5,-6) -- cycle;
		\draw[pattern color=blue, pattern = north east lines] (5,-7) -- (7,-6) -- (7,-5.5) -- (5,-6.5) -- (5,-7) -- cycle;
		\draw[pattern color=blue, pattern = north east lines] (6,-7) -- (7,-7) -- (7,-6.5) -- (6,-7) -- cycle;
	    \draw[thick, color=red] (0,0) -- (2,1);
	    \draw[thick, color=red] (0,0.5) -- (1,1);
	    \draw[thick, color=red] (0,1) -- (1,1.5);
	    \draw[thick, color=red] (0,1.5) -- (1,2);
	    \draw[thick, color=red] (0,2) -- (1,2.5);
	    \draw[thick, color=red] (0,2.5) -- (1,3);
	    \draw[thick, color=red] (1,0) -- (3,1);
	    \draw[thick, color=red] (2,0) -- (3,0.5);
		\draw[thick, color=red] (5,0.5) -- (6,1);
	    \draw[thick, color=red] (5,0) -- (7,1);
	    \draw[thick, color=red] (5,-0.5) -- (7,0.5);
	    \draw[thick, color=red] (5,-1) -- (7,0);
	    \draw[thick, color=red] (5,-1.5) -- (7,-0.5);
	    \draw[thick, color=red] (5,-2) -- (7,-1);
	    \draw[thick, color=red] (5,-2.5) -- (7,-1.5);
	    \draw[thick, color=red] (5,-3) -- (7,-2);
	    \draw[thick, color=red] (5,-3.5) -- (7,-2.5);
	    \draw[thick, color=red] (5,-4) -- (7,-3);
	    \draw[thick, color=red] (5,-4.5) -- (7,-3.5);
	    \draw[thick, color=red] (5,-5) -- (7,-4);
	    \draw[thick, color=red] (5,-5.5) -- (7,-4.5);
	    \draw[thick, color=red] (5,-6) -- (7,-5);
	    \draw[thick, color=red] (5,-6.5) -- (7,-5.5);
	    \draw[thick, color=red] (5,-7) -- (7,-6);
	    \draw[thick, color=red] (6,-7) -- (7,-6.5);
		\end{tikzpicture}
		\caption{}\label{fig:cylinder6n+22(2,1)}
	\end{subfigure}%
	\hfill
	\begin{subfigure}[b]{0.3\textwidth}
		\centering
        \begin{tikzpicture}[scale=0.45]
	    \draw (0,0) rectangle (3,1);
	    \draw (5,0) rectangle (7,1);
	    \draw (5,0) rectangle (7,-1);
	    \draw (5,-1) rectangle (7,-2);
    	\draw (5,-2) rectangle (7,-3);
	    \draw (0,1) rectangle (1,2);
	    \draw (0,2) rectangle (1,3);
	    \draw (5,-3) rectangle (7,-4);
        \draw (5,-4) rectangle (7,-5);
        \draw (5,-5) rectangle (7,-6);
        \draw (5,-6) rectangle (7,-7);
    	\draw (1,0) -- (1,1);
    	\draw (2,0) -- (2,1);
    	\draw (3,0) -- (3,1);
        \draw (6,1) -- (6,-7);
        \draw [dashed] (3.8,0.5) -- (4.2,0.5);
        \draw [decorate,line width=0.5mm,decoration={brace,mirror}] (1,-0.5) --  (4.8,-0.5) node[pos=0.5,below=10pt,black]{$6n+3$};
        \draw [decorate, decoration={snake}] (3.5,0) -- (3.5,1);
        \draw [decorate, decoration={snake}] (4.5,0) -- (4.5,1);
        \draw (3,0) -- (3.5,0);
        \draw (3,1) -- (3.5,1);
        \draw (4.5,0) -- (5,0);
        \draw (4.5,1) -- (5,1); 
        \draw[pattern color=blue, pattern = north east lines] (5,1) -- (7,-1) -- (7,0) -- (6,1) -- (5,1) -- cycle;
	    \draw[pattern color=blue, pattern = north east lines] (5,0) --  (7,-2) -- (7,-3) -- (5,-1) -- (5,0) -- cycle;
	    \draw[pattern color=blue, pattern = north east lines] (5,-2) -- (7,-4) -- (7,-5) -- (5,-3) -- (5,-2) -- cycle;
	    \draw[pattern color=blue, pattern = north east lines] (5,-4) -- (7,-6) -- (7,-7) -- (5,-5) -- (5,-4) -- cycle;
        \draw[pattern color=blue, pattern = north east lines] (6,-7) -- (5,-6) -- (5,-7) -- (6,-7) -- cycle;
    	\draw[thick, color=red] (1,0) -- (0,1);
    	\draw[thick, color=red] (1,2) -- (0,3);
    	\draw[thick, color=red] (2,0) -- (0,2);
    	\draw[thick, color=red] (3,0) -- (2,1);
    	\draw[thick, color=red] (7,0) --  (6,1);
    	\draw[thick, color=red] (7,-1) -- (5,1);
    	\draw[thick, color=red] (7,-2) -- (5,0);
    	\draw[thick, color=red] (7,-3) -- (5,-1);
    	\draw[thick, color=red] (7,-4) -- (5,-2);
    	\draw[thick, color=red] (7,-5) -- (5,-3);
    	\draw[thick, color=red] (7,-6) -- (5,-4);
    	\draw[thick, color=red] (7,-7) -- (5,-5);
    	\draw[thick, color=red] (6,-7) -- (5,-6);
		\end{tikzpicture}
		\caption{}\label{fig:cylinder6n+22(-2,1)}
	\end{subfigure}%
	\hfill
	\begin{subfigure}[b]{0.3\textwidth}
		\centering
		\begin{tikzpicture}[scale=0.45]
	    \draw (0,0) rectangle (3,1);
	    \draw (5,0) rectangle (7,1);
	    \draw (5,0) rectangle (7,-1);
	    \draw (5,-1) rectangle (7,-2);
	    \draw (5,-2) rectangle (7,-3);
	    \draw (0,1) rectangle (1,2);
	    \draw (0,2) rectangle (1,3);
	    \draw (5,-3) rectangle (7,-4);
        \draw (5,-4) rectangle (7,-5);
        \draw (5,-5) rectangle (7,-6);
        \draw (5,-6) rectangle (7,-7);
    	\draw (1,0) -- (1,1);
    	\draw (2,0) -- (2,1);
    	\draw (3,0) -- (3,1);
        \draw (6,1) -- (6,-7);
        \draw [dashed] (3.8,0.5) -- (4.2,0.5);
        \draw [decorate,line width=0.5mm,decoration={brace,mirror}] (1,-0.5) --  (4.8,-0.5) node[pos=0.5,below=10pt,black]{$6n+3$};
        \draw [decorate, decoration={snake}] (3.5,0) -- (3.5,1);
        \draw [decorate, decoration={snake}] (4.5,0) -- (4.5,1);
        \draw (3,0) -- (3.5,0);
        \draw (3,1) -- (3.5,1);
        \draw (4.5,0) -- (5,0);
        \draw (4.5,1) -- (5,1); 
        \draw[pattern color=blue, pattern = north east lines] (5,0) -- (6,1) -- (7,1) -- (5,-1) -- (5,0) -- cycle;
	    \draw[pattern color=blue, pattern = north east lines] (5,-2) -- (7,0) -- (7,-1) -- (5,-3) -- (5,-2) -- cycle;
	    \draw[pattern color=blue, pattern = north east lines] (5,-4) -- (7,-2) -- (7,-3) -- (5,-5) -- (5,-4) -- cycle;
	    \draw[pattern color=blue, pattern = north east lines] (5,-6) -- (7,-4) -- (7,-5) -- (5,-7) -- (5,-6) -- cycle;
        \draw[pattern color=blue, pattern = north east lines] (6,-7) -- (7,-7) -- (7,-6) -- (6,-7) -- cycle;
	    \draw[thick, color=red] (0,0) -- (1,1);
	    \draw[thick, color=red] (0,1) -- (1,2);
	    \draw[thick, color=red] (0,2) -- (1,3);
	    \draw[thick, color=red] (1,0) -- (2,1);
	    \draw[thick, color=red] (2,0) -- (3,1);
	    \draw[thick, color=red] (5,0) -- (6,1);
	    \draw[thick, color=red] (5,-1) -- (7,1);
	    \draw[thick, color=red] (5,-2) -- (7,0);
	    \draw[thick, color=red] (5,-3) -- (7,-1);
	    \draw[thick, color=red] (5,-4) -- (7,-2);
	    \draw[thick, color=red] (5,-5) -- (7,-3);
    	\draw[thick, color=red] (5,-6) -- (7,-4);
	    \draw[thick, color=red] (5,-7) -- (7,-5);
	    \draw[thick, color=red] (6,-7) -- (7,-6);
        \end{tikzpicture}
		\caption{}\label{fig:cylinder6n+22(1,1)}
	\end{subfigure}
	\caption{Cylinder decomposition in direction \((2,1)\), $(-1,1)$ and $(1,1)$ of origamis in \(\mathcal{F}_6\). Here $\alpha_2$, $\beta_2$ and $\gamma_2$ are the core curve of the white cylinders in the respective decomposition.}
\end{figure}


\subsection{Arithmeticity for the family \boldmath{$\mathcal{F}_7$}}\label{ss.H4odd-family7}

We now consider the origamis $\mathcal{O}'_{K,N}$ with $K = 3n$ and $N = 5n$ in the family $\mathcal{F}_7$, see Figure~\ref{KNOrigami}. In this case  - different as for the families $\mathcal{F}_1$,\ldots, $\mathcal{F}_7$ - we obtain the basis $B$ of $H_1^{(0)}(\mathcal{O}'_{K,N},\mathbb{Q})$ from Equation (\ref{BasisEforF7}). Also the transformation matrix $BC_1$ and the fundamental matrix $\tilde{G}$ differ in this case. 
Here we obtain for $B$:
\begin{equation}
  \begin{split}
    \begin{IEEEeqnarraybox}[][c]{rClCrCl}\label{BasisEForFamily7}
      e_1 &=& \sigma_2-2\sigma_1,&\qquad & e_2 &=& \sigma_3-5n\sigma_1\\
      e_3 &=& \zeta_2-2\zeta_1,  &      &  e_4 &=& \zeta_3-3n\zeta_1    
    \end{IEEEeqnarraybox}  
  \end{split}
\end{equation}
Then the corresponding transformation matrix (cf. \ref{BaseChange}) is:
  \[
  BC_1 = \begin{pmatrix}-2& -5n & 0 &0\\1 &0&0&0\\ 0&1&0&0\\0&0&-2&-3n\\ 0&0&1&0\\ 0&0&0&1\end{pmatrix}
  \]
The fundamental matrix of the intersection form in terms of the basis $\sigma_i,\xi_i$ is:
\begin{equation}
    \tilde{G} = 
    \begin{pmatrix}
     0 & 0 & 0 & 0 & 0 & 1\\
      0 & 0 & 0 & 0 & 1 & 1\\
      0 & 0 & 0 & 1 & 1 & 1\\
      0 & 0 &  -1 & 0 & 0 & 0\\
      0  & -1 &  -1 & 0 & 0 & 0 \\
      -1  & -1 & -1 & 0 & 0 & 0\\
    \end{pmatrix}
\end{equation}
Now, the tables for this family are:    
\begin{table}[!htbp]
\centering
\begin{subtable}{0.6\textwidth}
\centering
\begin{tabular}{|c|c |c| c| c| c| c|} 
 \hline
 \multirow{2}{*}{$\Omega$} & \multicolumn{2}{c|} {$\theta_1\parallel(1,1)$} & \multicolumn{2}{c|}{$\theta_2\parallel(1,-1)$} &  \multicolumn{2}{c|}{$\theta_3\parallel(1,3)$} \\
 \cline{2-7}
 &$\alpha_1$ & $\alpha_2$ & $\beta_1$ & $\beta_2$ & $\gamma_1$ & $\gamma_2$ \\ [0.5ex] 
 \hline
$\sigma_1$ & 1  & 0   &  0 & 1 & 1 & 2    \\ 
$\sigma_2$ & 1  & 1   &  1 & 1 & 1 & 5   \\
$\sigma_3$ & 1  & $5n-1$ &  1 &$5n-1$& 1 &$5n-1$    \\
$\zeta_1$ &0  & -1  & 0 & 1 & 0 & -1   \\
$\zeta_2$ &-1 & -1  & 1 & 1 & 0 &  -2    \\ 
$\zeta_3$ &$-(3n-1)$&-1& 1 & $3n-1$ & $-n$ & $-2n$
  \\ [1ex] 
 \hline
\end{tabular}
\caption{}
\end{subtable}
\hfill
\begin{subtable}{0.38\textwidth}
\centering
\begin{tabular}{|c|c |c| c|} 
 \hline
 &$\theta_1$ & $\theta_2$ & $\theta_3$ \\ [0.5ex] 
 \hline
$c_1$ & 1 & 1 & 1  \\ 
$c_2$ & 1 & 1 & 1 \\
$f_1$ & $3n$ &  2 & $n$\\
$f_2$ & $5n$ &  $8n-2$&  $7n$ \\
$k_1$ &  $5n$ & $8n-2$  & $7n$\\
$k_2$ &  $3n$ & 2 & $n$\\
 \hline
\end{tabular}
\caption{}
\end{subtable}
\caption{Intersections forms between the waist curves of the horizontal and vertical cylinders and data for multitwist for the family $\mathcal{F}_7$.}
\label{multitwistdataF7}
\end{table}
\begin{table}[!htbp]
\centering
\begin{tabular}{|c|c |c| c| c|} 
 \hline
 &$e_1$ & $e_2$ & $e_3$ & $e_4$ \\ [0.5ex] 
 \hline
 $X_1$ & $-5n$ & $3n$ & $3n$ & $-5n$ \\ 
 $X_2$ & $8n-2$ & -2 & $-8n+2$ & 2  \\
 $X_3$ & $n$ &  $n$ & $3n$ & $-5n$ \\
 $D_{1}$ & 1 & $5n-1$ & 1 & $3n-1$\\ 
 $D_{2}$ & -1 & -1 & -1 & -1 \\
 $D_{3}$ &  1  & $5n-1$ & 0 & $n$ \\ [1ex] 
 \hline
\end{tabular}
\caption{Coefficients for the basis in $H_1^{(0)}(M,\mathbb{Q})$ and coefficients for the action of the associated transvection on the elements of the basis for the family $\mathcal{F}_7$.}
\label{coefbaseandActionF7}
\end{table}
\begin{table}[!htbp]
\centering
\begin{tabular}{|c|c |c|} 
 \hline
 &$X_1$ & $X_2$  \\ [0.5ex] 
 \hline
 $D_{1}$ & $X_1$ & $X_2-4nX_1$  \\ 
 $D_{2}$ & $X_1+4nX_2$ & $X_2$     \\
 $D_{3}$ & $X_1+n(5n-4)^2X_2-n(5n-4)e$ &  $X_2$ \\
 \hline
 Annihilator & \multicolumn{2}{c|}{$e=(5n-4)X_2-2X_3$}\\
 \hline
 Word & \multicolumn{2}{c|}{$A_{3}^{4}A_{2}^{-(5n-4)^2}$ } \\ [1ex] 
 \hline
\end{tabular}
\caption{Action of each transvection on the ordered basis $X_1,~X_3,~e$ and nontrivial element $\Sp(W)$ for the family $\mathcal{F}_7$.}
\label{MatricesAndWordF7}
\end{table}

\begin{figure}[!htbp]
	\centering
	\noindent\begin{subfigure}[b]{0.3\textwidth}
		\centering
		\begin{tikzpicture}[scale=0.6]
			\draw (0,0) rectangle (1,1);
			\draw (1,1) rectangle (2,2);
			\draw (0,1) rectangle (1,2);
			\draw (0,2) rectangle (1,4);
			\draw (0,4) rectangle (1,5);
			\draw (1,0) rectangle (2,1);
			\draw (2,0) rectangle (4,1);
			\draw (4,0) rectangle (5,1);

			\draw[pattern color=blue, pattern = north east lines] (0,0) -- (1,0) -- (2,1) -- (2,2) -- (0,0) -- cycle;
			\draw[pattern color=blue, pattern = north east lines] (0,1) -- (1,2) -- (0,2) -- (0,1) -- cycle;
     		\draw[pattern color=blue, pattern = north east lines] (0,4) -- (1,5) -- (0,5) -- (0,4) -- cycle;
    		\draw[pattern color=blue, pattern = north east lines] (0,4) -- (1,4) -- (1,5) -- (0,4) -- cycle;
     
    		\draw[thick, color=red] (0,4) -- (1,5);
    		\draw[thick, color=red] (0,1) -- (1,2);
			\draw[thick, color=red] (0,0) -- (2,2);
			\draw[thick, color=red] (1,0) -- (2,1);
			\draw[thick, color=red] (4,0) -- (5,1);
      		
      		\draw[dashed] (2.5,0.5) -- (3.5,0.5);
			\draw[dashed] (0.5,2.5) -- (0.5,3.5);
		\end{tikzpicture}
		\caption{}\label{fig:F7Dir11}
	\end{subfigure}%
	\hfill
	\begin{subfigure}[b]{0.3\textwidth}
		\centering
		\begin{tikzpicture}[scale=0.6]
			\draw (0,0) rectangle (1,1);
			\draw (1,1) rectangle (2,2);
			\draw (0,1) rectangle (1,2);
			\draw (0,2) rectangle (1,4);
			\draw (0,4) rectangle (1,5);
			\draw (1,0) rectangle (2,1);
			\draw (2,0) rectangle (4,1);
			\draw (4,0) rectangle (5,1);

			\draw[pattern color=blue, pattern = north east lines] (1,0) -- (0,1) -- (0,2) -- (2,0) -- (1,0) -- cycle;
			\draw[pattern color=blue, pattern = north east lines] (2,1) -- (1,2) -- (2,2) -- (2,1) -- cycle;
     
   			\draw[thick, color=red] (1,0) -- (0,1);
    		\draw[thick, color=red] (2,0) -- (0,2);
			\draw[thick, color=red] (2,1) -- (1,2);
			\draw[thick, color=red] (1,4) -- (0,5);
			\draw[thick, color=red] (5,0) -- (4,1);

			\draw[dashed] (2.5,0.5) -- (3.5,0.5);
			\draw[dashed] (0.5,2.5) -- (0.5,3.5);
		\end{tikzpicture}
		\caption{}\label{fig:F7Dir1-1}
	\end{subfigure}%
	\hfill
	\begin{subfigure}[b]{0.3\textwidth}
		\centering
		\begin{tikzpicture}[scale=0.6]
			\draw (0,0) rectangle (1,1);
			\draw (1,1) rectangle (2,2);
			\draw (0,1) rectangle (1,2);
			\draw (0,2) rectangle (1,4);
			\draw (0,4) rectangle (1,5);
			\draw (1,0) rectangle (2,1);
			\draw (2,0) rectangle (4,1);
			\draw (4,0) rectangle (5,1);

			\draw[pattern color=blue, pattern = north east lines] (0,0) -- (0.66,2) -- (1,2) -- (0.33,0) -- (0,0) -- cycle;
			\draw[pattern color=blue, pattern = north east lines] (0.66,4) -- (1,4) -- (1,5) -- (0.66,4) -- cycle;
     		\draw[pattern color=blue, pattern = north east lines] (0,4) -- (0.33,5) -- (0,5) -- (0,4) -- cycle;
   
   			\draw[thick, color=red] (0,4) -- (0.33,5);
    		\draw[thick, color=red] (0.33,4) -- (0.66,5);
			\draw[thick, color=red] (0.66,4) -- (1,5);
			\draw[thick, color=red] (0,1) -- (0.33,2);
    		\draw[thick, color=red] (0,0) -- (0.66,2);
			\draw[thick, color=red] (0.33,0) -- (1,2);
	  		\draw[thick, color=red] (0.66,0) -- (1.33,2);
    		\draw[thick, color=red] (1,0) -- (1.66,2);
			\draw[thick, color=red] (1.33,0) -- (2,2);
			\draw[thick, color=red] (1.66,0) -- (2,1);
	  
   		 	\draw[thick, color=red] (4,0) -- (4.33,1);
    		\draw[thick, color=red] (4.33,0) -- (4.66,1);
			\draw[thick, color=red] (4.66,0) -- (5,1);
	
			\draw[dashed] (2.5,0.5) -- (3.5,0.5);
			\draw[dashed] (0.5,2.5) -- (0.5,3.5);
		\end{tikzpicture}
		\caption{}\label{fig:F7Dir13}
	\end{subfigure}
	\caption{In the three figures \ref{fig:F7Dir11}, \ref{fig:F7Dir1-1} and \ref{fig:F7Dir13} we can see the cylinder decompositions in direction $(1,1)$, $(1,-1)$ and $(1,3)$ for the family $\mathcal{F}_7$, where $\alpha_2$, $\beta_2$ and $\gamma_2$ are the core curves of the white cylinders of the respective decomposition.}
\end{figure}



\section{Arithmeticity of Kontsevich--Zorich monodromies in $\mathcal{H}^{hyp}(4)$}\label{s.H4hyp}


In this section we find a one-parameter family of origamis in $\mathcal{H}^{hyp}(4)$ whose Kontsevich-Zorich monodromy is arithmetic.

Let $\mathcal{P}_{N}$ be the origami associated to the following pair of permutations (cf. Figure~\ref{FigureF8}):
$$h=(1)(2, \dots, N+1)\quad \textrm{and} \quad v=(1,3,2)(4,5,\dots,N+1).$$

\begin{figure}[!htbp]
\centering
\begin{tikzpicture}
  \draw (0.5,1.5) node {1};
  \draw (0.5,.5)  node {2};
  \draw (1.5,.5)  node {3};
  \draw (2.5,.5)  node {4};
  \draw (3.5,.5)  node {5};
  \draw (6.5,.5)  node {N+1};
 \draw (0,0) rectangle (1,1);
\draw (0,1) rectangle (1,2);
\draw (1,0) rectangle (2,1);

\draw (2,0) rectangle (3,1);
\draw (3,0) rectangle (4,1);
\draw (4,0) rectangle (6,1);
\draw (6,0) rectangle (7,1);

\draw (0.5,2) node {$\backslash$};
\draw (1.5,0) node {$\backslash$};

\draw (0.5,0) node {$\backslash \backslash$};
\draw (1.5,1) node {$\backslash \backslash$};

\draw (2.5,1) node {$\times$};
\draw (3.5,0) node {$\times$};

\draw (2.5,0) node {$\wr$};
\draw (6.5,1) node {$\wr$};

\draw [decorate,line width=0.5mm,
    decoration = {brace,mirror}] (0,-0.5) --  (7,-0.5) node[pos=0.5,below=10pt,black]{$N$};

\draw [dashed] (4.5,0.5) -- (5.5,0.5);


\end{tikzpicture}
\caption{The origami $\mathcal{P}_{N}$} \label{FigureF8}
\end{figure}

Note that $\mathcal{P}_{N}\in\mathcal{H}^{hyp}(4)$ has HLK invariant $(1,[2,2,2])$ or $(0,[3,3,1])$ depending if $N$ is odd or even. Furthermore, observe that $\mathcal{P}_N$ decomposes into two cylinders in the horizontal and in the vertical direction, rather than in three cylinders as in Section~\ref{s.H4odd}. However,  $(-1,1)$ always is a 3-cylinder direction. Moreover, if $N = 2n$ is even then $(1,1)$ is a further 3-cylinder direction. Thus, as before, it follows from  \cite{LN} that their waist curves generate the homology of $\mathcal{P}_{N}\in\mathcal{H}^{hyp}(4)$. We denote the waist curves in direction $(1,1)$ by $\sigma_1$, $\sigma_2$, $\sigma_3$ such that their combinatorial lengths are $1$, $n+1$ and $n-1$, respectively. We denote the waist curves in direction $(-1,1)$ by $\zeta_1$, $\zeta_2$ and $\zeta_3$ such that their combinatorial lengths are 1, 2 and 2, respectively, and $\zeta_2$ runs through the square labeled with 1. Then, we obtain the basis
\begin{equation}\label{TildeBForF8}
  \tilde{B} = \{\sigma_1,\sigma_2,\sigma_3,\zeta_1,\zeta_2,\zeta_3\} 
\end{equation}
of $H_1(\mathcal{P}_{N},\QQ)$.
We further choose as base of $H_1^{(0)}(\mathcal{P}_{N},\mathbb{Q})$ the basis $B = \{e_1,\ldots, e_4\}$ with
\begin{equation}\label{BForF8}
  e_1 = \sigma_2-(n+1)\sigma_1, \quad e_2 = \sigma_3-(n-1)\sigma_1, \quad e_3 = \quad \zeta_2-2\zeta_1, \quad e_4 = \zeta_3-2\zeta_1.
\end{equation}

\begin{proposition}\label{ZdenisityInHyp}
  For $n$ large enough, the Kontsevich-Zorich monodromy of $\mathcal{P}_{2n}$ is Zariski dense in $\SP(H_1^{(0)}(\mathcal{P}_{2n},\mathbb{R}))$.
\end{proposition}
\begin{proof}
We use again Lemma~\ref{lemma:galois-pinching-criterion} and assume the notations from there. First, we compute the action of the horizontal and the vertical Dehn multitwist on $H_1^{(0)}(\mathcal{P}_{2n},\mathbb{Q})$.
Observe that the intersection numbers $\Omega(\sigma_i,\zeta_i)$ with $i,j\in\{1,2,3\}$ are:
\[
\begin{array}{|c|c|c|c|}
  \hline
  \Omega(\sigma_i,\zeta_j)&\zeta_1&\zeta_2&\zeta_3\\
  \hline
  \sigma_1 &0&1&1\\
  \sigma_2 &1&3&2\\
  \sigma_3 &1&0&1\\
  \hline
\end{array}
\]
The inverse moduli of the three cylinders in direction $(1,1)$ are $\mu_1^{-1} = 1$, $\mu_2^{-1} = n+1$ and $\mu_3^{-1} = n-1$. Hence the Dehn multitwist $\Tdiag$ in directions $(1,1)$ is of strength $L = (n-1)(n+1)$ with multiplicities $k_1 = n^2-1$, $k_2 = n-1$ and $k_3 = n+1$. Hence $\Tdiag$ 
acts by (\ref{eq:action-multitwist-homology}) via:
\begin{align*}
  \Tdiag \colon &  \zeta_1 \mapsto \zeta_1 - (n-1)\sigma_2 - (n+1)\sigma_3,\\
                &  \zeta_2 \mapsto \zeta_2 - (n^2-1)\sigma_1 - 3(n-1)\sigma_2,\\
                &  \zeta_3 \mapsto \zeta_3 - (n^2-1)\sigma_1 - 2(n-1)\sigma_2 - (n+1)\sigma_3
\end{align*}
For direction $(-1,1)$ we obtain the inverse moduli $\mu_1^{-1} = 1/(N-3)$, $\mu_2^{-1} = 2$ and $\mu_3^{-1} = 2$.  Hence the corresponding Dehn multitwist $\Tadiag$
is of strength $2$ with multiplicities $k_1 = 2(N-3)$, $k_2 = 1$ and $k_3 = 1$. $\Tadiag$ 
acts by (\ref{eq:action-multitwist-homology}) via:
\begin{align*}
  \Tadiag \colon &  \sigma_1 \mapsto \sigma_1 + \zeta_2 + \zeta_3,\\
                 &  \sigma_2 \mapsto \sigma_2 + 2(N-3)\zeta_1 + 3\zeta_2 + 2 \zeta_3,\\
                 &  \sigma_3 \mapsto \sigma_3 + 2(N-3)\zeta_1 + \zeta_3
\end{align*}


Thus, the matrices $A$ and $B$ of the restrictions of these actions to $H_1^{(0)}(\mathcal{P}_{2n},\mathbb{Q})$ with respect to the basis $B = \{b_1, \ldots, b_4\}$ are
$$A=\left(\begin{array}{cccc}1&0&1-n&0\\0&1&2(n+1)&n+1\\0&0&1&0\\0&0&0&1\end{array}\right), \quad B=\left(\begin{array}{cccc}1&0&0&0\\0&1&0&0\\ -n+2&-n+1&1&0 \\ -n+1 & -n+2 & 0 & 1\end{array}\right).$$

The characteristic polynomial of $A^{-1}B$ is $x^4+ax^3+bx^2+ax+1$ with $a=-2(n^2+n-1)$ and $b=2n^3+n^2+2n-3$. Hence, $t=-a-4$ and $d=b+2a+2$ are positive for all $n$ sufficiently large. Furthermore, 
\begin{align*}
  \Delta_1 &= 4(n^4-2n^2-4n+6) \quad \text{and} \\
  \Delta_2 &=  (2n^3 + 5n^2 + 6n - 5)(2n - 3)(n + 1)(n - 1),
\end{align*}
are factorizations of the polynomial into irreducible factors in $\Z[X]$ computed with Mathematica. It follows that the square-free parts of the polynomials $\Delta_1(n)$, $\Delta_2(n)$ and $\Delta_1(n)\cdot\Delta_2(n)$ are of degree greater or equal to three, hence by \cite[Prop.~6.18]{MMY} $\Delta_1$, $\Delta_2$ and $\Delta_3$ are squares at most for finitely many $n$. Therefore,   by Lemma~\ref{lemma:galois-pinching-criterion}, $A^{-1}B$ is Galois-pinching for  $n$ sufficiently large. Since $\mathcal{P}_{2n}$ decomposes into two cylinders for example in directions $(1,0)$, Proposition~\ref{prop:density} implies the claim.
\end{proof}


\begin{theorem}\label{t.H4hyp-family1} The Kontsevich--Zorich monodromy of $\mathcal{P}_{2n}$ is arithmetic for all but finitely many choices of  $n\in\mathbb{N}$. 
\end{theorem}
\begin{proof}
  By Proposition~\ref{ZdenisityInHyp} we may assume -- by choosing $n$ large enough -- that the  Kontsevich-Zorich monodromy of $\mathcal{P}_{2n}$ is Zariski dense in $\SP(H_1^{(0)}(\mathcal{P}_{2n},\mathbb{R}))$. We apply again the strategy described in  Section~\ref{TheSteps} with some modifications. We use the notations from \ref{TheSteps}.\\[2mm]  
\textbf{Step (A).}
We choose the bases $\tilde{B} = \{\sigma_1,\ldots,\zeta_3\}$ of $H_1(\mathcal{P}_{N},\Q)$  and  $B = \{e_1,\ldots, e_4\}$ of $H_1^{(0)}(\mathcal{P}_{N},\Q)$ from  (\ref{TildeBForF8}) and (\ref{BForF8}).\\[2mm]
\textbf{Step (B).}
 We choose the two-cylinder directions  $\theta_1 = (1,0)$, $\theta_2 = (0,1)$ and $\theta_3 = (2N,1)$.\\[2mm]
\textbf{Step (C).}
  For the three directions $\theta_1$, $\theta_2$, $\theta_3$ we obtain the characteristic numbers shown in Table~\ref{NumbersF8} on the left side
  and hence as generators $X_i$ of the images of $D_i - \Id$ for $i \in \{1,2,3\}$:
  \begin{equation}\label{XiForF8}
    X_1 = \alpha_2 - N\alpha_1, \quad X_2 = 3\beta_2 - (N-2)\beta_1, \quad X_3 = 3\gamma_1 - (N-2)\gamma_2  
  \end{equation}

\begin{table}[!htbp]
\centering
\begin{subtable}{\textwidth}
\centering
\begin{tabular}{|c|c |c| c| c| c| c|} 
 \hline
 \multirow{2}{*}{$\Omega$} & \multicolumn{2}{c|} {$\theta_1\parallel(1,0)$} & \multicolumn{2}{c|}{$\theta_2\parallel(0,1)$} &  \multicolumn{2}{c|}{$\theta_3\parallel(2N,1)$} \\
 \cline{2-7}
 &$\alpha_1$ & $\alpha_2$ & $\beta_1$ & $\beta_2$ & $\gamma_1$ & $\gamma_2$ \\ [0.5ex] 
 \hline
$\alpha_1$ & 0  & 0      &  1 & 0   & 0   & 1    \\ 
$\alpha_2$ & 0  & 0      &  2 & N-2 & N-2 & 2   \\
$\beta_1$  &-1  &-2      &  0 & 0   & $-4N+8$ & $-(2N+8)$    \\
$\beta_2$  &0   & -(N-2) &  0 & 0 & $-(N-2)(2N-4)$ & -4(N-2)   \\ [1ex] 
 \hline
\end{tabular}
\caption{}
\end{subtable}
\hfill
\begin{subtable}{\textwidth}
\centering
\begin{tabular}{|c|c|c|c|} 
 \hline
 &$\theta_1$ & $\theta_2$ & $\theta_3$ \\ [0.5ex] 
 \hline
 $c_1$ & 1 & 1 & 1  \\ 
 $c_2$ & 1 & 1 & 1 \\
 $f_1$ & $1$ &  3 & $N-2$\\
 $f_2$ & $N$ &  $N-2$&  $3$ \\
 $k_1$ &  $N$ & $N-2$  & $3$\\
 $k_2$ &  $1$ & 3 & $N-2$\\
 \hline
      \end{tabular}
\caption{}
\end{subtable}
\caption{Tables for the proof of Theorem~\ref{t.H4hyp-family1}: (A) Data for multitwists in direction $\theta_1$, $\theta_2$, $\theta_3$. (B) Intersection numbers of waist curves.}
\label{NumbersF8}
\end{table}

\textbf{Step (D).}
Since for the origami $\mathcal{P}_{N}$ we nicely see the intersection numbers of the waist curves in direction $\theta_1$, $\theta_2$ and $\theta_3$ 
from the image of the origami (cf. Table~\ref{NumbersF8}, right side), we proceed slightly different than in Section~\ref{s.H4odd}.
Using Table~\ref{NumbersF8} and (\ref{XiForF8}) we directly obtain that
\begin{align*}
    \Omega(X_1,X_2) &= (N+1)(N-2), \\
    \Omega(X_2,X_3) &= -2(N-2)^2(N+1),\\
    \Omega(X_3,X_1) &= -(N+1)(N-2).  
\end{align*}
By (\ref{eq:action-multitwist-homology}), we further obtain for the corresponding  Dehn multitwists $T_1$, $T_2$, $T_3$:
\begin{IEEEeqnarray*}{llcllcl}
    T_1\colon& \beta_1 &\mapsto& \beta_1 - N\alpha_1 - 2\alpha_2,  &\beta_2 &\mapsto& \beta_2 -(N-2)\alpha_2,\\
    &\gamma_1 &\mapsto& \gamma_1 - (N-2)\alpha_2, &\gamma_2 &\mapsto& \gamma_2 - N\alpha_1 - 2\alpha_2\\
    T_2\colon& \alpha_1 &\mapsto& \alpha_1 + (N-2)\beta_1, &\alpha_2 &\mapsto& \alpha_2  + 2(N-2)\beta_1 + 3(N-2)\beta_2\\
    & \gamma_1 &\mapsto& \gamma_1 + 2(N-2)^2[2\beta_1 + 3\beta_2], &\gamma_2 &\mapsto& \gamma_2 + (N-2)[(2N+8)\beta_1 + 12\beta_2]\\
    T_3\colon& \alpha_1 &\mapsto& \alpha_1 + (N-2)\gamma_2, &\alpha_2 &\mapsto& \alpha_2 + (N-2)[3\gamma_1 + 2 \gamma_2]\\
       & \beta_1 &\mapsto& \beta_1 - (N-2)[12\gamma_1 + (2N+8)\gamma_2], \quad&\beta_2 &\mapsto& \beta_2 - (N-2)^2[6\gamma_1 + 4\gamma_2]
\end{IEEEeqnarray*}
$X_1$, $X_2$ and $X_3$ are linearly independent. Namely, $r_1X_1 + r_2X_2 + r_3X_3 = 0$ implies - by taking the intersection product with $X_1$ and $X_2$ - that $r_3 = -r_2$ and $r_1 = 2(N-2)r_2$ for $N > 2$.The intersection product with $\zeta_1$ in the equation $2(N-2)\,r_2\,X_1 + r_2\,X_2 - r_2\,X_3$ leads to $r_2 = 0$. Recall for this that the class $\zeta_1$ is represented e.g. by the cycle in direction $(-1,1)$ from the lower edge of Square 5 to the upper edge of Square 4 in Figure~\ref{FigureF8} and use Table~\ref{INZetaF8}. \\
 \begin{table}[H]
\[
\begin{array}{|c|cccccc|}
  \hline
        \Omega(\zeta_1,\cdot)& \alpha_1&\alpha_2&\beta_1&\beta_2&\gamma_1&\gamma_2\\
        \hline
        \zeta_1 & 0& -1 & 0 & -1 &2N-4&-4 \\
        \hline
      \end{array}
\]
\caption{Intersection numbers of $\zeta_i$ and waist curves in direction $\theta_i$}
\label{INZetaF8}
\end{table}
 
 \textbf{Step (E).} We obtain $e = 2(N-2)X_1 + X_2 - X_3$.   
     The transformation matrices of the actions $D_1$, $D_2$ and $D_3$ with respect to the basis $\{X_1,X_2,e\}$ of $W = <X_1,X_2,X_3>$ are: 
\begin{align*}
    A_1 &= \begin{pmatrix} 1&-(N-2)&0\\0&1&0\\0&0&1\end{pmatrix}, \quad A_2 =\begin{pmatrix}1&0&0\\N-2 &1&0\\0&0&1\end{pmatrix}\quad \text{and} \\[2mm]
    A_3 &= \begin{pmatrix} 1 + 2(N-2)^2&-4(N-2)^3&0\\N - 2&1 - 2(N-2)^2&0\\ -(N-2) &2(N-2)^2&1\end{pmatrix}
\end{align*}
In particular,
  \[
  A_1^2A_3^{-1}A_1^{-2}A_2 = 
  \begin{pmatrix}
    1&0&0\\0&1&0\\N-2 &0&1 
  \end{pmatrix}
  \]
is the desired element.
\end{proof}



\section{Arithmeticity of certain Stairs origamis in genus four}\label{s.H6}

Let \(N\geq 4\) and \(M=4+2m\) with \(m\geq 0\). In this section
we consider the origami \(\mathcal{O}_{N,M}\) of degree $N+M+2$ associated to the pair of permutations \(h,v \in\mbox{Sym}(\{1,\dots,N+M+2\})\), where

\begin{align*}
h=&(1,2,3\dots,N)(N+1,N+2,N+3)(N+4,N+5)(N+6)\dots(N+M+2)\\
v=&(1,\ N+1,\ N+4,\ N+6,\dots, N+M+2)(2,\ N+2,\ N+5)(3,\ N+3)(4)\dots(N).
\end{align*}

\begin{figure}[htbp]
\begin{center}
\begin{tikzpicture}[scale=0.6]
	\draw (0,0) rectangle (6,1);
	\draw (0,1) rectangle (3,2);
	\draw (0,2) rectangle (2,3);
	\draw (0,3) rectangle (1,4);
	\draw (0,0) rectangle (1,8);
	\draw (1,0) -- (1,3);
	\draw (2,0) -- (2,2);
	\draw (3,0) -- (3,1);
	\draw (5,0) -- (5,1);
	\draw (0,6) -- (1,6);
	\draw (0,7) -- (1,7);
	
    
    \draw [decorate,line width=0.5mm,decoration={brace}] (-0.5,0) --  (-0.5,8) node[pos=0.5,left=10pt,black]{$M$};
    \draw [decorate,line width=0.5mm,decoration={brace,mirror}] (0,-1.1) --  (6,-1.1) node[pos=0.5,below=10pt,black]{$N$};

    \draw[thick,color=green] (0,0.3)--(3.4,0.3);
    \draw[dashed,thick,color=green] (3.5,0.3) -- (4.5,0.3);
	\draw[thick,color=green, ->] (4.6,0.3)--(6,0.3);
	\node[color=green, right] (sigma6) at (6,0.3) {\(\sigma_N\)};
	\draw[thick,color=green, ->] (0,1.3)--(3,1.3);
	\node[color=green, right] (sigma2) at (3,1.3) {\(\sigma_3\)};
	\draw[thick,color=green, ->] (0,2.3)--(2,2.3);
	\node[color=green, right] (sigma3) at (2,2.3) {\(\sigma_2\)};
	\draw[thick,color=green, ->] (0,6.3)--(1,6.3);
	\node[color=green, right] (sigma1) at (1,6.3) {\(\sigma_1\)};
	
	\draw[thick,color=red] (0.3,0)--(0.3,4.4);
	\draw[dashed,thick,color=red] (0.3,4.5) -- (0.3,5.5);
	\draw[thick, color=red,->] (0.3,5.6) -- (0.3,8);
	\node[color=red, below] (zeta1) at (0.3,0) {\(\zeta_M\)};
	\draw[thick,color=red, ->] (1.3,0)--(1.3,3);
	\node[color=red, below] (zeta2) at (1.3,0) {\(\zeta_3\)};
	\draw[thick,color=red, ->] (2.3,0)--(2.3,2);
	\node[color=red, below] (zeta3) at (2.3,0) {\(\zeta_2\)};
	\draw[thick,color=red, ->] (5.3,0)--(5.3,1);
    \node[color=red,below] (zeta4) at (5.3,0) {\(\zeta_1\)};
\end{tikzpicture}
\end{center}
\caption{Origami \(\mathcal{O}_{N,M}\) with horizontal waist curves \(\sigma_1,\sigma_2,\sigma_3,\sigma_N\) and vertical waist curves \(\zeta_1,\zeta_2,\zeta_3,\zeta_M\)
}
\label{fig:OrigamiNM}
\end{figure}

We will show for finitely many of the origamis \(\mathcal{O}_{N,M}\) that their Kontsevich-Zorich monodromy is arithmetic. 

\begin{theorem}\label{KoZoGenus4}
  For all \(N = 3m+4\) and \(M = 2m+4\) with \(m \in  \{0,\ldots, 50\}\)  the Kontsevich-Zorich monodromy $\KoZoMon$ of  \(\mathcal{O}_{N,M}\) is arithmetic.
\end{theorem}

The main difficulty for this sequence of origamis is to prove that the Kontsevich-Zorich monodromy $\KoZoMon$ is Zariski-dense. We use in this case a different approach than we used in Section~\ref{s.H4odd} and \ref{s.H4hyp} for the origamis in genus 3. This approach is described in Section~\ref{ssec:zariski-density-genus4}. Our proof is computer-aided and we need the explicit description of several elements in $\KoZoMon$ which we develop in Section~\ref{favour}. In Section~\ref{ssec:zariski-density-genus4} we prove the density of the Kontsevich-Zorich monodromy of the origamis considered in Proposition~\ref{KoZoGenus4}. Finally, in Section~\ref{ArithmeticInGenus4} we show arithmeticity with similar methods as in Section~\ref{s.H4odd} and Section~\ref{s.H4hyp}.

\subsection{Our favorite Dehn twists}\label{favour} 
In this subsection we use cylinder decompositions of the origami \(\mathcal{O}_{N,M}\) in several directions to construct Dehn multitwists. We will compute their transformation matrices with respect to a basis $B^{(0)}$ of \(H_1^{(0)}(\mathcal{O}_{N,M},\Z)\). This will be the cornerstone for the arguments in Section~\ref{ssec:zariski-density-genus4} and  \ref{ArithmeticInGenus4}.


The waist curves \(\sigma_1,\sigma_2,\sigma_3,\sigma_N\) of the four maximal horizontal cylinders and the four waist curves \(\zeta_1,\zeta_2,\zeta_3,\zeta_M\) of the four maximal vertical cylinders form a basis \(B\) of \(H_1(\mathcal{O}_{N,M},\mathbb{Q})\) (see Figure \ref{fig:OrigamiNM}). We have the following holonomy vectors for these waist curves:
\begin{IEEEeqnarray*}{lCrClCrClCrClCr}
\mbox{hol}(\sigma_1)&=&(1,0), &\quad& \mbox{hol}(\sigma_2)&=&(2,0), &\quad & 
\mbox{hol}(\sigma_3)&=&(3,0), &\quad& \mbox{hol}(\sigma_N)&=&(N,0)\\
\mbox{hol}(\zeta_1) &=&(0,1), &\quad& \mbox{hol}(\zeta_2) &=&(0,2), &\quad&
\mbox{hol}(\zeta_3) &=&(0,3), &\quad& \mbox{hol}(\zeta_M) &=&(0,M).
\end{IEEEeqnarray*}
We have that \(B^{(0)}=\{\Sigma_1,\Sigma_2,\Sigma_N,Z_1,Z_2,Z_M\}\) is a basis of the non-tautological part \(H^{(0)}(\mathcal{O}_{N,M},\mathbb{Q})\), where
\begin{IEEEeqnarray*}{lCrClCrClCr}
\Sigma_1 &=&\sigma_2-2\sigma_1, & \quad & \Sigma_2&=&\sigma_3-3\sigma_1, &\quad& \Sigma_N &=& \sigma_N-N\sigma_1, \\
    Z_1  &=&\zeta_2-2\zeta_1,   &       &      Z_2&=&\zeta_3-3\zeta_1,   &\quad&    Z_M   &=&\zeta_M-M\zeta_1.
\end{IEEEeqnarray*}
That \(B^{(0)}\) is indeed a basis for example  follows  from the fact that the fundamental matrix \(\tilde{G}\) of the intersection form $\Omega$ with respect to \(B^{(0)}\) (see \ref{FMatrixInGenus4}) is regular:
\begin{displaymath}\label{FMatrixInGenus4}
\tilde{G} =  \left(\begin{array}{cccccc}
                 0& 0&         0& 0& 1& -1 \\
                 0& 0&         0&1&  1&-2 \\
                 0& 0&         0& -1&-2& -N-M+1 \\
                 0& -1&        1& 0& 0& 0 \\
                -1&-1&         2& 0& 0& 0 \\
                 1 &2& N+M-1& 0& 0& 0 
  \end{array}\right)
\end{displaymath}
We will use the  Dehn multitwists along the waist curves of the cylinders in the directions \((1,1)\), \((1,-1)\), \((1,2)\), \((1,-2)\) as well as the horizontal and vertical direction.  In the following we compute their actions on \(H_1^{(0)}(\mathcal{O}_{N,M},\mathbb{Q})\).

\begin{figure}[htbp]
	\centering
	\noindent\begin{subfigure}[b]{0.5\textwidth}
		\centering
\begin{tikzpicture}[scale=0.6]
	\draw (0,0) rectangle (3,1);
	\draw (5,0) rectangle (6,1);
	\draw (0,1) rectangle (3,2);
	\draw (0,2) rectangle (2,3);
	\draw (0,3) rectangle (1,4);
	\draw (0,0) rectangle (1,4);
	\draw (0,6) rectangle (1,8);
	\draw (1,0) -- (1,3);
	\draw (2,0) -- (2,2);
	\draw (3,0) -- (3,1);
	\draw (5,0) -- (5,1);
	\draw (0,6) -- (1,6);
	\draw (0,7) -- (1,7);
    \draw [dashed] (3.8,0.5) -- (4.2,0.5);
    \draw [dashed] (0.5,4.8) -- (0.5,5.2);
    \draw [decorate,line width=0.5mm,decoration={brace}] (-0.5,4) --  (-0.5,8) node[pos=0.5,left=10pt,black]{$2m$};
    \draw [decorate,line width=0.5mm,decoration={brace,mirror}] (0,-1.1) --  (6,-1.1) node[pos=0.5,below=10pt,black]{$N$};
    \draw [decorate, decoration={snake}] (0,4.5) -- (1,4.5);
    \draw [decorate, decoration={snake}] (0,5.5) -- (1,5.5);
    \draw (0,4) -- (0,4.5);
    \draw (1,4) -- (1,4.5);
    \draw (0,5.5) -- (0,6);
    \draw (1,5.5) -- (1,6);
    \draw [decorate, decoration={snake}] (3.5,0) -- (3.5,1);
    \draw [decorate, decoration={snake}] (4.5,0) -- (4.5,1);
    \draw (3,0) -- (3.5,0);
    \draw (3,1) -- (3.5,1);
    \draw (4.5,0) -- (5,0);
    \draw (4.5,1) -- (5,1);
    \draw[pattern color=blue, pattern = north east lines] (1,0) -- (0,1) -- (0,3) -- (3,0) -- (1,0) --
     cycle;
    \draw[pattern color=blue, pattern = north east lines] (1,3) -- (2,3) -- (2,2) -- (1,3) --
     cycle;
    \draw[pattern color=blue, pattern = north east lines] (2,2) -- (3,2) -- (3,1) -- (2,2) --
     cycle;    
    \draw[thick, color=red] (0,8) -- (1,7);
    \draw[thick, color=red] (0,7) -- (1,6);
    \draw[thick, color=red] (0,4) -- (1,3);
    \draw[thick, color=red] (0,3) -- (3,0);
    \draw[thick, color=red] (0,2) -- (2,0);
    \draw[thick, color=red] (0,1) -- (1,0);
    \draw[thick, color=red] (1,3) -- (3,1);
    \draw[thick, color=red] (6,0) -- (5,1);  
\end{tikzpicture}
		\caption{}\label{fig:cylinder(1,-1)}
	\end{subfigure}%
	\hfill
	\begin{subfigure}[b]{0.5\textwidth}
		\centering
\begin{tikzpicture}[scale=0.6]
	\draw (0,0) rectangle (3,1);
	\draw (5,0) rectangle (6,1);
	\draw (0,1) rectangle (3,2);
	\draw (0,2) rectangle (2,3);
	\draw (0,3) rectangle (1,4);
	\draw (0,0) rectangle (1,4);
	\draw (0,6) rectangle (1,8);
	\draw (1,0) -- (1,3);
	\draw (2,0) -- (2,2);
	\draw (3,0) -- (3,1);
	\draw (5,0) -- (5,1);
	\draw (0,6) -- (1,6);
	\draw (0,7) -- (1,7);
    \draw [dashed] (3.8,0.5) -- (4.2,0.5);
    \draw [dashed] (0.5,4.8) -- (0.5,5.2);
    \draw [decorate,line width=0.5mm,decoration={brace}] (-0.5,4) --  (-0.5,8) node[pos=0.5,left=10pt,black]{$2m$};
    \draw [decorate,line width=0.5mm,decoration={brace,mirror}] (0,-1.1) --  (6,-1.1) node[pos=0.5,below=10pt,black]{$N$};
    \draw [decorate, decoration={snake}] (0,4.5) -- (1,4.5);
    \draw [decorate, decoration={snake}] (0,5.5) -- (1,5.5);
    \draw (0,4) -- (0,4.5);
    \draw (1,4) -- (1,4.5);
    \draw (0,5.5) -- (0,6);
    \draw (1,5.5) -- (1,6);    
    \draw [decorate, decoration={snake}] (3.5,0) -- (3.5,1);
    \draw [decorate, decoration={snake}] (4.5,0) -- (4.5,1);
    \draw (3,0) -- (3.5,0);
    \draw (3,1) -- (3.5,1);
    \draw (4.5,0) -- (5,0);
    \draw (4.5,1) -- (5,1);  
    \draw[pattern color=blue, pattern = north east lines] (0,1) -- (2,3) -- (1,3) -- (0,2) -- (0,1) --
     cycle;
    \draw[pattern color=blue, pattern = north east lines] (1,0) -- (2,0) -- (3,1) -- (3,2) -- (1,0) --
     cycle;
    \draw[thick, color=red] (0,7) -- (1,8);
    \draw[thick, color=red] (0,6) -- (1,7);
    \draw[thick, color=red] (0,3) -- (1,4);
    \draw[thick, color=red] (0,2) -- (1,3);
    \draw[thick, color=red] (0,1) -- (2,3);
    \draw[thick, color=red] (0,0) -- (2,2);
    \draw[thick, color=red] (1,0) -- (3,2);
    \draw[thick, color=red] (2,0) -- (3,1);
    \draw[thick, color=red] (5,0) -- (6,1);
\end{tikzpicture}
		\caption{}\label{fig:cylinder(1,1)}
	\end{subfigure}%
	\caption{Cylinder decomposition in direction \((1,-1)\) and $(1,1)$ of the origami \(\mathcal{O}_{N,M}\). Here $\chi_1$ and $\delta_1$ are the waist curves of the blue cylinders.}
\end{figure}
For the direction \((1,1)\) we have a decomposition into two maximal cylinders of equal height with waist curves \(\delta_1\) of length \(3\) and \(\delta_2\) of length \(N+M-1\) (see Figure \ref{fig:cylinder(1,1)}). In direction \((1,-1)\) we also have  a decomposition into two maximal cylinders (see Figure \ref{fig:cylinder(1,-1)}) again of equal height. We denote the waist curve of length \(5\) by \(\chi_1\) and the waist curve of length \(N+M-3\) by \(\chi_2\). By (\ref{eq:action-multitwist-homology}), the associated Dehn multitwists along the waist curves of these maximal cylinders then act as the transvections \(D_\delta\) and \(D_\chi\) on \(H_1^{(0)}(\mathcal{O}_{N,M},\mathbb{Q})\) given by:
\begin{align} \label{DehnTwists1}
    D_\delta\colon      & v\longmapsto v +(N+M-1)\ \Omega(v,\delta_1)\,\delta_1+3\  \Omega(v,\delta_2)\,\delta_2\\
    D_\chi   \colon     & v\longmapsto v + (N+M-3)\ \Omega(v,\chi_1)\,\chi_1 + 5\ \Omega(v,\chi_2)\,\chi_2
    \label{DehnTwists2}
  \end{align}
  As in the previous sections we count intersection points of the curves \(\sigma_i,\sigma_N, \zeta_i,\zeta_M\) \((i=1,2,3)\) with the waist curves of the cylinders:

\begin{table}[htbp]
\centering
\begin{tabular}{|c|c|c|c|c|}
\hline
$\Omega$    & $\delta_1$  & $\delta_2$  & $\chi_1$ & $\chi_2$   \\ [0.5ex]
\hline
$\sigma_1$  &  $0$        &  $1$        & $0$      & $-1$       \\ 
$\sigma_2$  &  $1$        &  $1$        & $-1$     & $-1$       \\
$\sigma_3$  &  $1$        &  $2$        & $-2$     & $-1$       \\
$\sigma_N$  &  $1$        &  $N-1$      & $-2$     & $-(N-2)$   \\
$\zeta_1 $  &  $0$        &  $-1$       & $ 0$     & $-1$       \\
$\zeta_2 $  &  $-1$       &  $-1$       & $-1$     & $-1$       \\
$\zeta_3 $  &  $-1$       &  $-2$       & $-2$     & $-1$       \\
$\zeta_M $  &  $-1$       &  $-(M-1)$   & $-2$     & $-(M-2)$   \\
\hline
\end{tabular}
\caption{Number of intersection points between the waist curves $\delta_1,\,,\delta_2$ of the cylinders in direction $(1,1)$ and the waist curves $\chi_1,\,\chi_2$ of the cylinders in direction $(1,-1)$ with the elements \(\sigma_i,\sigma_N, \zeta_i,\zeta_M\) \((i=1,2,3)\).}
\end{table}

As next step we compute the matrix representations \( M_\delta^{(0)}\) and \(M_\chi^{(0)}\) of the transvections \(D_\delta, D_\chi\in\mbox{Sp}_\Omega(H_1^{(0)}(\mathcal{O}_{N,M},\mathbb{Q}))\) with respect to \(B^{(0)}\). The following two elements of the homology will be essential for this:
\begin{align*}
  \Delta  &=(N+M-1)\cdot \delta_1-3\cdot\delta_2 \in H_1^{(0)}(\mathcal{O}_{N,M},\mathbb{Z})\\
  X       &=(N+M-3)\cdot \chi_1-5\cdot \chi_2 \in H_1^{(0)}(\mathcal{O}_{N,M},\mathbb{Z})
\end{align*}
They both have zero holonomy and are thus elements of the non-tautological part  \(H_1^{(0)}(\mathcal{O}_{N,M},\mathbb{Z})\).
The intersection numbers obtained above allow us by a simple but longish computation to determine the coefficients of $\Delta$ and $X$ with respect to the basis \(B\) of $H_1(\mathcal{O}_{N,M},\mathbb{Q})$. We then convert them to coefficients with respect to the basis $B^{(0)}$ of $H_1^{(0)}(\mathcal{O}_{N,M},\mathbb{Q})$ and obtain:
\begin{equation}
    \begin{split}
        \begin{IEEEeqnarraybox}[][c]{rCll}\label{DeltaX}
   \Delta &=&-& 3\,\Sigma_1+(N+M-1)\,\Sigma_2-3\,\Sigma_N\\
          & &-& 3\,Z_1+(N+M-1)\,Z_2-3\,Z_M,\\
        X &=& & (N+M-3)\,\Sigma_1+(N+M-3)\,\Sigma_2-5\,\Sigma_N\\
          & &-& (N+M-3)\,Z_1-(N+M-3)\,Z_2+5\,Z_M
        \end{IEEEeqnarraybox}
   \end{split}
\end{equation}

Now, using (\ref{DehnTwists1}) and (\ref{DehnTwists2}) we obtain:
\begin{equation}
    \begin{split}
        \begin{IEEEeqnarraybox}[][c]{rClCrClCrCl}   \label{DdeltaDchi}
        D_\delta(\Sigma_1)&=&\Sigma_1+\Delta, &\quad&
        D_\delta(\Sigma_2)&=&\Sigma_2+\Delta, &\quad&
        D_\delta(\Sigma_N)&=& \Sigma_N+\Delta,\\
        D_\delta(Z_1)     &=& Z_1-\Delta, &\quad& 
        D_\delta(Z_2)     &=& Z_2-\Delta, &\quad&
        D_\delta(Z_M)     &=& Z_M-\Delta,\\[2mm]
        D_\chi(\Sigma_1)  &=& \Sigma_1-X, &\quad& 
        D_\chi(\Sigma_2)  &=& \Sigma_2-2X,&\quad& 
        D_\chi(\Sigma_N)  &=& \Sigma_N-2X,\\
        D_\chi(Z_1)       &=& Z_1-X,  &\quad&
        D_\chi(Z_2)       &=& Z_2-2X, &\quad& 
        D_\chi(Z_M)       &=& Z_M-2X\\
      \end{IEEEeqnarraybox}
    \end{split}
\end{equation}
This finally leads to the matrix representations $M_\delta^{(0)},~M_\chi^{(0)}\in\mathbb{R}^{6\times 6}$ which we have included in the file AddToArticleBKKMNSVW.g (cf. \cite{KattlerGIT2023}). It can be printed with the functions BigMatrix1 and BigMatrix2.
%
%
\begin{figure}[htbp]
	\centering
	\noindent\begin{subfigure}[b]{0.5\textwidth}
		\centering
\begin{tikzpicture}[scale=0.6]
	\draw (0,0) rectangle (3,1);
	\draw (5,0) rectangle (6,1);
	\draw (0,1) rectangle (3,2);
	\draw (0,2) rectangle (2,3);
	\draw (0,3) rectangle (1,4);
	\draw (0,0) rectangle (1,4);
	\draw (0,6) rectangle (1,8);
	\draw (1,0) -- (1,3);
	\draw (2,0) -- (2,2);
	\draw (3,0) -- (3,1);
	\draw (5,0) -- (5,1);
	\draw (0,6) -- (1,6);
	\draw (0,7) -- (1,7);
    \draw [dashed] (3.8,0.5) -- (4.2,0.5);
    \draw [dashed] (0.5,4.8) -- (0.5,5.2);
    \draw [decorate,line width=0.5mm,decoration={brace}] (-0.5,4) --  (-0.5,8) node[pos=0.5,left=10pt,black]{$2m$};
    \draw [decorate,line width=0.5mm,decoration={brace,mirror}] (0,-1.1) --  (6,-1.1) node[pos=0.5,below=10pt,black]{$N$};
    \draw [decorate, decoration={snake}] (0,4.5) -- (1,4.5);
    \draw [decorate, decoration={snake}] (0,5.5) -- (1,5.5);
    \draw (0,4) -- (0,4.5);
    \draw (1,4) -- (1,4.5);
    \draw (0,5.5) -- (0,6);
    \draw (1,5.5) -- (1,6);    
    \draw [decorate, decoration={snake}] (3.5,0) -- (3.5,1);
    \draw [decorate, decoration={snake}] (4.5,0) -- (4.5,1);
    \draw (3,0) -- (3.5,0);
    \draw (3,1) -- (3.5,1);
    \draw (4.5,0) -- (5,0);
    \draw (4.5,1) -- (5,1); 
    \draw[pattern color=blue, pattern = north east lines] (0,6) -- (1,8) -- (0.5,8) -- (0,7) -- (0,6) -- cycle;
	\draw[pattern color=blue, pattern = north east lines] (0.5,6) -- (1,6) -- (1,7) -- (0.5,6) -- cycle;	
	\draw[pattern color=blue, pattern = north east lines] (0,2) -- (1,4) -- (0.5,4) -- (0,3) -- (0,2) --cycle;
	\draw[pattern color=blue, pattern = north east lines] (0.5,0) -- (1,0) -- (2,2) -- (2,3) -- (0.5,0) -- cycle;   
    \draw[thick, color=red] (0,3) -- (0.5,4);
	\draw[thick, color=red] (0,2) -- (1,4);   
    \draw[thick, color=red] (0,7) -- (0.5,8);
    \draw[thick, color=red] (0,6) -- (1,8);
	\draw[thick, color=red] (0.5,6) -- (1,7);
	\draw[thick, color=red] (0.5,6) -- (1,7);
	\draw[thick, color=red] (0,1) -- (1,3);
	\draw[thick, color=red] (0,0) -- (1.5,3);
	\draw[thick, color=red] (0.5,0) -- (2,3);
	\draw[thick, color=red] (1,0) -- (2,2);
	\draw[thick, color=red] (1.5,0) -- (2.5,2);
	\draw[thick, color=red] (2,0) -- (3,2);
	\draw[thick, color=red] (2.5,0) -- (3,1);
	\draw[thick, color=red] (5,0) -- (5.5,1);
	\draw[thick, color=red] (5.5,0) -- (6,1);			
\end{tikzpicture}
		\caption{}\label{fig:cylinder(1,2)}
	\end{subfigure}%
	\hfill
	\begin{subfigure}[b]{0.5\textwidth}
		\centering
\begin{tikzpicture}[scale=0.6]
	\draw (0,0) rectangle (3,1);
	\draw (5,0) rectangle (6,1);
	\draw (0,1) rectangle (3,2);
	\draw (0,2) rectangle (2,3);
	\draw (0,3) rectangle (1,4);
	\draw (0,0) rectangle (1,4);
	\draw (0,6) rectangle (1,8);
	\draw (1,0) -- (1,3);
	\draw (2,0) -- (2,2);
	\draw (3,0) -- (3,1);
	\draw (5,0) -- (5,1);
	\draw (0,6) -- (1,6);
	\draw (0,7) -- (1,7);
    \draw [dashed] (3.8,0.5) -- (4.2,0.5);
    \draw [dashed] (0.5,4.8) -- (0.5,5.2);
    \draw [decorate,line width=0.5mm,decoration={brace}] (-0.5,4) --  (-0.5,8) node[pos=0.5,left=10pt,black]{$2m$};
    \draw [decorate,line width=0.5mm,decoration={brace,mirror}] (0,-1.1) --  (6,-1.1) node[pos=0.5,below=10pt,black]{$N$};
    \draw [decorate, decoration={snake}] (0,4.5) -- (1,4.5);
    \draw [decorate, decoration={snake}] (0,5.5) -- (1,5.5);
    \draw (0,4) -- (0,4.5);
    \draw (1,4) -- (1,4.5);
    \draw (0,5.5) -- (0,6);
    \draw (1,5.5) -- (1,6);
    \draw [decorate, decoration={snake}] (3.5,0) -- (3.5,1);
    \draw [decorate, decoration={snake}] (4.5,0) -- (4.5,1);
    \draw (3,0) -- (3.5,0);
    \draw (3,1) -- (3.5,1);
    \draw (4.5,0) -- (5,0);
    \draw (4.5,1) -- (5,1); 
    \draw[pattern color=blue, pattern = north east lines] (0,8) -- (1,6) -- (1,7) -- (0.5,8) -- (0,8) -- cycle;
	\draw[pattern color=blue, pattern = north east lines] (0,7) -- (0.5,6) -- (0,6) -- (0,7) -- cycle;	
	\draw[pattern color=blue, pattern = north east lines] (0,4) -- (2,0) -- (2.5,0) -- (0.5,4) -- (0,4) --cycle;
	\draw[pattern color=blue, pattern = north east lines] (2,2) -- (3,0) -- (3,1) -- (2.5,2) -- (2,2) --cycle;
	\draw[pattern color=blue, pattern = north east lines] (0,0) -- (0.5,0) -- (0,1) -- (0,0) -- cycle;	
	\draw[pattern color=blue, pattern = north east lines] (5,0) -- (6,0) -- (6,1) -- (5,1) -- (5,0)-- cycle;	   
    \draw[thick, color=red] (0,8) -- (1,6);
	\draw[thick, color=red] (0.5,8) -- (1,7);
	\draw[thick, color=red] (0,7) -- (0.5,6);
	\draw[thick, color=red] (0,1) -- (0.5,0);
	\draw[thick, color=red] (0,2) -- (1,0);
	\draw[thick, color=red] (0,3) -- (1.5,0);
	\draw[thick, color=red] (0,4) -- (2,0);
	\draw[thick, color=red] (0.5,4) -- (2.5,0);
	\draw[thick, color=red] (1.5,3) -- (3,0);
	\draw[thick, color=red] (2.5,2) -- (3,1);
	\draw[thick, color=red] (5,1) -- (5.5,0);
	\draw[thick, color=red] (5.5,1) -- (6,0);	
\end{tikzpicture}
\caption{}\label{fig:cylinder(1,-2)}
	\end{subfigure}
	\caption{Cylinder decomposition in direction $(1,2)$ and \((1,-2)\) of the origami \(\mathcal{O}_{N,M}\). Here $\gamma_1$ and $\alpha_1$ are the waist curves of the blue cylinders.}
\end{figure}
For the directions \((1,2)\) and \((1,-2)\) we assume that $M$ is even. We then again have decompositions into two cylinders of equal height (see Figure \ref{fig:cylinder(1,2)} and Figure \ref{fig:cylinder(1,-2)}). For direction \((1,2)\) the waist curves \(\gamma_1\), \(\gamma_2\) have length \(\frac{M}{2}\) and \(\frac{2N+M+4}{2}\). For direction \((1,-2)\) the waist curves \(\alpha_1\), \(\alpha_2\) have length \(N+m\) and \(m+6\). We get the following mappings \(D_\gamma\) and \(D_\alpha\):
\begin{align*}
    D_\gamma\colon      & v\longmapsto v +(2N+M+4)\ \Omega(v,\gamma_1)\gamma_1+M\  \Omega(v,\gamma_2)\gamma_2\\
    D_\alpha  \colon     & v\longmapsto v + (m+6)\ \Omega(v,\alpha_1)\alpha_1 + (N+m)\ \Omega(v,\alpha_2)\alpha_2
  \end{align*}
For the intersection points of the waist curve \(\gamma_1,\gamma_2\) and \(\alpha_1,\alpha_2\) with \(\sigma_i,\sigma_N, \zeta_i,\zeta_M\) we counted:
\begin{table}[htbp]
\centering
\begin{tabular}{|c|c|c|c|c|}
\hline
$\Omega$    & $\gamma_1$  & $\gamma_2$  & $\alpha_1$ & $\alpha_2$   \\ [0.5ex]
\hline
$\sigma_1$  &  $1$        &  $1$        & $-1$       & $-1$       \\ 
$\sigma_2$  &  $1$        &  $3$        & $-1$       & $-3$       \\
$\sigma_3$  &  $1$        &  $5$        & $-2$       & $-4$       \\
$\sigma_N$  &  $1$        &  $2N-1$     & $4-2N$     & $-4$   \\
$\zeta_1 $  &  $0$        &  $-1$       & $-1$       & $0$       \\
$\zeta_2 $  &  $0$        &  $-2$       & $-1$       & $-1$       \\
$\zeta_3 $  &  $-1$       &  $-2$       & $-1$       & $-2$       \\
$\zeta_M $  &  $-(1+m)$   &  $-(3+m)$   & $-(1+m)$   & $-(3+m)$   \\
\hline
\end{tabular}
\caption{Number of intersection points between the waist curves $\gamma_1,\,\gamma_2$ of the cylinders in direction $(1,2)$ and the waist curves $\alpha_1,\,\alpha_2$ of the cylinders in direction $(1,-2)$ with the elements \(\sigma_i,\sigma_N, \zeta_i,\zeta_M\) \((i=1,2,3)\).}
\end{table}

With these data we compute similar as above the representation matrices \(M_\gamma^{(0)}\) and \(M_\alpha^{(0)}\) of the maps \(D_\gamma,D_\alpha\) with respect to the basis \(B^{(0)}\).
The crucial elements of the non tautological part of homology are in this case:
\begin{align*}
\Gamma&=(2N+M+4)\gamma_1-M\gamma_2 \in H_1^{(0)}(\mathcal{O}_{N,M},\mathbb{Z})\\
     A&=(m+6)  \alpha_1-(N+m)\alpha_2\in H_1^{(0)}(\mathcal{O}_{N,M},\mathbb{Z})
\end{align*}
We get for the coefficients of $\Gamma$ and $A$ in the basis $B^{(0)}=\{\Sigma_1,\Sigma_2,\Sigma_n,Z_1,Z_2,Z_M\}$:
\begin{equation}
    \begin{split}
        \begin{IEEEeqnarraybox}[][c]{rCll}\label{GammaA}
            \Gamma &=&  & (2N+M+4)\,\Sigma_1-M\,\Sigma_2-M\,\Sigma_N\\
                   & &- & 2M\,Z_1-2M\,Z_2+(2N+4)\,Z_M,\\  
                 A &=&- & (N+m)\,\Sigma_1-(N+m)\Sigma_2+(m+6)\Sigma_n \\
                   & &+ & (N-6)Z_1+2(N+m)Z_2+(N-6)Z_M
        \end{IEEEeqnarraybox}
  \end{split}
 \end{equation}
Furthermore we compute
\begin{equation}
    \begin{split}
        \begin{IEEEeqnarraybox}[][c]{rClCrClCrCl}\label{USW}
        D_\gamma(\Sigma_1)&=& \Sigma_1-\Gamma,      &\quad&
        D_\gamma(\Sigma_2)&=& \Sigma_2-2\Gamma,     &\quad&
        D_\gamma(\Sigma_N)&=& \Sigma_N-(N-1)\Gamma,\\
        D_\gamma(Z_1)     &=&  Z_1,                  &\quad&
        D_\gamma(Z_2)     &=&  Z_2-\Gamma,           &\quad& 
        D_\gamma(Z_M)     &=&  Z_M-(m+1)\Gamma,\\[2mm]
        D_\alpha(\Sigma_1)&=& \Sigma_1+A,           &\quad& 
        D_\alpha(\Sigma_2)&=& \Sigma_2+A,            &\quad& 
        D_\alpha(\Sigma_N)&=& \Sigma_N-(N-4)A,\\
        D_\alpha(Z_1)     &=& Z_1+A,       &\quad& 
        D_\alpha(Z_2)     &=& Z_2+2A,      &\quad&
        D_\alpha(Z_M)     &=& Z_M+(3+m)A
        \end{IEEEeqnarraybox}
    \end{split}
\end{equation}
The matrix representations \(M_\gamma^{(0)}\) and \(M_\alpha^{(0)}\in\mathbb{R}^{6\times 6}\) on \(H_1^{(0)}(\mathcal{O}_{N,M},\mathbb{Q})\) with respect to the basis $B^{(0)}$ are included in the file AddToArticleBKKMNSVW.g (cf. \cite{KattlerGIT2023}). They can be printed with the functions BigMatrix4 and BigMatrix3.

%
%
%
\begin{figure}[htbp]
	\centering
	\noindent\begin{subfigure}[b]{0.5\textwidth}
		\centering
\begin{tikzpicture}[scale=0.6]
	\draw (0,0) rectangle (3,1);
	\draw (5,0) rectangle (6,1);
	\draw (0,1) rectangle (3,2);
	\draw (0,2) rectangle (2,3);
	\draw (0,3) rectangle (1,4);
	\draw (0,0) rectangle (1,4);
	\draw (0,6) rectangle (1,8);
	\draw (1,0) -- (1,3);
	\draw (2,0) -- (2,2);
	\draw (3,0) -- (3,1);
	\draw (5,0) -- (5,1);
	\draw (0,6) -- (1,6);
	\draw (0,7) -- (1,7);
    \draw [dashed] (3.8,0.5) -- (4.2,0.5);
    \draw [dashed] (0.5,4.8) -- (0.5,5.2);
    \draw [decorate,line width=0.5mm,decoration={brace}] (-0.5,3) --  (-0.5,8) node[pos=0.5,left=10pt,black]{$M-3$};
    \draw [decorate,line width=0.5mm,decoration={brace,mirror}] (0,-1.1) --  (6,-1.1) node[pos=0.5,below=10pt,black]{$N$};
    \draw [decorate, decoration={snake}] (0,4.5) -- (1,4.5);
    \draw [decorate, decoration={snake}] (0,5.5) -- (1,5.5);
    \draw (0,4) -- (0,4.5);
    \draw (1,4) -- (1,4.5);
    \draw (0,5.5) -- (0,6);
    \draw (1,5.5) -- (1,6);
    \draw [decorate, decoration={snake}] (3.5,0) -- (3.5,1);
    \draw [decorate, decoration={snake}] (4.5,0) -- (4.5,1);
    \draw (3,0) -- (3.5,0);
    \draw (3,1) -- (3.5,1);
    \draw (4.5,0) -- (5,0);
    \draw (4.5,1) -- (5,1);
    \draw[pattern color=yellow, pattern = north west lines] (0,2) -- (2,2) -- (2,3) -- (0,3) -- (0,2) --
     cycle;
    \draw[pattern color=green, pattern = north west lines] (0,1) -- (3,1) -- (3,2) -- (0,2) -- (0,1) --
     cycle;
	\draw[pattern color=orange, pattern = north west lines] (0,0) -- (3,0) -- (3,1) -- (0,1) -- (0,0) --
	 cycle;
	\draw[pattern color=orange, pattern = north west lines] (5,0) -- (6,0) -- (6,1) -- (5,1) -- (5,0) --
	 cycle;
\end{tikzpicture}
		\caption{}\label{fig:cylinder(1,0)}
	\end{subfigure}%
	\hfill
	\begin{subfigure}[b]{0.5\textwidth}
		\centering
\begin{tikzpicture}[scale=0.6]
	\draw (0,0) rectangle (3,1);
	\draw (5,0) rectangle (6,1);
	\draw (0,1) rectangle (3,2);
	\draw (0,2) rectangle (2,3);
	\draw (0,3) rectangle (1,4);
	\draw (0,0) rectangle (1,4);
	\draw (0,6) rectangle (1,8);
	\draw (1,0) -- (1,3);
	\draw (2,0) -- (2,2);
	\draw (3,0) -- (3,1);
	\draw (5,0) -- (5,1);
	\draw (0,6) -- (1,6);
	\draw (0,7) -- (1,7);
    \draw [dashed] (3.8,0.5) -- (4.2,0.5);
    \draw [dashed] (0.5,4.8) -- (0.5,5.2);
    \draw [decorate,line width=0.5mm,decoration={brace}] (-0.5,0) --  (-0.5,8) node[pos=0.5,left=10pt,black]{$M$};
    \draw [decorate,line width=0.5mm,decoration={brace,mirror}] (3,-1.1) --  (6,-1.1) node[pos=0.5,below=10pt,black]{$N-3$};
    \draw [decorate, decoration={snake}] (0,4.5) -- (1,4.5);
    \draw [decorate, decoration={snake}] (0,5.5) -- (1,5.5);
    \draw (0,4) -- (0,4.5);
    \draw (1,4) -- (1,4.5);
    \draw (0,5.5) -- (0,6);
    \draw (1,5.5) -- (1,6);
    \draw [decorate, decoration={snake}] (3.5,0) -- (3.5,1);
    \draw [decorate, decoration={snake}] (4.5,0) -- (4.5,1);
    \draw (3,0) -- (3.5,0);
    \draw (3,1) -- (3.5,1);
    \draw (4.5,0) -- (5,0);
    \draw (4.5,1) -- (5,1);
     \draw[pattern color=yellow, pattern = north west lines] (2,0) -- (3,0) -- (3,2) -- (2,2) -- (2,0) --
     cycle;
     \draw[pattern color=green, pattern = north west lines] (1,0) -- (2,0) -- (2,3) -- (1,3) -- (1,0) --
     cycle;
     \draw[pattern color=orange, pattern = north west lines] (0,0) -- (1,0) -- (1,4) -- (0,4) -- (0,0) --
     cycle;
     \draw[pattern color=orange, pattern = north west lines] (0,6) -- (1,6) -- (1,8) -- (0,8) -- (0,6) --
     cycle;
\end{tikzpicture}
\caption{}\label{fig:cylinder(0,1)}
	\end{subfigure}
	\caption{Cylinder decomposition in direction $(1,0)$ and \((0,1)\) of the origami \(\mathcal{O}_{N,M}\).}
\end{figure}
In the cylinder decomposition of the origami \(\mathcal{O}_{N,M}\) in horizontal and vertical direction we have in both cases four maximal cylinders with moduli \(M-3,\ 1/2,\ 1/3,\ 1/N\) for the horizontal direction and moduli \(N-3,\ 1/2,\ 1/3,\ 1/M\) for the vertical direction (see Figure \ref{fig:cylinder(1,0)} and Figure \ref{fig:cylinder(0,1)}). 
Hence we obtain two corresponding multitwists which act on \(H_1^{(0)}(\mathcal{O}_{N,M},\mathbb{Q})\) by:
\begin{align*}
D_h \colon  & w\mapsto w + 6(M-3)N\ \Omega(w,\sigma_1)\sigma_1 + 3N\ \Omega(w,\sigma_2)\sigma_2+ 2N\ \Omega(w,\sigma_3)\sigma_3 + 6\ \Omega(w,\sigma_N)\sigma_N, \\
D_v \colon  & w\mapsto w + 6(N-3)M\ \Omega(w,\zeta_1)\zeta_1 + 3M\ \Omega(w,\zeta_2)\zeta_2+ 2M\ \Omega(w,\zeta_3)\zeta_3 + 6\ \Omega(w,\zeta_M)\zeta_M.
\end{align*}
We read the intersection numbers of the $\sigma_i$ and $\zeta_j$ from Figure~\ref{fig:OrigamiNM}. A straight forward computation now gives the representation matrices \( M_h^{(0)}\) and \(M_v^{(0)}\) for the action of the horizontal and vertical twist on \(H_1^{(0)}(\mathcal{O}_{N,M},\mathbb{Q})\) with respect to \(B^{(0)}\):
\begin{displaymath}
  \begin{array}{rl}
  M_h^{(0)}=&
    \left(\begin{array}{cccccc}
    1& 0& 0&   0& -3N& -3N\\
    0& 1& 0& -2N& -2N& -2N\\
    0& 0& 1&   6&  12&  6(M-1)\\
    0& 0& 0&  1&   0& 0 \\
    0& 0& 0& 0 & 1  & 0 \\
    0& 0& 0& 0 & 0  & 1 
    \end{array}\right),\\
  M_v^{(0)}=&
    \left(\begin{array}{cccccc}
    1   &     0&       0& 0& 0& 0\\
    0   &     1&       0& 0& 0& 0\\
    0   &     0&       1& 0& 0& 0\\
    0   &    3M&      3M& 1& 0& 0\\
    2M  &    2M&      2M& 0& 1& 0\\
   -6   & -12  & -6(N-1)& 0& 0& 1
    \end{array}\right).
  \end{array}
\end{displaymath}

\subsection{Zariski density in genus four}
    \label{ssec:zariski-density-genus4}

For the proof of the Zariski-density of the  Kontsevich-Zorich monodromy of the $\mathcal{O}_{N,M}$ we will follow an approach which differs from the one in the previous sections. Let us first describe the idea before we go into the details:

The key ingredient of our arguments is the following theorem of Detinko, Flannery and Hulpke.
\begin{theorem}[Detinko, Flannery, Hulpke, \cite{Detinko2018ZariskiDA} Prop. 3.7]\label{Thm:ZarDenTrans}
Suppose that a subgroup \(H\leq \Sp(2n,\mathbb{Z})\) contains a transvection \(t\in H\). Then \(H\) is Zariski dense if and only if the normal closure \(\langle t\rangle^H\) of \(t\) in \(H\) is absolutely irreducible.
\end{theorem}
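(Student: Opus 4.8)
The plan is to work over $\C$ with standard representation $V=\C^{2n}$ and to reduce the entire statement to computing the Zariski closure $\overline{N}$ of the normal closure $N:=\langle t\rangle^{H}$ inside the algebraic group $\SP_{2n}$. The basic observation is that, since $N\trianglelefteq H$, every $h\in H$ normalizes $N$ and hence (conjugation being an algebraic automorphism) normalizes $\overline{N}$; thus $\overline{H}$ lies in the Zariski-closed normalizer of $\overline{N}$. For the forward implication, assuming $H$ Zariski dense gives $\overline{N}\trianglelefteq\SP_{2n}$ as an algebraic subgroup. Now $\SP_{2n}$ is almost simple, so its only proper normal algebraic subgroups are contained in the centre $\{\pm I\}$; since $\overline{N}$ contains the transvection $t$, which is unipotent and therefore noncentral of infinite order, I would conclude $\overline{N}=\SP_{2n}$. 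Hence $N$ is itself Zariski dense, and because the standard representation of $\SP_{2n}$ on $V$ is absolutely irreducible, so is the action of $N$.

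For the converse, since $N\le H$ it suffices to prove $G:=\overline{N}=\SP_{2n}$. First I would show $G$ is semisimple: in characteristic zero the fixed space $V^{R_u(G)}$ of the unipotent radical is nonzero and $G$-invariant, so absolute irreducibility forces $R_u(G)=1$ and $G$ reductive, while Schur's lemma identifies $Z(G)$ with scalars, i.e. with a subgroup of $\{\pm I\}$, so $G$ is semisimple. By construction $G$ is generated by the connected one-parameter root subgroups $\overline{\langle t_i\rangle}\cong\mathbb{G}_a$ attached to the $H$-conjugates $t_i$ of $t$ (which generate $N$), so $G$ is connected. Thus $G$ is a connected semisimple subgroup of $\SP(V)$ generated by symplectic transvection subgroups and acting absolutely irreducibly.

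The heart of the matter is then to show that such a subgroup must be all of $\SP(V)$, and this is where I expect the real difficulty. I would argue at the level of the semisimple Lie algebra $\mathfrak{g}=\operatorname{Lie}(G)\subseteq\mathfrak{sp}(V)$, which contains the long-root vector $X_{w}\colon v\mapsto\Omega(v,w)w$ attached to $t$ and to each of its conjugates. The span of the corresponding directions $w_i$ is $N$-invariant and nonzero, so absolute irreducibility makes it all of $V$; hence these directions span $V$. The engine is the two identities $\operatorname{Ad}(t_{w'})X_{w}=X_{w+s\Omega(w,w')w'}$ and, whenever $\Omega(w,w')\neq 0$, the fact that $X_{w}$, $X_{w'}$ and $[X_{w},X_{w'}]$ generate a copy of $\mathfrak{sl}_2=\mathfrak{sp}(\langle w,w'\rangle)$ acting on the symplectic plane $\langle w,w'\rangle$ and trivially on its $\Omega$-orthogonal complement. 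Using these ``shears'' one propagates the membership $X_{w}\in\mathfrak{g}$ from the spanning set of directions to ever larger families, the goal being to reach every root vector of $\mathfrak{sp}(V)$ and conclude $\mathfrak{g}=\mathfrak{sp}(V)$, whence $G=\SP_{2n}$.

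The main obstacle is controlling the degenerate configurations in which too many relevant directions are mutually $\Omega$-orthogonal, so that the shear and $\mathfrak{sl}_2$ propagation stalls; excluding these is precisely where \emph{absolute} irreducibility (rather than mere irreducibility) is needed, to prevent the transvection directions from being trapped in a proper invariant or isotropic configuration. Rather than carry out this case analysis by hand, I would instead invoke the classification of irreducible linear groups generated by symplectic transvections (Zalesskii--Serezhkin), whose connected output in the symplectic case is exactly $\SP(V)$; this packages the delicate combinatorial step and yields $G=\SP_{2n}$, completing the converse.
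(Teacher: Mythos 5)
This theorem is not proved in the paper at all: it is imported verbatim from Detinko--Flannery--Hulpke \cite{Detinko2018ZariskiDA} (their Prop.~3.7) and used as a black box in the Zariski-density argument for the origamis $\mathcal{O}_{N,M}$. So there is no in-paper proof to compare against, and your proposal has to be judged on its own merits against the cited source.

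Your architecture is sound and all the soft reductions are carried out correctly: the forward direction via normality of $\overline{N}$ in $\overline{H}=\SP_{2n}$ and almost-simplicity of $\SP_{2n}$; and, for the converse, reductivity from $V^{R_u(G)}$ being a nonzero invariant subspace, semisimplicity from Schur, and connectedness from generation by the one-parameter groups $\overline{\langle t_i\rangle}\cong\mathbb{G}_a$. The one place where the write-up is genuinely incomplete is the crux you yourself flag: that a connected, absolutely irreducible subgroup of $\SP(V)$ generated by symplectic transvection subgroups is all of $\SP(V)$. Two remarks there. First, the ``degenerate orthogonal configurations'' you worry about are in fact excluded by irreducibility alone via a short argument you do not give: put a graph structure on the set $S$ of transvection directions with an edge when $\Omega\neq 0$; each component $C_j$ spans a subspace $W_j$ that is preserved by \emph{every} generator $t_i$ (a transvection fixes $w_i^{\perp}\supseteq W_j$ pointwise for $j$ not containing $w_i$, and maps $W_{j(i)}$ into itself), so each $W_j$ is $N$-invariant and irreducibility forces a single component; then your shear-and-$\mathfrak{sl}_2$ propagation runs to completion and yields $\mathfrak{g}=\mathfrak{sp}(V)$. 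Second, the citation you propose as a substitute is not quite on point: the Zalesskii--Serezhkin classification concerns (essentially finite) irreducible linear groups generated by transvections, whereas what you need is the statement for connected algebraic groups generated by long-root (transvection) one-parameter subgroups; either supply the graph-connectivity argument above, or cite a result in that setting (e.g.\ the theory of subgroups generated by long root elements). With that repair the proof is complete and is, as far as one can tell, the same in spirit as the source's.
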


In order to use this theorem we firstly have to overcome the issue that the Kontsevich monodromy $\Gamma^{(0)}$ by definition lives in $\mbox{Sp}_\Omega(H_1^{(0)}(\mathcal{O}_{N,M},\mathbb{Z}))$  and thus per se is not a subgroup of   \(\Sp(2n,\mathbb{Z})\). We solve this problem by passing to a finite index subgroup. More precisely, we choose in the beginning a \(\mathbb{Z}\)-submodule \(\Gamma_{N,M}\) of \(H_1^{(0)}(\mathcal{O}_{N,M},\mathbb{Z})\) such that with a suitable choice of a base of \(\Gamma_{N,M}\) the intersection form \(\Omega\) restricted to \(\Gamma_{N,M}\) is a multiple of the standard symplectic form on $\Z^6$.  Thus the elements of $\mbox{Sp}_\Omega(H_1^{(0)}(\mathcal{O}_{N,M},\mathbb{Z}))$ which stabilize $\Gamma_{N,M}$ can be identified with elements of the standard symplectic group $\mbox{Sp}(6,\mathbb{Z})$.
Therefore our goal is to find a transvection \(t\) in the image \(G\) of the action 
  \[
  \mbox{Aff}^+(\mathcal{O}_{N,M})\longrightarrow \mbox{Sp}_\Omega(H_1^{(0)}(\mathcal{O}_{N,M},\mathbb{Z})),
  \]
which stabilizes the lattice \(\Gamma_{N,M}\) and to show that the normal closure \(\langle t \rangle^H\) is absolutely irreducible, where we identify 
  \[
  H=G\cap \mbox{Stab}_{\mbox{Sp}_\Omega(H_1^{(0)}(\mathcal{O}_{N,M},\mathbb{Z}))}(\Gamma_{N,M})
  \]
with a subgroup of \(\text{Sp}(6,\mathbb{Z})\). With this approach we will show Zariski-density for finitely many $N,M\in\mathbb{N}$ using a computer aided proof for the irreducibility of \(\langle t\rangle^H\).

Nevertheless it seems to be possible to find  infinitely many \(N,M\in\mathbb{N}\) such that $\mbox{Sp}_\Omega(H_1^{(0)}(\mathcal{O}_{N,M},\mathbb{Z}))$ is Zariski-dense, if we again use Galois-theoretical arguments (see Remark \ref{Rem:InfFamONM}).

\vspace*{0.5cm}

We now start going into details. Consider the \(\mathbb{Z}\)-submodule \(\Gamma_{N,M}\) of \(H_1^{(0)}(\mathcal{O}_{N,M},\mathbb{Z})\) generated by the following elements:
\begin{IEEEeqnarray*}{rClCrCl}
  c_1&=&(N+M+2)\Sigma_1,       &\quad&  c_2 &=& (N+M+2)(-2 \Sigma_1-\ \Sigma_N),\\
  c_3&=&(-1-N-M)Z_1+ Z_2+ Z_M, &\quad&  c_4 &=& Z_2,\\
  c_5&=&Z_1,                   &\quad&  c_6 &=& \Sigma_1+\Sigma_2+\Sigma_N 
\end{IEEEeqnarray*}  
The submodule \(\Gamma_{N,M}\) has finite index in \(H_1^{(0)}(\mathcal{O}_{N,M},\mathbb{Z})\) and if we restrict the symplectic intersection-form \(\Omega\) to \(\Gamma_{N,M}\) we get the following matrix representation in \(I_\Omega^C\in\mathbb{Q}^{6\times6}\) with respect to the basis \(C=\{c_1,c_2,c_3,c_4,c_5,c_6\}\):
\begin{displaymath}
I_\Omega^C=(N+M+2)\cdot
  \left(\begin{array}{cccccc}
     0      & 0        & 0        & 1          & 0          & 0\\
     0      & 0        & 0        & 0          & 1          & 0\\
     0      & 0        & 0        & 0          &  0         & 1\\
    -1      & 0        & 0        & 0          &  0         & 0 \\
     0      & -1       & 0        & 0          &  0         & 0 \\
     0      &  0       &-1        & 0          &  0         & 0
  \end{array}\right)
\end{displaymath}
Let \(G\) be the image of the action \(\mbox{Aff}^+(\mathcal{O}_{N,M})\to \mbox{Sp}_\Omega(H_1^{(0)}(\mathcal{O}_{N,M},\mathbb{Z}))\). We conclude that choosing the basis $C$ of \(\Gamma_{N,M}\) identifies the elements \(\phi\in G\) which stabilize the lattice \(\Gamma_{N,M}\) with elements of the standard symplectic group \(\mbox{Sp}(6,\mathbb{Z})\) i.e., 
  \[
    H=G\cap \mbox{Stab}_{\mbox{\tiny Sp}_\Omega(6,\mathbb{Z})}(\Gamma_{N,M})
    \leq \mbox{Sp}(6,\mathbb{Z}).
  \]
In the following we describe the elements of \(\mbox{Sp}_\Omega(H_1^{(0)}(\mathcal{O}_{N,M},\mathbb{Z}))\) which stabilize the lattice \(\Gamma_{N,M}\). More precisely, we  find conditions for their matrix representations to do so. Denote by \(C\in \mathbb{Q}^{6\times6}\) the matrix, which has as columns the coefficients of the vectors \(c_i\) \((i=1,\dots,6)\) written as a linear combination of elements in \(B^{(0)}\). Furthermore let \(C^{-1}\in \mathbb{Q}^{6\times 6}\) be the inverse of \(C\) i.e.,
\begin{IEEEeqnarray*}{rCl}
  C&=&
  \left(\begin{array}{cccccc}
    N+M+2 &-2(N+M+2) & 0      & 0 & 0 & 1\\
    0     & 0        & 0      & 0 & 0 & 1\\
    0     & -(N+M+2) & 0      & 0 & 0 & 1\\
    0     & 0        & -1-N-M & 0 & 1 & 0\\
    0     & 0        & 1      & 1 & 0 & 0\\
    0     & 0        & 1      & 0 & 0 & 0
  \end{array}\right),\\
        & & \\
  C^{-1}&=&
   \left(\begin{array}{cccccc}
   \frac{1}{M+N+2} & \frac{1}{M+N+2} & \frac{-2}{M+N+2} & 0 & 0 & 0\\
                0  & \frac{1}{M+N+2} & \frac{-1}{M+N+2} & 0 & 0 & 0\\
                0  & 0               & 0                & 0 & 0 & 1\\
                0  & 0               & 0                & 0 & 1 & -1\\
                0  & 0               & 0                & 1 & 0 & M+N+1\\
                0  & 1               & 0                & 0 & 0 & 0
\end{array}\right)
\end{IEEEeqnarray*}
An element \(\phi\in \mbox{Sp}_\Omega(H_1^{(0)}(\mathcal{O}_{N,M},\mathbb{Z}))\) stabilizes the lattice \(\Gamma_{N,M}\) if and only if \(\phi(c_i)\) is an element of \(\mbox{Span}_\mathbb{Z}(\{c_1,\dots,c_6\})\) for each \(i\in\{1,\dots, 6\}\) or equivalent
 \[
  C^{-1}\cdot D_{B^{(0)}}(\phi(c_i)))\in \mathbb{Z}^6
 \]
for every \(i\in\{1,\dots,6\}\). Here we write \(D_{B^{(0)}}(\phi(c_i))\in \mathbb{R}^6\) for the coefficients of \(\phi(c_i)\) with respect to the basis \(B^{(0)}\). 

Thus \(\phi\) stabilizes \(\Gamma_{N,M}\) if and only if for each \(i\in\{1,\dots,6\}\) the element \(D_{B^{(0)}}(\phi(c_i))\) is in both the kernels of the following two maps
  \begin{IEEEeqnarray*}{lClClCl}
    g_1\colon\mathbb{Z}^6 &\longrightarrow& \mathbb{Z}{/}(N+M+2)\,\mathbb{Z},& \quad&
    (v_1,\dots v_6)&\longmapsto&   \overline{v_1+v_2-2v_3}\\
    g_2\colon\mathbb{Z}^6&\longrightarrow& \mathbb{Z}{/}(N+M+2)\,\mathbb{Z}, & \quad&
    (v_1,\dots v_6)&\longmapsto&  \overline{v_2-v_3}.
  \end{IEEEeqnarray*}
Easy but boring calculations show that for every \(i\in\{1,\dots,6\}\) and every matrix \(M\) in the set \(\{M_\delta^{(0)},\ M_\chi^{(0)},\ M_\gamma^{(0)},\ M_\alpha^{(0)},\ M_v^{(0)}\}\), we have
 \[
   M\cdot D_{B^{(0)}}(c_i)\in\mbox{ker}(g_1)\cap\mbox{ker}(g_2).
 \] 
Hence \(D_\delta,D_\chi,D_\gamma,D_\alpha,D_v\in \mbox{Stab}_{\Gamma_{N,M}}(\mbox{Sp}_\Omega(H_1^{(0)}(\mathcal{O}_{N,M},\mathbb{Z})))\). Furthermore \((M_h^{(0)}-I_6)^2=\underline{0}\) and so we conclude with the Bernoulli-formula \((M_h^{(0)})^{N+M+2}\equiv I_6\) modular \( N+M+2\). Hence
\[
   (M_h^{(0)})^{N+M+2}\cdot D_{B^{(0)}}(c_i)\in\mbox{ker}(g_1)\cap\mbox{ker}(g_2)
 \]  

Consider the algebra \(A_{N,M}\) generated by the transvection \(t=M_\delta^{(0)}\) and the elements 
\(M^{-1}t M\in\langle t \rangle^H\) where 
 \[
  M\in\{M_\chi^{(0)}, M_\gamma^{(0)}, M_\alpha^{(0)}, M_v^{(0)},(M_h^{(0)})^{N+M+2}\}.
 \]
Here \(\langle t\rangle ^H\) denotes the normal closure of the transvection \(t\) in \(H\).
For \(N\in\{4,5,...,50\}\) and \(M = 2m + 4\) with \(m\in\{0,1,...,50\}\) we calculated with GAP \(\mbox{dim}_\mathbb{Q}(A_{N,M})=36\) for the vector space dimension of the algebra \(A_{N,M}\). This shows that \(\langle t\rangle^H\) is an absolutely irreducible group 
and for these cases we conclude with Theorem \ref{Thm:ZarDenTrans} that \(H\) is Zariski dense in \(\mbox{Sp}_\Omega(H_1^{(0)}(\mathcal{O}_{N,M},\mathbb{C}))\).

\begin{remark}\label{Rem:InfFamONM}
It is possible to obtain the Zariski denseness of $H$ for infinitely many choices of the parameters $N$ and $M$ by developing the Galois-theoretical method in Subsection \ref{ss.Prasad-Rapinchuk-MMY}: This approach was developed later in the follow-up paper \cite{KanyMatheus23}.
\end{remark}

\subsection{Arithmeticity in genus four}\label{ArithmeticInGenus4}

Recall from Section~\ref{favour} that for each of the cylinder decompositions in direction \((1,1)\), \((1,-1)\) and \((1,2)\) we get two maximal cylinders. Their waist curves are \(\delta_1,\delta_2\in H_1(\mathcal{O}_{N,M},\mathbb{Z})\) for direction \((1,1)\), \(\chi_1,\chi_2\in H_1(\mathcal{O}_{N,M},\mathbb{Z})\) for the direction \((1,-1)\) and \(\gamma_1,\gamma_2\) for the direction \((1,2)\in H_1(\mathcal{O}_{N,M},\mathbb{Z})\). Furthermore we introduced in \ref{DeltaX} and \ref{GammaA} the following elements in the non-tautological part  \(H_1^{(0)}(\mathcal{O}_{N,M},\mathbb{Z})\):
\begin{IEEEeqnarray*}{rCll}
   \Delta &=&-& 3\,\Sigma_1+(N+M-1)\Sigma_2-3\,\Sigma_N\\
          & &-& 3\, Z_1+(N+M-1)\,Z_2-3\,Z_M,\\[1mm]
        X &=& & (N+M-3)\,\Sigma_1+(N+M-3)\,\Sigma_2-5\,\Sigma_N\\
          & &-& (N+M-3)\,Z_1-(N+M-3)\,Z_2+5\,Z_M,\\[1mm]
   \Gamma &=& & (2N+M+4)\,\Sigma_1-M\,\Sigma_2-M\,\Sigma_N\\
          & &-& 2M\,Z_1-2M\,Z_2+(2N+4)\,Z_M
\end{IEEEeqnarray*}
Set \(W=\mbox{Span}_\mathbb{Q}(\Delta,\ X,\ \Gamma)\). The vector space \(W\) has dimension \(\mbox{dim}_\mathbb{Q}(W)=3\). We set \(A=-22+4N+4M\), \(B=-6-3m\) and \(C=-12+3N-9m\). Using (\ref{DdeltaDchi}) and (\ref{USW}) we obtain that the restrictions of the transvections \(D_\delta,\ D_\chi\) and \(D_\gamma\) to $W$ have the following matrix representations with respect to the basis \(\{\Delta,\ X,\ \Gamma\}\):
\begin{displaymath}
\left(\begin{array}{ccc}1 & A & -2B \\ 0 & 1 & 0 \\ 0 & 0 & 1 \end{array}\right),\quad 
\left(\begin{array}{ccc}1 & 0 & 0 \\ -A & 1 & -2C \\ 0 & 0 & 1 \end{array}\right),\quad
\left(\begin{array}{ccc}1 & 0 & 0 \\  0 & 1 & 0    \\ B & C & 1 \end{array}\right)
\end{displaymath}
The vector \(e=-2C\Delta+2BX+A\Gamma\) is fixed by all the three elements \(D_\delta,\ D_\chi\) and \(D_\gamma\). Furthermore $\Omega(e,w)=0$ for all $w\in W$. With respect to the new basis
\(\{\Delta,\ X,\ e\}\) we get the following matrix representations for \(D_\delta,\ D_\chi\) and \(D_\gamma\):
\begin{displaymath}
  \left(\begin{array}{ccc}1 &  A & 0 \\ 0 &  1 & 0 \\ 0 & 0 & 1 \end{array}\right),\quad 
  \left(\begin{array}{ccc}1 &  0& 0  \\ -A&  1& 0  \\ 0 & 0 & 1 \end{array}\right),\quad
  \left(\begin{array}{ccc}2\frac{BC}{A}+1  &  2\frac{C^2}{A}     & 0  \\                                                                                                                                                                                                                                                                                                                                                                       
                          -2\frac{B^2}{A}  & -2\frac{BC}{A}+1    & 0 \\ 
                           \frac{B}{A}     &  \frac{C}{A}        & 1
  \end{array}\right)
\end{displaymath}
If we choose \(C=0\) or equivalent \(N=3m+4\), we have 
  \[
  \Omega(\Delta,X)=-50\,(2m^2+5m+2)<0
  \]
for all $m>0$ and the group generated by \(D_\delta|_W,~D_\chi|_W,~D_\gamma|_W\) contains a non-trivial element of the unipotent radical of the symplectic group on \(W\), namely \((D_\chi|_W)^{-2B^2}\circ (D_\gamma|_W)^{A^2}\) is represented by
  \[
    \begin{pmatrix}
    1 & 0 & 0 \\
    0 & 1 & 0 \\
    BA& 0 & 1
    \end{pmatrix}
  \]
with respect to the basis \(\{\Delta,\ X,\ e\}\). This ends the proof of Theorem \ref{t.B}. 



\section{Computational results}\label{s.computationals}

In this section we present results on the Kontsevich-Zorich monodromy that we obtained from computer experiments conducted with the GAP-package \cite{origami-package} and  \cite{ModularGroup}.  
Recall from Section~\ref{KZmonOrigami} that if an origami has no non-trivial translation then its Veech group is isomorphic to the affine group. Its explicit action on homology is described in \cite[Section 3]{HM}. Using this, one can compute the Kontsevich--Zorich monodromy up to finite index. More precisely, choosing a suitable basis of  the non-tautological part $H_1^{(0)}(M,\Z)$ and a sublattice of $H_1^{(0)}(M,\Z)$, one obtains a finite index subgroup of the Kontsevich--Zorich monodromy as subgroup in $\SP(2g-2,\Z)$. The code of the computer experiments presented in this section is collected in \cite{KattlerGIT2023}.

\subsection{Origamis in genus two}

In genus two it is known by a result of M{\"o}ller (cf. Appendix~\ref{a.ArithmShadVee}) that the Kontsevich--Zorich monodromies of origamis are always arithmetic. In this case they are subgroups of $\SL(2,\ZZ) = \SP(2,\ZZ)$. We compute the Kontsevich--Zorich monodromies of origamis in genus 2 of small degree and detect a nice pattern from these computer experiments.

\subsubsection{The stratum $\mathcal{H}(2)$}

Recall that the $\SL(2,\ZZ)$-orbits of origamis of degree $n$ in $\mathcal{H}(2)$ are classified by Hubert/Leli{\`e}vre and McMullen (cf. \cite{HL},\cite{McMullenSpin}; see also \cite[Section 4]{Weitze2015} for a condensed presentation of this result) in the following way: For each $n$ there are at most 2 orbits. More precisely, if $n$ is even or 3, then there is only one orbit. If $n$ is odd and not 3, then there are 2 orbits called $A_n$ and $B_n$ distinguished by their number of integer Weierstrass points. The origamis in the orbit $A_n$ have 1 integer Weierstrass point whereas the origamis in the orbit $B_n$ have 3 integer Weierstrass points.
From the classification of the orbits it follows in particular that each orbit can be represented by an $L$-shaped origami $L(a,b)$ of degree  $n = a+b-1$, cf. \cite[Remark 4.2]{Weitze2015}. Here  $L(a,b)$ is the origami associated to the pair of permutations $h, v  \in\mbox{Sym}( \{1,\dots,n\}) $, where
$h=(1,2,3\dots,a)$ and $v=(1, a+1,\ldots, n)$, cf. Figure~\ref{fig:Lab}. If $n$ is odd then $L(a,b)$ lies in the orbit $A_n$, if $a$ and $b$ are even, it lies in $B_n$, if $a$ and $b$  are odd. Hence it suffices to study for each $n$ one or two $L$-shaped origamis of degree $n$ depending on $n$ being even or odd.
With our computer experiments (cf.\cite[\texttt{Example2\_AKZ}]{KattlerGIT2023}) we obtain the results presented in Theorem~\ref{Thm:TheLs}.

\begin{theorem}\label{Thm:TheLs}
  Let $n$ be the number of squares of the origami $\mathcal{O} = (M,\omega)$ in $\mathcal{H}(2)$. For $n \leq 21$ we obtain for the  Kontsevich--Zorich monodromy $\KoZoMon$ of $(M,\omega)$:
\begin{itemize}
\item
  If $n$ is even, then $[\SL_2(\ZZ):\KoZoMon] = 3$ 
\item
  If $n$ is odd and $\mathcal{O}$ lies in the orbit $A_n$ or $n=3$ then $[\SL_2(\ZZ): \KoZoMon] = 1$
\item
  If $n$ is odd and $\mathcal{O}$ lies in the orbit $B_n$ then $ [\SL_2(\ZZ): \KoZoMon] = 3$
\end{itemize}
\end{theorem}

\begin{remark}
  We conjecture that the statement in Theorem~\ref{Thm:TheLs} holds for all origamis in  $\mathcal{H}(2)$ of arbitrary degree. Part of this conjecture is proved in \cite{Kattler2023} which arose as follow-up work of the article at hand.  
\end{remark}
  
\begin{figure}
\begin{center}
\begin{tikzpicture}[scale=0.8]
	\draw (0,0) rectangle (2,1);
        \draw (0,2) rectangle (1,3);
        \draw (5,0) rectangle (6,1);
	\draw (1,0) -- (1,1);
        \draw [dashed] (2,0) -- (5,0);
	\draw [dashed] (2,1) -- (5,1);
        \draw [dashed] (0,1) -- (0,2);
        \draw [dashed] (1,1) -- (1,2);

        \node at (0.5,0.5) {1};
        \node at (1.5,0.5) {2};
        \node at (5.5,0.5) {a};
        \node at (0.5,2.5) {n};
        
    
    \draw [decorate,line width=0.5mm,decoration={brace, mirror}] (0,-.3) --  (6,-.3) node[pos=0.5,below=10pt,black]{$a$};
    \draw [decorate,line width=0.5mm,decoration={brace}] (-.3,0) --  (-.3,3) node[pos=0.5,left=10pt,black]{$b$};
\end{tikzpicture}
\end{center}
\caption{Origami \(L(a,b)\): opposite edges are glued}
\label{fig:Lab}
\end{figure}

\subsubsection{The stratum $\mathcal{H}(1,1)$}

In the following we consider origamis $\mathcal{O}= \mathcal{O}(k,l)$ of degree $n = k+l$ given by the following permutations, cf. Figure~\ref{fig:Okl}:
\[h = (1,2,\ldots, k)(k+1,\ldots, n), \quad v = (k,k+1)\]

\begin{figure}
\begin{center}
\begin{tikzpicture}[scale=0.8]
	\draw (0,0) rectangle (1,1);
	\draw [dashed] (1,0) -- (3,0);
	\draw [dashed] (1,1) -- (3,1);
	\draw (3,0) rectangle (4,1);
	\draw (3,1) rectangle (4,2);	
    \draw [dashed] (4,1) -- (6,1);
    \draw [dashed] (4,2) -- (6,2);
    \draw (6,1) rectangle (7,2);

    \node at (0.5,0.5) {1};
    \node at (3.5,0.5) {k};
    \node at (3.5,1.5) {k+1};
    \node at (6.5,1.5) {n};
        
    
    \draw [decorate,line width=0.5mm,decoration={brace, mirror}] (0,-.3) --  (4,-.3) node[pos=0.5,below=10pt,black]{$k$};
    \draw [decorate,line width=0.5mm,decoration={brace}] (3,2.3) --  (7, 2.3) 
    node[pos=0.5,above=10pt, black]{$l$};
\end{tikzpicture}
\end{center}
\caption{Origami \(O(k,l)\): opposite edges are glued}
\label{fig:Okl}
\end{figure}

We obtain (cf. \cite[\texttt{Example3\_AKZ}]{KattlerGIT2023}) the pattern in Table \ref{table:Index} for the index of the Kontsevich--Zorich monodromy in $\SL(2,\ZZ)$.
Observe that  \(O(k,l)\) allows a translation if and only if $k=l$, namely square $k$ is translated to square $k+1$ and vice versa and square $i$ ($i \in \{1,\ldots, k-1\}$) to $i+k+1$ and vice versa. We have to exclude these surfaces from the computations since we only consider surfaces without translations.

\begin{table}
\centering
$\begin{array}{c|cccccccccccccccccccc}
l\backslash k&2&3&4&5&6&7&8&9&10&11&12&13&14&15&16&17&18&19&20&21\\ \hline
2 &- & 1 & 1 & 1 & 6 & 1 & 1 & 1 & 3 & 1 & 1 & 1 & 6 & 1 & 1 & 1 & 3 & 1 & 1 & 1 \\
3 &1 & - & 1 & 3 & 4 & 3 & 1 & 3 & 1 & 3 & 1 & 3 & 1 & 12 & 1 & 3 & 1 & 3 & 1 & 3 \\
4 &1 & 1 & - & 1 & 1 & 1 & 1 & 1 & 1 & 1 & 6 & 1 & 1 & 1 & 1 & 1 & 1 & 1 & 3 & 1 \\
5 &1 & 3 & 1 & - & 1 & 3 & 1 & 3 & 1 & 3 & 1 & 3 & 1 & 3 & 1 & 3 & 1 & 3 & 12 & 3 \\
6 &6 & 4 & 1 & 1 & - & 1 & 1 & 1 & 6 & 1 & 4 & 1 & 3 & 1 & 1 & 1 & 6 & 1 & 1 & 4 \\
7 &1 & 3 & 1 & 3 & 1 & - & 1 & 3 & 1 & 3 & 1 & 3 & 1 & 3 & 1 & 3 & 1 & 3 & 1 & 3 \\
8 &1& 1& 1& 1& 1& 1& -& 1& 1& 1& 1& 1& 1& 1& 1& 1& 1& 1& 1& 1 \\
9 &1& 3& 1& 3& 1& 3& 1& -& 1& 3& 1& 3& 1& 3& 1& 3& 4& 3& 1& 3 \\
10 &3& 1& 1& 1& 6& 1& 1& 1& -& 1& 1& 1& 6& 12& 1& 1& 3& 1& 1& 1 \\
11 &1& 3& 1& 3& 1& 3& 1& 3& 1& -& 1& 3& 1& 3& 1& 3& 1& 3& 1& 3 \\
12&1& 1& 6& 1& 4& 1& 1& 1& 1& 1& -& 1& 1& 4& 1& 1& 1& 1& 24& 1 \\
13 &1& 3& 1& 3& 1& 3& 1& 3& 1& 3& 1& -& 1& 3& 1& 3& 1& 3& 1& 3 \\
14 &6& 1& 1& 1& 3& 1& 1& 1& 6& 1& 1& 1& -& 1& 1& 1& 6& 1& 1& 1 \\
15 &1& 12& 1& 3& 1& 3& 1& 3& 12& 3& 4& 3& 1& -& 1& 3& 1& 3& 1& 12\\
16 &1& 1& 1& 1& 1& 1& 1& 1& 1& 1& 1& 1& 1& 1& -& 1& 1& 1& 1& 1 \\
17 &1& 3& 1& 3& 1& 3& 1& 3& 1& 3& 1& 3& 1& 3& 1& -& 1& 3& 1& 3 \\
18 &3& 1& 1& 1& 6& 1& 1& 4& 3& 1& 1& 1& 6& 1& 1& 1& -& 1& 1& 1 \\
19 &1& 3& 1& 3& 1& 3& 1& 3& 1& 3& 1& 3& 1& 3& 1& 3& 1& -& 1& 3 \\
20 &1& 1& 3& 12& 1& 1& 1& 1& 1& 1& 24& 1& 1& 1& 1& 1& 1& 1& -& 1 \\
21 &1& 3& 1& 3& 4& 3& 1& 3& 1& 3& 1& 3& 1& 12& 1& 3& 1& 3& 1& - 
\end{array}$
\caption{Index of the Kontsevich-Zorich monodromy for the origami \(\mathcal{O}(k,l)\) in the stratum \(\mathcal{H}(1,1)\)}
\label{table:Index}
\end{table}

\subsection{An example in genus 3} 
    \label{ssec:example-genus-3}
    In this section we study the Kontsevich--Zorich monodromy $\Gamma^{(0)}(M,\omega)$ of the origami $\mathcal{O}^\prime_{3,5} = (M,\omega) \in \mathcal{H}^{odd}(4)$ defined by the permutations $(1,2,3,4,5)(6,7)$ and $ (1,6,8)(2,7))$  (see Figure~\ref{fig:O35}), which is the smallest member of the family studied in Section \ref{ss.H4odd-family7}. From the computer experiments we obtain the results described in the following (cf.\cite[\texttt{Example1\_AKZ}]{KattlerGIT2023}).

\begin{figure}[htb]
\centering
\begin{tikzpicture}[scale = 0.8]
  \draw(0,0) -- (5,0);
  \draw(0,0) -- (0,3);
  \draw(1,0) -- (1,3);
  \draw(0,1) -- (5,1);
  \draw(2,0) -- (2,2);
  \draw(0,2) -- (2,2);
  \draw(3,0) -- (3,1);
  \draw(4,0) -- (4,1);
  \draw(5,0) -- (5,1);
  \draw(0,3) -- (1,3);
  \node at (.5,.5) {1};
  \node at (1.5,.5) {2};
  \node at (2.5,.5) {3};
  \node at (3.5,.5) {4};
  \node at (4.5,.5) {5};
  \node at (.5,1.5) {6};
  \node at (1.5,1.5) {7};
  \node at (.5,2.5) {8};
\end{tikzpicture}
\hspace*{1cm}
\begin{tikzpicture}[scale = 0.8]
\draw(0,0) -- (5,0);
\draw(0,0) -- (0,3);
\draw(1,0) -- (1,3);
\draw(0,1) -- (5,1);
\draw(2,0) -- (2,2);
\draw(0,2) -- (2,2);
\draw(3,0) -- (3,1);
\draw(4,0) -- (4,1);
\draw(5,0) -- (5,1);
\draw(0,3) -- (1,3);
\draw[->, color = red](0.5, 0) -- (0.5, 3) node[above] {$\zeta_l$};
\draw[->, color = red](1.5, 0) -- (1.5, 2) node[above,xshift = 2mm, yshift = -1mm] {$\zeta_m$};
\draw[->, color = red](3.5, 0) -- (3.5, 1) node[above] {$\zeta_c$};
\draw[->, color = green](0, 0.5) -- (5, 0.5) node[right] {$\sigma_l$};
\draw[->, color = green](0, 1.5) -- (2, 1.5) node[right] {$\sigma_m$};
\draw[->, color = green](0, 2.5) -- (1, 2.5) node[right,yshift = 1mm, xshift = -1mm] {$\sigma_c$};
\end{tikzpicture}
\caption{The origami $\mathcal{O} = \mathcal{O}_{3,5}$: opposite edges are glued.}
\label{fig:O35}
\end{figure}

The Veech group $\VG(M,\omega)$ is an index 1020  subgroup of $\SL(2,\ZZ)$ with 102 generators.
Consider the basis $B= \{\sigma_l, \sigma_m, \sigma_c, \zeta_l, \zeta_m, \zeta_c\}$ of $H_1(M,\ZZ)$ (see Figure~\ref{fig:O35}) and the basis
  \(
  \tilde{B} = \{ \Sigma_l = \sigma_l - 5\sigma_c,\;
  \Sigma_m = \sigma_m - 2\sigma_c,\;
  Z_l = \zeta_l - 3\zeta_c,\;
  Z_m = \zeta_m - 2\zeta_c \}
  )\)  of $H_1^{(0)}(M,\ZZ)$ defined at the beginning of Section~\ref{ss.H4odd-family7}. Let
$\rho: \VG(M,\omega) \to SL(4,\ZZ)$ be the action of the Veech group on the non tautological part  $H_1^{(0)}(M,\ZZ)$, when
we identify $H_1^{(0)}(M,\ZZ)$ with $\ZZ^4$ according to the chosen basis $\tilde{B}$. 
From Section~\ref{ss.H4odd-family7} we know that the image $H$ of $\rho$ is an arithmetic group. Recall that the action of the Veech group respects the intersection form $\Omega$. However, in general there is no symplectic basis of $H_1^{(0)}(M,\ZZ)$ defined over $\ZZ$. Hence in general $H$ can not be conjugated to a subgroup of the standard symplectic group $\SP(4,\ZZ)$. But we indeed can pass to a finite index subgroup of $H$ which is conjugated by a matrix $\tilde{T} \in \GL(4,\Q)$ to a subgroup $\tilde{U}$ of $\SP(4,\ZZ)$ of finite index (see below). Now, recall that $\SP(4,\ZZ)$  has the \textit{congruence subgroup property}, cf. \cite{BassMilnorSerre67}. Namely, any finite index subgroup of \(\SP(4,\ZZ)\) is a  congruence group of some level $l$, i.e. it contains $\Gamma(l) = \{A \in \Sp(4,\ZZ)|\; A \equiv \Id \mod l\}$, where $\Id$ is the identity matrix. We determine with GAP and in particular with the package \cite{Detinko2018ZariskiDA} the index and the level $l$ of the group $\tilde{U} \subseteq \SP(4,\ZZ)$ .\\ 

In detail, this is achieved for this example  as follows. 
Observe that the intersection form on the homology restricted to  $H_1^{(0)}(M,\ZZ)$ has the fundamental matrix $G$ given in (\ref{AllTheMatrices}) with respect to the basis $\tilde{B}$.
\begin{align}
\begin{split}\label{AllTheMatrices}
G =
\begin{pmatrix}
   0 &   0 &    7 & 1 \\
    0 &   0 &   1 &  -1\\
   -7 &  -1 &   0 &  0 \\
   -1 &   1 &   0 &  0\\
\end{pmatrix}, \quad
T =
\begin{pmatrix}
    1 &   1 &   0 & 0\\
    0 &   1 &   0 &  0\\
    0 &   0 &   0 &  1\\
    0 &   0 &   1 &  -7\\
\end{pmatrix},\\
\text{and} \quad G' = T^t G T =
\begin{pmatrix}
 0 & 0 & 1 & 0\\
 0 & 0 & 0 & 8\\ 
 -1 & 0 & 0 & 0\\
 0 & -8 & 0 & 0\\ 
\end{pmatrix}
\end{split}
\end{align}

The determinant of $G$ is 64. Hence we cannot find a symplectic basis of $H_1^{(0)}(M,\ZZ)$ defined over $\Z$. However, we do the  basis change given by the transformation matrix $T$ from (\ref{AllTheMatrices}) such that the fundamental matrix of the intersection form with respect to this new basis $B^\prime = (b'_1,b'_2,b'_3,b'_4)$ becomes the matrix $G'$ in (\ref{AllTheMatrices}).  
Define $\Lambda$ to be the lattice generated by $C = (8b'_1, b'_2, b'_3, b'_4)$. Then the intersection form on $\Lambda$ has the fundamental matrix $\tilde{G}$ in (\ref{EvenMoreMatrices}).
\begin{equation}
\tilde{G} = \begin{pmatrix}
 0 & 0 & 8 & 0\\
 0 & 0 & 0 & 8\\ 
 -8 & 0 & 0 & 0\\
 0 & -8 & 0 & 0\\ 
\end{pmatrix}, \quad
\tilde{T} = 
\begin{pmatrix}
  8&0&0&0\\
  0&1&0&0 \\
  0&0&1&0 \\
  0&0&0&1 
\end{pmatrix}
\end{equation}
\label{EvenMoreMatrices}
Observe that a matrix $A$ in $\SL(4,\ZZ)$ lies in $\SP(4,\ZZ)$ if and only if the corresponding linear transformation respects $\tilde{G}$ i.e., if and only if $A^t\cdot \tilde{G} \cdot A =\tilde{G}$.  We now have to restrict to those elements in $H'=T^{-1}H T$ which stabilize the lattice $\Lambda$ i.e., we consider
  \[
  U' = \textrm{Stab}_{H'}(\Lambda) = \{A \in H'|\; \forall x \in \Lambda: A\cdot x \in \Lambda\}
  .\]
  Computing $U' = \textrm{Stab}_{H'}(\Lambda)$ we obtain that it is a subgroup of index 48 in $H'$. Now we do the basis change described by the transformation matrix $\tilde{T}$ in (\ref{EvenMoreMatrices}) in order to express the elements of $U'$ with respect to the basis $C$. In this way we obtain $\tilde{U} = \tilde{T}^{-1}\cdot U' \cdot \tilde{T}$ as subgroup of $\SP(4,\ZZ)$. We find a transvection $Trv$ in this group. Using \cite{Detinko2018ZariskiDA} (with this transvection) we finally obtain as result that $\tilde{U}$ is a congruence subgroup of level 16 and of index 46080 in $\SP(4,\ZZ)$.

\subsection{A non Zariski-dense Kontsevich--Zorich monodromy in genus four}
In this subsection we consider the origami $\mathcal{O} = (M,\omega)$ defined as 
  \[
  \mathcal{O} = ((2,3,4)(5,7,6), (1,2,3,5,4,6,7))
  \]
  of degree 7 and genus 4 in stratum $\mathcal{H}(6)$, see Figure~\ref{fig:SpecialOrigamiGenus4}.
   Cf.\cite[\texttt{Example4\_AKZ}]{KattlerGIT2023}) for the computation of the following results.

\begin{figure}[htb]
\centering
\begin{tikzpicture}[scale=0.7]
  \draw (0,0) rectangle ++(1,1);
  \draw (0,1) rectangle ++(1,1);
  \draw (1,1) rectangle ++(1,1);
  \draw (2,1) rectangle ++(1,1);
  \draw (2,2) rectangle ++(1,1);
  \draw (3,2) rectangle ++(1,1);
  \draw (4,2) rectangle ++(1,1);
  \draw (.5,.5) node {1};
  \draw (.5,1.5) node {2};
  \draw (1.5,1.5) node {3};
  \draw (2.5,1.5) node {4};
  \draw (2.5,2.5) node {6};
  \draw (3.5,2.5) node {5};
  \draw (4.5,2.5) node {7};        
  \draw (.5,1.98) node {\uproman{1}};
  \draw (1.5,.98) node {\uproman{1}};
  \draw (1.5,1.98) node {\uproman{2}};
  \draw (3.5,1.98) node {\uproman{2}};
  \draw (2.5,.98) node {\uproman{3}};
  \draw (3.5,2.98) node {\uproman{3}};
  \draw (2.5,2.98) node {\uproman{4}};
  \draw (4.5,1.98) node {\uproman{4}};
  \draw (4.5,2.98) node {\uproman{5}};
  \draw (.5,-.02) node {\uproman{5}};          
\end{tikzpicture}
\caption{Origami $\mathcal{O}$ in $\mathcal{H}(6)$: edges with same labels and unlabeled opposite edges are glued}
\label{fig:SpecialOrigamiGenus4}
\end{figure}

Its Veech group $\VG(M,\omega)$ is a subgroup of index 8 in $\SL(2,\Z)$ generated by the following two parabolic matrices:
  \[
  A_1 =  \begin{pmatrix} 1&3\\0&1 \end{pmatrix} \quad \mbox{and} \quad A_2 = \begin{pmatrix} 1&0\\-1\ &1\end{pmatrix}.
  \]
The Schreier coset graph (cf.~\cite[2.3]{MKS2004})  of $\VG(M,\omega)$ with respect to the generators \(S\) and \(T\) -- as defined in \ref{TheMatrixSandT} -- of \(\SL(2,\ZZ)\) is shown in Figure~\ref{fig:cosetgraph} on the left side. Observe that $\VG(M,\omega)$ does not contain the matrix $-\Id$. Hence its image $\PSL(M,\omega)$ in $\PSL(2,\Z)$ is a subgroup of index 4. Its coset graph is shown in  Figure~\ref{fig:cosetgraph} on the right side. $\PSL(M,\omega)$ has two cusps of width 3 and width 1, respectively. They correspond to the $\overline{T}$-orbits where $\overline{T}$ is the image of $T$ in $\PSL(2,\Z)$. 
\begin{figure}[htb]
\centering
\begin{tikzpicture}[scale=0.7]
  \node[shape=circle,draw=black] (A) at (0,0) {8};
  \node[shape=circle,draw=black] (B) at (4,0) {7};
  \node[shape=circle,draw=black] (C) at (2,1) {6};
  \node[shape=circle,draw=black] (D) at (1.5,2.5) {3};
  \node[shape=circle,draw=black] (E) at (2.5,2.5) {2};
  \node[shape=circle,draw=black] (F) at (2,4) {1} ;
  \node[shape=circle,draw=black] (G) at (0,5) {5};
  \node[shape=circle,draw=black] (H) at (4,5) {4};
  \node[shape=circle,draw=black] (I) at (8,1) {5/7};
  \node[shape=circle,draw=black] (J) at (12,1) {4/8};
  \node[shape=circle,draw=black] (K) at (10,2.5) {1/6};
  \node[shape=circle,draw=black] (L) at (10,4.5) {2/3};

  \path [->] (C) edge node[left] {} (A);
  \path [->] (B) edge node[left] {} (C);
  \path [->] (A) edge node[left] {} (B);
  \path [->, dashed] (C) edge node[left] {} (D);
  \path [->, dashed] (E) edge node[right] {} (C);
  \path [->, dashed] (D) edge node[left] {} (F);
  \path [->, dashed] (F) edge node {} (E);
  \path [->](F) edge node {} (H);
  \path [->](H) edge node {} (G);
  \path [->](G) edge node {} (F);    
  \path [->,dashed, bend right=30](G) edge node {} (A);
  \path [->,dashed, bend right=30](B) edge node {} (H);
  \path [->,dashed, bend right=20](A) edge node {} (B);
  \path [->,dashed, bend right=20](H) edge node {} (G);
  \path [->, loop left](D) edge node {} (D);
  \path [->, loop right](E) edge node {} (E);
  \path [->] (I) edge node {} (K);
  \path [->] (K) edge node {} (J);
  \path [->] (J) edge node {} (I);
  \path [->, loop above] (L) edge node {} (L);
  \path [<->, dashed, bend left = 20] (I) edge node {} (K);
  \path [<->, dashed, bend left = 20] (K) edge node {} (J);
  \path [<->, dashed, bend left = 20] (J) edge node {} (I);
  \path [<->, dashed] (K) edge node {} (L);
\end{tikzpicture}
\caption{The Schreier coset graph of $\VG(M,\omega)$ in $\SL(2,\Z)$ (left side) and of $\PSL(M,\omega)$ in $\PSL(2,\ZZ)$ (right side). The dashed arrows show the action of $S$, the non-dashed arrows the action of $T$, cf.(~\ref{TheMatrixSandT})}
  \label{fig:cosetgraph}
\end{figure}

We denote in the following by $e_i$ the lower edge of the square in $\mathcal{O}$ labeled with $i$ and with $e_{i+7}$ the left edge of the square labeled with $i$. Then
  \[
  B = \{e_1, e_3, e_4, e_5, e_7, e_8, e_9, e_{13}\}
  \]
forms a basis of $H_1(M,\Z)$. The two generators $A_1$ and $A_2$ of the Veech group $\VG(M,\omega)$ act on $H_1(M,\Z)$ with respect to this basis by the two matrices:
  \[
  D_1 = \begin{pmatrix}1&0&0&0&0&3&1&1\\0&1&0&0&0&0&1&0\\0&0&1&0&0&0&1&1\\0&0&0&1&0&0&0&0\\0&0&0&0&1&0&0&1\\0&0&0&0&0&1&0&0\\0&0&0&0&0&0&1&0\\0&0&0&0&0&0&0&1\end{pmatrix}, \quad 
  D_2 = \begin{pmatrix} 1&1&1&1&1&-1&-1&0\\0&0&0&0&0&1&0&0\\0&1&1&1&0&0&-1&0\\0&0&-1&-1&0&0&1&0\\0&-1&0&0&0&0&1&0\\0&0&0&0&-1&0&0&1\\-1&0&0&-1&0&1&0&0\\0&-1&-1&0&0&0&1&0\end{pmatrix}
   \]
    
The non tautological part $H_1^{(0)}(M,\Z)$ of the homology  has the following basis:
   \[
   \tilde{B} = \{v_1 = e_1 - e_7,\; v_2 = e_3 - e_7,\; v_3 = e_4 - e_7,\; v_4 = e_5 - e_7,\; v_5 = e_8 - e_{13},\; v_6 = e_9 - e_{13}\}
   \]
 The action of  $A_1$ and $A_2$ on  $H_1^{(0)}(M,\Z)$ with respect to $\tilde{B}$ is then given by the following two matrices:
   \[
   E_1 = \begin{pmatrix}1&0&0&0&2&0\\0&1&0&0&0&1\\0&0&1&0&-1&0\\0&0&0&1&0&0\\0&0&0&0&1&0\\0&0&0&0&0&1\end{pmatrix}, 
   \quad
   E_2 = \begin{pmatrix}0&0&0&0&-1&-1\\0&0&0&0&1&0\\0&1&1&1&0&-1\\0&0&-1&-1&0&1\\1&1&1&1&-1&-1\\-1&0&0&-1&1&0\end{pmatrix}
   \]
Hence for this example the Kontsevich--Zorich monodromy is isomorphic to the subgroup of $\SL(6,\ZZ)$ generated by $E_1$ and $E_2$.
A computation with GAP gives us that the $\Q$-algebra $\Q(C_1,C_2)$ generated by $E_1$ and $E_2$ has dimension 18 and thus not the full dimension. We conclude that the Kontsevich--Zorich monodromy is not Zariski-dense in this case.



\appendix

\section{Example of Kontsevich--Zorich monodromy in the Prym locus of $\mathcal{H}^{odd}(4)$}\label{a.examplePrym}

Recall that an origami $(M,\omega)$ in the Prym locus of $\mathcal{H}^{odd}(4)$ has Kontsevich--Zorich monodromy included in $\textrm{Sp}(H_1^+)\times \textrm{Sp}(H_1^-)\simeq \SL(2,\mathbb{Z})\times \SL(2,\mathbb{Z})$ because the affine homeomorphisms of $(M,\omega)$ respect the splitting $H_1^{(0)}(M,\mathbb{Q}) = H_1^+\oplus H_1^-$ associated to the eigenspaces (of the eigenvalues $\pm 1$) of the anti-automorphism of $(M,\omega)$ (see, e.g., \cite{LN}). In particular, the Kontsevich--Zorich monodromy of an origami in the Prym locus of $\mathcal{H}^{odd}(4)$ is not Zariski dense in $\textrm{Sp}(H_1^{(0)}(M,\mathbb{R}))\simeq \textrm{Sp}(4,\mathbb{R})$, but we can still ask about the arithmeticity of Kontsevich--Zorich monodromies in this context. The answer to this question is not clear in general. 

For example, let us consider the case of the origami $\mathcal{E}_5$ associated to the permutations $h=(1,2)(3)(4,5)$ and $v=(1)(2,4,3)(5)$ to disclose the kind of question one finds by studying this locus.

\begin{figure}[!ht]
\centering
\tikzset{every picture/.style={line width=0.75pt}} 

\begin{tikzpicture}[x=0.75pt,y=0.75pt,yscale=-0.4,xscale=0.4]

\draw   (100,8) -- (189.5,8) -- (189.5,97.33) -- (100,97.33) -- cycle ;
\draw   (189.5,8) -- (279,8) -- (279,97.33) -- (189.5,97.33) -- cycle ;
\draw   (189.5,97.33) -- (279,97.33) -- (279,186.67) -- (189.5,186.67) -- cycle ;
\draw   (189.5,186.67) -- (279,186.67) -- (279,276) -- (189.5,276) -- cycle ;
\draw   (279,186.67) -- (368.5,186.67) -- (368.5,276) -- (279,276) -- cycle ;
\end{tikzpicture}
\caption{The origami in the Prym locus}
\end{figure}

By using SageMath, one has that the Veech group of $\mathcal{E}_5$ is an index 10 subgroup of $\SL(2,\mathbb{Z})$ generated by the matrices
$$\left(\begin{array}{cc}1&2\\0&1\end{array}\right), \left(\begin{array}{cc}5&-2\\3&-1\end{array}\right), \left(\begin{array}{cc}-4&3\\-7&5\end{array}\right).$$
Since
$$\left(\begin{array}{cc}5&-2\\3&-1\end{array}\right) = \left(\begin{array}{cc}1&2\\0&1\end{array}\right)\left(\begin{array}{cc}-1&0\\3&-1\end{array}\right) \quad \textrm{and} \quad \left(\begin{array}{cc}5&-2\\3&-1\end{array}\right)\left(\begin{array}{cc}-4&3\\-7&5\end{array}\right)= \left(\begin{array}{cc}-6&5\\-5&4\end{array}\right)$$
the Veech group of $\mathcal{E}_5$ is also generated by
$$\left(\begin{array}{cc}1&2\\0&1\end{array}\right), \left(\begin{array}{cc}1&0\\3&1\end{array}\right), \left(\begin{array}{cc}-1&0\\0&-1\end{array}\right), \left(\begin{array}{cc}6&-5\\5&-4\end{array}\right).$$
Observe that $\mathcal{E}_5$ has three horizontal cylinders with waist curves $\sigma_1$, $\sigma_0$, $\sigma_2$ with holonomies $(2,0)$, $(1,0)$, $(2,0)$, and three vertical cylinders with waist curves $\zeta_1$, $\zeta_0$, $\zeta_2$ with holonomies $(0,1)$, $(0,3)$, $(0,1)$, so that $H_1^{(0)}(\mathcal{E}_5,\mathbb{Q})$ has a basis consisting of $\Sigma_i=\sigma_i-2\sigma_0$, $Z_i=3\zeta_i-\zeta_0$ for $i=1,2$. Moreover, $-\Id$ acts on $H_1^{(0)}(\mathcal{E}_5,\mathbb{Q})$ by $\Sigma_i\mapsto -\Sigma_{3-i}$, $Z_i\mapsto-Z_{3-i}$ for $i=1,2$, so that
  \[
  H_1^{(0)}(\mathcal{E}_5,\mathbb{Q}) = H_1^+\oplus H_1^-
  \]
where $H_1^+$ is generated by $\Sigma^+=\Sigma_1-\Sigma_2$, $Z^+=Z_1-Z_2$, and $H_1^-$ is spanned by $\Sigma^-=\Sigma_1+\Sigma_2$ and $Z^-=Z_1+Z_2$. A direct computation reveals that the matrices $A$ and $B$ of the actions of $\left(\begin{array}{cc}1&2\\0&1\end{array}\right)$ and $\left(\begin{array}{cc}1&0\\3&1\end{array}\right)$ on the basis $\{\Sigma^+,Z^+, \Sigma^-,Z^-\}$ of $H_1^+\oplus H_1^-$ are
  \[
  A=\left(\begin{array}{cccc}1&3&0&0\\0&1&0&0\\0&0&1&1\\0&0&0&1\end{array}\right) \quad \textrm{and} \quad  B=\left(\begin{array}{cccc}1&0&0&0\\1&1&0&0\\0&0&1&0\\0&0&1&1\end{array}\right).
  \]
Moreover, the matrix $\left(\begin{array}{cc}6&-5\\5&-4\end{array}\right)$ acts trivially on $H_1^{(0)}(M,\mathbb{Q})$ because it induces a Dehn multitwist in the one-cylinder direction $(1,1)$. 

The action of $A$ and $B$ restricted to $H_1^-$ generate a copy of $\SL(2,\mathbb{Z})$ since the matrices 
 \[
 T=\left(\begin{array}{cc}1&1\\0&1\end{array}\right)\quad \text{and} \quad 
 S= \left(\begin{array}{cc}1&0\\1&1\end{array}\right)
 \]
are generators. Similarly, $A$ and $B$ restricted to $H_1^+$ generate a group $\Gamma$ which is a copy of the finite-index\footnote{The fact that $\Gamma$ has finite-index in $\SL(2,\mathbb{Z})$ is a general feature of the origamis in the Prym locus of $\mathcal{H}^{odd}(4)$: in fact, the arguments (due to M\"oller) in Appendix \ref{a.ArithmShadVee} can also be used to check that \emph{both} projections of the Kontsevich--Zorich monodromy to $\textrm{Sp}(H_1^+)$ and $\textrm{Sp}(H_1^-)$ have finite index in $\SL(2,\mathbb{Z})$.} subgroup $\Gamma_1(3)$ of $\SL(2,\mathbb{Z})$.

However, we have not further investigated how the group spanned by $A$ and $B$ sits inside the product $\Gamma \times \SL(2,\mathbb{Z})$. 

\section{Kontsevich--Zorich monodromy for genus two origamis}\label{a.ArithmShadVee}

The goal of this section is to prove Theorem \ref{Thm:ArithmShadVee} (an unpublished result of Martin M\"oller), stating that the Kontsevich--Zorich monodromy of an origami in genus \(g=2\) has finite index in \(\mbox{SL}(2,\mathbb{Z})\).

\subsection{Preliminaries} 

\subsubsection{Local systems}
The next Theorem is very important for the study of the representations in our context. For a proof see for example \cite{voisinHodge2003} Section 3.1.1.
\begin{theorem}\label{thm:monodromrep}
Let \(R\) be a ring and let \(X\) be a path-connected, locally simply connected topological space with a base point \(x\in X\). Then there is an equivalence between the category of \(R\)-local systems on \(X\) and the category of \(R\)-modules with \(\pi_1(X,x)\)-left action, given by the functor
  \[
    \mathbb{L} \longmapsto \mathbb{L}_x,
  \]
where  \(\mathbb{L}_x\) denotes the stalk of the \(R\)-local system \(\mathbb{L}\) at the base point \(x\in X\).
\end{theorem}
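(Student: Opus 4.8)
The plan is to exhibit an explicit quasi-inverse to the stalk functor and to check that the two composites are naturally isomorphic to the identity functors. Write $F$ for the functor $\mathbb{L}\mapsto \mathbb{L}_x$. First I would explain how the stalk $\mathbb{L}_x$ acquires a left $\pi_1(X,x)$-action. Since $\mathbb{L}$ is locally constant, every path $\gamma\colon[0,1]\to X$ induces a \emph{parallel transport} isomorphism $\tau_\gamma\colon \mathbb{L}_{\gamma(0)}\xrightarrow{\ \sim\ }\mathbb{L}_{\gamma(1)}$: one covers the compact image of $\gamma$ by finitely many connected open sets on which $\mathbb{L}$ is constant (subdividing $[0,1]$ via the Lebesgue number lemma) and composes the resulting restriction/corestriction isomorphisms on stalks. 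A homotopy rel endpoints between two paths can be subdivided into small rectangles over which $\mathbb{L}$ is constant, and local constancy forces $\tau_\gamma$ to depend only on the homotopy class of $\gamma$; hence $[\gamma]\mapsto \tau_\gamma$, restricted to loops based at $x$, defines a homomorphism $\pi_1(X,x)\to \mathrm{Aut}_R(\mathbb{L}_x)$. A morphism of local systems induces an $R$-linear map on stalks compatible with transport, so $F$ indeed lands in $R[\pi_1(X,x)]$-modules.

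Next I would construct the inverse functor $G$, and this is where the hypotheses enter decisively: $X$ path-connected and locally simply connected is in particular locally path-connected and semi-locally simply connected, so it admits a universal cover $p\colon\widetilde{X}\to X$ on which $\pi_1(X,x)$ acts freely and properly discontinuously by deck transformations. Given an $R$-module $M$ with left action $\rho$, I form the associated bundle
\[
E_M \;:=\; \bigl(\widetilde{X}\times M\bigr)\big/\pi_1(X,x),
\]
where $M$ carries the discrete topology and $g$ acts by $(\tilde y,m)\mapsto(g\cdot\tilde y,\ \rho(g)m)$. The map $E_M\to X$ is a covering with discrete fibre $M$, and its sheaf of local sections $G(M)$ is locally constant with stalks isomorphic to $M$. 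A $\pi_1(X,x)$-equivariant $R$-linear map descends to a morphism of bundles and hence of sheaves, so $G$ is a functor.

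Finally I would produce the natural isomorphisms $FG\cong\mathrm{id}$ and $GF\cong\mathrm{id}$. For $FG$, fixing a lift $\tilde x$ of $x$ identifies the stalk of $G(M)$ at $x$ with the fibre of $E_M$ over $x$, namely $M$ via $m\mapsto[(\tilde x,m)]$; one then checks that parallel transport along a loop $\gamma$ corresponds under this identification to the deck transformation $[\gamma]$, so that the recovered monodromy action is exactly $\rho$. For $GF$, set $M=\mathbb{L}_x$ with its monodromy representation; since $\widetilde{X}$ is simply connected and locally simply connected, the same transport argument shows that $p^*\mathbb{L}$ is the constant sheaf, canonically trivialized as $\underline{M}$ by transport to $\tilde x$, and this trivialization is $\pi_1(X,x)$-equivariant in the twisted sense, hence descends to an isomorphism $\mathbb{L}\xrightarrow{\ \sim\ }G(\mathbb{L}_x)$. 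Naturality of both isomorphisms in $\mathbb{L}$ and in $M$ is then routine. The main obstacle is making the two transport-based arguments airtight: the homotopy invariance of parallel transport (which underlies both the well-definedness of the monodromy action and the triviality of $p^*\mathbb{L}$) requires the careful subdivision of a homotopy into small rectangles on which $\mathbb{L}$ is constant, and one must pin down the action conventions (left versus right, and the direction of transport) so that the two descent constructions are genuinely mutually inverse rather than inverse up to an anti-isomorphism.
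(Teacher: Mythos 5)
Your argument is correct and is the standard one: parallel transport along paths gives the monodromy action, and the associated-bundle construction over the universal cover (which exists precisely because $X$ is path-connected and locally simply connected) furnishes the quasi-inverse. The paper does not prove this statement itself but defers to \cite{voisinHodge2003}, \S 3.1.1, where essentially this same argument appears, so your proposal matches the intended proof; your closing caveat about fixing the left/right action conventions so that the stalk functor is a genuine homomorphism rather than an anti-homomorphism is exactly the one point that needs care.
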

The mapping on \(\mathbb{L}_x\), induced by the left action of \(\pi_1(X,x)\), is called \textit{monodromy representation}.

\subsubsection{Translation structures} 

Let \(X\) be a compact Riemann surface of genus \(g\) with finitely many marked points \(\Sigma\subset X\). A \textit{translation structure} on \(X{\setminus}\Sigma\) is determined by an atlas \((V,\phi)\) with an open covering \(V=(V_i)_{i\in I} \) of \(X{\setminus}\Sigma\) and with charts \(\phi_i\colon V_i\to \mathbb{C}\) such that the transition maps \(\phi_{i,j}\colon \mathbb{C}\to \mathbb{C}\) are of the form
  \[\phi_{i,j}(z_i)=z_j+c_{i,j}\]
on the intersection \(V_i\cap V_j\).

Denote by \(\Omega T_g\) the bundle over the Teichm\"uller space \(T_g\) whose points parametrize pairs \((X,\omega)\) of a compact, marked Riemann surface \(X\) together with a non-zero holomorphic 1-form \(\omega\) on $X$.

For a point \((X,\omega)\in\Omega T_g\), let \(Z(\omega)\subset X\) be the set of zeros of \(\omega\). We can define a translation chart on \(X{\setminus}Z(\omega)\) in the following way: Choose a point \(x\in X{\setminus}Z(\omega)\), now for every simply connected \(U\subset X{\setminus}Z(\omega)\) define a map
 \[
 \phi_U\colon U \longrightarrow \mathbb{C},\quad y \longmapsto \int_x^y \omega.
 \]
Then \((U,\phi_U)\) is one of the translation charts.

On the other hand given a compact Riemann surface \(X\) of genus \(g\) with a finite set of points \(\Sigma\subset X\) and a translation atlas \((V,\phi_i)\) for \(X{\setminus}\Sigma\), we can pull back the holomorphic 1-form \(dz\) on \(\mathbb{C}\) via the charts \((V_i,\phi_i)\) to get a holomorphic 1-form \(\omega{'}\) on \(X{\setminus}\Sigma\). It is now easy to extend \(\omega{'}\) to a holomorphic 1-form \(\omega\) on \(X\).

The following proposition is standard and plausible considering the last arguments:
\begin{proposition}
On compact Riemann surfaces, translation structures are in one-to-one correspondence with holomorphic 1-forms which are not identically zero.
\end{proposition}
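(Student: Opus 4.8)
The plan is to exhibit two maps---one sending a holomorphic 1-form to a translation structure and the other reversing it---and to check that they are mutually inverse. The two directions are already sketched in the discussion preceding the statement, so the real work is to make each construction well-defined and to control the behavior at the marked points.

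First I would treat the forward direction $\omega \mapsto$ (translation structure). Given $(X,\omega)$ with zero set $Z(\omega)$, on each simply connected $U \subset X\setminus Z(\omega)$ set $\phi_U(y)=\int_x^y \omega$; restricting to simply connected $U$ makes the integral path-independent, so $\phi_U$ is well-defined, and since $\omega$ is nonvanishing on $U$ it is a local biholomorphism, hence a genuine chart. On an overlap $U\cap U'$ both $\phi_U$ and $\phi_{U'}$ are primitives of the same form, so $d(\phi_U-\phi_{U'})=0$ and the transition map is $z\mapsto z+c$ with $c$ locally constant, which is exactly the defining condition of a translation atlas. I would also record the local model at a zero of order $k$: in a coordinate $w$ vanishing there one has $\omega=(\mathrm{unit})\cdot w^k\,dw$, so the developing map behaves like $w^{k+1}$ and the cone angle is $2\pi(k+1)$.

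Next I would treat the reverse direction. Given a translation atlas $(V_i,\phi_i)$ on $X\setminus\Sigma$, pull back $dz$ to obtain $\omega_i':=\phi_i^*\,dz$; because $\phi_{i,j}(z)=z+c_{i,j}$ we have $\phi_{i,j}^*\,dz=dz$, so the $\omega_i'$ agree on overlaps and patch to a single holomorphic 1-form $\omega'$ on $X\setminus\Sigma$. The crux is extending $\omega'$ across each $p\in\Sigma$: in a coordinate $w$ centered at $p$ write $\omega'=f(w)\,dw$ with $f$ holomorphic on a punctured disk, so extension over $p$ is precisely the removable-singularity statement that $f$ is bounded. This is where I expect the main obstacle to lie, since for an \emph{arbitrary} cone angle the developing map is multivalued and need not yield a single-valued holomorphic form. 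I would therefore use the hypothesis implicit in the stratum setup $\Omega T_g$ that each cone angle is an integer multiple $2\pi(k+1)$ of $2\pi$; then the developing map is modeled on $w\mapsto w^{k+1}$, giving $\omega'=(\mathrm{unit})\cdot w^k\,dw$ near $p$, and the Riemann extension theorem produces $\omega\in\Omega T_g$ with a zero of order $k$ at $p$.

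Finally I would verify that the two constructions are mutually inverse, which is essentially bookkeeping. Starting from $\omega$, forming the primitives $\phi_U$ and pulling back $dz$ returns $d\phi_U=\omega$ by the fundamental theorem of calculus. Starting from a translation atlas, integrating the associated $\omega'$ reproduces the charts $\phi_i$ up to the additive constants that were already the only ambiguity in a translation structure. The one point needing care is that the singularity dictionary (zeros of $\omega$ of order $k$ $\leftrightarrow$ cone points of angle $2\pi(k+1)$) is consistent in both directions, which follows from the matching local models computed above.
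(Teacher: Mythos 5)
Your proof is correct and follows the same two constructions the paper sketches immediately before the proposition (integrating $\omega$ over paths in simply connected sets to obtain translation charts, and pulling back $dz$ through a translation atlas to recover $\omega$), which the paper then simply declares standard. You are in fact more careful than the paper at the one genuinely delicate point, the extension of $\omega'$ across the marked points, where some hypothesis on the singularities (cone angles that are integer multiples of $2\pi$, as implicit in the $\Omega T_g$ setting) is really needed, since a translation structure such as the one induced by $dw/w$ on a punctured disk does not extend to a holomorphic form.
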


\subsubsection{Teichm\"uller curves}

We first want to give the definition and construction of Teichm\"uller curves. As additional literature we can recommend \cite{Lochak05Oacimsoc} as well as \cite{McMullenBilliard} section 2 and 3 and for a more intense approach have a look in \cite{TMCMMlect} section 2 and 3.

Let \(S\) be a compact hyperbolic Riemann surface of genus \(g\). We denote by \(T_g(S)\) the \textit{Teichm\"uller space} of compact Riemann surfaces \(X\) of genus \(g\geq 2\) with markings \(m\colon S \to X\). We write \(\Gamma_g(S)\) for the mapping class group of \(T_g(S)\) and  \(M_g(S)\) for the \textit{moduli space of compact Riemann surfaces of genus \(g\)}. In most of the cases we are not interested in the base point \((S,\text{id})\) of \(T_g(S)\) and write \(T_g\) for \(T_g(S)\) respectively \(\Gamma_g\) and \(M_g\) for \(\Gamma_g(S)\) and \(M_g(S)\).

We will now explain how to construct Teichm\"uller curves from certain points \((X,\omega)\in \Omega T_g\). We can define an \(\mbox{SL}(2,\mathbb{R})\)-action on \(\Omega T_g\) in the following way:
Given \(A={\tiny\begin{pmatrix} a & b \\ c & d \end{pmatrix}} \in \mbox{SL}(2,\mathbb{R})\) and \((X,\omega)\in\Omega T_g\) consider the harmonic 1-form
  \begin{displaymath}
    \omega_A= \begin{pmatrix} 1 & i \end{pmatrix} 
              \begin{pmatrix} a & b \\ c & d \end{pmatrix}
              \begin{pmatrix} \mbox{Re}(\omega) \\ \mbox{Im}(\omega)\end{pmatrix}
   \end{displaymath}
on \(X\). 

We can equip \(X\) with a new complex structure, with respect to which \(\omega_A\) is again a holomorphic 1-form. This complex structure delivers a new Riemann surface \(X_A\) and we define \(A.(X,\omega)=(X_A,\omega_A)\in \Omega T_g\). 

The fibers of the projection \(\Omega T_g\to T_g\) are stabilized by \(\mbox{SO}_2(\mathbb{R})\) and for \((X,\omega)\in \Omega T_g\) the projection of the orbit \(\Delta=\mbox{SL}(2,\mathbb{R}).(X,\omega)\subset \Omega T_g\) to \(T_g\) is an embedding. Thus we get for every translation surface \((X,\omega)\in \Omega T_g\) a map
  \begin{displaymath}
   g_\omega\colon \mathbb{H}=\mbox{SL}(2,\mathbb{R}){/}\mbox{SO}_2(\mathbb{R}) \longrightarrow T_g,
  \end{displaymath}
which is a geodesic embedding for the Teichm\"uller metric on \(T_g\), see \cite{earle1997teichmuller}.
By composing \(g_\omega\) with the projection map \(\pi_g\colon T_g\to M_g\), we get a map
  \begin{displaymath}
   f_\omega\colon \mathbb{H}=\mbox{SL}(2,\mathbb{R}){/}\mbox{SO}_2(\mathbb{R}) \longrightarrow M_g.
  \end{displaymath}
The global stabilizer of the action of \(\Gamma_g\) on \(\Delta=g_\omega(\mathbb{H})\subset T_g\) is the group \(\mbox{Aff}^+(X,\omega)\) of orientation preserving diffeomorphisms, which are affine with respect to the translation structure defined by \(\omega\) (see \cite[Theorem 1]{earle1997teichmuller}). We denote the pointwise stabilizer of the action of \(\Gamma_g\) on \(\Delta\) by $H(X,\omega)$. The quotient  \(\mbox{Aff}^+(X,\omega){/}H(X,\omega)\) can be identified with the group
  \[
   \mbox{SL}(X,\omega)=\mbox{Stab}(f_\omega)=\{A\in \mbox{Aut}(\mathbb{H})\mid f_\omega(A\,t)= f_\omega(t)\ \forall t\in \mathbb{H}\}.
  \]
We want to point out that $R\cdot \text{SL}(X,\omega)\cdot R$ coincides with the projective Veech group of \((X,\omega)\), where \(R=\text{diag}(1,-1)\) (\cite{McMullenBilliard} Prop. 3.2). Since \(g_{\omega}\) is injective we have an isomorphism \(\mathbb{H}{/}\mbox{SL}(X,\omega)\simeq\Delta{/}\mbox{Aff}^+(X,\omega)\).
The map $f_\omega\colon \mathbb{H}\to M_g$ clearly factors through its stabilizer and we call 
  \[
  j_\omega\colon\mathbb{H}{/}\mbox{SL}(X,\omega)\to M_g
  \]
or \(C_1=\Delta{/}\mbox{Aff}^+(X,\omega)\) a \textit{Teichmüller curve} if one of the following equivalent statements is true: 
\begin{enumerate}
  \item[(i)]
  The stabilizer group \(\mbox{SL}(X,\omega)\subset \mbox{Aut}(\mathbb{H})\) is a lattice.
  \item[(ii)]
  The manifold \(\Delta{/}\mbox{Aff}^+(X,\omega)\) has finite volume. 
\end{enumerate}
In this case the map \(j_\omega\colon C_1\to M_g\) is proper and generically injective. Its image \(j_\omega(C_1)\subset M_g\) is an algebraic curve, whose normalization is \(C_1\) (see \cite{McMullenBilliard} section 2). If the curve \(C_1\) was constructed from a pair \((X,\omega)\in \Omega T_g\), we say that \((X,\omega)\) \textit{generates the Teichmüller curve \(C_1\)}. The construction made above is visualized in the following diagram:
\begin{center}
\begin{tikzpicture}[scale=1.5]
\draw(-1,0) node[left]{$\mathbb{H}$};
\draw[->](-1,0) -- node[above]{$g_\omega$} (1,0) node[right]{$\Delta$};
\draw[->](1.3,0) -- (3,0) node[right]{$T_g$};
\draw[->](3.2,-0.2) -- node[right]{$\pi_g$} (3.2,-1.2) node[below]{$M_g$};
\draw[->](1.2,-0.2) -- (1.2,-1.2) node[below]{$C_1=\Delta{/}\text{Aff}^+(X,\omega)$};
\draw[->](-1.2,-0.2) -- (-1.2,-1.2) node[below]{$\mathbb{H}{/}\text{SL}(X,\omega)$};
\draw[->](-0.55,-1.4) -- (0.15,-1.4);
\draw[->](2.3,-1.4) -- node[above]{$j_\omega$} (3,-1.4);
\end{tikzpicture}
\end{center}
\begin{remark}\label{Rem:Veefiind}
If \((X,\omega)=\mathcal{O}\) defines an origami, the group \(\mbox{SL}(X,\omega)\) is a finite index subgroup of \(\mbox{SL}(2,\mathbb{Z})\) (see \cite{GuJu00}). This implies that \(\mbox{SL}(X,\omega)\) is a lattice in \(\mbox{Aut}(\mathbb{H})\) and hence every origami defines a Teichmüller curve. We will call a Teichmüller curve, which comes from an origami, a \textit{origami curve}.
\end{remark} 

\subsubsection{Family of curves}\label{sect:famteicur}

We recall the construction of the \textit{family of curves coming from a Teichmüller curve} as in section 1.4 of \cite{MoellerVar06} or section 3.1 of \cite{TMCMMlect}.

Let \(j_\omega\colon C_1\to M_g(S)\) be a Teichmüller curve, which comes from a pair \((X,\omega)\in \Omega T_g(S)\). Let \(M_g^{[3]}=T_g(S)/\Gamma_g^{[3]}\) be the moduli space of curves with level-\(3\) structure. Here \(\Gamma_g^{[3]}\) is the kernel of the action of \(\Gamma_g(S)\) on \(H^1(S,\mathbb{Z}/3\mathbb{Z})\). We have that \(\Gamma_g^{[3]}\leq \Gamma_g\) is a torsion free finite index subgroup. Furthermore \(M_g^{[3]}\) is a fine moduli space. Hence there is a universal family \(f^{[3]}\colon\mathcal{X}^{[3]}_\text{univ}\to M_g^{[3]}\) over \(M_g^{[3]}\). 

Let \(\Gamma_1\) be the stabilizer of \(\Delta=g_\omega(\mathbb{H})\) for the action of \(\Gamma_g^{[3]}\) on \(T_g(S)\) and define \(C_1^{[3]}=\Delta/\Gamma_1\). The inclusion \(\Delta\hookrightarrow T_g(S)\) induces a map \(C_1^{[3]}\rightarrow M_g^{[3]}\) on the quotients. The moduli space \(M_g^{[3]}\) admits a universal family \(f^{[3]}\colon\mathcal{X}^{[3]}_\text{univ}\to M_g^{[3]}\), which we can pull back via \(C_1^{[3]}\rightarrow M_g^{[3]}\) to get a family of curves \(\mathcal{X}^{[3]}_{C_1}\to C_1^{[3]}\). 

We can now pass to a finite index subgroup \(\Gamma\leq\Gamma_1\), such that the pull back of the universal family via the map \(C=\Delta{/}\Gamma\to M_g^{[3]}\) delivers a family of curves \(f\colon \mathcal{X}\to C\), which can be completed to a stable family \(\overline f\colon\overline{\mathcal{X}}\to \overline C\) over a smooth completion (smooth compactification) \(\overline C=\overline{\Delta{/}\Gamma}\) of \(C\). 

This implies that monodromies around the cusps \(\partial C=\overline C \setminus C\) are unipotent. We call such a family \(f\colon \mathcal{X} \to C\) a \textit{family of curves coming from a Teichmüller curve}.

The whole situation is visualized in the following diagram.
\begin{center}
\begin{tikzpicture}[scale=1.5]
\draw(0,0) node[left]{$C_1=\Delta{/}\text{Aff}^+(X,\omega)$};
\draw(1,0) node[right]{$M_g$};
\draw(1,1) node[right]{$M_g^{[3]}$};
\draw(0,1) node[left]{$C_1^{[3]}=\Delta{/}\Gamma_1$};
\draw[->](0,0) -- (1,0); 
\draw[->](0,1) -- (1,1); 
\draw[<-](1.2,0.2) -- (1.2,0.8);
\draw[<-](-0.5,0.2) -- (-0.5,0.8);
\draw[<-](-0.5,1.2)-- node[right]{$f^{[3]}$} (-0.5,2.7) node[above]{$\mathcal{X}_{C_1}^{[3]}$};
\draw[<-](1.2,1.2)-- node[right]{$f^{[3]}$} (1.2,2.7) node[above]{$\mathcal{X}_\text{univ}^{[3]}$};
\draw[->](-0.2,2.9) -- (0.88,2.9);
\draw[<-](-0.8,2.9) -- (-2.3,2.9) node[left]{$\mathcal{X}$};
\draw[->](-2.5,2.7)-- node[right]{$f$} (-2.5,1.8) node[below]{$C=\Delta{/}\Gamma$};
\draw[->](-2.2,1.4) -- (-1.5,1);
\draw[->] (-2.7,2.9) -- (-3.7,2.9) node[left]{$\overline{\mathcal{X}}$};
\draw[->] (-3.9,2.7) --node[right]{$\overline f$} (-3.9,1.8) node[below]{$\overline C$};
\draw[<-] (-3.7,1.6) -- (-3.1,1.6);
\end{tikzpicture}
\end{center}

\begin{remark}\label{Rem:PropertC}
We want to record two very important properties of the finite index group \(\Gamma\leq \mbox{Aff}^+(X,\omega)\) from above:
\begin{enumerate}
  \item The group \(\Gamma\) is torsion free and hence we can identify it with the fundamental group
  \(\pi_1(C,c)\) of the curve \(C=\Delta/\Gamma\).
  \item The local monodromy of \(\Gamma\) around the cusps \(\overline C\setminus C\) is unipotent.
\end{enumerate}
\end{remark}

\subsubsection{Variations of Hodge structures}\label{sect:VarOfHS}
Let \(C_1\to M_g\) be a Teichmüller curve generated by the pair \((X,\omega)\in \Omega T_g\) and let \(f\colon \mathcal{X}\to C=\Delta/\Gamma\) be the associated family of curves, constructed as in section \ref{sect:famteicur}. We can associate to $f\colon \mathcal{X} \to C$ the local system \(\mathbb{V}_\mathbb{Z}=R^1f_*\mathbb{Z}_\mathcal{X}\), where \(\mathbb{Z}_{\mathcal{X}}\) is the constant sheaf of stalk \(\mathbb{Z}\) on \(\mathcal{X}\). Due to Theorem \ref{thm:monodromrep} we have a monodromy representation for the local system $\mathbb{V}_\mathbb{Z}$ which we can describe as follows. Fix a base point \(c\in C(\mathbb{C})\). Without loss of generality we can assume that the fiber of \(f\) over \(c\) is the Riemann surface \(X\) from above and thus \((\mathbb{V}_\mathbb{Z})_c=H^1(X,\mathbb{Z})\). We identify $\pi_1(C,c)$ with the torsion free subgroup \(\Gamma\leq\mbox{Aff}^+(X,\omega)\) as before. Hence every $a\in \pi(C,c)$ is giving us an affine homeomorphism $\varphi_a\in \Gamma$. Then the monodromy operation of $a$ on $H^1(X,\mathbb{Z})$ is given by the pull back map $\varphi_a^*$ defined by $\varphi_a\in \Gamma$ (c.f. \cite[Section 3.1.2]{voisinHodge2003}).

The bundle
  \[
 \mathcal{V}^{1,0}=R^0f_*(\Omega_{\mathcal{X}{/}C}^1)= f_* \Omega_{\mathcal{X}/C}^1
  \]
is the $(1,0)$-part of a Hodge filtration of weight one on the holomorphic bundle \(\mathcal{V}=R^1f_*\mathbb{Z}_\mathcal{X}\otimes_\mathbb{Z} \mathcal{O}_{C}\), which induces the Hodge decomposition on its fibers (see \cite{DeligneTrdeGrif69})

In Remark \ref{Rem:PropertC} we already recorded that the monodromy of \(\pi_1(C,c)\) is locally unipotent around the cusps \(\overline C\setminus C\) of \(C\). Next we want to state a result of Deligne, where we need this fact.

Let \(\mathbb{V}\) be a \(\mathbb{C}\)-local system of rank $k$ on the punctured unit disc \(\mathbb{D}^*=\mathbb{D}{\setminus}\{0\}\), which has unipotent monodromy representation around \(0\). Let \((\mathcal{V}=\mathbb{V}\otimes_{\mathbb{C}} \mathcal{O}_{\mathbb{D}^*},\nabla)\) be the corresponding vector bundle with flat (in particular holomorphic) connection 
  \[
  \nabla\colon \mathcal{V}\longrightarrow \Omega_{\mathbb{D}^*}^1\otimes_{\mathcal{O}_{\mathbb{D}^*}} \mathcal{V}.
  \]
  
\begin{proposition}[Deligne \cite{DeligneEqDif70}]\label{Prop:DelExt} 
In the situation above there is a unique extension \((\mathcal{V}_\text{ext},\nabla_\text{ext})\) of \((\mathcal{V},\nabla)\) on \(\mathbb{D}\), where \(\mathcal{V}_\text{ext}\) is a locally free \(\mathcal{O}_\mathbb{D}[x^{-1}]\)-module ( with $x$ a coordinate of $\mathbb{D}$) and 
  \[
  \nabla_\text{ext}\colon \mathcal{V}_\text{ext}\longrightarrow 
  \Omega_{\mathbb{D}}\otimes_{\mathcal{O}_\mathbb{D}} \mathcal{V}_\text{ext}
  \]
is a regular, meromorphic connection.
\end{proposition}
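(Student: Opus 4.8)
The plan is to realise $(\mathcal{V}_{\text{ext}},\nabla_{\text{ext}})$ as the \emph{Deligne canonical extension}, built explicitly from the monodromy. First I would invoke Theorem \ref{thm:monodromrep} to encode $\mathbb{V}$ by its fibre $V_0:=\mathbb{V}_x$ together with the monodromy automorphism $T\in\mathrm{GL}(V_0)$ given by analytic continuation once around $0$; by hypothesis $T$ is unipotent. Unipotence is what makes everything work: the series $N:=\frac{1}{2\pi i}\log T=\frac{1}{2\pi i}\sum_{j\geq 1}\frac{(-1)^{j-1}}{j}(T-\mathrm{Id})^j$ terminates, so $N$ is a well-defined nilpotent endomorphism of $V_0$ with $\exp(2\pi i N)=T$. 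This $N$ will be the residue of the extended connection.

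Next I would produce a single-valued holomorphic frame. For a multivalued flat section $v$ of $\mathbb{V}$ set $\tilde v:=\exp(-N\log x)\,v=x^{-N}v$, where $x^{-N}:=\exp(-N\log x)$ is matrix-valued and polynomial in $\log x$ (again by nilpotence of $N$). Traversing the puncture once sends $\log x\mapsto \log x+2\pi i$ and $v\mapsto Tv$, while $x^{-N}\mapsto x^{-N}\exp(-2\pi i N)=x^{-N}T^{-1}$; since $N$, $x^{-N}$ and $T$ pairwise commute, $\tilde v\mapsto x^{-N}T^{-1}\cdot Tv=\tilde v$, so $\tilde v$ is single-valued and holomorphic on $\mathbb{D}^*$. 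Choosing a basis $v_1,\dots,v_k$ of $V_0$, the sections $\tilde v_1,\dots,\tilde v_k$ form a holomorphic frame of $\mathcal{V}$ over $\mathbb{D}^*$, and I would \emph{define} $\mathcal{V}_{\text{ext}}$ by declaring this frame to generate a locally free sheaf across $0$ (the object in the statement being its localisation, the associated $\mathcal{O}_{\mathbb{D}}[x^{-1}]$-module). A direct computation then reads off the connection: from $v_i=x^{N}\tilde v_i$ flat and $d(x^{-N})=-N\,x^{-N}\frac{dx}{x}$ one gets $\nabla_{\text{ext}}\tilde v_i=-\sum_l N_{il}\,\tilde v_l\otimes\frac{dx}{x}$, i.e. in this frame $\nabla_{\text{ext}}=d-N\frac{dx}{x}$. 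Hence $\nabla_{\text{ext}}$ has a first-order (logarithmic) pole at $0$ with \emph{nilpotent} residue $-N$, which is exactly the Fuchsian/regular-singular condition: the horizontal sections $x^{N}\tilde v_i$ have moderate growth, being polynomial in $\log x$. This settles existence, and the construction is essentially a computation.

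The hard part will be uniqueness. Given a second regular meromorphic extension $(\mathcal{V}',\nabla')$, restricting to $\mathbb{D}^*$ yields a horizontal isomorphism $\phi$ with $(\mathcal{V},\nabla)$, represented in the two frames by a matrix $A(x)$ that is meromorphic at $0$ and satisfies the first-order differential equation linking the two connection matrices. The crucial input is a residue/eigenvalue (non-resonance) argument: regularity permits normalising both residues to be nilpotent, so no two residue-eigenvalues differ by a nonzero integer, and the standard Deligne comparison then forces $A$ and $A^{-1}$ to extend holomorphically across $0$. Consequently $\phi$ is an isomorphism of extensions, proving uniqueness. I expect the main obstacle to lie precisely in this comparison step — making ``regular meromorphic'' precise, controlling moderate growth of solutions, and carrying out the non-resonance bookkeeping for the residues — rather than in the existence half, which the explicit $x^{-N}$-twist handles cleanly.
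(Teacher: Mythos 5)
The paper does not prove this statement: it is quoted verbatim as a theorem of Deligne (\cite{DeligneEqDif70}) and used as a black box, so there is no in-paper argument to compare against. Your write-up is the standard construction of the Deligne canonical extension and is essentially correct: the logarithm $N=\frac{1}{2\pi i}\log T$ of the unipotent monodromy is a finite sum, the twisted sections $\tilde v = x^{-N}v$ are single-valued, they generate a lattice on which the connection reads $d - N\,\frac{dx}{x}$, and the logarithmic pole with nilpotent residue gives regularity via moderate growth. One remark on the uniqueness step: the proposition as stated concerns the localized object, a locally free $\mathcal{O}_{\mathbb{D}}[x^{-1}]$-module, and for \emph{that} uniqueness you do not need the non-resonance normalization of residues at all — regularity of both extensions already forces the flat comparison matrix $A$ and its inverse to have moderate growth, hence to be meromorphic at $0$, which is exactly an isomorphism of $\mathcal{O}_{\mathbb{D}}[x^{-1}]$-modules with connection. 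The residue/eigenvalue bookkeeping you describe is what is needed for the finer statement that the canonical \emph{lattice} (the $\mathcal{O}_{\mathbb{D}}$-module with nilpotent residue) is unique; it proves more than the proposition asks, so your argument is not wrong, merely aimed at a stronger target.
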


Since our curve \(C=\Delta/\Gamma\) has only finitely many cusps \(\partial C=\overline C \setminus C\), we can use Proposition \ref{Prop:DelExt} pointwise, to extend the bundle \(\mathcal{V}=R^1f_*\mathbb{Z}_\mathcal{X}\otimes_\mathbb{Z} \mathcal{O}_{C}\) to a bundle \(\mathcal{V}_\text{ext}\) on the smooth completion \(\overline C\) of \(C\). Furthermore we deduce that the Gauss-Manin connection \(\nabla\) corresponding to \(\mathbb{V}_\mathbb{Z}\otimes_\mathbb{Z}\mathbb{C}\) extends to a regular meromorphic connection \(\nabla_\text{ext}\) at the cusps  \(\partial C\) of $C$.

The family \(f\colon \mathcal{X}\to C\) extends to a family of stable curves \(\overline f\colon \overline{\mathcal X }\to \overline C\) (compare section \ref{sect:famteicur}) and the bundle
  \(
  \mathcal{V}^{1,0}_\text{ext}= f_* \Omega_{\overline{\mathcal X}/\overline C}^1
  \)
extends the bundle $\mathcal{V}^{1,0}$ on $\overline{C}$.
Thus \((\mathbb{V}_\mathbb{Z},\mathcal{V}^{1,0}_\text{ext})\) is a \textit{variation of Hodge structure} (VHS) in the sense of \cite{MoellerVar06} section 2.

We now want to give a polarization for the variation of Hodge structure \((\mathbb{V}_\mathbb{Z},\mathcal{V}^{1,0})\) from above (compare \cite{Kappes16LyapOfBQ} section 2.2).
On \(H^1(X,\mathbb{C})\simeq H^1_{\text{dR}}(X,\mathbb{C})\) we have the natural polarization by the cup-product pairing\footnote{The cup-product pairing \((\cdot,\cdot)\) on \(H^1(X,\mathbb{C})\) is Poincaré dual to the intersection-pairing on \(H_1(X,\mathbb{C})\).} \((\alpha,\beta)=\int_X \alpha\wedge \beta\).
The pairing \((\cdot,\cdot)\) on \(H^1(X,\mathbb{C})\) induces a positive definite hermitian form
  \[
   H\colon H^1(X,\mathbb{C})\times H^1(X,\mathbb{C})\longrightarrow \mathbb{C},
   \quad H(\alpha,\beta)=\int\limits_X \alpha\wedge *\beta,
  \]
for which the Hodge decomposition is orthogonal (here \(*\) denotes the Hodge star operator). 

The cup product pairings on the fibers of 
\(\mathbb{V}_\mathbb{C}=\mathbb{V}_{\mathbb{Z}}\otimes_{\mathbb{Z}} {\mathbb{C}}\) 
glue together to a locally constant bilinear map \(Q\colon \mathbb{V}_\mathbb{C}\otimes \mathbb{V}_\mathbb{C}\to \mathbb{C}_C\), where \(\mathbb{C}_C\) is the constant sheaf of stalk \(\mathbb{C}\) on the curve \(C\). 
The map \(Q\) induces a locally constant hermitian form \(\psi(v,w)=i/2\cdot Q(v,\overline w)\) on \(\mathbb{V}_\mathbb{C}\), for which the decomposition
  \[
  \mathbb{V}_\mathbb{Z}\otimes_\mathbb{Z}\mathcal{O}_C=\mathcal{V}^{1,0} \oplus \mathcal{V}^{0,1}
  \]
is orthogonal. For \(v\in \mathbb{V}_\mathbb{R}\) we can find an element \(w\in \mathcal{V}^{1,0}\) with \(v=\mbox{Re}(w)\) and define the \textit{Hodge Norm} as \(\|v\|=\sqrt{\psi(w,w)}\).

From Deligne's semisimplicity Theorem (\cite{DeligneUnThe87} 1.11-1.12 and Prop. 1.13) M\"oller deduced the following splitting Theorem of the local system and polarized VHS associated to a Teichm\"uller curve.

\begin{theorem}[Möller, \cite{MoellerVar06} Prop. 2.4 or \cite{TMCMMlect} Th.5.5]\label{Thm:splitVHS}
Let \(K=K(X,\omega)\) be the trace field of \((X,\omega)\in \Omega T_g\).
The polarized VHS \((\mathbb{V}_\mathbb{Z}=R^1f_*\mathbb{Z}_\mathcal{X},\mathcal{V}^{1,0},Q)\) associated to the family of curves \(f\colon \mathcal{X}\to C\) splits over \(\mathbb{Q}\) into two subsystems
  \begin{displaymath}
    \mathbb{V}_\mathbb{Q}= \mathbb{W}_\mathbb{Q}\oplus \mathbb{M}_\mathbb{Q}
  \end{displaymath}
where \(\mathbb{M}_\mathbb{Q}\) carries a polarized \(\mathbb{Q}\)-VHS of weight one and the local system \(\mathbb{W}_\mathbb{Q}\) splits over $F$, the Galois closure of the trace field $K$, as
  \begin{displaymath}
   \mathbb{W}_F=\underset{\sigma\in \mbox{Gal}(F|\mathbb{Q}){/}\mbox{Gal}(F|K)}{\bigoplus}~\mathbb{L}^\sigma,
  \end{displaymath}
such that each of the Galois-conjugate rank two subsystems \(\mathbb{L}^\sigma\) carries a polarized \(F\)-VHS of weight one. The sum of these VHS gives back the original VHS on \(\mathbb{V}_\mathbb{C}\).
\end{theorem}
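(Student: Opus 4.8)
The plan is to deduce the decomposition from three ingredients: Deligne's semisimplicity theorem for polarizable variations of Hodge structure, the tautological (uniformizing) rank-two subsystem produced by the $\mathrm{SL}_2(\mathbb{R})$-action, and a Galois-descent argument governed by the trace field $K$. The first and most structural input is semisimplicity: since $(\mathbb{V}_\mathbb{Z},\mathcal{V}^{1,0},Q)$ is a polarized $\mathbb{Q}$-VHS over the smooth affine curve $C$ whose local monodromies at the cusps are unipotent (Remark \ref{Rem:PropertC}), the category of polarizable $\mathbb{Q}$-VHS on $C$ is semisimple. Concretely this means that $\mathbb{V}_\mathbb{Q}$ is a direct sum of irreducible sub-VHS, that any Hodge-theoretic sub-local-system is a direct summand, and that $\mathrm{End}(\mathbb{V}_\mathbb{Q})$ is a semisimple $\mathbb{Q}$-algebra acting by morphisms of VHS; this is the device that will convert Galois-stable sublattices of local systems into genuine VHS summands.

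Next I would isolate the uniformizing subsystem. The $\mathrm{SL}_2(\mathbb{R})$-orbit construction of Section \ref{sect:VarOfHS} distinguishes, in each fiber $H^1(X_t,\mathbb{R})$, the plane spanned by $\mathrm{Re}(\omega_t)$ and $\mathrm{Im}(\omega_t)$; this plane is monodromy-invariant, and the induced representation of $\pi_1(C,c)\simeq\mathrm{SL}(X,\omega)$ is the tautological inclusion of the Veech lattice into $\mathrm{SL}_2(\mathbb{R})$. This produces a maximal-Higgs rank-two sub-VHS $\mathbb{L}$. By Thurston--Veech trace-field theory the traces of $\mathrm{SL}(X,\omega)$ generate $K$ and the representation conjugates into $\mathrm{SL}_2(K)$, so that $\mathbb{L}$ is defined over $K$. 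I would then let $\mathbb{W}_\mathbb{Q}$ be the smallest $\mathbb{Q}$-sub-VHS of $\mathbb{V}_\mathbb{Q}$ whose complexification contains $\mathbb{L}$; by the previous paragraph it is a direct summand of $\mathbb{V}_\mathbb{Q}$.

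The heart of the matter, and where I expect the genuine work to lie, is to show that $\mathbb{W}_F:=\mathbb{W}_\mathbb{Q}\otimes_\mathbb{Q} F$ breaks up as $\bigoplus_\sigma \mathbb{L}^\sigma$ over the Galois conjugates of $\mathbb{L}$, with each $\mathbb{L}^\sigma$ a polarized weight-one $F$-VHS and the summands pairwise non-isomorphic. The subtlety is that Galois conjugation respects local systems but not, a priori, Hodge filtrations. Here I would use that $\mathrm{End}(\mathbb{W}_\mathbb{Q})$ contains $K$ acting by real multiplication -- exactly the semisimplicity of $\mathrm{End}$ together with the fact that $\Gamma=\mathrm{SL}(X,\omega)$ generates $K$ through traces -- so that the idempotents attached to the embeddings $\sigma\colon K\to\mathbb{R}$, equivalently to the cosets in $\mathrm{Gal}(F|\mathbb{Q})/\mathrm{Gal}(F|K)$, decompose $\mathbb{W}_F$ into rank-two eigen-subsystems $\mathbb{L}^\sigma$. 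Because these idempotents are morphisms of VHS, each $\mathbb{L}^\sigma$ inherits a weight-one Hodge filtration and a polarization from $Q$; the distinctness of the eigencharacters of $K$ then forces the $\mathbb{L}^\sigma$ to be mutually non-isomorphic, whence the sum is direct and $\mathbb{L}=\mathbb{L}^{\mathrm{id}}$.

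It remains to produce $\mathbb{M}_\mathbb{Q}$ and to confirm the descent. Since $\mathbb{W}_F=\bigoplus_\sigma\mathbb{L}^\sigma$ is stable under $\mathrm{Gal}(F|\mathbb{Q})$, it is already defined over $\mathbb{Q}$, namely it equals $\mathbb{W}_\mathbb{Q}$. The polarization $Q$ restricts to a nondegenerate form on $\mathbb{W}_\mathbb{Q}$ by positivity of the Hodge metric, so I would set $\mathbb{M}_\mathbb{Q}$ to be the $Q$-orthogonal complement of $\mathbb{W}_\mathbb{Q}$; by semisimplicity it is again a sub-VHS, the restriction of $Q$ polarizes it, and one obtains $\mathbb{V}_\mathbb{Q}=\mathbb{W}_\mathbb{Q}\oplus\mathbb{M}_\mathbb{Q}$ with $\mathbb{M}_\mathbb{Q}$ a $\mathbb{Q}$-VHS of weight one. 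By construction the sum of the $F$-VHS carried by the $\mathbb{L}^\sigma$ recovers the VHS on $\mathbb{W}_\mathbb{C}$, which completes the argument.
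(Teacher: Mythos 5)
The paper itself offers no proof of this statement: it is quoted verbatim from M\"oller (\cite{MoellerVar06}, Prop.~2.4), with the remark immediately preceding it that the result is ``deduced from Deligne's semisimplicity Theorem.'' So the only comparison available is with M\"oller's original argument, and your outline does follow its broad architecture: Deligne semisimplicity of polarizable $\mathbb{Q}$-VHS over $C$ (using the unipotent local monodromy at the cusps), the tautological rank-two subsystem $\mathbb{L}$ coming from $\langle \mathrm{Re}(\omega),\mathrm{Im}(\omega)\rangle$, its Galois conjugates $\mathbb{L}^\sigma$, Galois descent of their sum to $\mathbb{W}_\mathbb{Q}$, and the $Q$-orthogonal complement as $\mathbb{M}_\mathbb{Q}$. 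The step where you say the idempotents cutting out the $\mathbb{L}^\sigma$ are morphisms of VHS is exactly Deligne's Prop.~1.13 (flat global endomorphisms of a polarizable VHS are endomorphisms of the VHS, and irreducible summands of the underlying local system carry sub-VHS); you are implicitly invoking the right input there, though you should cite it rather than treat it as formal.

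The one place where your sketch glosses over a genuine step is the assertion that ``the representation conjugates into $\mathrm{SL}_2(K)$, so that $\mathbb{L}$ is defined over $K$.'' Knowing that the traces of a Fuchsian group generate $K$ does \emph{not} by itself let you conjugate the group into $\mathrm{SL}_2(K)$; in general one only gets the invariant quaternion algebra over $K$, and the representation descends to a field $K_1$ with $[K_1:K]\le 2$ --- which is precisely what the paper's Remark~\ref{Rem:TrivSubsys} records from M\"oller's Lemma~2.2. To get the decomposition indexed by $\mathrm{Gal}(F|\mathbb{Q})/\mathrm{Gal}(F|K)$, i.e.\ by embeddings of $K$ itself, one must show $K_1=K$. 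For a lattice Veech group this follows because the group contains nontrivial parabolic elements (every Teichm\"uller curve has a cusp), which forces the invariant quaternion algebra to contain a nilpotent and hence to split over $K$. Without this argument the count of rank-two summands, and therefore the stated form of $\mathbb{W}_F$, is not justified. Apart from supplying that step (and a citation for Deligne 1.11--1.13), your proposal is a faithful reconstruction of the cited proof.
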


\begin{remark}\label{Rem:TrivSubsys}
The subsystem \(\mathbb{L}^{\Id}\) comes from the standard action of \(\pi_1(C,c)\) on the \(\mbox{Aff}^+(X,\omega)\)-invariant subspace 
  \[
   \langle \mbox{Re}(\omega), \mbox{Im}(\omega)\rangle_\mathbb{R}\subset H^1(X,\mathbb{R}).
  \]
M\"oller showed that the subsystem of this subrepresentation is defined over a number field \(K_1\subset \mathbb{R}\) which has degree at most two over the trace field \(K=K(X,\omega)\) (\cite{MoellerVar06}, Lemma 2.2).
\end{remark}

\subsubsection{Kontsevich-Zorich cocycle and Lyapunov exponents for Teichm\"uller curves}

We want to introduce the Kontsevich-Zorich cocycle and the Lyapunov exponents in the context of Teichm\"uller curves. For references see \cite{Bouw2005TeichmullerCT} section 8 or \cite{filip2015quaternionic} section 2.4. 

Let \((X,\omega)\in \Omega T_g\) be renormalized such that it has area one. Furthermore let \(j\colon C_1\to M_g\) be the Teichm\"uller curve generated by \((X,\omega)\in \Omega T_g\) and \(f\colon \mathcal{X}\to C\) the associated family of curves as described in section \ref{sect:famteicur}. We have the \(\mathbb{R}\)-local system \(\mathbb{V}_\mathbb{R}=R^1f_*\mathbb{R}_\mathcal{X}\) and the corresponding real \(C^{\infty}\)-bundle \(\mathcal{V}=\mathbb{V}_\mathbb{R}\otimes_\mathbb{R} C^\infty_{C,\mathbb{R}}\). 

For every \(t\in \mathbb{R}\) set \(g_t=\mbox{diag}(e^t,e^{-t})\in \mbox{SL}(2,\mathbb{R})\). The flow of \(g_t\) on the Teichm\"uller disk \(\Delta\) induces a flow on the curve \(C\), which lifts to a flow on the bundle \(\mathcal{V}\) by parallel transport along paths. This flow is called the \textit{Kontsevich-Zorich cocycle} and we denote it by \(G_t^{\text{KZ}}(X,\omega)\).

The bundle \(\mathcal{V}\) carries a metric, which comes from the Hodge-norm induced by \(H\) on the fibers of \(\mathcal{V}\) (compare section \ref{sect:VarOfHS}). 
The Haar-measure \(\lambda\) on \(\mbox{SL}(2,\mathbb{R})\) induces a finite measure \(\mu_M\) on \(\Omega M_g\) with support \(M\), where \(M\) is a lift of the Teichm\"uller curve \(C_1\) to \(\Omega M_g\). The Haar-measure \(\lambda\) is of course \(\mbox{SL}(2,\mathbb{R})\)-invariant and ergodic with respect to the flow \(g_t\) \((t\in \mathbb{R})\) and the measure \(\mu_M\) inherits these two properties. Hence we can apply Oseledec's Theorem on \((\Omega M_g,\ \mu_M)\), the flow \(G_t^{\text{KZ}}(X,\omega)\) and the bundle \(\mathcal{V}\) to get \(2g\) Lyapunov exponents
  \[
  1=\lambda_1\geq \lambda_2\geq\dots\geq\lambda_g\geq 0\geq -\lambda_g\geq\dots\geq-\lambda_2\geq-\lambda_1=-1
  \]
symmetric to the origin. 

From Theorem \ref{Thm:splitVHS} we know that in genus \(g=2\) the VHS over a Teichmüller curve splits over \(\mathbb{Q}\) into two direct summands of rank two. We can apply Oseledec's Theorem to each of the summands individually. The full set of Lyapunov exponents is the union of the Lyapunov exponents of the two summands. 
In \cite{Bouw2010DifferentialEA} Bouw and Möller computed the Lyapunov spectrum of a Teichmüller curve in genus \(g=2\). We want to state their result in the following Proposition:

\begin{proposition}[Bouw, Möller, \cite{Bouw2010DifferentialEA}, Corollary 2.4]\label{Prop:LyaExpg=2}
Let \(C_1\) be a Teichm\"uller curve in genus \(g=2\) generated by the translation surface \((X,\omega)\). The positive Lyapunov exponents are
\begin{displaymath}
(\lambda_1,\lambda_2)
=\left\{
\begin{array}{rl} 
(1,\,1/3) & \mbox{if}~ (X,\omega) \in \mathcal{H}(2), \\
(1,\,1/2) & \mbox{if}~ (X,\omega) \in \mathcal{H}(1,1).
\end{array} \right.
\end{displaymath}
\end{proposition}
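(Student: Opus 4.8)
The plan is to read off the two positive exponents from the splitting of the weight-one variation of Hodge structure over the Teichm\"uller curve (Theorem~\ref{Thm:splitVHS}) together with the Kontsevich--Zorich formula identifying a Lyapunov exponent of a rank-two polarized sub-VHS $\mathbb{N}$ with the normalized degree of the Deligne extension $\overline{\mathcal{N}^{1,0}}$ of its Hodge line bundle to the smooth completion $\overline{C}$:
\[
\lambda(\mathbb{N}) \;=\; \frac{2\,\deg \overline{\mathcal{N}^{1,0}}}{2g(\overline{C}) - 2 + s},
\]
where $s=\#(\overline{C}\setminus C)$ is the number of cusps. In genus two the rank-four system $\mathbb{V}_{\mathbb{Q}}=R^1f_*\mathbb{Q}$ decomposes into two rank-two polarized sub-VHS: either $\mathbb{V}_{\mathbb{Q}}=\mathbb{W}_{\mathbb{Q}}\oplus\mathbb{M}_{\mathbb{Q}}$ with $\mathbb{W}_{\mathbb{Q}}=\mathbb{L}^{\mathrm{id}}$ when the trace field is $\mathbb{Q}$, or $\mathbb{V}_F=\mathbb{L}^{\mathrm{id}}\oplus\mathbb{L}^{\sigma}$ when the trace field is a real quadratic field. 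In every case the Oseledets spectrum is the union of the exponents of the two summands, so it suffices to treat each rank-two piece separately.

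First I would dispatch $\lambda_1=1$. The distinguished summand $\mathbb{L}^{\mathrm{id}}$ is the tautological, uniformizing subsystem carrying the classes $\langle \mathrm{Re}(\omega),\mathrm{Im}(\omega)\rangle$ (Remark~\ref{Rem:TrivSubsys}); by M\"oller's theorem it is a maximal Higgs line bundle, i.e.\ its extended Hodge bundle saturates the Arakelov bound, $\deg \overline{\mathcal{L}^{\mathrm{id},1,0}}=\tfrac12(2g(\overline{C})-2+s)$. The displayed formula then yields $\lambda_1=1$, recovering the top exponent.

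The content is therefore the degree of the Hodge line bundle of the remaining summand $\mathbb{N}$ (namely $\mathbb{M}_{\mathbb{Q}}$, resp.\ $\mathbb{L}^{\sigma}$). Here I would use that $\deg \overline{\mathcal{N}^{1,0}}$ is a sum of local contributions at the cusps: the monodromy around each cusp is unipotent (Remark~\ref{Rem:PropertC}), and the nilpotent orbit theorem expresses the parabolic degree of $\overline{\mathcal{N}^{1,0}}$ at a cusp in terms of the local degeneration data of $(X,\omega)$. The crucial point is that this data is governed by the orders of the zeros of $\omega$, so that the ratio $2\deg\overline{\mathcal{N}^{1,0}}/(2g(\overline{C})-2+s)$ is constant on each connected stratum and equals $1/3$ on $\Omega T_2(2)$ (a single zero of order $2$) and $1/2$ on $\Omega T_2(1,1)$ (two zeros of order $1$). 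The main obstacle is precisely this cusp-local degree computation: extracting the parabolic contributions from the nilpotent parts of the local monodromy and verifying that they sum to the stated fractions is exactly where the two strata separate and where the arithmetic of the orders $m_i$ enters, for instance through the combination $\tfrac{m_i(m_i+2)}{m_i+1}$ governing the Eskin--Kontsevich--Zorich evaluation of $\lambda_1+\lambda_2$. As a shortcut avoiding the general cusp analysis, one may invoke the Eskin--Kontsevich--Zorich theorem that the exponents depend only on the connected component of the stratum and then evaluate them on a single convenient square-tiled surface in each of $\mathcal{H}(2)$ and $\mathcal{H}(1,1)$, where $\overline{C}$, $s$, and $\deg\overline{\mathcal{N}^{1,0}}$ are computed explicitly from the finite $\mathrm{SL}(2,\mathbb{Z})$-orbit and the integral Kontsevich--Zorich cocycle, exactly as in the computations carried out in the body of the paper.
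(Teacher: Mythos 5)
The paper does not actually prove this proposition: it is quoted from Bouw--M\"oller \cite{Bouw2010DifferentialEA}, Corollary 2.4, and is used in Appendix \ref{a.ArithmShadVee} only through the consequence $\deg(\mathcal{U}^{1,0})\neq 0$. So there is no internal proof to compare yours with. That said, your framework does match the actual Bouw--M\"oller strategy: split the weight-one VHS into two rank-two polarized pieces via Theorem \ref{Thm:splitVHS}, obtain $\lambda_1=1$ from the maximal Higgs property of the tautological summand, and reduce $\lambda_2$ to the normalized degree $2\deg\overline{\mathcal{N}^{1,0}}/(2g(\overline{C})-2+s)$ of the complementary Hodge line bundle (the ratio formula is \cite{Bouw2005TeichmullerCT}, Prop.\ 8.5, which the appendix also invokes).

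The decisive step, however, is asserted rather than proved. You claim that the cusp-local parabolic degrees of $\overline{\mathcal{N}^{1,0}}$ are ``governed by the orders of the zeros of $\omega$,'' hence that the normalized degree is constant on each stratum with values $1/3$ and $1/2$. A priori the local contribution at a cusp depends on the full degeneration data of the stable fiber --- the combinatorics, widths and heights of the cylinders in the corresponding parabolic direction --- and not merely on the zero orders; that these contributions always sum to the same fraction of $2g(\overline{C})-2+s$ is exactly the genus-two phenomenon to be established, and it is where Bouw--M\"oller (and Bainbridge, via the Deligne--Mumford boundary) do the real work. The proposed shortcut does not repair this: there is no Eskin--Kontsevich--Zorich theorem asserting that the Lyapunov exponents of an \emph{individual} Teichm\"uller curve depend only on the connected component of its stratum. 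For a single Teichm\"uller curve $C$ one has $\lambda_1+\lambda_2=\tfrac{1}{12}\sum_i\tfrac{m_i(m_i+2)}{m_i+1}+c_{\mathrm{area}}(C)$, where $c_{\mathrm{area}}(C)$ is the Siegel--Veech constant of that particular curve, which is not a priori the same for all curves in the stratum; the ``non-varying'' property in genus two is a theorem (and in higher genus it is false), equivalent to the statement you are trying to prove. Evaluating on one convenient origami per stratum therefore proves nothing about the others. To close the gap you must either carry out the boundary degree computation for an arbitrary Teichm\"uller curve in each stratum, or cite the non-varying result itself.
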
  

\subsubsection{Period mapping}\label{sect:periodmap}

Good references for the next subsection are \cite{carlson2017period} and \cite{Kappes2011} section 7.3. First of all we want to repeat the construction of the period mapping as in \cite{DeligneTrdeGrif69} 6.1-6.2.

Let \(B\) be a connected complex manifold (in particular smooth) of genus \(g\in \mathbb{N}\) and let \((\mathbb{V}_\mathbb{Z},\mathcal{V}^{1,0},Q)\) be a pure polarized VHS of weight one on \(B\), where \(\mathbb{V}_\mathbb{Z}\) is a local system of rank \(2d\). We denote \(\mathbb{V}=\mathbb{V}_\mathbb{Z}\otimes_\mathbb{Z} \mathbb{C}\).
Fix a base point \(b\in B\) and a universal cover 
  \[
   \pi\colon \tilde B \longrightarrow B.
  \]
By pull back we get a VHS \((\pi^{-1}\mathbb{V},\pi^*\mathcal{V}^{1,0},\pi^*Q)\) of weight one on \(\tilde B\) polarized by \(\pi^* Q\). The inverse-image sheaf is by continuation along paths isomorphic to the constant sheaf of stalk \(\mathbb{V}_b\). 

Let \(\tilde{b}\in \pi^{-1}(b)\) and let \(\varphi_{\tilde b}\colon (\pi^{-1}\mathbb{V})_{\tilde b} \to \mathbb{V}_b\) be the canonical isomorphism.
For every \(z\in \tilde B\) we can construct isomorphisms between the stalk \((\pi^{-1}\mathbb{V})_z\) and \((\pi^{-1}\mathbb{V})_{\tilde b}\) by transporting germs along a path \(c\) connecting \(z\) and \(\tilde b\). Since all paths are homotopic this induces a well defined isomorphism
  \[
   \phi_{z,\tilde b}\colon (\pi^{-1}\mathbb{V})_z \longrightarrow (\pi^{-1}\mathbb{V})_{\tilde b}.
  \]
The bundle \(\pi^* \mathcal{V}^{1,0}\) singles out a subspace \(\tilde{W}_z\subset (\pi^{-1}\mathbb{V})_z\) for every \(z\in \tilde B\). We denote
  \[
   W_z= \varphi_{\tilde b}\circ \phi_{z,\tilde b}(\tilde{W}_z)\subset \mathbb{V}_b.
  \]
We define the map \(P\colon\tilde B\to\mbox{Grass}(d, \mathbb{V}_b)\) by \(P(z)=W_z\).
By construction, for every \(z\in \tilde B\) the subspace \(\tilde{W}_z\subset (\pi^{-1}\mathbb{V})_z\) obeys Riemann bilinear relations with respect to the polarization \((\pi^*Q)_z\) i.e., 
  \[
   (\pi^*Q)_z(u,w)=0\quad \mbox{and} \quad i\cdot (\pi^*Q)_z(w,\overline w)>0
  \]
for every \(u,w\in \tilde{W}_z\). 

The polarization \(\pi^*Q\colon \pi^{-1}\mathbb{V}\otimes_\mathbb{C} \pi^{-1}\mathbb{V}\to \C\) is locally constant and since we constructed the isomorphisms \(\phi_{z,\tilde b}\colon (\pi^{-1}\mathbb{V})_z \to (\pi^{-1}\mathbb{V})_{\tilde b}\) by continuation along paths, all the images
\(\phi_{z,\tilde b}(\tilde{W}_z)\) obey the Riemann bilinear relations with respect to \((\pi^*Q)_{\tilde b}\). The image \(W_z= \varphi_{\tilde b}\circ \phi_{z,\tilde b}(\tilde{W}_z)\) of \(z\in \tilde B\) under the mapping \(P\colon\tilde B \to\mbox{Grass}(d,\mathbb{V}_b)\) is hence an element of the \textit{period domain}
  \begin{displaymath}  
   \mbox{Per}((\mathbb{V}_\mathbb{Z})_b, Q_b)\subset \mbox{Grass}(d, \mathbb{V}_b),
  \end{displaymath}
the set of \(d\)-dimensional subspaces of \(\mathbb{V}_b\) which obey the Riemann bilinear relations with respect to the form \(Q_b\). The mapping
  \[
   P\colon\tilde B\longrightarrow \mbox{Per}((\mathbb{V}_\mathbb{Z})_b, Q_b),\quad z \longmapsto W_z
  \]
is called \textit{period mapping}. In the next Proposition we want to collect two very important properties of the period mapping from above.

\begin{proposition}\label{Prop:PerMap}
Let \(P\colon \tilde B \to \mbox{Per}((\mathbb{V}_\mathbb{Z})_b, Q_b)\) be the period mapping associated to a pure polarized VHS of weight one on a complex connected manifold \(B\) with fixed base point \(b\in B\).
Theorem \ref{thm:monodromrep} implies, that there is a monodromy representation \(\rho\colon \pi(B,b)\to \mbox{G}_{Q_b}((\mathbb{V}_\mathbb{Z})_b)\)\footnote{Here we write \(G_{Q_b}((\mathbb{V}_\mathbb{Z})_b)\) for the subgroup of elements in \(\mbox{GL}((\mathbb{V}_\mathbb{Z})_b)\) which are orthogonal with respect to \(Q_b\).} associated to the local system \(\mathbb{V}_\mathbb{Z}\). Then:
\begin{enumerate}
  \item[(i)]
  The period mapping \(P\colon \tilde B \to \mbox{Per}((\mathbb{V}_\mathbb{Z})_b, Q_b)\) is  
  holomorphic (\cite{Griffiths68b}, Theorem 1.27 or \cite{DeligneTrdeGrif69} 3.4).
  \item[(ii)]
  The period mapping \(P\) is equivariant with respect to the action of \(\gamma\in \pi_1(B,b)\) on
   \(\tilde B\) by deck transformations and the action of 
   \(\rho(\gamma)\in G_{Q_b}((\mathbb{V}_\mathbb{Z})_b)\) on 
   \(\mbox{Per}((\mathbb{V}_\mathbb{Z})_b, Q_b)\) (c.f. \cite[6.2]{DeligneTrdeGrif69}) i.e., the relation
         \[
            W_{\gamma(z)}=\rho(\gamma)\, W_z.
         \]
   holds for every \(z\in \tilde B\).
\end{enumerate}
\end{proposition}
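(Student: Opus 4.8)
The plan is to treat the two assertions separately, since (i) is a manifestation of the fundamental regularity theorem for period maps, whereas (ii) is a bookkeeping identity forced by the very construction of \(P\).

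For (i), I would argue that holomorphicity is essentially built into the definition of \(W_z\). The subspace \(\tilde W_z\subset(\pi^{-1}\mathbb{V})_z\) is the fiber of the pulled-back Hodge bundle \(\pi^*\mathcal{V}^{1,0}\), which is a holomorphic subbundle of \(\pi^*\mathcal{V}\); moreover the flat identifications \(\phi_{z,\tilde b}\), obtained by parallel transport for the Gauss--Manin connection, vary holomorphically in \(z\) because that connection is holomorphic (indeed flat). Post-composing with the fixed linear isomorphism \(\varphi_{\tilde b}\) then exhibits \(z\mapsto W_z\) as a holomorphic map into \(\mbox{Grass}(d,\mathbb{V}_b)\), landing in the locally closed subset \(\mbox{Per}((\mathbb{V}_\mathbb{Z})_b,Q_b)\) cut out by the Riemann bilinear relations (the isotropy condition is closed, the positivity condition open), which hold pointwise by construction. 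This is exactly the content of Griffiths' theorem, so I would simply invoke \cite[Thm. 1.27]{Griffiths68b} (or \cite[3.4]{DeligneTrdeGrif69}) for the nontrivial regularity input rather than reprove it.

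For (ii), I would unwind the definition of \(P\) along a deck transformation. Fix \(\gamma\in\pi_1(B,b)\) and \(z\in\tilde B\). Because \(\pi\circ\gamma=\pi\) and the Hodge bundle is pulled back from \(B\), the induced isomorphism \(\gamma_*\colon(\pi^{-1}\mathbb{V})_z\to(\pi^{-1}\mathbb{V})_{\gamma(z)}\) carries \(\tilde W_z\) onto \(\tilde W_{\gamma(z)}\), whence
\[
W_{\gamma(z)}=\varphi_{\tilde b}\circ\phi_{\gamma(z),\tilde b}\circ\gamma_*(\tilde W_z).
\]
The key identity to establish is that \(\varphi_{\tilde b}\circ\phi_{\gamma(z),\tilde b}\circ\gamma_*=\rho(\gamma)\circ\varphi_{\tilde b}\circ\phi_{z,\tilde b}\) as maps \((\pi^{-1}\mathbb{V})_z\to\mathbb{V}_b\). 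This follows from path composition: a path from \(\gamma(z)\) to \(\tilde b\) may be taken as \(\gamma\) applied to a chosen path from \(z\) to \(\tilde b\), followed by a path from \(\gamma(\tilde b)\) to \(\tilde b\) whose projection to \(B\) is a loop representing \(\gamma\); transport along this concatenation is, under \(\varphi_{\tilde b}\), precisely transport from \(z\) to \(\tilde b\) post-composed with the monodromy of that loop, namely \(\rho(\gamma)\). Applying both sides to \(\tilde W_z\) and recalling \(W_z=\varphi_{\tilde b}\circ\phi_{z,\tilde b}(\tilde W_z)\) yields \(W_{\gamma(z)}=\rho(\gamma)W_z\), matching \cite[6.2]{DeligneTrdeGrif69}.

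The main obstacle is conceptual rather than computational: part (i) rests on the genuinely deep holomorphicity theorem of Griffiths, which I would cite rather than attempt to reprove. The only real verification lies in part (ii), where the delicate point is the homotopy convention identifying the correcting loop with \(\gamma\) (as opposed to \(\gamma^{-1}\)); once the conventions for \(\phi_{z,\tilde b}\), \(\varphi_{\tilde b}\) and \(\rho\) are fixed consistently, the equivariance is forced.
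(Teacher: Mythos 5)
Your proposal is correct and consistent with the paper, which offers no written proof of this proposition beyond the citations embedded in its statement (Griffiths/Deligne for (i), Deligne 6.2 for (ii)). Your unwinding of (ii) via path concatenation and the monodromy of the correcting loop — including the explicit flag about the $\gamma$ versus $\gamma^{-1}$ convention — is exactly the content those references supply, so you have simply made explicit what the paper delegates to the literature.
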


\begin{remark}
The period mapping descends to a mapping
  \begin{displaymath}
   p\colon B \longrightarrow \rho(\pi_1(B,b)){\setminus}\mbox{Per}((\mathbb{V}_\mathbb{Z})_b, Q_b)
  \end{displaymath}
which we also want to call \textit{period mapping}.
\end{remark}

\subsection{Arithmeticity of Kontsevich--Zorich monodromies in genus two}

From now on let \(\mathcal{O}=(X,\omega)\in  \Omega T_g\) be an origami of genus \(g\in \mathbb{N}\), let 
  \[
  C_1=\Delta{/}\text{Aff}^+(\mathcal{O})\longrightarrow M_g
  \]
be the origami curve generated by the pair \((X,\omega)\in \Omega T_g\) and let \(f\colon\mathcal{X}\to C\) be the associated family of curves over the origami curve \(C_1\) as in section \ref{sect:famteicur}. Furthermore choose a basepoint \(c\in C(\mathbb{C})\) such that the fiber of \(f\) over \(c\) is the Riemann surface \(X\).

For every \(\gamma\in H_1(\mathcal{O},\mathbb{Z})\) we have \(\int_\gamma \mbox{Re}(\omega)\in \mathbb{Z}\) and \(\int_\gamma \mbox{Im}(\omega)\in \mathbb{Z}\), hence we can consider \(\mbox{Re}(\omega)\) and \(\mbox{Im}(\omega)\) as elements of \(H^1(\mathcal{O},\mathbb{Z})\). 
We denote by \(H_{st}^1(\mathcal{O},\mathbb{Z})\) the submodule of \(H^1(\mathcal{O},\mathbb{Z})\) spanned by  \(\mbox{Re}(\omega)\) and \(\mbox{Im}(\omega)\) and call it the \textit{tautological part} of \(H^1(\mathcal{O},\mathbb{Z})\). The \textit{non-tautological part} \(H_{(0)}^1(\mathcal{O},\mathbb{Z})\) of \(H^1(\mathcal{O},\mathbb{Z})\) is by definition the symplectic orthogonal of \(H_{st}^1(\mathcal{O},\mathbb{Z})\) with respect to the dualized intersection form \(\Omega^*\) on \(H^1(\mathcal{O},\mathbb{R})\). We get the splitting
  \[
    H^1(\mathcal{O},\mathbb{Z})=H_\text{st}^1(\mathcal{O},\mathbb{Z})
    \oplus H_{(0)}^1(\mathcal{O},\mathbb{Z}).
  \]
Since the group of affine orientation preserving diffeomorphisms \(\mbox{Aff}^+(X,\omega)\) respects the  dualized intersection form \(\Omega^*\) on \(H^1(\mathcal{O},\mathbb{R})\) and hence the orthogonal splitting \(H_\text{st}^1(\mathcal{O},\mathbb{Z})\oplus H_{(0)}^1(\mathcal{O},\mathbb{Z})\),
we have that the action of \(\pi_1(C,c)\) on \(H^1(\mathcal{O},\mathbb{Z})\) induces two actions on the \(\mbox{Aff}^+(X,\omega)\)-invariant submodules \(H_\text{st}^1(\mathcal{O},\mathbb{Z})\) and \(H_{(0)}^1(\mathcal{O},\mathbb{Z})\):
   \begin{displaymath}
   \begin{array}{lrcl}
        &\rho_\text{st}\colon\pi_1(C,c)&\longrightarrow&\mbox{Sp}_{\Omega^*}(H_\text{st}^1(\mathcal{O},\mathbb{Z}))\simeq\mbox{SL}(2,\mathbb{Z}),\\
        &                                           &                      &           \\
        &\rho_\text{sh}\colon \pi_1(C,c)&\longrightarrow &\mbox{Sp}_{\Omega^*}(H_{(0)}^1(\mathcal{O},\mathbb{Z})).
   \end{array}
   \end{displaymath}
From Theorem \ref{thm:monodromrep} and the explanations in section \ref{sect:VarOfHS} we know that the two actions \(\rho_\text{st}\) and \(\rho_\text{sh}\) correspond to two local subsystems of \(\mathbb{V}_\mathbb{Z}=R^1f_*\mathbb{Z}_\mathcal{X}\), which we  describe in the following paragraph.

The action of \(\mbox{Aff}^+(X,\omega)\) on \(H_\text{st}^1(\mathcal{O},\mathbb{Z})\) under the identification of the span \(\langle\mbox{Re}(\omega),\mbox{Im}(\omega)\rangle_\mathbb{Z}\) with \(\mathbb{Z}^2\) is just the standard action of the derivative $D(\mbox{Aff}^+(X,\omega))\subset \mbox{SL}(2,\mathbb{Z})$ on \(\mathbb{Z}^2\). 
Hence the action \(\rho_\text{st}\) of \(\pi_1(C,c)\) on \(H_\text{st}^1(\mathcal{O},\mathbb{Z})\) is also given by the standard action. This shows that the local subsystem \(\mathbb{L}^{\Id}\) of \(R^1f_*\mathbb{R}_\mathcal{X}\) from Remark \ref{Rem:TrivSubsys} is defined over \(\mathbb{Z}\) in the origami case \(\mathcal{O}=(X,\omega)\). Thus the action \(\rho_\text{st}\) corresponds to a \(\mathbb{Z}\)-local subsystem \(\mathbb{L}_\mathbb{Z}\) of \(\mathbb{V}_\mathbb{Z}\) such that \(\mathbb{L_\mathbb{Z}}\otimes_\mathbb{Z}\mathbb{R}=\mathbb{L}^{\Id}\).

To describe the action  \(\rho_\text{sh}\colon \pi_1(C,c) \to \mbox{Sp}_{\Omega^*}(H_{(0)}^1(\mathcal{O},\mathbb{Z}))\) and the corresponding local subsystem of \(\mathbb{V}_\mathbb{Z}=R^1f_*\mathbb{Z}_\mathcal{X}\), we will use Theorem \ref{Thm:splitVHS} in combination with the following Theorem of Gutkin and Judge:

\begin{theorem}[\cite{GuJu00},Thm. 5.5 and 7.1]\label{Thm:tracefQ}
For the trace field \(K(X,\omega)\) of an origami \(\mathcal{O}=(X,\omega)\), we have \(K(X,\omega)=\mathbb{Q}\).
\end{theorem}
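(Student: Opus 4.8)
The plan is to observe that, for an origami, the Veech group consists entirely of integer matrices, so that all of its traces are integers. First I would recall that the trace field $K(X,\omega)$ is by definition the subfield of $\mathbb{R}$ generated over $\mathbb{Q}$ by the traces $\mbox{tr}(A)$ of the elements $A$ of the Veech group of $(X,\omega)$. Because conjugation by $R=\mbox{diag}(1,-1)$ preserves traces and, as recorded above, $R\cdot \mbox{SL}(X,\omega)\cdot R$ coincides with the Veech group, it is immaterial whether one computes traces over $\mbox{SL}(X,\omega)$ or over its $R$-conjugate; I will work with $\mbox{SL}(X,\omega)$.

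Next I would invoke Remark \ref{Rem:Veefiind}: for an origami $\mathcal{O}=(X,\omega)$ the group $\mbox{SL}(X,\omega)$ is a \emph{finite index subgroup} of $\mbox{SL}_2(\mathbb{Z})$. The underlying geometric reason, which one could also spell out directly, is that every element of $\mbox{Aff}^+(X,\omega)$ has a derivative preserving the group of relative periods of $\omega$; for a reduced origami this period group is the standard lattice $\mathbb{Z}\oplus i\mathbb{Z}\cong \mathbb{Z}^2$, so the derivative restricts to an orientation- and area-preserving automorphism of $\mathbb{Z}^2$, hence lies in $\mbox{SL}_2(\mathbb{Z})$.

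Finally I would combine these facts. Every $A\in \mbox{SL}(X,\omega)\subset \mbox{SL}_2(\mathbb{Z})$ has integer entries, so $\mbox{tr}(A)\in\mathbb{Z}$; consequently the field generated by all these traces is contained in $\mathbb{Q}$, i.e.\ $K(X,\omega)\subseteq \mathbb{Q}$. Since the trace field always contains $\mathbb{Q}$, equality follows and $K(X,\omega)=\mathbb{Q}$. I do not expect any genuine obstacle in this argument: the only substantive input---which is precisely the content of the Gutkin--Judge theorem---is the identification of the period group of a reduced origami with $\mathbb{Z}^2$ and the ensuing containment $\mbox{SL}(X,\omega)\subseteq \mbox{SL}_2(\mathbb{Z})$, and this has already been recorded in Remark \ref{Rem:Veefiind}.
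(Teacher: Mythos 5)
Your argument is correct. The paper itself offers no proof of this statement---it is quoted directly from Gutkin--Judge---so the comparison is between your elementary derivation and a citation. Your route is the standard one and it works: the trace field is generated over $\mathbb{Q}$ by the traces $\mathrm{tr}(DA)$ for $A\in\mathrm{Aff}^+(X,\omega)$; since the derivative of any affine homeomorphism of a reduced origami preserves the relative period lattice $\mathbb{Z}\oplus i\mathbb{Z}$ and has determinant one, it lies in $\mathrm{SL}_2(\mathbb{Z})$ (a fact the paper already records in its preliminaries), so every trace is a rational integer and $K(X,\omega)=\mathbb{Q}$. One small remark on attribution: for this particular conclusion you do not need the full strength of the Gutkin--Judge theorems (whose substantive content is that the Veech group of an origami has \emph{finite index} in $\mathrm{SL}_2(\mathbb{Z})$, i.e.\ is a lattice, together with the converse characterization of torus covers); the mere containment $\mathrm{SL}(X,\omega)\subseteq\mathrm{SL}_2(\mathbb{Z})$, which is elementary, already forces all traces to be integers. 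Your argument also extends painlessly to non-reduced origamis, since the Veech group then preserves some rank-two period lattice and the trace of a lattice-preserving endomorphism is still an integer.
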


From Theorem \ref{Thm:splitVHS} we know that the local system \(\mathbb{V}_\mathbb{Q}=R^1f_*\mathbb{Z}_\mathcal{X} \otimes_\mathbb{Z} \mathbb{Q}\) splits over \(\mathbb{Q}\) as
  \[
   \mathbb{V}_\mathbb{Q}=\mathbb{W}_\mathbb{Q}\oplus \mathbb{M}_\mathbb{Q}.
  \]
We have a further splitting of \(\mathbb{W}_\mathbb{Q}\) over the Galois closure \(F\) of the trace field \(K(X,\omega)\) in 
  \(\mathbb{W}_F=\oplus_{\sigma}\mathbb{L}^\sigma\) with \(\sigma\in \mbox{Gal}(F|\mathbb{Q}){/}\mbox{Gal}(F|K(X,\omega))\). 
But since in the origami case \(K(X,\omega)=\mathbb{Q}\) by Theorem \ref{Thm:tracefQ}, we get \(\mathbb{W}_F=\mathbb{L}_F^{\Id}\). 
We conclude that for an origami \(\mathcal{O}\) of genus \(g\) the local system \(\mathbb{V}_\mathbb{Z}=R^1f_*\mathbb{Z}_\mathcal{X}\) splits as
  \[
  \mathbb{V}_\mathbb{Z}=\mathbb{L}_\mathbb{Z}\oplus \mathbb{U}_\mathbb{Z},
  \]
where \(\mathbb{U}_\mathbb{Z}\) is the  \(\mathbb{Z}\)-local system of rank \(2(g-1)\) corresponding to the action of \(\pi_1(C,c)\) on \(H_{(0)}^1(\mathcal{O},\mathbb{Z})\) and \(\mathbb{L}_\mathbb{Z}\) is the local system corresponding to the action of \(\Gamma\simeq\pi_1(C,c)\) on \(\mathbb{Z}^2\simeq H_\text{st}^1(\mathcal{O},\mathbb{Z})\) by the linear parts of the affine maps of \(\Gamma\leq \text{Aff}^+(X,\omega)\). Furthermore \(\mathbb{U}_\mathbb{Z}\otimes_\mathbb{Z}\mathbb{Q}\simeq\mathbb{M}_\mathbb{Q}\) and therefore we know from Theorem \ref{Thm:splitVHS} that  \(\mathbb{U}_\mathbb{Z}\) carries a polarized VHS of weight one, which we denote by \((\mathbb{U}_\mathbb{Z},\ \mathcal{U}^{1,0},Q_U)\).

\begin{theorem}\label{Thm:ArithmShadVee}
Let \(\mathcal{O}=(X,\omega)\) be an origami of genus \(g=2\) with associated family of curves \(f\colon \mathcal{X}\to C\) as in Section \ref{sect:famteicur}.
Then \(\Sp_{\Omega^*}(H_{(0)}^1(\mathcal{O},\mathbb{Z}))\) is isomoprhic to the special linear group \(\SL(2,\mathbb{Z})\) and the image of the map 
  \[
    \rho_{\text{sh}}\colon \pi_1(C,c)\longrightarrow \SL(2,\mathbb{Z})
  \]
is a finite index subgroup of \(\SL(2,\mathbb{Z})\).
\end{theorem}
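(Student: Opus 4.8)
The plan is to treat the two assertions separately, using the variation-of-Hodge-structure machinery assembled above. The first isomorphism is essentially formal: in genus two the non-tautological part $H_{(0)}^1(\mathcal{O},\mathbb{Z})$ has rank $2(g-1)=2$, and since the splitting $H^1(\mathcal{O},\mathbb{Z})=H_{\mathrm{st}}^1\oplus H_{(0)}^1$ is $\Omega^*$-orthogonal with $\Omega^*$ nondegenerate on the tautological plane $\langle \mathrm{Re}(\omega),\mathrm{Im}(\omega)\rangle$, the restriction of $\Omega^*$ to $H_{(0)}^1$ is a nondegenerate alternating form on a rank-two lattice. On such a lattice any automorphism $A$ multiplies the form by $\det A$, so the group preserving it is exactly $\{A:\det A=1\}\cong\mbox{SL}_2(\mathbb{Z})$, giving $\mbox{Sp}_{\Omega^*}(H_{(0)}^1(\mathcal{O},\mathbb{Z}))\cong\mbox{SL}_2(\mathbb{Z})$. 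I would also observe here that it suffices to prove finite index for the image of $\pi_1(C,c)\cong\Gamma$, the torsion-free finite-index subgroup of $\mbox{Aff}^+(\mathcal{O})$ from Section \ref{sect:famteicur}, since $\Gamma$ has finite index in $\mbox{SL}(\mathcal{O})$ and finite index of images is preserved under passing to a finite-index source subgroup.

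For the second assertion, the key is to recognize $\rho_{\mathrm{sh}}$ as the monodromy of a genuine family of elliptic curves. By the splitting theorem (Theorem \ref{Thm:splitVHS}) together with $K(X,\omega)=\mathbb{Q}$ (Theorem \ref{Thm:tracefQ}), the factor $\mathbb{W}$ reduces to the tautological line system $\mathbb{L}^{\mathrm{id}}$, so the non-tautological system $\mathbb{U}_{\mathbb{Z}}$ of rank $2(g-1)=2$ carries a polarized $\mathbb{Z}$-VHS of weight one $(\mathbb{U}_{\mathbb{Z}},\mathcal{U}^{1,0},Q_U)$. The period domain for a polarized weight-one Hodge structure of rank two is the upper half plane, so the associated period map (Section \ref{sect:periodmap}) is a holomorphic map $\tilde p\colon \mathbb{H}\to\mathbb{H}$ which, by Proposition \ref{Prop:PerMap}, is $\rho_{\mathrm{sh}}$-equivariant and descends to $p\colon C\to \mathbb{H}/\rho_{\mathrm{sh}}(\pi_1(C))$. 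Composing with the natural projection to $Y(1)=\mathbb{H}/\mbox{SL}_2(\mathbb{Z})$ yields the $j$-invariant map of the elliptic family classified by $\mathbb{U}$. The next step is to show this map is non-constant, which I would extract from the Lyapunov spectrum: by Proposition \ref{Prop:LyaExpg=2} the second exponent equals $1/3$ in $\mathcal{H}(2)$ and $1/2$ in $\mathcal{H}(1,1)$, both strictly positive, so the sub-VHS $\mathbb{U}$ is non-unitary, the Hodge bundle $\mathcal{U}^{1,0}_{\mathrm{ext}}$ has positive degree, and the family is non-isotrivial; equivalently $\tilde p$ is non-constant.

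Finally I would invoke the structural fact for elliptic surfaces: because the monodromy around the cusps is unipotent (Remark \ref{Rem:PropertC}), the period map extends to a morphism $\overline p\colon \overline C\to \mathbb{P}^1=\overline{Y(1)}$ of the smooth completions, which is a finite branched cover since it is a non-constant map of projective curves. The universal elliptic curve over $Y(1)$ has monodromy all of $\mbox{SL}_2(\mathbb{Z})$, and $\mathbb{U}$ is its pullback along $\overline p$; hence $\rho_{\mathrm{sh}}(\pi_1(C))$ is precisely the finite-index subgroup of $\mbox{SL}_2(\mathbb{Z})$ cut out by the covering, and by the reduction above the image of $\mbox{SL}(\mathcal{O})$ is finite index as well.

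The main obstacle, I expect, is making the final step fully rigorous while tracking the integral structures. Three points require care: first, that the polarized integral weight-one system $\mathbb{U}_{\mathbb{Z}}$ really produces a family of elliptic curves with monodromy landing in $\mbox{SL}_2(\mathbb{Z})$ — the possible non-principality of $\Omega^*|_{H_{(0)}^1}$ only replaces the family by an isogenous one and alters the lattice by finite index, which is harmless for the conclusion; second, the passage from ``non-constant period map'' to ``finite-index monodromy'', which relies on properness of the $j$-map together with a careful treatment of the orbifold points of $Y(1)$ (of orders $2$ and $3$) and the $\mbox{SL}_2$-versus-$\mbox{PSL}_2$ lift, all of which affect the index only by bounded factors; and third, confirming that the Deligne--Borel extension across the cusps is compatible with the unipotency recorded in Remark \ref{Rem:PropertC}, so that $\overline p$ is genuinely a finite morphism of the compactified curves. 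The one essentially analytic input is the positivity of $\lambda_2$, and this is already supplied by the cited Bouw--M\"oller computation.
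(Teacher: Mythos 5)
Your proposal follows essentially the same route as the paper's proof: the rank-two identification of $\mathrm{Sp}_{\Omega^*}(H^1_{(0)}(\mathcal{O},\mathbb{Z}))$ with $\mathrm{SL}_2(\mathbb{Z})$, the splitting of the VHS via M\"oller's theorem together with Gutkin--Judge ($K(X,\omega)=\mathbb{Q}$), non-constancy of the period map deduced from the positive second Lyapunov exponent (equivalently $\deg\mathcal{U}^{1,0}\neq 0$ by Bouw--M\"oller), and extension/properness of the period map across the cusps to conclude finite index. The paper packages this last step via Griffiths' extension, properness and finite-volume theorems for period mappings followed by Shimura's criterion (finite covolume implies finite index in $\mathrm{SL}_2(\mathbb{Z})$), which is precisely the properness input you correctly flag as the delicate point; your reformulation through the compactified $j$-map is an equivalent way of supplying it.
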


\begin{proof}
First we want to point out that in genus \(g=2\) the module \(H_{(0)}^1(\mathcal{O},\mathbb{Z})\) has rank two and hence the symplectic group \(\mbox{Sp}_{\Omega^*}(H_{(0)}^1(\mathcal{O},\mathbb{Z}))\) is isomorphic to \(\mbox{SL}(2,\mathbb{Z})\). 

Denote by $\pi\colon \mathbb{H}\to C$ the universal cover of the curve $C$ and let $c\in C(\mathbb{C})$ be a basepoint with fiber $f^{-1}(\{c\})=X$. The pull back $\pi^{-1}\mathbb{U}_\mathbb{Z}$ of the local subsystem $\mathbb{U}_\mathbb{Z}\subset R^1f_*\mathbb{Z}_\mathcal{X}$ to the universal cover $\mathbb{H}$ of $C$ is isomorphic to the constant sheaf of stalk $(\mathbb{U}_\mathbb{Z})_c=H_{(0)}^1(\mathcal{O},\mathbb{Z})$. By Poincar\'e duality we can consider $\pi^{-1}\mathbb{U}_\mathbb{Z}$ as the constant sheaf of stalk $H_1^{(0)}(\mathcal{O},\mathbb{Z})$ as well.

The period mapping of the polarized VHS \((\mathbb{U}_\mathbb{Z},\ \mathcal{U}^{1,0},Q_U)\) is in our situation given by the following map (compare \cite{TMCMMlect} Section 4):
There exists a global section \(\omega\) of the \((1,0)\)-part of the pullback of \(\mathbb{U}_\mathbb{Z}\otimes_\mathbb{Z}\mathcal{O}_C\) to the universal cover \(\mathbb{H}\) of \(C\) (\cite{Forster2012lectures}, Theorem 30.3). Furthermore there are sections $a,b$ of $\pi^{-1}\mathbb{U}_\mathbb{Z}$ that are locally a symplectic basis adapted to $\omega$ i.e.,
  \[
  \int_{a(\tau)} \omega(\tau) \in \mathbb{H} \quad \text{and} \quad \int_{b(\tau)} \omega(\tau)=1
  \]
for every \(\tau\in \mathbb{H}\). 

The period domain is in our situation analytic isomorphic to the Siegel upper half space $\mathbb{H}$ (see Proposition 1.24 in \cite{Griffiths68}) and the period mapping is given by
  \[
  P\colon \mathbb{H} \longrightarrow \mathbb{H},\quad \tau\longmapsto \int_{a(\tau)} \omega(\tau).
  \]
By part (ii) of Proposition \ref{Prop:PerMap} the period mapping \(P\colon \mathbb{H}\to \mathbb{H}\) descends to a holomorphic map \(p\colon C\to \rho_\text{sh}(\pi_1(C,c)){\setminus}\mathbb{H}\). Recall that  $\rho_\text{sh}(\pi_1(C,c))$ acts holomorphically, properly and discontinuously on $\mathbb{H}$.

We show in the following that the map $p$ is not constant. Write $\text{deg}(\mathcal{U}^{1,0})$ and $\text{deg}(\mathcal{L}^{1,0})$ for the degrees of the line bundles $\mathcal{U}^{1,0}$ and $\mathcal{L}^{1,0}$, where $\mathcal{U}^{1,0}$ and $\mathcal{L}^{1,0}$ are the $(1,0)$-parts of the Hodge filtration of the Deligne extension of \(\mathbb{U}_\mathbb{Z}\otimes_\mathbb{Z}\mathcal{O}_C\) and $\mathbb{L}_\mathbb{Z}\otimes_\mathbb{Z}\mathcal{O}_C$ to $\overline C$. 
With Proposition \ref{Prop:LyaExpg=2} from above and Proposition 8.5 in \cite{Bouw2005TeichmullerCT} we conclude for the positive Lyapunov exponent \(\lambda_U\) corresponding to \(\mathbb{U}_\mathbb{Z}\):
  \[
  \lambda_U=\frac{\text{deg}(\mathcal{U}^{1,0})}{\text{deg}(\mathcal{L}^{1,0})}=\frac{1}{3}
  \quad \text{or} \quad
  \lambda_U=\frac{\text{deg}(\mathcal{U}^{1,0})}{\text{deg}(\mathcal{L}^{1,0})}=\frac{1}{2}
  \]
Thus $\text{deg}(\mathcal{U}^{1,0})\neq 0$, the variation of Hodge structure \((\mathbb{U}_\mathbb{Z},\ \mathcal{U}^{1,0},Q_U)\) is non-trivial and the period mapping $p\colon C\to \rho_\text{sh}(\pi_1(C,c)){\setminus}\mathbb{H}$ is non-constant and thus open. 

Next we want to show that $\rho_\text{sh}(\pi_1(C,c)){\setminus}\mathbb{H}$ has finite hyperbolic volume. By Theorem 9.5 in \cite{Griffiths70} we can extend the period mapping $p$ holomorphically to a map
  \[
  p\colon C\cup S \longrightarrow  \rho_\text{sh}(\pi_1(C,c)){\setminus}\mathbb{H}, 
  \]
where $S\subset \partial C$ denotes those cusps, for which $\rho_\text{sh}$ maps the corresponding parabolic elements in $\Gamma\simeq H_1(X,c)$ to elements of finite order in $\mbox{SL}(2,\mathbb{Z})$. From Proposition 9.11 and the proof of Theorem 9.6 in \cite{Griffiths70} we conclude that 
    \[
    p\colon C\cup S \to  \rho_\text{sh}(\pi_1(C,c)){\setminus}\mathbb{H}
    \]
is proper and since $p\colon C\cup S \to  \rho_\text{sh}(\pi_1(C,c)){\setminus}\mathbb{H}$ is also holomorphic and non-constant, it is surjective. Furthermore Theorem 9.6 in \cite{Griffiths70} implies that    
    \[
    \rho_\text{sh}(\pi_1(C,c)){\setminus}\mathbb{H}=p(C\cup S)
    \]
has finite hyperbolic volume or equivalently the group \(\rho_\text{sh}(\pi_1(C,c))\) has finite index in \(\mbox{SL}(2,\mathbb{Z})\) (see \cite[Prop. 1.31]{ShimuraGoro1971Itta}).
\end{proof}

\bibliographystyle{amsplain}

\begin{thebibliography}{99}


\bibitem{Bainbridge-Habegger-Moeller-2016}
{\bf M. Bainbridge, P. Habegger and M. M\"{o}ller, Martin},
\newblock{\it Teichm\"{u}ller curves in genus three and just likely intersections in
   ${\bf G}^n_m\times {\bf G}^n_a$},
\newblock  Publ. Math. Inst. Hautes Études Sci. 124 (2016), 1–98.

\bibitem{BassMilnorSerre67}
      {\bf H. Bass, J. Milnor and J.-P. Serre},
      \newblock {\it Solution of the congruence subgroup problem for $SL_n (n\geq 3)$ \it and $Sp_{2n} (n \geq 2)$},
      \newblock Publications Mathématiques de l'IH\'{E} 33 (1967), 59–137.
      
\bibitem{BM}
{\bf Y. Benoist and S. Miquel},
\newblock{\it Arithmeticity of discrete subgroups containing horospherical lattices},
\newblock Duke Math. J. 169 (2020), no. 8, 1485--1539.

\bibitem{birkenhake2004complex}
{\bf C. Birkenhake and H. Lange},
\newblock{\it Complex Abelian Varieties},
\newblock Grundlehren der mathematischen Wissenschaften (2004), Springer Verlag.

\bibitem{Bouw2005TeichmullerCT}
{\bf I. Bouw and M. M\"oller},
\newblock{\it Teichm\"uller curves, triangle groups, and Lyapunov exponents},
\newblock Annals of Mathematics 172 (2005), 139-185.

\bibitem{Bouw2010DifferentialEA}
{\bf I. Bouw and Martin M\"oller},
\newblock{\it Differential equations associated with nonarithmetic Fuchsian groups},
\newblock Journal of The London Mathematical Society-second Series 81 (2010), 65-90.

\bibitem{BT}
{\bf C. Brav and H. Thomas},
\newblock{\it Thin monodromy in Sp(4)},
\newblock Compos. Math. 150 (2014), no. 3, 333--343.

\bibitem{carlson2017period}
{\bf J. Carlson, S. M{\"u}ller-Stach and C. Peters},
\newblock{\it Period Mappings and Period Domains},
\newblock Cambridge Studies in Advanced Mathematics (2017), Cambridge University Press.


\bibitem{DeligneTrdeGrif69}
{\bf P. Deligne},
\newblock{\it Travaux de Griffiths},
\newblock S\'eminaire Bourbaki : vol. 1969/70, expos\'es 364-381 (1971), Springer Press.

\bibitem{DeligneEqDif70}
{\bf P. Deligne},
\newblock{\it Equations Diff\'erentielles \`a Points Singuliers R\'eguliers},
\newblock Lecture Notes in Mathematics 163 (1970), Springer Press.

\bibitem{DeligneUnThe87}
{\bf P. Deligne},
\newblock{\it Un th\'eor\`eme de finitude pour la monodromie},
\newblock Discrete groups in Geometry and Analysis 67 (1987), 1-19, Birkh\"auser, Progress in Math.

\bibitem{Detinko2018ZariskiDA}
{\bf A. S. Detinko, D. L. Flannery and A. Hulpke},
\newblock{\it Zariski density and computing in arithmetic groups},
\newblock Math. Comput. 87 (2018), 967-986.

\bibitem{earle1997teichmuller}
{\bf Earle, Clifford J and Gardiner, Frederick P},
\newblock{\it Teichm{\"u}ller disks and Veech’s F-structures}.
\newblock Contemp. Math. 201 (1997), 165-189.

\bibitem{origami-package}
{\bf S. Ertl, L. Junk, P. Kattler, A. Rogovskyy, A. Thevis, G. Weitze-Schmithüsen}
\newblock{\it GAP Package Origami},
\newblock{  GitHub repository, \url{https://ag-weitze-schmithusen.github.io/Origami/}}

\bibitem{F}
{\bf S. Filip},
\newblock{\it Uniformization of some weight 3 variations of Hodge structure, Anosov representations, and Lyapunov exponents}, 
\newblock preprint (2021) available at arXiv:2110.07533.

\bibitem{filip2015quaternionic}
{\bf S. Filip, G. Forni and C. Matheus},
\newblock{\it Quaternionic covers and monodromy of the Kontsevich-Zorich cocycle in orthogonal groups},
\newblock Journal of the European Mathematical Society 20 (2015).

\bibitem{Forster2012lectures}
{\bf O. Forster},
\newblock{\it Lectures on Riemann Surfaces},
\newblock Graduate Texts in Mathematics 81 (2012).

\bibitem{FF}
{\bf S. Filip and C. Fougeron},
\newblock{\it A cyclotomic family of thin hypergeometric monodromy groups in $Sp_4(\mathbb{R})$},
\newblock preprint (2021) available at arXiv:2106.09181.

\bibitem{FMS}
{\bf E. Fuchs, C. Meiri and P. Sarnak},
\newblock{\it Hyperbolic monodromy groups for the hypergeometric equation and Cartan involutions},
\newblock J. Eur. Math. Soc. (JEMS) 16 (2014), no. 8, 1617--1671. 

\bibitem{Griffiths68}
{\bf P. A. Griffiths},
\newblock{\it Periods of Integrals on Algebraic Manifolds, I. (Construction and Properties of the Modular Varieties)},
\newblock American Journal of Mathematics 90 (1968), no.2, 568--626.

\bibitem{Griffiths68b}
{\bf P. A. Griffiths},
\newblock{\it Periods of Integrals on Algebraic Manifolds, II. (Local Study of the Period Mapping)},
\newblock American Journal of Mathematics 90 (1968), no.3, 805--865.

\bibitem{Griffiths70}
{\bf P. A. Griffiths},
\newblock{\it Periods of Integrals on Algebraic Manifolds, III (Some global differential-geometric properties of the period mapping)},
\newblock Publications Mathématiques de l'IHÉS, no. 38, 125--180.


\bibitem{GuJu00}
{\bf E. Gutkin and C. Judge},
\newblock{\it Affine mappings of translation surfaces: geometry and arithmetic},
\newblock Duke Mathematical Journal 103 (2000), no.2, 191 -- 213.

\bibitem{HL}
{\bf P. Hubert and S. Leli{\`e}vre},
\newblock {\it Prime arithmetic {T}eichm\"uller discs in {$H(2)$}},
\newblock Israel J. Math. 151 (2006), 281--321.

\bibitem{HM}
{\bf P. Hubert and C. Matheus},
\newblock{\it An origami of genus 3 with arithmetic Kontsevich-Zorich monodromy},
\newblock Math. Proc. Cambridge Philos. Soc. 169 (2020), no. 1, 19--30.

\bibitem{JKZ}
{\bf F. Jouve, E. Kowalski and D. Zywina},
\newblock{\it Splitting fields of characteristic polynomials of random elements in arithmetic groups},
\newblock Israel J. Math. 193 (2013), no. 1, 263--307.

\bibitem{ModularGroup}
{\bf L. Junc and G. Weitze-Schmithüsen}
{GAP package ModularGroup}

\bibitem{KanyMatheus23}
{\bf M. Kany and C. Matheus}
\newblock{\it Arithemticity of the Kontsevich--Zorich monodromies of certain families of square-tiled surfaces II},
\newblock preprint (2023) available at arXiv:2301.06894.

\bibitem{Kappes2011}
{\bf A. Kappes},
\newblock{Monodromy Repesentations and Lyapunov exponents of Origamis},
\newblock Dissertation (2011), KIT Karlsruhe, \url{https://www.math.kit.edu/iag3/~kappes/media/kappes_andre_diss.pdf}. 

\bibitem{Kappes16LyapOfBQ}
{\bf A. Kappes and M. M\"oller},
\newblock{\it Lyapunov spectrum of ball quotients with applications to commensurability questions},
\newblock Duke Mathematical Journal 165 (2016), no.1, 1-66, Duke University Press.

\bibitem{Kattler2023}
  {\bf P. Kattler},
  \newblock{\it Index of the Kontsevich-Zorich monodromy of origamis in $\mathcal{H}(2)$},
  \newblock preprint (2023) available at arXiv:2307.00816.

\bibitem{KattlerGIT2023}
  {\bf P. Kattler},
  \newblock{\it Addendum to the article Arithmeticity of Kontsevich--Zorich monodromies of origamis},
  \newblock{File AddToArticleBKKMNSVW.g on \url{https://github.com/AG-Weitze-Schmithusen/Addenda}.}

\bibitem{KZ}
{\bf M. Kontsevich and A. Zorich},
\newblock{\it Connected components of the moduli spaces of Abelian differentials with prescribed singularities},
\newblock Invent. Math. 153 (2003), no. 3, 631--678.

\bibitem{LN}
{\bf E. Lanneau and D.-M. Nguyen},
\newblock{\it Teichm\"uller curves generated by Weierstrass Prym eigenforms in genus 3 and genus 4},
\newblock J. Topol. 7 (2014), no. 2, 475--522. 

\bibitem{Lochak05Oacimsoc}
{\bf P. Lochak},
\newblock{\it On arithmetic curves in the moduli space of curves},
\newblock Journal of the Institute of Mathematics of Jussieu 4 (2005), 443--508.


\bibitem{MKS2004}
{\bf W. Magnus, A. Karrass and D. Solitar},
\newblock {\it Combinatorial group theory},
\newblock Dover Publications, Inc., Mineola, NY (2004).


\bibitem{Masur2006}
{\bf H. Masur},
\newblock{\it Ergodic theory of translation surfaces},
\newblock{Handbook of dynamical systems. Vol. 1{B} (2006), 527--547.}

\bibitem{M2022}
{\bf C. Matheus},
\newblock{\it Three lectures on square-tiled surfaces},
\newblock Collection: Panoramas et Synthèses, (2022), vol. 58, 77--99.

\bibitem{MMY}
{\bf C. Matheus, M. M\"oller and J.-C. Yoccoz},
\newblock{\it A criterion for the simplicity of the Lyapunov spectrum of square-tiled surfaces},
\newblock Invent. Math. 202 (2015), no. 1, 333--425.

\bibitem{McMullenBilliard}
{\bf C. McMullen},
\newblock{\it Billiards and Teichmuller curves on Hilbert modular surfaces},
\newblock Journal of the American Mathematical Society 16 (2003), no.4, 857-885.

\bibitem{McMullenSpin}
{\bf C. McMullen},
\newblock{\it Teichm{\"u}ller curves in genus two: Discriminant and spin},
\newblock Math. Ann. 333 (2005), no.1, 87-130.

\bibitem{Milne}
{\bf J. S. Milne},
\newblock{\it Algebraic groups. The theory of group schemes of finite type over a field},
\newblock Cambridge Stud. Adv. Math., 170 Cambridge University Press, Cambridge (2017).




\bibitem{MoellerVar06}
{\bf M. M\"oller},
\newblock{\it Variations of Hodge structures of a Teichmüller curve},
\newblock Journal of the American Mathematical Society 19 (2003), no.2, 327-344. 

\bibitem{MoellerAff09}
 {\bf M. M\"oller},
 \newblock{\it Affine groups of flat surfaces},
\newblock  Handbook of Teichmüller theory. Vol. II (2009), 369–387,
IRMA Lect. Math. Theor. Phys., 13.


\bibitem{TMCMMlect}
{\bf M. M\"oller},
\newblock{\it Teichm\"uller curves, mainly from the viewpoint of algebraic geometry},
\newblock (2011),  \url{https://www.uni-frankfurt.de/50569555/PCMI.pdf}.

\bibitem{PR}
{\bf G. Prasad and A. Rapinchuk},
\newblock{\it Generic elements in Zariski-dense subgroups and isospectral locally symmetric spaces},
\newblock Thin groups and superstrong approximation, 211--252, Math. Sci. Res. Inst. Publ., 61, Cambridge Univ. Press, Cambridge, 2014.

\bibitem{IR}
{\bf I. Rivin},
\newblock{\it Large galois groups with applications to zariski density},
\newblock preprint (2015) available at arXiv:1312.3009.




\bibitem{MHMproject}
{\bf C. Sabbah and C. Schnell},
\newblock{\it The MHM Project (Version 2)},
\newblock \url{https://perso.pages.math.cnrs.fr/users/claude.sabbah/MHMProject/mhm.html} .


\bibitem{Sarnak}
{\bf P. Sarnak},
\newblock{\it Notes on thin matrix groups},
\newblock Thin groups and superstrong approximation, 343--362,
Math. Sci. Res. Inst. Publ., 61, Cambridge Univ. Press, Cambridge, 2014.

\bibitem{DissGabi}
{\bf G. Schmithüsen},
\newblock{\it Veech groups of origamis},
\newblock PhD Thesis (2005), \url{http://www.math.kit.edu/iag3/~schmithuesen/media/dr.pdf}, Karlsruhe Institute of Technology - KIT.

\bibitem{ShimuraGoro1971Itta}
{\bf G. Shimura},
\newblock{Introduction to the arithmetic theory of automorphic functions},
\newblock (1971), Shoten Tokyo.

\bibitem{Singh}
{\bf S. Singh},
\newblock{\it Arithmeticity of four hypergeometric monodromy groups associated to Calabi-Yau threefolds},
\newblock Int. Math. Res. Not. IMRN 2015, no. 18, 8874--8889.

\bibitem{SV}
{\bf S. Singh and T. Venkataramana},
\newblock{\it Arithmeticity of certain symplectic hypergeometric groups},
\newblock Duke Math. J. 163 (2014), no. 3, 591--617.

\bibitem{Weitze2015}
{\bf G. Weitze-Schmith\"{u}sen}
\newblock{\it The deficiency of being a congruence group for {V}eech groups of origamis},
\newblock{Int. Math. Res. Not. IMRN (2015), no. 6, 1613--1637.}

\bibitem{voisinHodge2003}
{\bf C. Voisin},
\newblock{\it Hodge Theory and Complex Algebraic Geometry II},
\newblock Cambridge Studies in Advanced Mathematics 2 (2003), Cambridge University Press.

\bibitem{Vorobets96}
{\bf Ya B Vorobets}
\newblock{\it Planar structures and billiards in rational polygons: the Veech alternative},
\newblock{Russian Mathematical Surveys 51 (1996), no. 5, 779--817.}

\bibitem{zmiaikou2011}
{\bf D. Zmiaikou},
\newblock{\it Origamis and permutation groups},
\newblock Th\`ese, Universit{\'e} Paris Sud - Paris XI.
\newblock https://theses.hal.science/tel-00648120

\bibitem{Zorich2006}
{\bf A. Zorich},
\newblock{\it Flat surfaces},
\newblock{Frontiers in number theory, physics, and geometry. {I} (2006), 437--583.}


\end{thebibliography}

\end{document}